\documentclass[11pt]{amsart}
\usepackage{amsmath, amsthm, amssymb}
\usepackage[all]{xy}
\usepackage{stmaryrd}
\usepackage{mathrsfs}
\usepackage{caption}
\usepackage{thmtools}
\usepackage{graphicx}
\usepackage{float}
\usepackage{enumerate}
\usepackage{mathtools}
\usepackage[english]{babel}
\usepackage[dvipsnames]{xcolor}
\usepackage{tikz-cd}
\usepackage[hypertexnames=false,colorlinks=false, linkcolor=blue, citecolor=magenta, urlcolor=blue]{hyperref}

\usepackage[]{biblatex}
\DeclareLabelalphaTemplate{
  \labelelement{
    \field[final]{shorthand}
    \field{label}
    \field[strwidth=1,strside=left]{labelname}
  }
  \labelelement{
    \field[strwidth=2,strside=right]{year}
  }
}

\newtheorem{theorem}{Theorem}[section]
\newtheorem{corollary}[theorem]{Corollary}
\newtheorem{prop}[theorem]{Proposition}
\newtheorem{lemma}[theorem]{Lemma}
\newtheorem{definition}[theorem]{Definition}
\newtheorem{remark}[theorem]{Remark}

\newcommand{\op}[1]{\operatorname{#1}}

\newcommand\C{{\mathbb C}}

\newcommand\Q{{\mathbb Q}}
\newcommand\Z{{\mathbb Z}}
\newcommand\prj{{\mathbb P}}

\newcommand\cC{{\mathcal C}}
\newcommand\cD{{\mathcal D}}
\newcommand\cE{{\mathcal E}}
\newcommand\cF{\mathcal {F}}
\newcommand\cG{\mathcal {G}}

\newcommand\cI{{\mathcal I}}
\newcommand\cK{{\mathcal K}}

\newcommand\cM{{\mathcal M}}

\newcommand\cO{{\mathcal O}}
\newcommand\cP{{\mathcal P}}

\newcommand\cS{{\mathcal S}}

\newcommand\Pic{{\operatorname{Pic}}}

\newcommand\Gr{{\operatorname{Gr}}}
\newcommand{\bu}{\bullet}
\newcommand{\wt}[1]{\widetilde{#1}}
\newcommand{\nt}[1]{\tilde{#1}}
\newcommand{\wh}[1]{\widehat{#1}}
\newcommand{\llhb}[1]{\llbracket {#1} \rrbracket}
\newcommand{\llb}[1]{\llbracket {#1} \rrbracket^{\operatorname{1st}}}
\newcommand{\llbb}[1]{\llbracket {#1} \rrbracket^{\operatorname{2nd}}}

\newcommand\To{\longrightarrow}

\newcommand\INTO{\ \ar@{^(->}[r]<-.2ex>}
\newcommand{\Into}{\ensuremath{\lhook\joinrel\relbar\joinrel\rightarrow}}

\newcommand\pt{\operatorname{pt}}

\newcommand\ch{\operatorname{ch}}
\newcommand\rk{\operatorname{rk}}
\newcommand\rank{\operatorname{rank}}
\newcommand\vir{\operatorname{vir}}
\newcommand\vd{\operatorname{vd}}

\newcommand\coker{\operatorname{coker}}
\newcommand\im{\operatorname{im}}
\newcommand\id{\operatorname{id}}

\newcommand\hd{\operatorname{hd}}

\DeclareMathOperator{\sHom}{\kern -2pt\mathscr{H}\textit{\kern -4pt {om}}\,}
\DeclareMathOperator{\sExt}{\kern -2pt\mathscr{E}\textit{\kern -2pt {xt}}\,}

\newcommand\Jac{\operatorname{Jac}}

\newcommand\Cone{\operatorname{Cone}}

\renewcommand{\a}{\alpha}
\renewcommand{\b}{\beta}
\newcommand{\s}{\sigma}
\newcommand{\ga}{\gamma}

\newcommand{\de}{\delta}
\newcommand{\sff}[1]{\mathsf{#1}}
\newcommand{\scr}[1]{\mathscr{#1}}
\newcommand{\bb}[1]{\mathbb{#1}}
\newcommand{\bff}[1]{\mathbf{#1}}
\newcommand{\cc}[1]{\mathcal{#1}}
\newcommand{\fr}[1]{\mathfrak{#1}}
\newcommand{\xr}[1]{\xrightarrow{#1}}
\newcommand{\sh}{\mathfrak{h}}
\newcommand{\rkh}[1]{\mathsf{h}_{#1}}

\newcommand{\onto}[1]{\xymatrix{\ar@{->>}[r]^-{#1} & }}
\newcommand{\into}[1]{\xymatrix{\ar@{^{(}->}[r]^-{#1} & }}
\newcommand\beq[1]{\begin{equation}\label{#1}}
\newcommand\eeq{\end{equation}}
\newcommand\beqa{\begin{eqnarray*}}
\newcommand\eeqa{\end{eqnarray*}}
\newcommand{\Perf}{\operatorname{Perf}}

\usepackage{tabularx} 

\newcolumntype{N}{>{\bfseries}p{2cm}}
\newcolumntype{D}{>{\raggedright\arraybackslash}X}

\tikzset{curve/.style={settings={#1},to path={(\tikztostart)
    .. controls ($(\tikztostart)!\pv{pos}!(\tikztotarget)!\pv{height}!270:(\tikztotarget)$)
    and ($(\tikztostart)!1-\pv{pos}!(\tikztotarget)!\pv{height}!270:(\tikztotarget)$)
    .. (\tikztotarget)\tikztonodes}},
    settings/.code={\tikzset{quiver/.cd,#1}
        \def\pv##1{\pgfkeysvalueof{/tikz/quiver/##1}}},
    quiver/.cd,pos/.initial=0.35,height/.initial=0}
\usetikzlibrary{calc}

\numberwithin{equation}{section}

\begin{document}
\title[Virtual cycles of 3-term complexes]{Virtual cycles of 3-term complexes \\ and the Hilbert schemes of surfaces}

\author[E. Dominguez and A. Gholampour]{Emilio Dominguez and Amin Gholampour}

\maketitle
\begin{abstract}
    Given a 3-term perfect complex $\sff E$ over a quasi-projective variety $X$ and a nonnegative integer $r$, we define two virtual cycles and their refinements supported over the $r$-th degeneracy loci of $\sff E$. This is done by modifying the complex $\sff E$ after pulling it back to certain blow ups of $X$.  We establish several Thom-Porteous, comparison, duality and wall-crossing formulas for these virtual cycles. We apply this construction to perfect complexes arising from the universal objects over the Picard variety and the Hilbert schemes of non-singular complex projective surfaces. We recover, reprove and strengthen some of the known results involving the reduced cycles and the virtual cycles of the Hilbert schemes related to the curve counting theory and Vafa-Witten theory, respectively.  In the case of elliptic surfaces, we provide an explicit calculation generalizing that of Seiberg-Witten invariants.
\end{abstract}
\tableofcontents

\addtocontents{toc}{\protect\setcounter{tocdepth}{2}}

\section{Introduction}

\subsection{Overview} Virtual fundamental classes play an important role in modern enumerative geometry,  specially in curve counting theories, such as Gromov-Witten and Pandharipande-Thomas theories. The virtual fundamental class of a complex quasiprojective scheme  $M$ is usually  defined by means of a perfect obstruction theory \cite{behrend1997intrinsic}, which is a 2-term complex of vector bundles on $M$ approximating a truncation of the cotangent complex of $M$. In addition to instances above, the moduli spaces of stable sheaves on nonsingular projective surfaces and  Calabi-Yau or Fano threefolds admit natural perfect obstruction theories leading to Donaldson and Donaldson-Thomas invariants, respectively.

A typical example of a scheme $M$ with perfect obstruction theory is the zero locus of a section of vector bundle $E$ over a nonsingular quasi-projective variety $X$, or more generally, certain virtual resolutions of the degeneracy loci of a map of vector bundles $E_0\to E_1$ over $X$. In \cite{G1, G2}, this point of view was pursued for $M$ the nested Hilbert schemes of nonsingular projective surfaces to study the properties of their virtual cycles.

On the other hand, there are important moduli spaces that do not generally admit natural perfect obstruction theories. For example, the moduli spaces of stable sheaves on nonsingular varieties of dimension $\ge 3$. Having this limitation in mind, we build on the approach of \cite{G1,G2} to construct virtual cycles supported over the degeneracy loci of 3-term complexes of vector bundles over $X$
$$\sff E=\{E_{0}\to E_{1} \to E_{2}\}$$ sitting in degrees $[0,2]$. 
This enables us to define intersection numbers that are invariant under  \emph{certain deformations} of $X$ or $\sff E$ over nonsingular bases (cf. Section \ref{sec:defInvar}), 
and that only depend on the quasi-isomorphism class of $\sff E$ (cf. Lemma \ref{lem:indep} and Appendix \ref{quasiinvariance}). We do not insist on having a perfect obstruction theory on each degeneracy locus or its virtual resolution, though when there is a natural one, the resulting virtual cycle coincides with that of this paper (cf. Remark \ref{rem:hd=1,2}). We may therefore relax the assumption that $X$ is nonsingular, but it is important in what follows that $X$ is kept reduced and irreducible. 

Our main technique is to first pullback $\sff E$ to a suitable blow up $\nu\colon Y\to X$ over which $\sff E$ could essentially be thought of as being decomposed into a direct sum of two 2-term complexes of vector bundles sitting in degrees $[0,1]$ and $[1,2]$\footnote{In general, such a decomposition occurs only after further pulling back to an affine bundle over $Y$, which is an affine variety.}. Any such blow up will be referred to as an \emph{$\sff E$-suitable blow up}.  We then fix a positive integer $r$ and  define two virtual cycles, one for each direct summand,
\begin{align*}
    &\llb{\sff{E}, r}\in A_{\dim X-r(r-\sff e+\rkh{\sff E}^2)}(\wt{\op D}_r(\sff E)),\\
&\llbb{\sff{E},r}\in A_{\dim X-r(r-\rkh{\sff E}^2)}(\wt{\op D}_r(\sff E^\vee)),
\end{align*}
where $\sff e:=\rank(\sff E)$, $\rkh{\sff E}^2$ is the rank of the second cohomology of $\sff E$ on $X$, which is well-defined by the integrality condition on $X$, and $\wt{\op D}_r(\sff E)$ (respectively, $\wt{\op D}_r(\sff E^\vee))$ is a certain virtual resolution of the  $r$-th degeneracy locus $\op D_r(\sff E)$ of $\sff E$ (respectively, $\op D_r(\sff E^\vee)$ of $\sff E^\vee$), where the rank of the 0-th cohomology of $\sff E|_{\op D_r(\sff E)}$  (respectively, $\sff E^\vee|_{\op D_r(\sff E^\vee)}$) is at least $r$ (cf. Definition \ref{def:1st2nd}). These classes are shown to be independent of the choice of an $\sff E$-suitable blow up $\nu\colon Y\to X$  (cf. Lemma \ref{lem:indep}). 

When $X$ is nonsingular and the restriction $\sff E|_{{\op{D}}_r(\sff{E})}$ has vanishing second cohomology (cf. Condition \eqref{UDr}),  the first class 
$\llb{\sff E, r}$ coincides with the virtual cycle $\big[\wt{\op{D}}_r(\sff{E})\big]^{\vir}$ defined via a natural perfect obstruction theory and studied in \cite{G1, G2}. This is the case, for example, when $\sff E$ is a 2-term complex of vector bundles in degrees $[0,1]$, for which the second class vanishes.

The main application we consider in this paper is to the complexes arising from the universal objects over the Picard variety and the Hilbert scheme of nonsingular complex projective surfaces. Our construction can be readily extended to the case of perfect complexes of arbitrary lengths. In particular, in a future work we plan to use a similar construction to define and study virtual cycles for the Hilbert scheme of divisors on threefolds for which no perfect obstruction theory is known.  

\subsection{Properties of the virtual cycles}


Denote the $i$-th cohomology sheaf of the complex $\sff E$ of rank $\sff e$ as above by  $\frak h^i(\sff E)$ and its rank by $\sff h^i_{\sff E}$. We will also define a finite sequence of virtual cycles of  $\sff E$ indexed by nonnegative integers $k$  (cf. Definition \ref{def:refinedE}) 
$$\llb{\sff E, r}_k\in A_{\dim X-r(r-\sff e+\rkh{\sff E}^2)-k}(\wt{\op D}_r(\sff E))$$ with $\llb{\sff E, r}=\llb{\sff E, r}_0$. If $\sff E$ is a complex of vector bundles that is more generally sitting in degrees $[a,a+2]$, we define the above classes for $\sff E$ by applying our construction to the shifted complex $\sff E[-a]$.

We will prove a number of formulas comparing the first and the second classes of two or more complexes of vector bundles. We summarize a few of them in the following theorem. See Section \ref{sec:moddual} for more details and general results.  

\begin{theorem}[Comparisons]
Suppose $u\colon \sff F
\to \sff E$ is a map in the derived category $D^b(X)$ between two perfect complexes of ranks $\sff f, \sff e$ sitting in degrees $[0,2]$. Let  $\sff{G}:=\Cone(u)$ and $Q_{\widetilde{\op{D}}_r(\sff{F})}$, $Q_{\widetilde{\op{D}}_r(\sff{E})}$ be the universal rank $r$ quotient bundles over $\widetilde{\op{D}}_r(\sff{F})$ and $\widetilde{\op{D}}_r(\sff{E})$, respectively. 
\begin{enumerate}
    \item If $\sff{G}$ is represented by a locally free sheaf $G$ in degree 0, then there exists a closed immersion $\iota:\widetilde{\op{D}}_r(\sff{F})\hookrightarrow\widetilde{\op{D}}_r(\sff{E})$ and for any $k\ge 0$, we have
    $$\iota_*\llb{\sff{F},r}_k=c_{r(\sff e-\sff f)}({G}\otimes Q_{\widetilde{\op{D}}_r(\sff{E})})\cap \llb{\sff{E},r}_k.$$
    Moreover, $\widetilde{\op{D}}_r(\sff{F}^\vee)\cong\widetilde{\op{D}}_r(\sff{E}^\vee)$ with 
    $\llbb{\sff F, r}=\llbb{\sff E, r}$.
    \item If $\sff h^2_{\sff E}=0$  and  $\sff{G}$ is represented by a locally free sheaf $G$ in degree 1 then $\widetilde{\op{D}}_r(\sff{F})\cong\widetilde{\op{D}}_r(\sff{E})$ and
           \begin{align*}
               \llb{\sff{E}, r}&=\sum_{i=0}^{r(\sff f-\sff e-\rkh{\sff F}^2)}c_{{r(\sff f-\sff e-\rkh{\sff F}^2)-i}}(G\otimes Q_{\wt{\op{D}}_r(\sff F)})\cap\llb{\sff F,r}_i.
           \end{align*}
       
    \item If $\sff G$ is represented by a locally free sheaf $G$ in degree $2$ then 
    $$\widetilde{\op{D}}_r(\sff{F})\cong \widetilde{\op{D}}_r(\sff{E})\text{ and }\llb{ \sff{F},r}=\llb{ \sff{E},r}.$$

    \item For any $(\sff E$, $\sff E^\vee)$-suitable blow up $\nu\colon Y\to X$, 
    $$\llb{\sff{E}, r}=\nu_*\left(c_{r\rkh{\sff{E}}^1}(T\otimes Q_{\widetilde{\op{D}}_r(\sff{E})})\cap\llbb{\nu^*\sff{E}^\vee,r}\right),$$
    where $T$ (defined in \eqref{def:T}) is a locally free sheaf on $Y$ of rank $\rkh{\sff E}^1$.
    
    \end{enumerate}
    \end{theorem}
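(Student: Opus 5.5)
We plan to reduce every item to statements about 2-term complexes over an $\sff E$-suitable (and $\sff F$-suitable) blow up $\nu\colon Y\to X$, where the pulled-back complex splits (after passing to an affine bundle, which does not change Chow groups) as $\sff E_{[0,1]}\oplus \sff E_{[1,2]}$. The first class is built from the degree-$[0,1]$ summand $A_0\to A_1$: it is the Thom--Porteous type virtual cycle of the degeneracy locus $\wt{\op D}_r$ cut out inside the Grassmann bundle of rank $r$ quotients of $A_0^\vee$-type data. The second class comes from the dual of the degree-$[1,2]$ summand. Since Lemma \ref{lem:indep} gives independence of the blow up, we may choose a single blow up that is simultaneously suitable for $\sff F$, $\sff E$, $\sff G$ and $\sff E^\vee$. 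We may also replace $u$ by an honest map of 3-term complexes of bundles, which is possible up to quasi-isomorphism by the quasi-invariance in Appendix \ref{quasiinvariance}.

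For (1), $\sff G=G[0]$ gives a triangle $\sff F\to\sff E\to G$. After splitting, this says that the $[0,1]$-parts differ by a surjection $E_0\to G$ in degree $0$ with $F_0=\ker$, while the $[1,2]$-parts agree. Hence $\llbb{\sff F,r}=\llbb{\sff E,r}$ and $\wt{\op D}_r(\sff F^\vee)=\wt{\op D}_r(\sff E^\vee)$ immediately. The locus $\wt{\op D}_r(\sff F)$ is the zero locus in $\wt{\op D}_r(\sff E)$ of the induced section of $G^\vee{}^\vee\otimes Q=G\otimes Q$; this is the composite of the universal map and the projection to $G$. The refined classes $\llb{\cdot}_k$ are defined by capping with Chern classes of the obstruction/excess bundle. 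These are compatible with the excess/Fulton intersection for a regular section, which gives $\iota_*\llb{\sff F,r}_k=c_{r(\sff e-\sff f)}(G\otimes Q)\cap\llb{\sff E,r}_k$. One checks that $\rank G=\sff e-\sff f$, so the degrees match.

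For (3), a bundle in degree $2$ only changes the $[1,2]$-summand by a trivial $G\xrightarrow{\id}G$ type extension. The $[0,1]$-summand, and hence $\wt{\op D}_r$ and its cycle, are unchanged. For (2), $\sff h^2_{\sff E}=0$ means the $[1,2]$-summand of $\sff E$ is acyclic up to a bundle, and $G$ in degree $1$ enlarges $A_1$ by $G$. The degeneracy loci are then identified, but the virtual dimension drops by $r\,\rank$. The formula follows by writing the obstruction bundle of $\sff E$ as an extension of that of $\sff F$ by $G\otimes Q$, using the Whitney formula, and expanding against the refined classes $\llb{\sff F,r}_i$; these are by definition the Chern-class cappings appearing in that expansion. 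The delicate point is bookkeeping the $\rkh{\sff F}^2$ shift, since $\sff F$ may have nonzero $\frak h^2$ that is killed in $\sff E$.

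For (4), we work on $Y$ suitable for both $\sff E$ and $\sff E^\vee$. There $\nu^*\sff E$ has locally free $\frak h^1$-part $T$ sitting between the two summands, and dualizing swaps the roles of the two summands. We then compare the two Grassmannian constructions on the same degeneracy locus: the first differs from the second by the middle bundle $T\otimes Q$, whose top Chern class accounts for the difference in virtual dimension $r\rkh{\sff E}^1$. We then push forward along $\nu$. We expect this step to be the main obstacle, since it requires showing that the degeneracy loci for $\nu^*\sff E$ and $\nu^*\sff E^\vee$ match with compatible universal quotients. It also requires showing that the obstruction theories differ exactly by $T\otimes Q$. We would try to establish this first on the affine bundle, where explicit splittings are available.
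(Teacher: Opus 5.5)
Your overall framework (pass to a suitable blow up, split off $\tau^{\le1}(\nu^*\sff E)$ and $\sh^2(\nu^*\sff E)[-2]$, and compare the 2-term pieces by excess intersection) is the paper's, and (1) and (3) work essentially as you describe. There is a genuine gap in (2).

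\textbf{The mechanism you give for (2) is wrong.} It is not true that $G$ in degree $1$ enlarges the degree-$1$ term of the $[0,1]$-part by $G$. Nor is the obstruction bundle of $\sff E$ an extension of that of $\sff F$ by $G\otimes Q$. After pulling back, the cohomology sequence contains $\sh^1(\nu^*\sff E)\to\nu^*G\to\sh^2(\nu^*\sff F)\to\sh^2(\nu^*\sff E)\to0$, so part of $G$ is used up killing $\sh^2(\nu^*\sff F)$. By the octahedral axiom, the cone of $\tau^{\le1}(\nu^*\sff F)\to\tau^{\le1}(\nu^*\sff E)$ is $A[-1]$, not $G[-1]$, where $A=\ker(\nu^*G\to\sh^2(\nu^*\sff F))$ has rank $\sff f-\sff e-\rkh{\sff F}^2$. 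Your Whitney-formula argument would produce the single term $c_{r(\sff f-\sff e)}(G\otimes Q)\cap\llb{\sff F,r}$. That class has the wrong dimension (off by $r\rkh{\sff F}^2$) and contains no Segre classes of $\sh^2(\nu^*\sff F)\otimes Q$, so it cannot produce the refined classes $\llb{\sff F,r}_i$. Calling the $\rkh{\sff F}^2$ bookkeeping ``delicate'' does not supply the missing step.

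\textbf{The missing idea is Lemma \ref{lem:truncatecone}.} First, $\sh^2(\nu^*\sff E)=\nu^*\sh^2(\sff E)=0$, so $\nu^*G\to\sh^2(\nu^*\sff F)$ is surjective. This is where the vanishing of $\sh^2(\sff E)$ is used, as in Theorem \ref{thm:compdoub}(2)(ii); it should replace your gloss that the $[1,2]$-summand is ``acyclic up to a bundle''. Second, $\hd(\sh^2(\nu^*\sff F))\le 1$ on an $\sff F$-suitable blow up, so the kernel $A$ is locally free. The 2-term comparison (Theorem \ref{compGT}(2), via Proposition \ref{prop:comparisonSpecial}) then gives $\llb{\sff E,r}=\nu_*\big(c_N(A\otimes Q)\cap\llb{\nu^*\sff F,r}\big)$ with $N=r(\sff f-\sff e-\rkh{\sff F}^2)$. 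The $K$-theory identity $[A]=[\nu^*G]-[\sh^2(\nu^*\sff F)]$ rewrites $c_N(A\otimes Q)$ as $\sum_i c_{N-i}(G\otimes Q)\,s_i(\sh^2(\nu^*\sff F)\otimes Q)$. The projection formula then turns this into exactly the stated sum over $\llb{\sff F,r}_i$.

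\textbf{Part (4) is easier than you expect.} Both classes live on $\Gr(\sh^0(\nu^*\sff E^\vee),r)$ by definition, so no matching of degeneracy loci is needed. The class $\llbb{\nu^*\sff E^\vee,r}$ is the first class of $\{\nu^*E_0\to(\nu^*E_1^*/C)^*\}$, and the natural map of this complex to $\{\nu^*E_0\to K\}$ has cone $T[-1]$. So (4) is the same 2-term comparison with a locally free degree-$1$ cone (Proposition \ref{prop:aboutT}), and no splitting on an affine bundle is required.
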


We will prove a Thom-Porteous formula for the classes associated to the complex $\sff E$ as above. For this, fix a locally free sheaf $B$ of rank $b$ on $X$ with a surjection $B^*\twoheadrightarrow \frak h^0(E^\vee)$. This gives a canonical closed embedding of the virtual resolution into the Grassmannian  of $r$-quotients of $B^*$ with the universal quotient bundle $Q_{\Gr}$ $$\iota\colon \wt{\op  D}_r(\sff E)\hookrightarrow \Gr(B^*,r).$$  The following result is proven in Theorem \ref{thm:T-Pformulas} along with a similar formula for the second class $\llbb{\sff E^\vee,r}$.


\begin{theorem}[Thom-Porteous] \label{thm:thom-por} 
For any $\sff E$-suitable blow up  $\nu:Y\to X$ let $\Gr(\nu^*B^*,r)\to \Gr(B^*,r)$ be the induced blow up. Then,  
\begin{align*}
&\iota_*\llb{\sff{E},r}=\nu_*\left(c_{r(b+\rkh{\sff{E}}^2-\sff e)}\big((\nu^*B+\frak h^2(\nu^*\sff E)-\nu^*\sff{E})\otimes Q_{\Gr}\big)\right),
\end{align*} where the pullback symbols to the Grassmannian are omitted. 
\end{theorem}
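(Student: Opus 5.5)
Since we only have the introduction, I will work from the construction it sketches and reduce to the classical Thom--Porteous formula for a 2-term complex. On an $\sff E$-suitable blow up $\nu\colon Y\to X$, the complex $\nu^*\sff E$ becomes (after possibly pulling back to an affine bundle $\pi\colon V\to Y$, which does not change Chow groups by homotopy invariance) quasi-isomorphic to a direct sum $\sff E'\oplus\sff E''$. Here $\sff E'=\{E_0'\to E_1'\}$ sits in degrees $[0,1]$ and $\sff E''$ sits in degrees $[1,2]$. On $Y$ the sheaf $\frak h^2(\nu^*\sff E)$ is locally free of rank $\rkh{\sff E}^2$, which is how suitability should be arranged. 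Consequently $\sff E''$ is, up to quasi-isomorphism, an injective-with-locally-free-cokernel map whose cohomology is concentrated in degree $2$, namely $\frak h^2(\nu^*\sff E)[-2]$. So in K-theory
$$\nu^*\sff E-\frak h^2(\nu^*\sff E)=\sff E'$$
as a class of rank $\sff e-\rkh{\sff E}^2$. By Definition \ref{def:1st2nd}, the first class $\llb{\sff E,r}$ is $\nu_*$ of the standard virtual class of the degeneracy-type locus $\wt{\op D}_r(\sff E')$ of the 2-term complex $\sff E'$ on $Y$. By Remark \ref{rem:hd=1,2} this agrees with the perfect-obstruction-theory class of \cite{G1,G2}.

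The second step is to realize $\wt{\op D}_r(\sff E')$ inside $\Gr(\nu^*B^*,r)$ as the zero locus of a section. I would first pull back the surjection $B^*\twoheadrightarrow\frak h^0(\sff E^\vee)$ to $Y$. Since $\frak h^0((\sff E')^\vee)$ is a quotient of $\frak h^0(\nu^*\sff E^\vee)$ (the $\sff E''$ summand contributes nothing to $\frak h^0$ of the dual, being in degrees $[-2,-1]$ after dualizing), we get $\nu^*B^*\twoheadrightarrow \frak h^0(\sff E'^\vee)=\coker(E_1'^*\to E_0'^*)$. Replacing the presentation of $\sff E'$ by a quasi-isomorphic one, I can take $E_0'=\nu^*B$ with $E_1'$ of rank $b-\sff e+\rkh{\sff E}^2$. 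Then $\wt{\op D}_r(\sff E')\subset \Gr(\nu^*B^*,r)$ is the locus where the composite $E_1'^*\to \nu^*B^*\to Q_{\Gr}$ vanishes. This is the zero locus of a section of $E_1'\otimes Q_{\Gr}$, and its virtual class is the localized top Chern class. Pushing forward gives
$$c_{r(b-\sff e+\rkh{\sff E}^2)}(E_1'\otimes Q_{\Gr})\cap[\Gr(\nu^*B^*,r)].$$
Since $\Gr(B^*,r)$ is irreducible, the blow-up $\Gr(\nu^*B^*,r)\to\Gr(B^*,r)$ is birational, so its fundamental class pushes forward to $[\Gr(B^*,r)]$.

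The third step rewrites $E_1'$ in K-theory as
$$E_1'=\nu^*B-\sff E'=\nu^*B+\frak h^2(\nu^*\sff E)-\nu^*\sff E,$$
which gives the stated formula. Note that Chern classes of a K-theory class tensored with $Q_{\Gr}$ make sense and the degree matches: $\rank E_1'=b+\rkh{\sff E}^2-\sff e$. The compatibility $\iota\circ\nu=\nu\circ\iota_Y$ holds by construction of $\iota$.

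The main obstacle is the second step. I need the presentation of $\sff E'$ with $E_0'=\nu^*B$ to exist globally and compatibly with the affine-bundle trick, and I need the zero-locus virtual class to agree with the class defining $\llb{\sff E,r}$. I would handle the first point with the standard mapping-cone modification: form $\nu^*B\oplus E_0'\to E_1'\oplus\nu^*B$, using the lift $\nu^*B^*\to E_0'^*$, which exists locally and on the affine bundle globally, and use Lemma \ref{lem:indep} to change presentations. For the second point, I would cite that a perfect obstruction theory with a zero-section description yields the localized top Chern class \cite{behrend1997intrinsic}.
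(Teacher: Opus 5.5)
Your argument is correct in substance, but it takes a different route from the paper's. The paper first passes to the modified complex $\sff H=\Cone(q^*\sff E\to q^*B/Q^*_{\Gr})$ on $\Gr(B^*,r)$ and proves $\llb{\sff E,r}=\llb{\sff H,r}$. This is equation \eqref{eq:HandE}, obtained either via a resolution of singularities of $Y$ with Proposition \ref{prop:red1st} and Lemma \ref{lem:indep}, or via Theorem \ref{thm:A8}. The paper then notes that $\op D_r(\tau^{\le1}(\nu^*\sff H))$ is a deepest degeneracy locus. Finally it applies Fulton's determinantal Thom--Porteous formula \cite[Theorem 14.4]{fu}, rewriting $\Delta^{(r)}_{b-\sff e+\rkh{\sff E}^2}c\big(\nu^*B/Q^*_{\Gr}-\tau^{\le1}(\nu^*\sff E)\big)$ as the top Chern class in the statement.

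You instead stay with $\tau^{\le1}(\nu^*\sff E)$ and change its presentation so that its degree-$0$ term is $\nu^*B$. This exhibits $\wt{\op D}_r(\nu^*\sff E)\subset\Gr(\nu^*B^*,r)$ directly as the zero scheme of a section of $E_1'\otimes Q_{\Gr}$, and \cite[Proposition 14.1]{fu} then gives the formula. You need no deepest-locus identification and no determinantal identity; your only input is presentation-independence of the localized top Chern class of a 2-term complex. That is Lemma \ref{lem:indep}, or Proposition \ref{prop:invariance}, which avoids resolution of singularities. In fact your complex $\{\nu^*B\to E_1'\}$ is exactly the intermediate complex $\sff F$ in the paper's proof of Proposition \ref{prop:invar2}, so you are shortcutting through the paper's own appendix. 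Your version is shorter for this formula. The paper's detour pays off later, because $\sff H$ and the identity $\llb{\sff E,r}_k=\llb{\sff H,r}_k$ underlie Proposition \ref{prop:refined}, the duality theorem, and the Hilbert-scheme applications.

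\textbf{Three corrections.}

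First, $\sff E$-suitability gives only $\hd(\sh^2(\nu^*\sff E))\le1$, not local freeness. If $\sh^2(\sff E)$ is a nonzero torsion sheaf, then $\sh^2(\nu^*\sff E)=\nu^*\sh^2(\sff E)$ is still torsion, so it is never locally free. This happens in the Koszul example of Section \ref{sec:defInvar} and in the case $p_g<d\le p_g+g-1$ of Section \ref{sec:elliptic}. Likewise $\{\nu^*E_1/K\to\nu^*E_2\}$ is injective, but its cokernel need not be locally free. Your argument never needs more than $\hd\le1$: it uses only that $K=\ker(\nu^*\sigma_1)$ is locally free, so that $\tau^{\le1}(\nu^*\sff E)\simeq\{\nu^*E_0\to K\}$. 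It also uses that $\sh^2(\nu^*\sff E)$ has K-theory class $\nu^*E_2-\nu^*E_1/K$, which is how it must be read in the statement.

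Second, make the change of presentation explicit. The map $K^*\oplus\nu^*B^*\to\nu^*E_0^*$ is surjective, and this is where the lift $\psi$ enters. Hence $(\nu^*\sigma_0,\psi^*)\colon\nu^*E_0\to K\oplus\nu^*B$ is an injection of bundles. Setting $E_1':=(K\oplus\nu^*B)/\nu^*E_0$ gives $\{\nu^*B\to E_1'\}\simeq\tau^{\le1}(\nu^*\sff E)$, with $E_1'=\nu^*B-\tau^{\le1}(\nu^*\sff E)$ in K-theory. This is compatible with the surjection $\nu^*B^*\twoheadrightarrow\sh^0(\nu^*\sff E^\vee)$ that defines $\iota$.

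Third, drop the appeal to perfect obstruction theories, which are unavailable anyway when $Y$ is singular. The class $\llb{-}$ is by definition a refined Gysin pullback $0^!$, so comparing the two zero-locus classes is exactly the presentation-independence above.
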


The following two theorems are proven for the case $r=1$. Let $$p:\wt{\op D}_1(\sff E)\to X,\qquad   \hat p\colon \wt{\op D}_1(\sff E^\vee)\to X$$ be the natural projections. In Theorem \ref{thm:gendual} we prove the following result. 
\begin{theorem}[Wall-Crossing] Suppose $\sff E$ is a 3-term complex of vector bundles for which $\op{D}_1(\sff E)\cap \op{D}_1(\sff E^\vee)=\emptyset$. Then, 
    $$p_*\llb{\sff{E},1}-(-1)^{\sff e}\hat p_*\llb{\sff{E}^\vee,1}=c_{1-\sff e}(-\sff{E})$$
in $A_{\dim X+\sff e-1}(X)$.
\end{theorem}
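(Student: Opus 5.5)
The plan is to reduce to the Thom--Porteous formula (Theorem \ref{thm:thom-por}) for $r=1$, applied to both $\sff E$ and $\sff E^\vee$, and then compare the two resulting Chern-class expressions on a common projective bundle.

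\textbf{Step 1: a common ambient space.} Choose an $(\sff E,\sff E^\vee)$-suitable blow up $\nu\colon Y\to X$. Represent $\sff E$ by $E_0\to E_1\to E_2$, and take $B=E_0$ for $\sff E$ and $B=E_2^*$ for $\sff E^\vee$. Then $\wt{\op D}_1(\sff E)\subset \prj(E_0)$ and $\wt{\op D}_1(\sff E^\vee)\subset\prj(E_2^*)$, with $Q_{\Gr}=\cO(1)$ in each case, and Theorem \ref{thm:thom-por} gives
\[
p_*\llb{\sff E,1}=\nu_*\pi_*\, c_{e_0+\rkh{\sff E}^2-\sff e}\big((\nu^*E_0+\frak h^2(\nu^*\sff E)-\nu^*\sff E)(1)\big).
\]
There is an analogous formula for $\hat p_*\llb{\sff E^\vee,1}$ with $\sff E^\vee$ in place of $\sff E$.

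\textbf{Step 2: use the hypothesis.} Since $\op D_1(\sff E)\cap\op D_1(\sff E^\vee)=\emptyset$, on the open sets where $\frak h^0$ (respectively $\frak h^2$) of $\sff E$ is generically zero, the rank $\rkh{\sff E}^2$ of $\frak h^2(\sff E)$ vanishes. Arguing generically, at most one of $\frak h^0,\frak h^2$ has positive rank, so both generic ranks are $0$ unless $\op D_1$ is all of $X$. I will treat the main case in which the correction terms $\frak h^2(\nu^*\sff E)$ and $\frak h^0(\nu^*\sff E)$ are torsion, so that $\rkh{\sff E}^2=0$.

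\textbf{Step 3: Segre-class computation.} With $\rkh{\sff E}^2=0$, the expression reduces to the standard identity $\pi_*c_{e_0-\sff e}((E_0-\sff E)(1))=s_{1-\sff e}$-type terms. Explicitly, $\pi_*\big(c(\,\cdot\,(1))\big)$ extracts $c_{1-\sff e}(-\sff E)$ plus corrections supported on the locus where the blow-up is needed.

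The key step is to write
\[
\nu^*\sff E=[\nu^*E_0\to K]\oplus[K'\to\nu^*E_2]
\]
on $Y$ (after passing to an affine bundle if necessary). Away from $\op D_1(\sff E^\vee)$ the second summand is an honest surjection, so it contributes only a trivial Chern-class factor. Symmetrically, away from $\op D_1(\sff E)$ the first summand is injective. The class $c_{1-\sff e}(-\sff E)$ then splits as a sum of two localized contributions, one supported on each disjoint locus. This is the excess/localized top-Chern-class argument of Fulton, Example 14.4.

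\textbf{Step 4: matching the second contribution.} The contribution near $\op D_1(\sff E^\vee)$ equals $\hat p_*\llb{\sff E^\vee,1}$ up to the sign $(-1)^{\sff e}$, which comes from the identity $c_k(-V^\vee)=(-1)^kc_k(-V)$ with $k=1-\sff e$.

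\textbf{Main obstacle.} I expect the hardest part to be Step 3: justifying that the global class $c_{1-\sff e}(-\sff E)$ decomposes as a sum of localized classes supported on the two disjoint degeneracy loci. This requires the localized Chern class of a 2-term complex supported on its degeneracy locus. It also requires checking that this localization agrees with the blow-up definition of $\llb{\sff E,1}$, independently of the suitable blow up, which is guaranteed by Lemma \ref{lem:indep}.
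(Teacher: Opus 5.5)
Your Step 1 is fine. Combined with Lemma \ref{lem:pushpicn0}, it already gives the following in the main case $\rkh{\sff E}^0=\rkh{\sff E}^2=0$, with $h:=\rkh{\sff E}^1=-\sff e$, $\hat{\sff E}:=\sff E^\vee[-2]$, and $C,K,T$ as in \eqref{def:T}:
\[
p_*\llb{\sff E,1}=\nu_*c_{h+1}(K-\nu^*E_0),\qquad \hat p_*\llb{\hat{\sff E},1}=\nu_*c_{h+1}(C-\nu^*E_2^*)=(-1)^{h+1}\nu_*c_{h+1}(C^*-\nu^*E_2).
\]
So the theorem is equivalent to the $\nu_*$-image of the identity $c_{h+1}(-\nu^*\sff E)=c_{h+1}(K-\nu^*E_0)+c_{h+1}(C^*-\nu^*E_2)$ on $Y$. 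Write $K-\nu^*E_0=T+x$ and $C^*-\nu^*E_2=T+y$, so that $-\nu^*\sff E=T+x+y$, where $x=-\fr h^0(\nu^*\sff E^\vee)^\vee$ and $y=-\fr h^2(\nu^*\sff E)$ in $K$-theory. Since $c_{h+1}(T)=0$, the two sides differ exactly by the mixed terms $c_a(T)c_i(x)c_j(y)$ with $i,j\ge 1$ and $a+i+j=h+1$. Killing these is the whole content of the theorem, and your Steps 3--4 do not do it.

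Statements like ``away from $\op D_1(\sff E^\vee)$ the second summand contributes a trivial factor'' are identities after restriction to $X\setminus\op D_1(\sff E^\vee)$ or $X\setminus\op D_1(\sff E)$. Restriction to an open cover is not injective on Chow groups: on $\prj^1$ a point class restricts to $0$ on both $\prj^1\setminus\{0\}$ and $\prj^1\setminus\{\infty\}$. So these local statements do not assemble into an identity in $A_*(X)$. A ``localized contribution'' only has meaning once a refined class is constructed, and identifying the one near $\op D_1(\sff E^\vee)$ with $(-1)^{1-\sff e}\hat p_*\llb{\hat{\sff E},1}$ is the mixed-term vanishing again. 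Lemma \ref{lem:indep} does not help: it gives independence of the blow up and resolution, not compatibility with any localization of $c_{1-\sff e}(-\sff E)$.

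The missing idea is how the paper turns disjointness into an honest identity on the support.
\begin{itemize}
\item Proposition \ref{prop:aboutT} gives $\llb{\nu^*\sff E}=c_h(\tilde q^*T(1))\cap\llbb{\nu^*\hat{\sff E}}$ as classes on $\wt{\op D}_1(\nu^*\sff E)$.
\item On that scheme, $0\to K\to\nu^*E_1\to\nu^*E_2\to\fr h^2(\nu^*\sff E)\to 0$ restricts to an exact sequence of bundles. Hence the classes $c_{i>0}$ of $\tilde q^*\fr h^2(\nu^*\sff E)(1)$ vanish there, and $T$ may be replaced by $T-\fr h^2(\nu^*\sff E)=C^*-\nu^*E_2$, a class defined on all of $\prj(\nu^*B)$.
\item The Thom--Porteous formula \eqref{eq:2ndmodB} for the second class and Lemma \ref{lem:pushpicn0} then give, globally on $Y$, $\tilde q_*\iota'_*\llb{\nu^*\sff E}=c_{h+1}(-\nu^*\sff E)-c_{h+1}(C^*-\nu^*E_2)$. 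The subtracted term equals $(-1)^{h+1}\tilde q_*\jmath'_*\llb{\nu^*\hat{\sff E}}$, so no localization or gluing is needed.
\end{itemize}
Alternatively, you could complete your own route by showing that $c_j(y)\cap[Y]$ lifts to $A_*(\op{Supp}\fr h^2(\nu^*\sff E))$ for $j\ge1$, for instance rationally via Fulton's localized Chern characters. Capping with $c_i(x)$ then gives $0$, because $x$ restricts to $0$ in $K$-theory near that support.

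Two minor points. In Step 2 you mean $\fr h^0(\nu^*\sff E^\vee)$, since $\fr h^0(\nu^*\sff E)\subset\nu^*E_0$ cannot be nonzero torsion. The excluded case $\op D_1(\sff E)=X$ (or $\op D_1(\sff E^\vee)=X$) is immediate: the other locus is then empty, the complex is $2$-term, and Fulton's Thom--Porteous formula applies directly.
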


Suppose $B$ is a vector bundle chosen as above, so that the projective bundle $$q\colon \prj(B)\to X$$ contains both $\wt{\op{D}}_1(\sff{E})$ and $\wt{\op{D}}_1(\sff{E}^\vee)$ as closed subschemes. If for some positive integer $\ell$ there are maps in the derived category 
$$v:\cO_{\prj(B)}^{\oplus \ell}[-2]\to q^*\sff E\otimes\cO_{\prj(B)}(1),\qquad \hat v:\cO_{\prj(B)}^{\oplus \ell}\to q^* \sff E^\vee \otimes\cO_{\prj(B)}(1),$$
such that $\Cone(v|_{\wt{\op{D}}_1(\sff{E})})$ and $\Cone(\hat v[-2]|_{\wt{\op{D}}_1(\sff{E^\vee})})$ have vanishing second cohomologies. In Theorem \ref{thm:dualdeep}, we show the following.
\begin{theorem}[Duality] Suppose there are maps $v,\hat v$ in $D^b(\bb P(B))$ as above. 
    Then, 
    $$p_*\llb{\sff E,1}_{\ell-\rkh{\sff E}^2}=(-1)^{\sff e}\hat p_*\llb{\sff E^\vee,1}_{\ell-\rkh{\sff E}^0}$$ in $A_{\dim X+\sff e-\ell-1}(X)$.
\end{theorem}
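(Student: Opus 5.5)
Both sides live on the projective bundle $q\colon \prj(B)\to X$, which contains $\wt{\op D}_1(\sff E)$ and $\wt{\op D}_1(\sff E^\vee)$ as closed subschemes. The plan is to compute each side there by an excess Thom--Porteous type formula, and then compare the two computations using Serre duality for $\cO_{\prj(B)}(1)$-twists.

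\textbf{Step 1: rewrite the refined classes.} Write $F:=\Cone(v)$ on $\prj(B)$, twisted by $\cO(1)$ as in the statement. Then $\sff h^2_{F}$ vanishes on $\wt{\op D}_1(\sff E)$, so part (2) of the Comparisons theorem (or its refined version in Section \ref{sec:moddual}) applies to $v$. I expect the cone to be described by a locally free sheaf $\cO^{\oplus\ell}$ in degree $2$ shifted to degree $1$. Inverting the relation in (2), the refined class $\llb{\sff E,1}_{\ell-\rkh{\sff E}^2}$ should be the genuine virtual class of $\wt{\op D}_1(\sff E)$ seen as a degeneracy locus of the modified complex. That modified complex has no $\frak h^2$ along the locus, so it carries an honest perfect obstruction theory (Remark \ref{rem:hd=1,2}). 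The index shift $\ell-\rkh{\sff E}^2$ is exactly the rank of the correction $\cO^{\oplus \ell}\otimes Q$ minus the generic $\frak h^2$ that was already accounted for. The same argument with $\hat v$ handles $\llb{\sff E^\vee,1}_{\ell-\rkh{\sff E}^0}$, with $\rkh{\sff E^\vee}^2=\rkh{\sff E}^0$.

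\textbf{Step 2: express both classes by Chern classes on $\prj(B)$.} With both loci now cut out by 2-term complexes, I apply the Thom--Porteous formula (Theorem \ref{thm:T-Pformulas}). This gives
\[
p_*\llb{\sff E,1}_{\ell-\rkh{\sff E}^2}=q_*\big(c_{\mathrm{top}}((q^*B-q^*\sff E-\cO^{\oplus\ell}[-2])\otimes\cO(1))\big),
\]
and the analogous formula for the dual with $q^*\sff E^\vee$ and $\hat v$. A rank count shows that both sides lie in $A_{\dim X+\sff e-\ell-1}(X)$.

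\textbf{Step 3: compare the two pushforwards.} The dual expression involves $c(\sff E^\vee\otimes\cO(1))$, while the first involves $c(-\sff E\otimes\cO(1))$. I would use the standard identity for pushforward of Segre-type classes from $\prj(B)$:
\[
q_*\big(c_m(V\otimes\cO(1))\big)=(-1)^{m-\rank(B)+1} q_*\big(c_m(V^\vee\otimes \cO(1))\big)
\]
for virtual $V$, up to replacing $\cO(1)$ by its dual and $B$ by $B^*$. This identity comes from the fact that both sides equal the same coefficient of $s(B)c(V)$ up to sign. Its sign bookkeeping produces the factor $(-1)^{\sff e}$, and the $\cO^{\oplus\ell}$ contributions match because they are self-dual.

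\textbf{Main obstacle.} The hardest step is Step 1. One must verify that the refined classes $\llb{\cdot}_k$ correspond to the corrected complexes with exactly the stated indices. This requires a careful use of the refined form of the Comparisons theorem, together with a check that the blow-up constructions are compatible with passing to $\prj(B)$.
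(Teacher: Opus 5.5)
There is a genuine gap, and it is in Step 2.

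Step 1 is fine. It is Proposition \ref{prop:refined}, via Proposition \ref{prop:deeprefined}. It needs no perfect obstruction theory, and none would be available anyway, since $X$ may be singular.

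The problem with Step 2 is that the cone $\sff V=\Cone(v)$ is only $1$-special. It is a $2$-term complex on some open $U\supset\wt{\op D}_1(\sff E)$, but $\sh^2(\sff V)$ may be nonzero elsewhere on $\prj(B)$. Because $A_*(\prj(B))\to A_*(U)$ is not injective, a Chern class formula on $U$ does not determine $\iota_*\llb{\sff V,1}$.

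A global Thom--Porteous formula (Theorem \ref{thm:T-Pformulas}) exists only on a $\sff V$-suitable blow up $\nu$. There it involves $\tau^{\le1}(\nu^*\sff V)=\nu^*\sff V-\sh^2(\nu^*\sff V)[-2]$, and the $\sh^2$-correction contributes to the pushforward. An expression $q_*c_{\mathrm{top}}\big((q^*B-q^*\sff E+\cdots)(1)\big)$ depends only on the $K$-class of $\sff E$, so it cannot separate $\wt{\op D}_1(\sff E)$ from the locus where $\sh^2$ jumps. Two checks show this concretely:
\begin{itemize}
\item For $\ell=0$ your expression is $c_{1-\sff e}(-\sff E)$ by Lemma \ref{lem:pushpicn0}. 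By Theorem \ref{thm:gendual}, this also contains, up to sign, the contribution $\hat p_*\llb{\hat{\sff E},1}$ of the dual locus.
\item For $\ell>0$, take $\sff E=R\pi_*\cP_{K_S}$ with $g,p_g>0$ and $\ell=p_g$, so $\rkh{\sff E}^2=0$. Your sign-corrected class is $q_*c_{b-1+g}(W(1))$ with $W=B-\sff E$ of rank $w=b-1+g-p_g$. Expanding, $c_{b-1+g}(W(1))=\sum_{i\le g}\binom{w-i}{b-1+g-i}c_i(W)H^{b-1+g-i}$, and every binomial vanishes. The true left side is $\deg[S_{K_S}]^{\vir}=(-1)^{\chi(\cO_S)}$ by Corollary \ref{cor:chang}.
\end{itemize}
Your sign is also off. $\Cone(v)$ has $K$-class $q^*\sff E-\ell\,\cO(-1)$, so the bundle should contain $+\cO^{\oplus\ell}$. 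As you wrote it, the class lies in $A_{d+\sff e+\ell-1}$, which contradicts your own rank count.

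The missing idea is a way to track the $\sh^2$-corrections of $\sff E$ and of $\hat{\sff E}=\sff E^\vee[-2]$ together. Comparing global Chern classes of $\sff E$ and $\sff E^\vee$, as in your Step 3, cannot do this. The paper proceeds as follows on a common $(\sff E,\hat{\sff E})$-suitable blow up $\nu$ and on $\prj(\nu^*B)$, with $\eta=\ell-\rkh{\sff E}^2$:
\begin{enumerate}
\item It writes $\iota_*\llb{\sff E,1}_\eta$ as $\nu_*$ of $s_\eta\big(\sh^2(\nu^*\sff E)(1)\big)\cap\iota'_*\llb{\nu^*\sff E,1}$.
\item Using Proposition \ref{prop:aboutT}, it replaces $\llb{\nu^*\sff E,1}$ by $c_{\rkh{\sff E}^1}(T(1))\cap\llbb{\nu^*\sff E^\vee,1}$.
\item The $1$-special hypothesis makes $\cO^{\oplus\ell}-\sh^2(\nu^*\sff E)(1)$ a bundle of rank $\eta$ near the locus. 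So the Segre class and $c(T(1))$ merge into the single top Chern class $c_{\rkh{\sff E}^1+\eta}\big((T-\sh^2(\nu^*\sff E))(1)\big)$.
\item Applying \eqref{eq:2ndmodB} then expresses everything through $\tau^{\le1}(\nu^*\hat{\sff E})^\vee$ and $\sh^2(\nu^*\hat{\sff E})^\vee$.
\item The other side is expressed through $\tau^{\le1}(\nu^*\hat{\sff E})$ and $\sh^2(\nu^*\hat{\sff E})$ by \eqref{eq:modB}.
\item Pushing both down with Lemma \ref{lem:pushpic}, which uses $\ell>0$, gives two sums with coefficients $\binom{\ell-1}{j}$. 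The reindexing $j\mapsto\ell-1-j$ together with $c_i(V^\vee)=(-1)^ic_i(V)$ matches them up to $(-1)^{\sff e}$.
\end{enumerate}
The bundle $T$ is what links the two $\sh^2$'s, and your outline has nothing playing that role.
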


\bigskip
\subsection{Nested Hilbert schemes} \label{sec:introNested} Let $S$ be a nonsingular complex projective surface with a $\beta\in H^2(S,\Z)$, whose image in $H^2(S,\bb C)$ is of $(1,1)$ type. Fix nonnegative  integers $n_1,n_2$. Let $$\Pic_\b(S)\cong \op{Jac}(S)$$ denote the Picard variety of line bundles with the first Chern class $\b$, and 
$S^{[n_i]}$ and $S_\b$ denote the Hilbert scheme of $n_i$ points and that of class $\b$ divisors on $S$, respectively.  
We apply our construction to the perfect complex 
$$\sff E:= R\sHom_\pi(\cI_1,\cI_2(\cP_\beta))\quad \text{over}\quad X:=S^{[n_1]}\times S^{[n_2]}\times \Pic_\beta(S)$$ sitting in degrees $[0,2]$. 
Here,  $\cP_\beta$ is a Poincar\'e line bundle over $S\times\Pic_\beta(S)$, $\cI_i$ is the universal ideal sheaf on $S\times S^{[n_i]}$, $\pi\colon S\times X\to X$, and we have omitted the obvious pullback symbols in the definition of $\sff E$. It is well-known that $X$ is a nonsingular projective variety of dimension $2n_1+2n_2+h^1(\cO_S)$. Consider the nested Hilbert scheme
$$S_{\beta}^{[n_1,n_2]}:=\{I_1(-D)\subset I_2\subset \cO_S : [D]=\beta, \text{ length}(\cO_S/I_i)=n_i\}.$$
In \cite{G2} it was shown that there is a canonical isomorphism  over $X$
$$S_{\beta}^{[n_1,n_2]}\cong \wt{\op D}_1(\sff E).$$
This allows us to define a virtual cycle on $S_\beta^{[n_1,n_2]}$ as

$$\llhb{S_\beta^{[n_1,n_2]}}:=\llb{\sff{E,1}}\in A_{n_1+n_2+\vd_\beta+p_g-\rkh{\sff E}^2}(S_\beta^{[n_1,n_2]}),$$ where $\vd_\b:=\b(\b-K_S)/2$ is the expected dimension of the moduli space in Seiberg-Witten theory and $p_g:=h^2(\cc O_S)$ is the geometric genus of $S$.
When $H^2(L)=0$ for every effective  $L\in \Pic_\beta(S)$, we will show that this class coincides with the reduced cycle $[S_\beta^{[n_1,n_2]}]^{\op{red}}$ constructed in \cite{gholampour2020nested, G2}. More generally, we define a finite sequence of virtual cycles 
$$\llhb{S_\beta^{[n_1,n_2]}}_k:=\llb{\sff{E,1}}_k\in A_{n_1+n_2+\vd_\beta+p_g-\rkh{\sff E}^2-k}(S_\beta^{[n_1,n_2]})$$
for $0\leq k\leq p_g-\rkh{\sff E}^2$. We will refer to the $k$-th one as \emph{the $k$-th refined cycle} on the nested Hilbert scheme, where the 0-th cycle is the one defined a few lines above. Note that even if $\b'\in H^2(S,\bb Z)$ and $\b$ differ by a torsion, and $\cc P_{\b'}$, $X'$, $\sff E'$ are defined analogously, we may have $\sff h^2_{\sff E}\neq \sff h^2_{\sff E'}$, so that the cycles $\llhb{S_\beta^{[n_1,n_2]}}_k$ and $\llhb{S_{\beta'}^{[n_1,n_2]}}_k$ have different virtual dimensions (cf. Remark \ref{rmk:diffvirdim}).  

The following is a direct consequence of Proposition \ref{proph2l=0} and Theorem \ref{redtovir}.

\begin{theorem} 
    If $[S_\beta^{[n_1,n_2]}]^{\vir}$ denotes the virtual cycle of the nested Hilbert scheme constructed in \cite{gholampour2020nested, G2}, then
$$\llhb{S_\beta^{[n_1,n_2]}}_{p_g-\rkh{\sff{E}}^2}=[S_\beta^{[n_1,n_2]}]^{\vir}.$$
 Moreover, if $H^2(L)=0$ for every effective  $L\in \Pic_\beta(S)$,  then  
 $$\sum_{k=0}^{p_g-\rkh{\sff E}^2}\llhb{S_\beta^{[n_1,n_2]}}_k=[S_\beta^{[n_1,n_2]}]^{\op{red}},$$ i.e. all the higher refined classes vanish in this case.
\end{theorem}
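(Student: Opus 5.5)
The first identity comes from comparing the two natural 2-term obstruction theories on $S_\beta^{[n_1,n_2]}\cong\wt{\op D}_1(\sff E)$. I will pass to an $\sff E$-suitable blow up $\nu\colon Y\to X$. Over it (after pulling back to an affine bundle if needed) $\nu^*\sff E$ splits up to quasi-isomorphism as $\sff E'\oplus\sff E''$, with $\sff E'$ in degrees $[0,1]$ and $\sff E''$ in degrees $[1,2]$. In the construction, $\llb{\sff E,1}_k$ is the virtual class of the degeneracy locus of $\sff E'$, capped with the $k$-th Chern class of the excess bundle $\frak h^2(\nu^*\sff E)\otimes Q$. Here $\frak h^2(\nu^*\sff E)$ is locally free of rank $\rkh{\sff E}^2$ after the blow up.

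On the other side, the virtual cycle of \cite{gholampour2020nested, G2} is built from the full complex $\sff E$ truncated to degrees $[0,1]$, with obstruction bundle $\frak h^1$ modified by $H^2$. It uses the semiregularity (trace) map $\frak h^2(\sff E)\to H^2(\cO_S)\otimes\cO$, which has rank $p_g$, to reduce the obstruction theory. I would compare the two perfect obstruction theories by the exact triangle relating $\sff E'$ to the truncation $\tau^{\le 1}$. Their difference is the bundle $\ker\bigl(H^2(\cO_S)\otimes \cO\to\frak h^2\bigr)^\vee\otimes Q$, of rank $p_g-\rkh{\sff E}^2$. The standard excess/functoriality formula for virtual classes under a short exact sequence of obstruction bundles then gives $[S_\beta^{[n_1,n_2]}]^{\vir}=c_{p_g-\rkh{\sff E}^2}(\cdots)\cap\llb{\sff E,1}$. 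This is exactly $\llb{\sff E,1}_{p_g-\rkh{\sff E}^2}$ by Definition \ref{def:refinedE}. This is the content of Theorem \ref{redtovir}.

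Next I would compare the reduced cycle. When $H^2(L)=0$ for all effective $L$, the second cohomology of $\sff E$ restricted to $\op D_1(\sff E)$ vanishes, so Condition \eqref{UDr} holds. I expect this to be Proposition \ref{proph2l=0}, which gives $\llb{\sff E,1}=[\wt{\op D}_1(\sff E)]^{\vir}$ for the natural perfect obstruction theory. That theory is precisely the reduced one of \cite{G2}, so $\llb{\sff E,1}_0=[S_\beta^{[n_1,n_2]}]^{\op{red}}$.

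It remains to show that the higher refined classes vanish. On $\wt{\op D}_1(\sff E)$, the excess bundle whose Chern classes define $\llb{\sff E,1}_k$ for $k\ge1$ is built from the restriction of $\frak h^2(\nu^*\sff E)$ there. Since this restriction vanishes, that bundle is zero near the support, so $c_k=0$ for $k>0$. Hence the sum reduces to the $k=0$ term. As a consistency check, the top-index class then equals the reduced class capped with $c_{p_g}$ of a trivial bundle $H^2(\cO_S)\otimes Q$. This is zero when $p_g>0$, matching the known vanishing of $[\,]^{\vir}$.

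The main obstacle is the first step. It requires identifying the splitting summand $\sff E'$ on the blow up with the obstruction theory of \cite{G2}, compatibly with the cosection/trace map. The delicate part is checking that the blow-up pushforward $\nu_*$ commutes with these Chern class operations. I would try to do this by working on the affine bundle where the splitting is genuine, and using independence of the blow up (Lemma \ref{lem:indep}).
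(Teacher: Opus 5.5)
Your outline is the paper's: the statement is read off from Theorem~\ref{redtovir} and Proposition~\ref{proph2l=0}. The higher classes vanish because $\sh^2(\nu^*\sff E)$ vanishes on a neighbourhood of $\wt{\op D}_1(\nu^*\sff E)$. That last argument is fine once ``$c_k$ of the excess bundle'' is replaced by $s_k$ of the perfect complex $\sh^2(\nu^*\sff E)\otimes Q$, which is what Definition~\ref{def:refinedE} actually uses. Note that $\sh^2(\nu^*\sff E)$ only has homological dimension $\le 1$ and is not locally free: for $\beta=K_S$ with $g,p_g>0$ one has $\rkh{\sff E}^2=0$, yet $\llhb{S_{K_S}}_k\neq 0$ for $k>0$ by Proposition~\ref{propSKSk}. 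Citing Theorem~\ref{redtovir} closes the proof, but your own sketch of that identity does not work as written.

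The map relating the DT theory to $\sff E$ is not a trace map $\sh^2(\sff E)\to H^2(\cO_S)\otimes\cO$ on $X$, and the DT class is not built from $\tau^{\le 1}\sff E$. The map is $H^2(\cO_S)\otimes\cO(-1)[-2]\to\sff E$, induced by the universal section $\cO\to\cO(\cD_\beta)\cong\cP_\beta(1)$.
\begin{itemize}
\item It lives on (an affine bundle over) $\prj(B)$ and is surjective on $\sh^2$ near $\wt X$ by \eqref{fig:h2surj}.
\item $[S_\beta^{[n_1,n_2]}]^{\vir}=\llb{\sff V,1}$ for the $1$-special complex $\sff V$ built from its cone (Proposition~\ref{prop:hilbvir}, via Proposition~\ref{prop:red1st} and \cite[Theorem 4.34]{G1}).
\end{itemize}

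The $\cO(-1)$ twist is exactly what your step ``this is exactly $\llb{\sff E,1}_{p_g-\rkh{\sff E}^2}$ by Definition~\ref{def:refinedE}'' needs. Let $A=\ker(\cO(-1)^{\oplus p_g}\to\sh^2(\nu^*\sff E))$; it is locally free of rank $p_g-\rkh{\sff E}^2$ near the locus, and $Q=\cO(1)$ there. Then $c(A\otimes Q)=c(\cO^{\oplus p_g})\,s(\sh^2(\nu^*\sff E)\otimes Q)=s(\sh^2(\nu^*\sff E)\otimes Q)$. With your untwisted kernel you get instead $(1+c_1(Q))^{p_g}\,s(\sh^2(\nu^*\sff E)\otimes Q)$, and your extra dual changes it further. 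For example, under the hypotheses of the second part with $p_g>0$, your formula gives $c_1(Q)^{p_g}\cap[S_\beta^{[n_1,n_2]}]^{\op{red}}$, which is nonzero in general. The correct value is $\llb{\sff E,1}_{p_g}=0$. Your consistency check treats $H^2(\cO_S)\otimes Q=Q^{\oplus p_g}$ as trivial, but it is $H^2(\cO_S)\otimes\cO(-1)\otimes Q$ that is trivial.

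The paper carries this out on $\prj(B)$. With $\sff H=\Cone(\sff E\to B/\cO(-1))$, diagram \eqref{vircred} gives $\Cone(\sff H\to\sff V)\simeq\cO(-1)^{\oplus p_g}[-1]$. Proposition~\ref{prop:deeprefined}, in the form of Proposition~\ref{prop:refined}, then gives $\llb{\sff V,1}=\sum_i c_{p_g-\rkh{\sff E}^2-i}(\cO^{\oplus p_g})\cap\llb{\sff H,1}_i=\llb{\sff E,1}_{p_g-\rkh{\sff E}^2}$, using $\llb{\sff H,1}_k=\llb{\sff E,1}_k$.
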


As a result, we can apply Theorem \ref{thm:thom-por} to get Thom-Porteous formulas for the refined classes of the nested Hilbert scheme generalizing \cite[Theorem 3]{G2}. Moreover, in Theorems \ref{thm:comphilbCO} and \ref {thm:compn1n2}, we will prove comparison formulas for these refined classes reproving and generalizing \cite[Theorem 5]{G2}.

The realization of the virtual and reduced cycles of the nested Hilbert schemes in terms of the refined cycles, also allows us in Theorems \ref{thm:hilbwall} and \ref{thm:wallhilb} to obtain uniform proofs for the wall-crossing and duality theorems for the nested Hilbert schemes arising from Seiberg-Witten theory (cf. \cite[Theorem 8]{G2} and \cite{DKO}) that do not use any of the comparison formulas, case by case analysis, the classification of surfaces, or the blow up formulas. 

\begin{theorem}[Wall-Crossing and Duality]
    If $\sff E|_{\op{D}_1(\sff E)}$ has vanishing second cohomology, then for $\hat{\beta}:=K_S-\b$ and the natural projections $p:S_\beta^{[n_1,n_2]}\to X$ and $\hat p:S_{\hat{\beta}}^{[n_2,n_1]}\to X$,
$$p_*[S_\beta^{[n_1,n_2]}]^{\op{red}}-(-1)^{\sff e}\hat p_*[S_{\hat{\beta}}^{[n_2,n_1]}]^{\op{red}}=c_{\sff 1-\sff e}(-\sff E),$$ where $\sff e=\rk(\sff E)=\chi(\cO_S)+\vd_\beta-n_1-n_2.$
    In particular, when $p_g=0$ the reduced and virtual cycles coincide and we get
    $$p_*[S_\beta^{[n_1,n_2]}]^{\op{vir}}-(-1)^{\sff e}\hat p_*[S_{\hat \beta}^{[n_2,n_1]}]^{\op{vir}}=c_{1-\sff e}(- \sff E).$$
    When  $p_g>0$ we have 
    $$p_*[S_\beta^{[n_1,n_2]}]^{\vir}=(-1)^{\sff e}\hat p_*[S_{\hat{\beta}}^{[n_2,n_1]}]^{\vir}.$$
\end{theorem}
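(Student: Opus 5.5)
Throughout, $\sff E=R\sHom_\pi(\cI_1,\cI_2(\cP_\beta))$ on $X=S^{[n_1]}\times S^{[n_2]}\times\Pic_\beta(S)$ and $\hat\beta=K_S-\beta$. The plan is to deduce the statement from the general Wall-Crossing and Duality theorems for 3-term complexes (Theorems \ref{thm:gendual} and \ref{thm:dualdeep}), combined with the identification of the refined cycles with the reduced and virtual cycles (Proposition \ref{proph2l=0} and Theorem \ref{redtovir}).

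\textbf{Dual complex.} By Grothendieck--Serre duality on $S\times X\to X$ we have
$$\sff E^\vee[2]\cong R\sHom_\pi(\cI_2,\cI_1(\cP_{\hat\beta})),$$
where $\cP_{\hat\beta}=\omega_S\otimes\cP_\beta^{-1}$ is, after identifying $\Pic_\beta\cong\Pic_{\hat\beta}$ via $L\mapsto K_S-L$, a Poincar\'e bundle for $\hat\beta$. Hence the canonical isomorphism $S_\beta^{[n_1,n_2]}\cong\wt{\op D}_1(\sff E)$ of \cite{G2}, applied with the roles of $(n_1,n_2,\beta)$ and $(n_2,n_1,\hat\beta)$ exchanged, gives $S_{\hat\beta}^{[n_2,n_1]}\cong\wt{\op D}_1(\sff E^\vee)$. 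Under these identifications $\llb{\sff E,1}$ and $\llb{\sff E^\vee,1}$ are the $0$-th refined cycles of the two nested Hilbert schemes.

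\textbf{Wall-crossing formula.} Suppose $\sff E|_{\op D_1(\sff E)}$ has vanishing $\frak h^2$. Then I would check that $\op D_1(\sff E)\cap\op D_1(\sff E^\vee)=\emptyset$. A point of the intersection has $\op{Hom}(I_1,I_2(L))\neq0$ and $\op{Ext}^2(I_1,I_2(L))\cong\op{Hom}(I_2,I_1(K_S-L))^*\neq0$. By the hypothesis, $\frak h^2=0$ at the points of $\op D_1(\sff E)$, and base change for the top cohomology of $\sff E$ then gives $\op{Ext}^2(I_1,I_2(L))=0$ there, which is a contradiction. Theorem \ref{thm:gendual} now gives
$$p_*\llb{\sff E,1}-(-1)^{\sff e}\hat p_*\llb{\sff E^\vee,1}=c_{1-\sff e}(-\sff E).$$

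\textbf{Upgrading to reduced cycles.} The next step is to identify the higher refined classes on both sides. The hypothesis forces $H^2(L)=0$ for every effective $L\in\Pic_\beta$: take $n_1=n_2=0$ at the point $I_1=I_2=\cO_S$, or note that the vanishing is an open condition. Then Theorem \ref{redtovir} says all higher refined classes on the $\beta$ side vanish, so $\llb{\sff E,1}=[S_\beta^{[n_1,n_2]}]^{\op{red}}$. On the dual side I would use the same argument. For effective $L$ we have $\hat L=K_S-L$, and $H^2(\hat L)=H^0(L)^*\neq0$, so we cannot expect $H^2(\hat L)=0$. However, on $\op D_1(\sff E^\vee)$ the $\frak h^2$ of $\sff E^\vee[2]$ is dual to $\op{Hom}(I_1,I_2(L))$, which vanishes on the complement of $\op D_1(\sff E)$. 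By the disjointness this gives the vanishing condition there, so the general form of Proposition \ref{proph2l=0} still identifies $\llb{\sff E^\vee,1}$ with the reduced cycle.

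\textbf{The cases $p_g=0$ and $p_g>0$.} When $p_g=0$ the reduced and virtual cycles agree, since the index range $0\le k\le p_g-\rkh{\sff E}^2$ collapses to $k=0$; this gives the second formula. When $p_g>0$ I would instead apply Theorem \ref{thm:dualdeep} with $\ell=p_g$. The maps $v$ and $\hat v$ come from a basis of $H^0(K_S)$ (respectively $H^2(\cO_S)$), twisted by the tautological section over $\prj(B)$, via multiplication $\cI_1\to\cI_1\otimes\omega_S$ composed with Serre duality. These kill $\frak h^2$ on the degeneracy loci, as in the construction of the reduced obstruction theory. The refined indices become $\ell-\rkh{\sff E}^2=p_g-\rkh{\sff E}^2$ and $\ell-\rkh{\sff E}^0$. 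By Theorem \ref{redtovir} these are the virtual cycles, provided $\rkh{\sff E}^0=\sff h^2_{\sff E^\vee[2]}$, which holds by the duality above. This yields $p_*[S_\beta^{[n_1,n_2]}]^{\vir}=(-1)^{\sff e}\hat p_*[S_{\hat\beta}^{[n_2,n_1]}]^{\vir}$.

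\textbf{Main obstacle.} I expect the hardest step to be constructing $v$ and $\hat v$ with the required vanishing of $\frak h^2$ of the cones over the whole degeneracy loci, not only generically, and matching the resulting index with the virtual-cycle index.
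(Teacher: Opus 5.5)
Your overall route is the paper's. For the first formula you derive $\op D_1(\sff E)\cap\op D_1(\hat{\sff E})=\emptyset$ from the hypothesis and apply Theorem~\ref{thm:gendual}. For $p_g>0$ you apply Theorem~\ref{thm:dualdeep} with $\ell=p_g$ and convert both refined classes into virtual cycles with Theorem~\ref{redtovir}, using $\rkh{\hat{\sff E}}^2=\rkh{\sff E}^0$. These parts are fine, apart from a shift slip: the dual complex is $\hat{\sff E}=\sff E^\vee[-2]\cong R\sHom_\pi(\cI_2,\cI_1(\cP_{\hat\beta}))$, not $\sff E^\vee[2]$.

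The step that fails is the claim in your ``upgrading'' paragraph that the hypothesis forces $H^2(L)=0$ for every effective $L\in\Pic_\beta(S)$. You cannot specialize to $I_1=I_2=\cO_S$: $n_1,n_2$ are fixed, so this is not a point of $X$. Openness of the vanishing does not give the claim either. The implication is in fact false. Take $p_g>0$, $\beta=K_S$, $n_1=1$, $n_2=0$. For every $(x,L)\in X$ the line bundle $K_S\otimes L^{-1}$ lies in $\Pic^0$, so $\op{Ext}^2(I_x,L)\cong H^0(I_x\otimes K_S\otimes L^{-1})^*=0$. Hence $\sh^2(\sff E)=0$ and the hypothesis holds, yet $K_S$ is effective with $H^2(K_S)\neq 0$. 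Only the converse implication holds, and that is all Proposition~\ref{proph2l=0} uses.

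The claim is also unnecessary. The hypothesis is exactly Condition~\eqref{UDr} for $r=1$, and by Remark~\ref{rem:UDr} this condition is symmetric in $\sff E$ and $\hat{\sff E}$. So Proposition~\ref{prop:red1st} gives both
\[
\llb{\sff E,1}=[\wt{\op D}_1(\sff E)]^{\vir},\qquad \llb{\hat{\sff E},1}=[\wt{\op D}_1(\hat{\sff E})]^{\vir}.
\]
This is what ``reduced cycle'' means in the statement, and by Proposition~\ref{proph2l=0} it agrees with the reduced cycle of \cite{G2} whenever $H^2(L)=0$ for all effective $L$. So you should use on the $\beta$ side the argument you already give on the dual side. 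Theorem~\ref{redtovir} and the higher refined classes play no role in the first two formulas.

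The step you flag as the main obstacle is already settled in the paper, and not by multiplication with sections of $K_S$. There $v$ is the composite
\[
R^2\pi_*\cO(-1)[-2]\to R\pi_*\cI_2(\cP_\beta)\to\sff E,
\]
built from the tautological section of $\cO(\cD_\beta)\cong\cP_\beta(1)$ and lifted as in \eqref{fig:h2diag}; $\hat v$ is built the same way from $\hat\beta$. The cone has vanishing $\sh^2$ on an open neighborhood of $\wt X\supset S_\beta^{[n_1,n_2]}$. This follows from the fiberwise surjections $H^2(\cO_S)\twoheadrightarrow H^2(I_2(D))\twoheadrightarrow\op{Ext}^2(I_1,I_2(D))$ of \eqref{fig:h2surj}, together with openness. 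This is the same input that proves Theorem~\ref{redtovir}, so nothing new is required there.
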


\subsection{Hilbert scheme of divisors}
In \cite{DKO}, the proof of the identification of Poincar\'e invariants and Seiberg-Witten invariants of algebraic surfaces was reduced to proving a single formula  
(cf. \cite[Conjecture 0.1]{DKO})  $$\deg\; [S_{K_S}]^{\vir}=(-1)^{\chi(\cO_S)}$$ for a minimal  general type surface $S$. This formula was later proven in \cite{chang2012}.  In Theorem \ref{thm:classKS}, we prove the following result that in particular gives a new proof of the key formula above (cf. Corollary \ref{cor:chang}). Suppose that $B$ is a vector bundle of rank $b$ on $\Pic_{K_S}(S)$ chosen as above, so that  $\iota\colon S_{K_S}\hookrightarrow \bb P(B)\xr{q} \Pic_{K_S}(S)$.

\begin{theorem} Let $S$ be a nonsingular complex projective surface with  $p_g=h^2(\cO_S)>0$. If $g=h^1(\cc O_S)>0$, 
    then    \begin{align*}\iota_*\sum_{k=0}^{p_g}\llhb{S_{K_S}}_k&=c_{b-1+g-p_g}\big(q^*(B-\sff{E})(1)\big)\\&+\sum_{k=0}^{p_g}(-1)^{k+g-1} c_1(\cO_{\bb P(B)}(1)|_Z)^{b-1+p_g-k},\end{align*}
     where $\sff{E}=R\pi_*\cP_{K_S}$ and $\prj^{b-1}\cong Z\subset\prj(B)$ is the fiber of $q$ over $\omega_S:=\Lambda^2\Omega_S$.
    In the case that $h^1(\cO_S)=0$, 
    $$\sum_{k=0}^{p_g-1}\llhb{S_{K_S}}_k=s(\cO_{\bb P(B)}(1)|_{S_{K_S}}).$$
\end{theorem}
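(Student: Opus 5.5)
We work with $X=\Pic_{K_S}(S)$, of dimension $g$, and the complex $\sff E=R\pi_*\cP_{K_S}$, which has rank $\sff e=\chi(\cO_S)=1-g+p_g$. The cohomology sheaves of $\sff E$ are generically free. By Serre duality, $H^2(L)=H^0(K_S-L)^*$ vanishes for $L\neq\omega_S$, while $H^2(\omega_S)=\C$. Hence for $g>0$ we have $\rkh{\sff E}^2=0$, and the jump of $\frak h^2(\sff E)$ is supported at the single point $\omega_S$. For $g=0$, $X$ is a point and $\rkh{\sff E}^2=1$. Together with $S_{K_S}\cong\wt{\op D}_1(\sff E)$, this accounts for the two ranges of $k$ in the statement.

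The plan for $g>0$ is as follows.

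\textbf{Step 1.} Apply the Thom--Porteous formula (Theorem \ref{thm:thom-por}, in its refined form Theorem \ref{thm:T-Pformulas}) with $r=1$. The Grassmannian $\Gr(B^*,1)$ is $\bb P(B)$, and $Q_{\Gr}=\cO(1)$. Summing over $k$ expresses $\iota_*\sum_k\llhb{S_{K_S}}_k$ as $\nu_*$ of a total Chern class expression in $(\nu^*B+\frak h^2(\nu^*\sff E)-\nu^*\sff E)(1)$ on the induced blow up of $\bb P(B)$.

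\textbf{Step 2.} Choose the $\sff E$-suitable blow up $\nu\colon Y\to X$ to be the blow up of $X$ at the point $\omega_S$. It should be suitable because $\frak h^2(\sff E)$ is $\cO_{\omega_S}$ up to the local structure of $\sff E$ near that point. On $Y$, the second cohomology $\frak h^2(\nu^*\sff E)$ becomes a line bundle supported on the exceptional divisor, roughly $\cO_{\mathcal E}(-\mathcal E)$ or a twist of it; the precise twist is the main thing to be determined.

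\textbf{Step 3.} Split the class into two parts.
\begin{enumerate}[(a)]
\item The $\nu^*(B-\sff E)$ part pushes forward to $c_{b-1+g-p_g}(q^*(B-\sff E)(1))$. The degree is right: $b-1+g-p_g=b-\sff e$.
\item The remaining part is the difference produced by the class of $\frak h^2$. It is supported on $\nu^{-1}(\omega_S)\times\bb P^{b-1}$, that is, over $Z$. Expand $c(\cO_{\mathcal E}(\cdot)\otimes\cO(1))$ via the sequence $0\to\cO(-\mathcal E)\to\cO\to\cO_{\mathcal E}\to0$. Then push forward along $\mathcal E\cong\bb P^{g-1}\to\mathrm{pt}$ using $\int_{\bb P^{g-1}}\xi^{g-1}=1$ with $\mathcal E|_{\mathcal E}=-\xi$. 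This should produce the sign $(-1)^{g-1}$, together with the extra $(-1)^k$ coming from the degree-$k$ refinements, leaving the powers $c_1(\cO(1)|_Z)^{b-1+p_g-k}$.
\end{enumerate}
The dimensions check out: the cycle of index $k$ has dimension $p_g-k$ in $\bb P(B)$, and $Z$ has dimension $b-1$, which matches the exponent.

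For $g=0$, the base is a point and $\rkh{\sff E}^2=1$. The Thom--Porteous formula then involves $B-\sff E+\frak h^2$ with $\frak h^2=\C$, and $S_{K_S}=\bb P(H^0(K_S))$ is cut out linearly. In this case I expect the formula to collapse: the class $c_{b-\sff e+1}((B-\sff E+\cO)(1))$ becomes the Segre class $s(\cO(1))$ capped on $S_{K_S}$, via the standard identity for a linear subspace cut out by the quotient $B\to H^0(K_S)^*$. Alternatively, one can use the definition of the refined classes directly as $c_{\rm top}$-type classes of the obstruction bundle $H^1(K_S)\otimes\cO(1)$ modified by $\cO(1)$, and sum.

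\textbf{Main obstacle.} The main difficulty is Step 2/3(b). It requires three things: identifying $\frak h^2(\nu^*\sff E)$ on the blow up precisely, including the twist by $\mathcal E$ and the local model of $\sff E$ near $\omega_S$ (here $H^1$ of $\sff E$ at $\omega_S$ jumps compatibly via $T_{\omega_S}X=H^1(\cO_S)$); verifying that this blow up is $\sff E$-suitable; and tracking the signs in the exceptional-divisor pushforward so that exactly $(-1)^{k+g-1}$ appears. I would first work out the local model of $\sff E$ near $\omega_S$: the differential $\frak h^1\to\frak h^2$ should be the Serre dual of the natural map $H^0(\cO_S)\otimes T\to H^1(\cO_S)$, which is injective in the normal directions.
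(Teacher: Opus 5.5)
Your route is the paper's. You blow up $\Pic_{K_S}(S)$ at $\omega_S$, apply Theorem \ref{thm:T-Pformulas} on the induced blow-up $Y_B=\prj(\nu^*B)\to\prj(B)$ along $Z$, and push the exceptional terms down. For $g=0$ your second option is exactly Theorem \ref{thm:g=0total}: no blow-up is needed, $h^1(K_S)=0$ gives $\llhb{S_{K_S}}=[S_{K_S}]$, and the Segre class enters only through $s_k(\sh^2(\sff E)\otimes\cO(1))=s_k(\cO(1))$, not through the Thom--Porteous class as your first option suggests.

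The gap is the step you leave open, and your leading guess there is wrong. There is no twist. Since $\sh^2$ is the top cohomology sheaf and $\nu^*$ is right exact, $\sh^2(\nu^*\sff E)=\nu^*\sh^2(\sff E)$. What must be proved is that $\sh^2(\sff E)\cong\cO_{\omega_S}$ scheme-theoretically, i.e.\ that $\op{D}_1(R\pi_*\cP_0)$ is the reduced point. This is Lemma \ref{lem:jac}; your tangent-space computation (injectivity of $H^1(\cO_S)\to\operatorname{Hom}(H^0(\cO_S),H^1(\cO_S))$) is a valid substitute for the paper's argument. Then $\sh^2(\nu^*\sff E)=\cO_Y/\mathfrak m_{\omega_S}\cO_Y=\cO_N$, where $N\cong\prj^{g-1}$ is the exceptional divisor. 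This gives $\sff E$-suitability at once (Remark \ref{remk:blsupp}). On $Y_B$, with exceptional divisor $\tilde Z$ and $H=c_1(\cO(1))$, it gives $c(\sh^2(\nu^*\sff H)(1))=(1+H)/(1+H-\tilde Z)$. With your guess $\cO_N(-N)$ you would get $(1+H-\tilde Z)/(1+H-2\tilde Z)$ instead. The $k=0$ exceptional term would then come out as $(-2)^{g-1}Z\cdot H^{b-1-p_g}$, which is wrong for $g\ge 2$.

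Step 3(b) also does not reduce to $\int_{\prj^{g-1}}\xi^{g-1}=1$. Put $\ell=b-1+g-p_g$. The cross terms $c_{\ell-i}(q^*(B-\sff E)(1))\,\nu_*\big(c_i(\cO_{\tilde Z}(1))\,s_k(\cO_{\tilde Z}(1))\big)$ contribute, where $s_k(\cO_{\tilde Z}(1))=(-1)^k\tilde Z H^{k-1}$ for $k>0$.

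Since $\nu_*\tilde Z^j=0$ for $j\neq g$ and $\nu_*\tilde Z^g=(-1)^{g-1}Z$, these terms land on $Z$. There $q^*(B-\sff E)$ is trivial of rank $\ell$ in $K$-theory, so $c_{\ell-i}$ restricts to $\binom{\ell}{i}H^{\ell-i}$. You are left with the sums
\begin{itemize}
\item $\sum_i(-1)^i\binom{\ell}{i}\binom{i-1}{g-1}$ for $k=0$;
\item $\sum_i(-1)^i\binom{\ell}{i}\binom{i-1}{g-2}$ for $k>0$ and $g\ge 2$ (for $g=1$ only the $i=0$ term survives).
\end{itemize}
These are evaluated by Lemma \ref{lem:altbinom}, and that is what fixes the signs $(-1)^{k+g-1}$.

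Finally, your dimension check fails as written: $H|_Z^{b-1+p_g-k}$ has dimension $k-p_g$, not $p_g-k$. Propositions \ref{prop:SKS} and \ref{propSKSk} give $(-1)^{k+g-1}Z\cdot H^{b-1-p_g+k}$, which has dimension $p_g-k$. So the exponent in the statement should be read as $b-1-p_g+k$, consistent with Corollary \ref{cor:chang} at $k=p_g$.
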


In Theorem \ref{thm:blow up}, we prove blow up formulas for the refined classes of the Hilbert scheme of divisors $S_\b$ generalizing those in Seiberg-Witten theory (cf. \cite{DKO}). 

In Section \ref{sec:elliptic}, we explicitly calculate some of the refined classes of the Hilbert scheme of divisors for elliptic surfaces. Let $\pi:S\to C$ be a relatively minimal elliptic surface over a general curve $C$ of genus $g$.  Denote by $F_1,\dots, F_n$ the multiple fibers of $S$ and denote the multiplicity of $F_i$ of by $m_i$. We will prove the following result that provides a good generalization of the evaluation of Seiberg-Witten invariants for the elliptic surfaces.
\begin{theorem} Let $D$ be a divisor of degree $d$ on $C$, and $\beta\in H^2(S,\bb Z)$ be the class of $\pi^*D+\sum_i a_iF_i$, where $\forall i\; 0\le a_i< m_i$. If $0\le d\le p_g$ then for each $0\le k\le d$,
$$\llhb {S_\beta}_k=(-1)^k\sum_{i=0}^{k}{p_g-d-1+k\choose k-i}\frac{\theta^ix^{k-i}}{i!},$$ where  $\theta$ is the class of the theta divisor on $\Pic_d(C)\cong \Pic_\b(S)$, and $x$ is the class of the  universal line bundle on $S_\b$. If $p_g< d\le p_g+g-1$ then for each $0\le k \le p_g$, the push down of $\llhb {S_\beta}_k$ to $\Pic_\b(S)$ is  $$(-1)^{d-p_g+k}\frac{\theta^{g-p_g+k}}{g-p_g+k!}{g-1-p_g+2k\choose d-p_g+k}.$$
Finally, if $d>p_g+g-1$ then only nonzero refined class is $$[S_\b]^{\op{red}}=\sum_{j=0}^{d-p_g}(-1)^j\frac{\theta^j x^{{d-p_g}-j}}{j!}.$$
\end{theorem}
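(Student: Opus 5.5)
We prove the elliptic surface formula by reducing everything to the curve $C$, using the structure of $S_\beta$ and of $\sff E=R\pi_*\cP_\beta$ over $\Pic_\beta(S)$, and then applying the Thom--Porteous formula (Theorem \ref{thm:thom-por}) for the refined classes.

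First we identify the geometry. For a relatively minimal elliptic surface over a general curve, every effective divisor in class $\beta=\pi^*D+\sum a_iF_i$ with $0\le a_i<m_i$ has the form $\pi^*D'+\sum a_iF_i$ with $D'\in C^{(d)}$. Hence $S_\beta\cong C^{(d)}$ and $\Pic_\beta(S)\cong\Pic_d(C)$ via $L\mapsto \pi^*L(\sum a_iF_i)$. We then compute $R\pi_*\cP_\beta$ by pushing forward along $S\to C$. Using $R^1\pi_*\cO_S\cong (R^0\pi_*\omega_{S/C})^\vee$, a line bundle of degree $-\chi(\cO_S)$ on $C$, together with the fact that the $a_iF_i$ twists do not change $\pi_*$, we get
\[
\sff E\simeq R p_*\mathcal L\;\oplus\;R p_*(\mathcal L\otimes N)[-1].
\]
Here $\mathcal L$ is the Poincar\'e bundle on $C\times\Pic_d(C)$, $p$ is the projection to $\Pic_d(C)$, and $N$ is a line bundle on $C$ with $\deg N=-\chi(\cO_S)=-(p_g+1-g)$. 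In particular $p_g=g-1+\chi(\cO_S)$. The degrees $d\le p_g$, $p_g<d\le p_g+g-1$ and $d>p_g+g-1$ should then correspond exactly to the three regimes in which $H^1(L\otimes N)$ or $H^1(L)$ vanishes generically on $\Pic_d(C)$. This gives $\rkh{\sff E}^2$ in each case and the virtual dimensions.

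Next we use the decomposition. Since $\sff E$ splits as a direct sum of a complex in degrees $[0,1]$ and a complex in degrees $[1,2]$, we can take $Y=X=\Pic_d(C)$ as an $\sff E$-suitable blow up, possibly after a blow up resolving the generic rank of $R^1p_*(\mathcal L\otimes N)$. The first class $\llb{\sff E,1}_k$ is then the refined virtual class of $C^{(d)}=\mathbb P(\text{degree }[0,1]\text{ part})$, capped with the degree $k$ piece of the Chern class contribution of the $[1,2]$ summand twisted by $\cO(1)$.

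Concretely, we expect
\[
\llhb{S_\beta}_k=\Big[c\big(-R p_*(\mathcal L\otimes N)\otimes\cO(1)\big)\Big]_k\cap[C^{(d)}],
\]
adjusted by the correction coming from $\frak h^2$. The Chern character of $Rp_*$ of a twisted Poincar\'e bundle is given by Grothendieck--Riemann--Roch in the standard form $\ch=(\deg+1-g)-\theta$. On $C^{(d)}$ we use the Mattuck/ACGH relations $\theta$, $x$, and Poincar\'e's formula $[W_d]=\theta^{g-d}/(g-d)!$. The binomial sums then come out of $c(\cdot)=(1+x)^{m}e^{-\theta/(1+x)}$-type expressions. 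Pushing forward to $\Pic_d(C)$ in the middle range uses $p_*x^j=\theta^{g-d+j}/(g-d+j)!$-style Segre class identities, which produce the binomial coefficient ${g-1-p_g+2k\choose d-p_g+k}$.

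The main obstacle is the middle step: justifying that the refined classes are literally the graded pieces of this Chern class on $C^{(d)}$. That requires matching the paper's blow up construction with the splitting, including the case where $\frak h^2(\sff E)$ has positive generic rank and jumps on $W$-loci. We would handle it with comparison formula (3) to discard locally free degree-2 pieces, and with the Thom--Porteous formula to compute directly in $\Gr(B^*,1)=\mathbb P(B)$. Finally, in the last range $d>p_g+g-1$ the second summand has $\frak h^1$ locally free of rank $d-p_g$ and higher refined classes vanish, as in Proposition \ref{proph2l=0}. The formula then reduces to $c_{d-p_g}$ of that bundle twisted by $x$.
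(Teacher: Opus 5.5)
The gap is in the middle range $p_g<d\le p_g+g-1$, which is exactly the step you flag as the main obstacle, and the tools you propose do not close it.

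\textbf{Why your template fails there.} In this range $\sh^2(\sff E)=R^1p_*(\mathcal L\otimes N)$ is torsion. So $\rkh{\sff E}^2=0$ and $\llhb{S_\beta}_k\in A_{p_g-k}(S_\beta)$, while $\dim S_\beta=d>p_g$. A degree-$k$ Chern class capped with $[C^{(d)}]$ therefore has the wrong dimension, and your unspecified ``correction from $\sh^2$'' would have to carry the entire content. Moreover, $\sh^2(\sff E)$ is supported on the locus where $h^1(L\otimes N)>0$, which has codimension $d-p_g+1\ge 2$ in $\Pic_d(C)$. Hence $\hd\,\sh^2(\sff E)\ge 2$. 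The splitting $\sff E\simeq\sff A\oplus\sff G[-1]$ does not make $\Pic_d(C)$ itself $\sff E$-suitable, and a genuine blow up $\nu\colon Y\to\Pic_d(C)$ is unavoidable. Comparison formula (3) does not apply: it needs the cone to be a vector bundle in degree $2$, and it only compares the $k=0$ classes. The Thom--Porteous formula \eqref{eq:modB} just leaves you with $\nu_*$ from an unidentified $Y$.

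\textbf{The missing idea: identify $Y$.} Because of the splitting, the blow up can be taken to be $Y=\Gr(\cC,c)\subset\Gr(G_0^*,c)$, provided this Grassmannian is irreducible. Here $\cC=\coker(\sigma_1^*)\cong R^1\psi_*\cP_m$, so $Y$ is the variety of linear series $G^{d-p_g-1}_{d-\chi(\cO_S)}(C)$, whose Brill--Noether number is $\rho=g$.
\begin{enumerate}
\item For a \emph{general} curve $C$ this variety is smooth of dimension $g$ (Gieseker--Petri). This is what supplies the irreducibility hypothesis of Section~\ref{sec:splitting}.
\item Corollary~\ref{cor:irred} then writes $[Y]$ as the Porteous class $\Delta^{(g_1)}_c c(Q-G_1^*)$, and on $Y$ one has $\tau^{\le1}(\nu^*\sff E)\simeq\sff A\oplus Q^*[-1]$.
\item Pragacz's Gysin formula for Schur determinants collapses everything (Theorem~\ref{thm:refinedforsplit}) to $\iota_*\llhb{S_\beta}_k=c_{b-1+g-d}\big((B-\sff A)(1)\big)\,c_{d-p_g+k}\big(\sff G(1)\big)$. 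This is $c_{d-p_g+k}$ of $\sff G(1)$ against $[C_d]$, not $c_k$.
\item Lemmas~\ref{lem:pushpic} and \ref{chernpic} together with Vandermonde's identity then give the stated pushforward.
\end{enumerate}
This is the only place the generality of $C$ is used. You invoke generality instead for $S_\beta\cong C_d$, where what is actually needed is $q(S)=g(C)$.

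\textbf{The other two ranges} are essentially right, with one correction.

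For $d\le p_g$, the reason no blow up is needed is that $R^0p_*(\mathcal L\otimes N)=0$. Hence $G_0\to G_1$ is injective, $\tau^{\le1}\sff E=\sff A$, and $\llhb{S_\beta}=[C_d]$ because $C_d=\wt{\op D}_1(\sff A)$ is smooth of the expected dimension $d$. However, the refined class is $s_k\big(\sh^2(\sff E)(1)\big)\cap[C_d]=c_k\big(Rp_*(\mathcal L\otimes N)(1)\big)\cap[C_d]$, with $c\big(Rp_*(\mathcal L\otimes N)(1)\big)=(1+x)^{d-p_g}e^{-\theta/(1+x)}$. Your displayed $c\big(-Rp_*(\mathcal L\otimes N)(1)\big)=(1+x)^{p_g-d}e^{\theta/(1+x)}$ is the inverse of this and produces the wrong binomial coefficients.

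For $d>p_g+g-1$, Theorem~\ref{compGT}(2) applied to $\sff A\to\sff A\oplus R^0p_*(\mathcal L\otimes N)[-1]$ gives $c_{d-p_g}\big(R^0p_*(\mathcal L\otimes N)(1)\big)\cap[C_d]$. This agrees with the paper's obstruction-theory formula $c_{d-p_g}\big(T_{S_\beta}-R\pi_*\cP_\beta(1)\big)$, since $T_{C_d}-Rp_*\mathcal L(1)$ has trivial total Chern class. Your route here is correct and slightly quicker than the paper's.
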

We derive several corollaries of this result in Section \ref{sec:elliptic} and prove a stronger version of the duality theorem  in Theorem \ref{thm:dualityell} that involves all the refined classes.

\subsection{Curve counting}\label{intro:curve} Section \ref{sec:curvecounting} builds on the work of Kool-Thomas in \cite{Kool_2014, Kool_2014_2} and strengthen some of their results. For simplicity, we denote the nested Hilbert scheme $S^{[0,n]}_\b$ by $S^{[n]}_\b$. It is identified with the moduli space of stable of pairs on $S$. Let $\cc D_\b \subset S\times S^{[n]}_\b$ be the universal divisor, and $\cO(\cc D_\b)^{[n]}$ be the associated rank $n$ tautological bundle over $S^{[n]}_\b$.
For any $\sigma_i\in H^*(S,\beta)$, let $\tau(\sigma_i):=\pi_*(c_1(\cO(\cD_\beta))\cap p^*(\sigma_i))$, where 
	$$S \xleftarrow{p} S\times S_\beta^{[n]}\xr{\pi}  S_\beta^{[n]}$$
are the projections. Define the  residue invariant  of $S$ (see the footnote in Section \ref{sec:curvecounting}) by
    $$\llhb{S^{[n]}_\b(m,\bar\gamma)}_{\op{res}}:=\int_{\llhb{S_\beta^{[n]}}}c(\sff T)\big(\prod^{b_1(S)}_{i=1}\tau(\gamma_i)\big)\tau([\pt])^{m},$$
    where $\sff T=T_{S^{[n]}}-\cO(\cc D_\b)^{[n]}+{R}\pi_*\cO(\cD_\beta)-R\pi_*\cO$ in $K$-theory, 
   $[\pt]$ is the class of a point on $S$, and $\bar \ga=\{\gamma_1,\dots, \ga_{b_1(S)}\}$ is a normalized oriented integral basis of $H_1(S,\Z)/$torsion.

For any effective line bundle $L\subset \Pic_\b(S)$ and any $a$-dimensional sublinear  system $\bb P^a\subset |L|$, define 
\beq{equ:SnPa}S_{\prj^a}^{[n]}:=\{\cO_S(-D)\subset I\subset \cO_S\colon D\in \bb P^a\}\subset S^{[n]}_\b,\eeq and let its topological Euler characteristic be $e(S^{[n]}_{\bb P^a})$.
In Theorem \ref{thm:eulerref} we show the following result, which was proven in \cite{Kool_2014} with the additional assumption that $H^2(L)=0$ for every effective $L\in\Pic_\beta(S)$ (in which case the invariant above coincides with a  reduced residue stable pair invariant of $S$). 

\begin{theorem} Suppose a class $\beta\in H^2(S,\bb Z)$ is such that $$m:=\chi(L)-h^2(\beta)-1-\delta\geq0$$ for some nonnegative integer $\de$, where $h^2(\beta):=\min \{h^2(L)\colon L\in \Pic_\b(S)\}$. Then, for any $0\leq n\leq\delta$, any $\de$-very ample $L\in \Pic_\beta(S)$ and a general chain of  sublinear systems $\bb P^0 \subset \bb P^1\subset \cdots \subset \prj^\delta\subset|L|$, we have $$\llhb{S^{[n]}_\b(m,\bar\gamma)}_{\op{res}}=\sum_{j=0}^{\de}a_j e(S_{\prj^{\delta-j}}^{[n]})$$
 for some constants $a_j\in\Z_{\geq0}$ only depending on $\delta$ and $\chi(L)$ with $a_0=1$.
    \end{theorem}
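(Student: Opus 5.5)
The plan is to follow the Kool--Thomas strategy of \cite{Kool_2014}, replacing the reduced class by the class $\llhb{S_\beta^{[n]}}=\llb{\sff E,1}$ and using the Thom--Porteous formula (Theorem \ref{thm:thom-por}) to cut down to the linear systems $\prj^a$.

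\textbf{Step 1: cutting down by point insertions.} Each insertion $\tau([\pt])$ is $c_1$ of $\cO(\cD_\beta)$ restricted to $\{x\}\times S^{[n]}_\b$. I will represent it by the divisor of curves through a general point $x$. The $b_1(S)$ insertions $\tau(\gamma_i)$ together give the class of a point of $\Pic_\beta(S)$, since their product is $\theta^{g}/g!$ up to the normalization of $\bar\gamma$. Capping $\llhb{S^{[n]}_\b}$ with these classes therefore amounts to restricting the complex $\sff E$ to the fiber over a fixed $L$ and imposing $m$ general point conditions.

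\textbf{Step 2: identifying the restricted class.} The comparison formula (1) of the Comparisons theorem, applied to a map with locally free cone in degree $0$, lets me pass from $\sff E$ to a complex $\sff F$ for which $\wt{\op D}_1(\sff F)$ is $S^{[n]}_{\prj^{\de}}$, where $\prj^\de\subset|L|$ is cut out by the $m$ points. This uses $m=\chi(L)-h^2(\beta)-1-\de$ together with $h^0(L)=\chi(L)-h^2(L)+h^1(L)$ and the genericity of the points. The $\de$-very ampleness of $L$ makes $S^{[n]}_{\prj^\de}$ nonsingular of the expected dimension for $n\le\de$, following \cite{Kool_2014}. On such a smooth locus the refined classes $\llb{\sff F,1}_k$ should reduce to Chern classes of an excess (obstruction) bundle. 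The main difficulty lies here. The second cohomology of $\sff E$ on the fiber equals $H^2(L)$, which may exceed $h^2(\beta)$, so the class $\llb{\cdot}$ carries a correction by $\frak h^2(\nu^*\sff E)$. I expect to control this with the comparison formula (3), which adds a degree-$2$ bundle without changing the class, together with the identification of Proposition \ref{proph2l=0} and Theorem \ref{redtovir}. Together these should show that the result is $c_{\text{top}}$ of the excess bundle $\cO(\cD)^{[n]}$-type obstruction minus $H^1$ terms, evaluated on $S^{[n]}_{\prj^\de}$.

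\textbf{Step 3: computing the integral.} After the reduction, the integrand $c(\sff T)\cdot(\text{obstruction class})$ restricted to $S^{[n]}_{\prj^\de}$ should become $c(T_{S^{[n]}_{\prj^\de}})\cdot c(\cO(1)^{\oplus})$-type corrections. The tangent-obstruction sequence gives $\sff T|=T_{S^{[n]}_{\prj^\de}}+(\text{trivial}/\cO(1)\text{-twisted terms})$. Expanding, the top-degree piece of $c(T)$ gives $e(S^{[n]}_{\prj^\de})$ with $a_0=1$. The terms involving powers of $h=c_1(\cO_{\prj^\de}(1))$ are handled by the observation that $h^j$ is Poincar\'e dual to $S^{[n]}_{\prj^{\de-j}}$ for a general chain, which is again smooth by very ampleness. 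The coefficients $a_j$ then come from expanding a product of the form $(1+h)^{\chi(L)-\de-1}$-type factors, which are binomial coefficients depending only on $\de$ and $\chi(L)$ and are therefore nonnegative. The obstacle I expect is Step 2, namely checking that the $h^2$-correction coming from the jump $h^2(L)>h^2(\beta)$ is exactly absorbed so that no extra terms appear.
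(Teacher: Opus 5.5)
Your Steps 1 and 3 have the right shape. The paper also treats the $\tau(\gamma_i)$ as the Gysin pullback $j^!$ to a point $L\in\Pic_\beta(S)$ and the point insertions as $H^m$. It also ends exactly as you describe: it integrates $(1+H)^{\chi(L)-\delta-1}c(T_{S^{[n]}_{\prj^\delta}})$ along a general chain of hyperplane sections, using $0\to T_{S^{[n]}_{\prj^{\delta-j-1}}}\to T_{S^{[n]}_{\prj^{\delta-j}}}\to\cO(H)\to0$. (These sequences feed back further factors of $1+H$, so $a_j=\binom{\chi(L)-\delta-2+j}{j}$.) However, Step 2, which you flag yourself, is a genuine gap, and the route you sketch for it does not work.

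\emph{Restricting the complex is not the same as $j^!$.} Capping with the $\tau(\gamma_i)$ is not the same as restricting $\sff E$ to $\{L\}$ and rerunning the construction. The classes are built from $\tau^{\le 1}$ of the pullback of $\sff E$ to a suitable blow up, and this does not commute with restriction to $\{L\}$ when $h^2(L)>h^2(\beta)$ (compare the correction terms in Theorem \ref{thm:defInv}). Already for $n=0$ the dimensions disagree. The class $j^!\llhb{S_\beta}$ has dimension $\chi(L)-h^2(\beta)-1$. The restricted complex $\{H^0(L)\xr{0}H^1(L)\xr{0}H^2(L)\}$ instead yields $H^{h^1(L)}$ on $\prj^{h^0(L)-1}$, as in Theorem \ref{thm:g=0total}, of dimension $\chi(L)-h^2(L)-1$.

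\emph{The point conditions do not give $\prj^\delta$.} Likewise, $m$ general points cut $|L|$ down to a linear system of dimension $h^0(L)-1-m=\delta+h^1(L)-h^2(L)+h^2(\beta)$. This equals $\delta$ only if $h^1(L)=h^2(L)-h^2(\beta)$, as in the Kool--Thomas setting $h^1(L)=h^2(L)=0$. So the comparison step you describe does not in general produce a complex with $\wt{\op D}_1=S^{[n]}_{\prj^\delta}$.

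\emph{The tools you name cannot absorb the jump.} Part (3) of the comparison theorem concerns a locally free cone in degree $2$ and leaves the jump of $\sh^2$ at $L$ untouched. Proposition \ref{proph2l=0} assumes exactly the vanishing of $H^2$ on effective line bundles that this theorem is designed to drop. Theorem \ref{redtovir} concerns the $(p_g-\sff h^2_{\sff E})$-th refined class, not $\llhb{S^{[n]}_\beta}$.

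The missing idea is to restrict cycle classes on $\prj(B)$ rather than complexes. Theorem \ref{thm:compn1n2} first reduces you to $c_n(\cD_\beta^{[n]})\cap[S^{[n]}]\times\llhb{S_\beta}$.

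\begin{enumerate}
\item Over an $\sff H$-suitable blow up $\nu$, the Thom--Porteous formula gives $\bar\varphi_*\llhb{S_\beta}=\sum_i c_{\eta-i}(-\sff H(1))\,\nu_*c_i(\sh^2(\nu^*\sff H(1)))$, with $\eta=b-\chi(L)+h^2(\beta)$ and $\bar\varphi\colon S_\beta\hookrightarrow\prj(B)$.
\item Expand $\nu_*c_i(\sh^2(\nu^*\sff H(1)))=\sum_j H^j q^*\alpha_{i,j}$ by Leray--Hirsch. Intersecting with the fiber $|L(A)|=q^*[\pt]$ kills every term except $\alpha_{i,i}=\lambda_i[\Pic_\beta(S)]$.
\item Since $\lambda_i$ is a top-dimensional coefficient, it can be read off on the dense open set where $R^2\pi_*\cP_\beta$ is locally free of rank $h^2(\beta)$ and $\nu$ is an isomorphism. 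This gives $\lambda_i=\binom{h^2(\beta)}{i}$.
\end{enumerate}

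Hence $\iota_{L*}j^!\llhb{S^{[n]}_\beta}=c_n(L^{[n]}(1))H^{\chi(L(A))-\chi(L)+h^2(\beta)}$ on $S^{[n]}\times|L(A)|$. This is how the generic value $h^2(\beta)$, rather than $h^2(L)$, enters.

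After capping with $H^m$, the linear factor is simply the class of a $\delta$-dimensional linear subspace of $|L(A)|$. Because $h^0(L)\ge\delta+1$, it may be represented by a general $\prj^\delta\subset|L|$, not by the curves through $m$ actual points. Then $c_n(L^{[n]}(1))\cap[S^{[n]}\times\prj^\delta]=[S^{[n]}_{\prj^\delta}]$ by $\delta$-very ampleness, and your Step 3 takes over.
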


 It is shown in \cite[Theorem 3.4]{Kool_2011} that for a $\delta$-very ample $L\in \Pic_\beta(S)$, $j\in \{0,\dots, \de\}$ and a general sublinear system $\prj^{\delta-j}\subset|L|$, there are integers $\a^j_r$, such that 
    \beq{eq:eulernr}q^{1-h}\sum_{n=0}^\infty e(S_{\prj^{\delta-j}}^{[n]}) q^n=\sum_{r=h-\delta+j}^h \a^j_rq^{1-r}(1-q)^{2r-2},\eeq where $h$ is the common arithmetic genus of the curves $|L|$. Moreover,
the coefficient $\a^0_{h-\delta}$ equals the number of $\delta$-nodal curves in $\prj^\delta$. For a class $\beta$ satisfying the hypotheses of the previous theorem, we use \eqref{eq:eulernr} to define the integers $\eta_r$ as
\begin{equation*}
    \sum_{j=0}^\de q^{1-h}\sum_{n=0}^\infty a_j e(S_{\prj^{\delta-j}}^{[n]}) q^n=\sum_{r=h-\delta}^h \eta_rq^{1-r}(1-q)^{2r-2}.
\end{equation*}
 By \eqref{eq:eulernr}, for a fixed $j>0$, the terms $a_je(S^{[n]}_{\prj^{\de-j}})$ do not contribute to $\eta_{h-\de},\dots,$ $\eta_{h-\de+j-1}$. In particular, \beq{equ:nodalcount}\eta_{h-\de}=\a^0_{h-\de}.\eeq After inverting the expression above and using the previous theorem, we find that 
$$\eta_{h}=
    \sum_{j=0}^\delta a_je(S^{[0]}_{\prj^{\de-j}})=\llhb{S^{[0]}_\b(m,\bar\gamma)}_{\op{res}},$$ and similarly for $h-\de\le r< h$, 
    $$\eta_r=\llhb{S^{[h-r]}_\b(m,\bar\gamma)}_{\op{res}}-\sum_{k=r+1}^{h}(-1)^{k-r}{2k-2\choose k-r}\eta_{k}.$$
This in particular shows that  the count of the $\delta$-nodal curves in $\bb P^\de$ given by \eqref{equ:nodalcount} can be expressed as a linear combination of the residue invariants $$\llhb{S_\b^{[n]}(m,\bar\gamma)}_{\op{res}} \qquad  n=0,\ldots,\delta.$$   
 

\bigskip

\subsection{Glossary of notation} For the convenience of the readers here we provide a list of the frequently used symbols and notation.
\begin{itemize}
    \item $X$: \qquad a complex quasi-projective variety of dimension $d$. 
    \item $D^b(X)$: \qquad the bounded derived category of $X$. 
    \item $\Perf(X,a,b)$: \qquad the full subcategory of $D^b(X)$ consisting of perfect complexes $\sff E$ of amplitude $[a,b]$. (The same definition when $X$ is any quasi-projective complex scheme.)
    \item $\cE, \cF,\cG,\dots$: \qquad   coherent sheaves on $X$ of ranks $e, f, g,\dots$.
    \item $E, F, G, \dots$: \qquad locally free sheaves on $X$ of ranks $e, f, g, \dots$.
    \item $\sff{E}, \sff F ,\sff G,\dots $: \qquad objects of $\Perf(X,a,b)$ of ranks $\sff e, \sff f, \sff g,\dots$.
    \item $\sff{E}^\vee$: \qquad the derived dual of $\sff E$.
    \item $\tau^{\le c}(\sff E$): \qquad the truncation of $\sff E$ at degrees bigger than $c$.
    \item $\hd(\cF)$: \qquad the homological dimension of $\cF$.
    \item $\Gr(\cc F,r)$:\qquad the Grassmannian of rank $r$ quotients of $\cc F$.
    \item $Q_{\Gr(\cc F,r)}$: \qquad the universal rank $r$ quotient bundle of $\Gr(\cc F,r)$.
      \item $\sh^i(\sff{E})$: \qquad the $i$-th cohomology sheaf of  $\sff E$.
    \item $\rkh{\sff{E}}^i$: \qquad the rank of the coherent sheaf $\sh^i(\sff E)$.
    \item $\op{D}_r(\sff E)$: \qquad the subscheme of $X$ corresponding to the fitting ideal $F_{r-1}(\sh^{-a}(\sff E^\vee))$.
    \item $\wt{\op{D}}_r(\sff E)$: \qquad  $\Gr(\sh^{-a}(\sff E^\vee),r)$.
    \item $Q_{\wt{\op{D}}_r(\sff E)}$:\qquad the universal quotient bundle of ${\wt{\op{D}}_r(\sff E)}$.
\item  $\llb{\sff E,r}_k, \llbb{\sff E,r}$: \qquad the first and second virtual cycles of $\sff E$.
 \item $S$: \qquad a nonsingular complex  quasi projective surface. 
    \item $\beta$: \qquad a class in $H^2(S,\Z)$.
    \item $\Pic_\b(S)$: \qquad the Picard variety of lines bundles $L$ with $c_1(L)=\b$.
    \item $S^{[n]}, S_\b$: \qquad the Hilbert schemes of $n$ points and of class $\b$ divisors on $S$ respectively. 
    \item $\op{AJ}\colon S_\b\to\Pic_\b(S)$:\qquad the Abel-Jacobi morphism.
     \item $S_\beta^{[n_1,n_2]}$: \qquad the nested Hilbert scheme of points and curves on $S$.
     \item $S^{[n]}_\b=S_\beta^{[0,n]}$:\qquad the moduli space of stable pairs on $S$.
    \item  $\llhb{S_\beta^{[n_1,n_2]}}_k$: \qquad the $k$-th refined virtual cycle of $S_\beta^{[n_1,n_2]}$.
\end{itemize}

\bigskip
\subsection*{Acknowledgment} We would like to thank Richard Thomas for sharing insights and for useful conversations.

\section{Virtual cycles of 2-term complexes--recap}

\subsection{Degeneracy loci and  virtual cycles}\label{virres}
Let $X$ be a complex quasi-projective variety\footnote{All varieties are assumed to be irreducible.}  of dimension $d$ and
$$\sigma:E_0\to E_1$$
 be a map of vector bundles of ranks $e_0$ and $e_1$ over $X$. For any $r\leq e_0$ the $r$-th degeneracy locus of $\sigma$ is the subscheme of $X$ defined by
\beq{defDr}\op{D}_r(\sigma):=\{x\in X|\dim \ker (\sigma|_x)\geq r)\},\eeq
with scheme structure given by the vanishing of $\wedge^{e_0-r+1}\sigma$, or equivalently, corresponding to the fitting ideal  $F_{r-1}(\coker(\sigma^*))$. We consider its virtual resolution 
$$\wt{\op{D}}_r(\sigma):=\Gr(\coker(\sigma^*),r)\overset{\iota}{\subset}\Gr(E_0^*,r),$$ where $p:\Gr(E_0^*,r)\to X$ is the relative Grassmannian of rank $r$ quotients of $E_0^*$. It is equipped with the rank $r$ universal bundle $Q_{\wt {\op{D}}_r(\sigma)}$ of the Grassmannian. Under the duality identification $\Gr(E_0^*,r)\cong \Gr(r,E_0)$, this virtual resolution  corresponds to the subscheme
$$\{(x,V)|V\subseteq\ker(\sigma|_x)\}\subset \Gr(r,E_0)$$
given by the vanishing locus of the composition
\beq{eq:secforDr}
Q^*_{\Gr(E_0^*,r)}\into{}p^* E_0\xr{p^*\sigma}p^*E_1,
\eeq
where $Q_{\Gr(E_0^*,r)}$ is the rank $r$  universal quotient bundle. If we assume that $X$ is smooth, from its description as the vanishing of \eqref{eq:secforDr}, or equivalently as that of the corresponding section of the bundle $Q_{\Gr}\otimes p^*E_1$, $\wt{\op{D}}_r(\s)$ acquires a natural perfect obstruction theory and hence a virtual cycle 
$$[\wt{\op{D}}_r(\sigma)]^{\vir}\in A_{d+r(e_0-r)-re_1}(\wt{\op{D}}_r(\sigma))$$ that only depends on the quasi-isomorphism class of the complex $$\sff E:=\{E_0\xr \s E_1\}$$
sitting in degrees $[0,1]$ (cf.  \cite[Proposition 1]{G2}). Its pushforward to $\Gr(E_0^*,r)$ is given by the Thom-Porteous formula
\begin{equation}\label{eq:TP-Gr}\iota_*[\wt{\op{D}}_r(\sigma)]^{\vir}=c_{re_1}( p^*E_1 \otimes Q_{\Gr(E_0^*,r)})\in A_{d-r(r-e_0+e_1)}(\Gr(E_0^*,r)),\end{equation}
and by \cite[Theorem 14.4]{fu}
\begin{equation}\label{eq:TP-X}
    p_*\iota_*[\wt{\op{D}}_r(\sigma)]^{\vir}=\Delta_{ r-\sff e}^rc(-\sff E)\in A_{d-r(r-\sff e)}(X).
\end{equation}
In the rest of this section, we will denote the degeneracy locus \eqref{defDr} and its virtual resolution by $\op{D}_r(\sff E)$ and   $\wt{\op{D}}_r(\sff E)$, respectively.


\subsection{Deepest degeneracy loci}\label{sec:setupB}
For the complex $\sff E=\{E_0\xr{\sigma} E_1\}$ as above there is always a minimum $k\ge 0$ such that $\op{D}_{k+1}(\sff E)=\emptyset$. For such $k$, we say $\op{D}_{k}(\sff E)$ is the \emph{deepest degeneracy locus}. The restriction of the cohomology sheaves $\sh^i(\sff E)|_{\op{D}_{k}(\sff E)}$ are then locally free for $i=0,1$, and hence the natural projection
$$p \colon \wt{\op{D}}_{k}(\sff E)\to \op{D}_{k}(\sff E)$$  is an isomorphism.

For general $r$, we may view the $r$-th degeneracy locus as a deepest one after modifying the complex $\sff E$ as follows. 
Such a modification was used in \cite{G2} and it will also prove to be useful in this paper.  Choose a locally free sheaf $B$ on $X$, such that $B^*$ surjects onto $\sh^0(\sff E^\vee)$. 
Suppose that there is a lift $\psi$ of this as in  \eqref{fig:defsurjB}\footnote{Such a lift always exists after pulling back to an affine bundle over $X$, which is an affine variety.}. For example, one could take $B$ to be $E_0$, but more natural choices of $B$ will be desirable in most of the applications. 
\begin{equation}\label{fig:defsurjB}
    \begin{tikzcd}
	&& {B^*} & \\
	{E_1^*} & {E_0^*} & {\sh^0(\sff E^\vee)} & 0
	\arrow["\psi"', dashed, from=1-3, to=2-2]
	\arrow[two heads, from=1-3, to=2-3]
	\arrow[from=2-1, to=2-2]
	\arrow[from=2-2, to=2-3]
	\arrow[from=2-3, to=2-4]
\end{tikzcd}
\end{equation}
Then, the virtual resolution $\wt{\op{D}}_r(\sff E)$  is identified with the deepest degeneracy locus $\op{D}_r(\rho)$, for 
$$\rho: q^*E_0 \xr{(q^*\sigma,q^*\psi^*)}q^*E_1\oplus q^*B\onto{} q^*E_1\oplus q^*B/Q^*_{\Gr(B^*,r)}$$ over $q:\Gr(B^*,r)\to X$, where the second arrow is the obvious projection. 

By the construction, the map $(q^*\sigma,q^*\psi^*)$ is an injection of bundles, so $B-\sff E$ is represented by a vector bundle of rank $b-\sff e$ in the $K$-group. When $X$ is nonsingular, we furthermore have the identification of the virtual cycles $[\wt{\op{D}}_r(\sff E)]^{\vir}=[\op{D}_r(\rho)]^{\vir}$, and as in \eqref{eq:TP-X} their pushforward to $\Gr(B^*,r)$ is given by Thom-Porteous formula
\begin{align}\label{eq:TP-Gr(B)} \Delta ^{(r)}_{b-\sff e}c\big(q^*B/Q^*_\Gr-q^*\sff E\big)&=\Delta ^{(r)}_{b-\sff e}c\big(q^*B-q^*\sff E-Q_\Gr\big)\\ \nonumber
&=c_{r(b-\sff e)}\big(q^*(B-\sff E)\otimes Q _\Gr\big).\end{align}

\subsection{Comparison theorem}
Given a map of 2-term complexes $u:\sff F\to \sff E$ of vector bundles, \cite[Section 3]{G2} provides formulas to compare $[\wt{\op{D}}_r(\sff E)]^{\vir}$ and $[\wt{\op{D}}_r(\sff F)]^{\vir}$,  whenever $\Cone(u)$ is represented by a locally free sheaf sitting in degree 0 or 1. 

\begin{theorem}[{\cite[Theorem 3.5]{G2}}] \label{compGT} Let $u \colon  \sff F\to \sff{E}$ be a map in the derived category between two 2-term complexes of vector bundles sitting in degrees $[0,1]$ over a nonsingular complex variety $X$.
\begin{enumerate}
    \item If $\Cone(u)$ is represented by a locally free sheaf $G$ sitting in degree 0, then there exists a closed immersion $\iota \colon  \wt{\op{D}}_r(\sff{F}) \hookrightarrow \wt{\op{D}}_r(\sff{E})$, such that the universal bundle $Q_{\wt{\op{D}}_r(\sff{E})}$ pulls back to that on $\wt{\op{D}}_r(\sff{F})$, and we have
$$\iota_*\big [\wt{\op{D}}_r(\sff{F})\big]^{\vir}=c_{r(\sff e-\sff f)}\big( p^*G\otimes Q_{\wt{\op{D}}_r(\sff{E})}\big)\cap \big[\wt{\op{D}}_r(\sff{E})\big]^{\vir}.$$

\item If $\Cone(u)$ is represented by a locally free sheaf $G$ sitting in degree 1, then $\wt{\op{D}}_r(\sff{F}) \cong \wt{\op{D}}_r(\sff{E})$,  and we have
$$\big [\wt{\op{D}}_r(\sff{E})\big]^{\vir}=c_{r(\sff f-\sff e)}\big(p^* G\otimes Q_{\wt{\op{D}}_r(\sff{E})}\big)\cap \big[\wt{\op{D}}_r(\sff{F})\big]^{\vir}.$$ 

\end{enumerate}
\end{theorem}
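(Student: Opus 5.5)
The plan is to reduce both parts to the Thom--Porteous description of \S\ref{sec:setupB}. There $\wt{\op D}_r(\sff E)$ is realized as the vanishing locus of a section of a vector bundle on a Grassmannian, and $[\wt{\op D}_r(\sff E)]^{\vir}$ is the refined Euler class of that section. We may represent $u$ by an honest map of 2-term complexes of vector bundles, after replacing $\sff F$ and $\sff E$ by quasi-isomorphic complexes and, if needed, pulling back to an affine bundle over $X$. This pullback is harmless: flat pullback to an affine bundle is an isomorphism on Chow groups, and the virtual cycles depend only on the quasi-isomorphism class (\cite[Proposition 1]{G2}).

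\emph{Part (1).} If $\Cone(u)\simeq G[0]$, then the triangle $\sff F\to\sff E\to G$ can be arranged so that $\sff F=\{F_0\to E_1\}$ with $F_0=\ker(E_0\twoheadrightarrow G)$ and the same $E_1$. Then $\sff e-\sff f=\rk G$. Dualizing gives an exact sequence
\[
G^*\to \sh^0(\sff E^\vee)\to \sh^0(\sff F^\vee)\to 0 .
\]
Hence the surjection $\sh^0(\sff E^\vee)\twoheadrightarrow \sh^0(\sff F^\vee)$ induces a closed immersion $\iota\colon \Gr(\sh^0(\sff F^\vee),r)\hookrightarrow \Gr(\sh^0(\sff E^\vee),r)$, under which $Q_{\wt{\op D}_r(\sff E)}$ restricts to $Q_{\wt{\op D}_r(\sff F)}$. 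In the kernel picture, $\iota$ is cut out inside $\wt{\op D}_r(\sff E)$ by the vanishing of the composite
\[
Q^*\to p^*E_0\to p^*G ,
\]
which is a section of $G\otimes Q$. Working in $\Gr(E_0^*,r)$, the section of $E_1\otimes Q$ defining $\wt{\op D}_r(\sff E)$ and the section of $G\otimes Q$ together cut out $\wt{\op D}_r(\sff F)$ inside $\Gr(F_0^*,r)\subset\Gr(E_0^*,r)$. The latter is itself the zero locus of the section of $G\otimes Q$. By functoriality of refined Gysin maps for the direct sum $E_1\otimes Q\oplus G\otimes Q$, we get the stated formula
\[
\iota_*[\wt{\op D}_r(\sff F)]^{\vir}=c_{r(\sff e-\sff f)}(G\otimes Q)\cap[\wt{\op D}_r(\sff E)]^{\vir},
\]
since the top Chern class of $G\otimes Q$ is $c_{r\,\rk G}$. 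One must check that the two obstruction theories are compatible; I expect this to follow from the commutativity of the Gysin pullbacks (\cite[Theorem 6.4]{fu}).

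\emph{Part (2).} If $\Cone(u)\simeq G[-1]$, the complexes can be arranged as $\sff F=\{E_0\to F_1\}$ with $0\to F_1\to E_1\to G\to0$, so $\sff f-\sff e=\rk G$ and $\sh^0(\sff F^\vee)=\sh^0(\sff E^\vee)$. Here the identification of the degeneracy loci requires checking that the kernels agree pointwise, and this is exactly the delicate point: a vector in $\ker(\sigma_E)$ maps to zero in $E_1$, hence lies in $F_1$. So $\wt{\op D}_r(\sff F)\cong\wt{\op D}_r(\sff E)$ scheme-theoretically, because both are $\Gr$ of the same cokernel sheaf. For the cycles, the section of $E_1\otimes Q$ defining $\wt{\op D}_r(\sff E)$ factors through the subbundle $F_1\otimes Q$, and the quotient is $G\otimes Q$. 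The excess intersection formula (\cite[Example 6.3.5]{fu}), applied to the zero locus of a section of $E_1\otimes Q$ that lands in $F_1\otimes Q$, then gives
\[
[\wt{\op D}_r(\sff E)]^{\vir}=c_{top}(G\otimes Q)\cap[\wt{\op D}_r(\sff F)]^{\vir},
\]
with $c_{top}(G\otimes Q)=c_{r(\sff f-\sff e)}(G\otimes Q)$.

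The main obstacle is the strictification step: the derived-category triangle must be represented by the termwise maps used above. For Part (1) I would first try the cone construction followed by cancelling an acyclic summand, and I would use the same construction with the shift for Part (2).
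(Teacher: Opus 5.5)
Your argument is correct. The paper does not prove this statement itself; it imports it from \cite{G2} and uses it as a black box, e.g.\ in Proposition~\ref{prop:comparisonSpecial}. The natural comparison is therefore with Appendix~\ref{quasiinvariance}, where the paper builds exactly the obstruction-theory-free toolkit you use:
\begin{itemize}
\item each class is the localized top Chern class $\Z(s)$ of a section of $Q_{\Gr}\otimes(\text{degree-1 term})$ on a single Grassmannian;
\item Lemma~\ref{lem:invar1} commutes the Gysin maps of a direct sum of sections;
\item Lemma~\ref{lem:invar2} uses a regular section of $Q\otimes G$ that cuts out a sub-Grassmannian.
\end{itemize}
Your Part~(1) is precisely that combination. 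Your Part~(2) adds the standard excess step. Write $0_V=j\circ 0_{V'}$ for $V'=F_1\otimes Q\subset V=E_1\otimes Q$. Since $s^{-1}(V')$ is the whole Grassmannian, $j^![\Gr]=c_{\mathrm{top}}(G\otimes Q)\cap[\Gr]$, and Gysin maps commute with Chern classes. Your worry about compatibility of obstruction theories is moot: with the natural obstruction theory, the virtual class of a zero locus is $\Z(s)$, so the whole argument is Gysin calculus.

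One correction and one simplification to the strictification step.

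\emph{Part (1).} After the Jouanolou trick, the map $\sff E\to G$ is a chain map $\phi\colon E_0\to G$, but $\phi$ need not be surjective (e.g.\ $\sff E=0$, $\sff F=G[-1]$). So $\ker\phi$ need not be a bundle with quotient $G$. To fix this, first replace $\sff E$ by $\{E_0\oplus G\xrightarrow{\sigma\oplus\id}E_1\oplus G\}$. Then use a homotopy $h\colon E_1\oplus G\to G$ to make the $G$-component of the chain map the identity.

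Simpler still, skip $F_0$ altogether. We have $\sff F\simeq\Cone(\sff E\to G)[-1]=\{E_0\xrightarrow{(\sigma,\phi)}E_1\oplus G\}$. Hence $\wt{\op D}_r(\sff F)=Z(s_{E_1},s_G)\subset Z(s_{E_1})=\wt{\op D}_r(\sff E)$ inside the same $\Gr(E_0^*,r)$, and $Q$ restricts to $Q$ tautologically. Lemma~\ref{lem:invar1} gives $[\wt{\op D}_r(\sff F)]^{\vir}=0^!_{G\otimes Q}[\wt{\op D}_r(\sff E)]^{\vir}$, refined along $s_G$. Its pushforward is $c_{r(\sff e-\sff f)}(G\otimes Q)\cap[\wt{\op D}_r(\sff E)]^{\vir}$, and no surjectivity is needed.

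\emph{Part (2).} No Jouanolou trick is needed at all. One has $\operatorname{Hom}(\sff E,G[-1])=\{\psi\colon E_1\to G:\psi\sigma=0\}$. Moreover $\sh^2(\sff F)=\coker\psi$ must vanish, which forces $\psi$ to be onto. This gives exactly your arrangement $\sff F\simeq\{E_0\to\ker\psi\}$.
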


\section{Virtual cycles of 3-term complexes} \label{Sec:vir3-term}

In this section, we will work over a quasi-projective complex variety\footnote{All varieties are assumed to be irreducible.} $X$ of dimension $d$. 
Given $a,b\in\Z$, we  denote by $$\Perf(X,a,b)$$ the full subcategory of $D^b(X)$ consisting of perfect complexes $\sff E$ that admit a locally free resolution
$$\sff E\simeq \{E_a\to E_{a+1}\to\cdots\to E_b\}.$$ 
For any morphism $\nu\colon Y\to X$ of quasi-projective varieties, we denote the derived pullback of $\sff E$ by $$\nu^*\sff E\simeq \{\nu^*E_a\to\cdots \to \nu^*E_b\} \in \Perf(Y,a,b).$$
Analogous to Definition \eqref{defDr}, we let the $r$-th degeneracy locus of $\sff E$ be the subscheme of $X$ 
defined by
$$\op{D}_r(\sff E):=\{x\in X|\dim \sh^a(\sff E|_x)\geq r)\}$$
with scheme structure given by the fitting ideal $F_{r-1}(\sh^{-a}(\sff E^\vee)).$ Also, define the  virtual resolution of it as 
$$\wt{\op{D}}_r(\sff{E}):=\Gr(\sh^{-a}(\sff{E}^\vee),r)\xr{p} \op D_r(\sff E),$$ 
which is equipped with a rank $r$ universal quotient bundle that we denote by $Q_{\wt{\op{D}}_r(\sff E)}$.
In this paper, we will be mainly interested in 3 term complexes of vector bundles, i.e. the objects of  $\Perf(X,a,a+2)$.

To construct a well-behaved virtual cycle for $\wt{\op{D}}_r(\sff{E})$, we need to have a good control on the homological dimensions of the sheaf cohomologies $\sh^i(\sff E)$. This can be achieved by pulling back everything to some blow up $Y$ of $X$. We will construct a cycle over $Y$ and then push it down to $X$. There are various ways of modifying coherent sheaves (cf. \cite{rabano2024desingularizationssheaveshighergenus} for a good review of those methods). In the next section, we will give a construction of a blow up with the desired property. In Appendix \ref{equivblow up}, we will show that our construction is equivalent to one of the existing ones.

\subsection{Modification of 3-term complexes}\label{sec:modcomp} Our modification of perfect complexes will be based on the following key lemma.

\begin{lemma}\label{lem:blhd1}
     Let $\cF$ be a coherent sheaf on $X$. There exists a quasi-projective variety $Y$ and a birational morphism $\nu\colon  Y\to X$, such that $\cF$ has homological dimension at most 1. 
\end{lemma}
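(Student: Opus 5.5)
The plan is to reduce the statement to a known \emph{torsion-free flattening} result for a single sheaf. That result is applied not to $\cF$ itself but to the kernel of a locally free presentation of $\cF$. Throughout, the pullback $\nu^*\cF$ means the ordinary (underived) pullback, and homological dimension is taken on $Y$.

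\emph{Step 1 (presentation).} Since $X$ is quasi-projective, there is a locally free sheaf $E_0$ with a surjection $E_0\twoheadrightarrow\cF$; for instance, take a sum of copies of $\cO_X(-m)$ for $m\gg0$. Let $\cG:=\ker(E_0\to\cF)$. Then $\cG$ is a subsheaf of a locally free sheaf on the integral scheme $X$, so it is torsion free of some rank $g$. It is locally free on a dense open $U\subset X$. Note that $\hd(\cF)\le 1$ at a point exactly when $\cG$ is free there.

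\emph{Step 2 (pullback and image).} For any birational $\nu\colon Y\to X$ with $Y$ integral, right exactness of $\nu^*$ gives
$$\nu^*\cG\to\nu^*E_0\to\nu^*\cF\to0 .$$
Let $\cG'$ denote the image of $\nu^*\cG$ in $\nu^*E_0$. This image is torsion free, since it sits inside a locally free sheaf on an integral scheme. The map $\nu^*\cG\to\cG'$ is an isomorphism over $\nu^{-1}(U)$, so its kernel is torsion. Combining the two facts, the kernel is exactly the torsion subsheaf, and $\cG'\cong\nu^*\cG/\mathrm{tors}$. Hence $0\to\cG'\to\nu^*E_0\to\nu^*\cF\to0$ is exact. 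So $\hd(\nu^*\cF)\le1$ as soon as $\nu^*\cG/\mathrm{tors}$ is locally free.

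\emph{Step 3 (making $\nu^*\cG/\mathrm{tors}$ locally free).} Here I would invoke the classical result of Rossi (also Villamayor): blowing up $X$ along the Fitting ideal $F_g(\cG)$ makes $\nu^*\cG/\mathrm{tors}$ locally free of rank $g$. This Fitting ideal is nonzero, since it is the unit ideal on $U$.

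To sketch why the result holds, choose a surjection $E_1\twoheadrightarrow\cG$ with $E_1$ locally free. On $U$, the quotient $E_1|_U\twoheadrightarrow\cG|_U$ defines a section $U\to\Gr(E_1,g)$. Let $Y$ be the closure of its image, which is the graph closure; this is the same as the blow up of $F_g(\cG)$. On $Y$ there is a universal rank $g$ quotient $\nu^*E_1\twoheadrightarrow Q$. One then checks that $\ker(\nu^*E_1\to\nu^*\cG/\mathrm{tors})$ and $\ker(\nu^*E_1\to Q)$ are both the saturation of the generic kernel, so $\nu^*\cG/\mathrm{tors}\cong Q$.

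The resulting $Y$ is integral, quasi-projective and birational over $X$, since it is a blow up of a variety along a nonzero ideal. Combined with Step 2, this proves the lemma.

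The main obstacle is the identification in Step 3. One must show that the kernel of $\nu^*E_1\to\nu^*\cG/\mathrm{tors}$ coincides with the kernel of the universal quotient, which uses that both are saturated subsheaves of $\nu^*E_1$ agreeing over $\nu^{-1}(U)$. If that argument becomes messy, I would simply cite Rossi's theorem, or the equivalent construction reviewed in \cite{rabano2024desingularizationssheaveshighergenus}, since Step 2 already reduces the lemma to exactly that statement.
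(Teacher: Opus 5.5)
Your argument is correct and is essentially the paper's construction in dual form. Set $G=E_1$, $H=E_0$, $\cC=\coker(\phi^*)$. Under $\Gr(E_1,g)\cong\Gr(E_1^*,e_1-g)$, your closure of the section $U\to\Gr(E_1,g)$ is the paper's closure in $\Gr(G^*,c)$. Likewise, your saturation argument $\nu^*\cG/\mathrm{tors}\cong Q$ is the dual of the paper's identification $\ker(\nu^*\phi)=(\nu^*\cC)^*\cong Q^*_{\Gr}|_Y$.

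One aside should be dropped: the graph closure is in general only dominated by $\op{Bl}_{F_g(\cG)}X$, not equal to it. For example, if $\cG=(x,y)(z,w)\subset k[x,y,z,w]$, then $F_1(\cG)=\cG\cdot(x,y,z,w)$, which is not invertible on $\op{Bl}_{\cG}$ (compare Remark \ref{rmk:D3}). Your proof never uses that identification, and either blow up suffices for the lemma.
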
 
\begin{proof}
Fix a presentation $G\to H\to \cF\to 0$ with $G, H$ locally free. Let $\cK:=\ker(G\to H)$ and $\cC:=\coker(H^*\to G^*)$. Then, $\cK=\cC^*:=\sHom(\cC, \cO_X)$. 

If $\cC$ is locally free, then $S:=\ker(G^*\to \cC)$ is locally free and dualizing the short exact sequence
$0\to S\to G^*\to \cC\to 0$ gives the short exact sequence $0\to \cK\to G\to S^*\to 0$, showing that $G/\cK$ is locally free, and hence $\hd(\cF)\leq1$. If $\cC$ is torsion then $\cK=0$ and the same will be true.

Otherwise, let $U\subseteq X$ be the open set where $\cC$ is locally free of rank $c\geq1$. Consider the relative Grassmannian $p:\Gr(G^*,c)\to X$. Since $\cC|_{U}$ is locally free of rank $c$, $p$ admits a local section
$$s_{\cC}:U\to \Gr(G^*|_U,c),\qquad x\mapsto(G^*|_x\twoheadrightarrow \cC|_x).$$
Let $Y$ be the closure of the image of $s_{\cC}$, and $\nu: Y\to X$ be the natural projection (the restriction of $p$). Note that the surjection $G^* \twoheadrightarrow \cC $ induces a canonical closed immersion $$\Gr(\cC,c) \xhookrightarrow{i} \Gr(G^*,c),$$ and 
$Y$ is then the unique  irreducible component of  the image of $i$ dominating $X$.  By the description above, we can see that the canonical surjection $$\nu^*\cC \twoheadrightarrow Q_{\Gr}|_{Y}$$ is an isomorphism over the open dense subset $\nu^{-1} (U)$. Therefore, if $\cc T$ is the torsion subsheaf of $\nu ^* \cc C$, we have the short exact sequence  $$0\to \cc T\to \nu^* \cc C \to Q_{\Gr}|_{Y}\to 0.$$
After dualizing, we find that $Q^*_\Gr|_Y \cong (\nu^* \cC)^*\cong \ker(\nu^* \sigma)$ 
is locally free. By construction $$\nu^*G/Q^*_\Gr|_Y  =(p^*G/Q^*_{\Gr})|_{Y}$$ is locally free, so the short exact sequence $$0\to \nu^*G/Q^*_\Gr|_Y \to \nu^* H\to \nu^* \cc F\to 0$$ shows that $\hd(\nu^*\cc F)\le 1$, as desired.
\end{proof}

\begin{remark}\label{remk:blsupp}
     \begin{enumerate}[i)]
         \item In Appendix \ref{equivblow up}, we show that the blow up of Lemma \ref{lem:blhd1} satisfies a universal property. In particular, it is independent of the choice of a presentation for $\cF$.
     \item In the case that $\cc F$ is locally free over its reduced support $\op{Supp}(\cc F)\subsetneq X$, one can give a simpler construction by taking $\nu \colon Y\to X$ to be the blow up of $X$ along  $\op{Supp}(\cc F)$. In that case, $ \nu^* \cc F$ will be locally free over the exceptional divisor of $Y$, which is a Cartier divisor, and hence $\hd(\nu^*\cc F)=1$ (cf. Remarks \ref{rmk:D3} and \ref{rmk:LiHu} for generalizations). 
     
     \end{enumerate}
\end{remark}

In the notation of the proof of Lemma \ref{lem:blhd1}, letting $E=G^*$ and $F=H^*$, we get a presentation
\begin{equation}\label{eq:presentationC}
    F\overset{\varphi}{\To} E\To \cC\To 0.
\end{equation}
We know that (cf. \eqref{eq:secforDr}) $\Gr(\cC,c)\subset \Gr(E,c)\xr{p} X$ is the zero locus $Z(\s)$ of the composition
    \beq{eq:sectionsigma}\sigma:p^*F\xr{p^*\varphi} p^*E \onto{} Q_{\Gr(E,c)}.\eeq
    The scheme $Z(\sigma)$ has expected codimension $cf$, and $\Gr(E,c)$ has dimension $d+c(e-c)$. In particular, if $\varphi$ is injective, since $f=e-c$, the expected dimension of $Z(\sigma)$ is $d$.

\begin{corollary}\label{cor:irred}
If $\Gr(\cC, c)$ is irreducible then it is identified with the blow up $Y$ constructed in Lemma \ref{lem:blhd1}. As a result, if $\varphi$ is injective   
$$[Y]=\Z(\s)\in A_d(Z(\sigma)),$$ where $\s$ is given in \eqref{eq:sectionsigma} and $\Z(\s)$ is the localized top Chern class of $p^*F^*\otimes Q_\Gr$ (cf. \cite[Section 14.1]{fu}).
In particular, for the inclusion $i:Y\hookrightarrow \Gr(E,c)$, we have
$$i_*[Y]=\Delta^{(f)}_{c}c\big({Q}_{_{\Gr}}-p^*F \big).$$
\end{corollary}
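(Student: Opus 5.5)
Three things need proving: the identification $Y=\Gr(\cC,c)$, the equality $[Y]=\Z(\s)$, and the pushforward formula. I will take them in that order.

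\textbf{Identification.} From the proof of Lemma \ref{lem:blhd1}, $Y$ is the closure of $s_{\cC}(U)$ inside $\Gr(\cC,c)\subset\Gr(E,c)$. It is the unique irreducible component of $\Gr(\cC,c)$ dominating $X$. If $\Gr(\cC,c)$ is irreducible, this component is all of it as a closed set. So $Y=\Gr(\cC,c)_{\mathrm{red}}$, and the two agree as cycles up to the multiplicity of the generic point.

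That multiplicity is $1$. Over $U$ the sheaf $\cC$ is locally free of rank $c$, so $p^{-1}(U)\cap\Gr(\cC,c)\to U$ is an isomorphism onto $U$, which is reduced. Hence $\Gr(\cC,c)$ is generically reduced, and its fundamental class equals $[Y]$. For the rest of the argument I identify $Y$ with $\Gr(\cC,c)$ at the level of cycles; this is all the subsequent statements need.

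\textbf{Localized Chern class.} By \eqref{eq:sectionsigma}, $\Gr(\cC,c)=Z(\s)$ is the zero scheme of a section of $V:=p^*F^*\otimes Q_{\Gr}$, whose rank is $cf$. The ambient $\Gr(E,c)$ is irreducible of dimension $d+c(e-c)$. When $\varphi$ is injective we have $f=e-c$, so
$$\dim \Gr(E,c)-\operatorname{rk}V=d+cf-cf=d.$$
Since $Z(\s)$ is irreducible of dimension $\dim X=d$, $\s$ is a regular section in the sense of codimension. I will make this rigorous by quoting \cite[Proposition 14.1(b)]{fu}: when $Z(\s)$ has the expected codimension, $\Z(\s)$ is a positive cycle supported on $Z(\s)$. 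Its coefficients are the intersection multiplicities along the components of $Z(\s)$, and they equal $1$ at generically reduced components. Here there is one component, which is generically reduced, so $\Z(\s)=[Y]$.

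One caveat: to use intersection multiplicities, Fulton's statement needs the ambient to be Cohen–Macaulay, or else one must argue directly. I would avoid this by a generic computation. The coefficient of a component in $\Z(\s)$ is computed at its generic point, and there $Z(\s)$ is a reduced point of $U$-type. Over $p^{-1}(U)$, both $\Gr(E,c)|_U$ and $Z(\s)|_U$ are local complete intersections of the correct codimension, and the section is transverse. Transversality follows from the local freeness of $\cC|_U$: locally, $\cC|_U$ is a direct summand of $E$, so $\s$ cuts out a section of a smooth morphism. Checking this multiplicity-one claim in the possibly singular setting is the step I expect to be the main obstacle.

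\textbf{Pushforward.} This step is formal. By \cite[Proposition 14.1(a)]{fu}, $i_*\Z(\s)=c_{cf}(p^*F^*\otimes Q_{\Gr})\cap[\Gr(E,c)]$. The Chern class of a tensor product of bundles of ranks $f$ and $c$ can be rewritten as a $\Delta$-determinant, as in \eqref{eq:TP-Gr(B)}:
$$c_{cf}(p^*F^*\otimes Q_{\Gr})=\Delta^{(f)}_{c}\,c\big(Q_{\Gr}-p^*F\big).$$
This yields the stated formula.
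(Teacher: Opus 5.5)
Your proposal is correct, but it determines the multiplicity by a different mechanism than the paper.

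Both arguments start from the same observation. Since $Z(\s)=\Gr(\cC,c)$ is irreducible of the expected dimension $d$, one has $\Z(\s)=t[Y]$ for some integer $t$, and $t$ is positive by \cite[Proposition 14.1]{fu}.

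The paper then fixes $t$ globally. It pushes the Thom--Porteous class $c_{cf}(p^*F^*\otimes Q_{\Gr})=\Delta^{(f)}_c c(Q_{\Gr}-p^*F)$ down to $X$, where it equals $[X]$ by \cite[Example 14.2.1]{fu}. It compares this with $\nu_*[Y]=[X]$, which holds because $\nu$ is birational. This forces $t=1$ with no local analysis.

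You instead compute $t$ at the generic point of $Y$, using $\Gr(\cC,c)\times_X U\cong U$. Your Cohen--Macaulay worry is unnecessary. \cite[Proposition 7.1]{fu} gives $1\le t\le \ell(\cO_{Z(\s),Y})$ for any proper intersection with a regular embedding, and generic reducedness makes this length $1$.

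Your fallback argument is also sound once it is phrased correctly. Over $p^{-1}(U)$ the zero scheme is the image of a section of the smooth morphism $p$, hence a regular embedding of codimension $cf$. So $\s$ is a regular section there, and $t=1$ follows by restricting $\Z(\s)$ to this dense open set. The point is that this embedding is regular; it is not true that $\Gr(E,c)|_U$ is a local complete intersection, since $X$ may be singular.

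The paper's route is shorter and purely numerical. Yours gives extra information: generic reducedness of $\Gr(\cC,c)$ and regularity of $\s$ over $U$. It also makes explicit that $Y$ and $\Gr(\cC,c)$ are identified as reduced schemes and as cycles, which is all that is used later.

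Your final pushforward step is the same as the paper's.
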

\begin{proof}
The first claim is immediate from the proof of Lemma \ref{lem:blhd1}, where $Y$ was defined to be an irreducible component of $\Gr(\cC,c)$. For the next claims,
    since $Y=Z(\sigma)$ is irreducible of the expected dimension $d$,  there is some integer $t$, such that $\Z(\sigma)=t[Y]$. By Thom-Porteous formula
    \begin{align*}
        i_*\Z(\sigma)&=c_{c(e-c)}(p^*F^*\otimes{Q}_{\Gr})        =\Delta_{c}^{(f)}c({Q}_{\Gr}-p^*F).
    \end{align*}
     Since  $\nu_*\left(\Delta_{c}^{(f)}c({Q}_{\Gr}-p^*F)\right)=[X]$ (cf. \cite[Example 14.2.1]{fu}) and $\nu_*[Y]=[X]$, we find that $t=1$ and the claim follows.
\end{proof}

\begin{corollary} \label{cornu}
For any $\sff{E}\in \Perf(X,0,2)$, there exists a blow up $\nu\colon Y\to X$, such that 
$\op{hd}(\sh^{2}(\nu^*\sff{E}))\le 1$. As a result,  $\tau^{\le 1}(\nu^* \sff{E})\in\Perf(Y,0,1)$. 
\end{corollary}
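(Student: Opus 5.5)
Apply Lemma \ref{lem:blhd1} to the coherent sheaf $\cF:=\sh^2(\sff E)$ and then check that the property survives derived pullback along the resulting $\nu$. Represent $\sff E$ by a complex of vector bundles $E_0\xr{d_0}E_1\xr{d_1}E_2$ in degrees $[0,2]$, so that $\sh^2(\sff E)=\coker(d_1)$. Lemma \ref{lem:blhd1} gives a birational $\nu\colon Y\to X$ with $\op{hd}(\nu^*\cF)\le1$. Here $\nu^*$ means the ordinary (underived) pullback, as in the last exact sequence of that proof.

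The first step is to identify $\sh^2(\nu^*\sff E)$ with $\nu^*\sh^2(\sff E)$. Since $\nu^*\sff E$ is represented by $\nu^*E_0\to\nu^*E_1\to\nu^*E_2$, its top cohomology is $\coker(\nu^*d_1)$. Pullback is right exact, so this cokernel equals $\nu^*\coker(d_1)=\nu^*\cF$. Hence $\op{hd}(\sh^2(\nu^*\sff E))\le1$.

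The second step shows $\tau^{\le1}(\nu^*\sff E)\in\Perf(Y,0,1)$. Write $\cF':=\sh^2(\nu^*\sff E)$ and $K:=\ker(\nu^*E_2\to\cF')$. Since $\nu^*E_2$ is locally free and $\op{hd}(\cF')\le1$, the sequence $0\to K\to\nu^*E_2\to\cF'\to0$ shows $K$ is locally free. The map $\nu^*d_1$ factors through a surjection $\nu^*E_1\twoheadrightarrow K$ onto a locally free sheaf, so its kernel $E_1':=\ker(\nu^*E_1\to K)$ is locally free. Then $\tau^{\le1}(\nu^*\sff E)$ is quasi-isomorphic to $\nu^*E_0\to E_1'$, because $\nu^*d_0$ lands in $E_1'$ as $d_1d_0=0$. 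Indeed $\tau^{\le1}$ of the three-term complex is $\nu^*E_0\to\ker(\nu^*d_1)$ with $\ker(\nu^*d_1)=E_1'$. This is a 2-term complex of vector bundles in degrees $[0,1]$.

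There is no real obstacle. The only points needing care are that Lemma \ref{lem:blhd1} refers to the underived pullback, and the right-exactness identification in the first step. If $\sh^2(\sff E)$ is already of homological dimension at most $1$ (for instance zero or locally free), one may simply take $\nu=\id$.
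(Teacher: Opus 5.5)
Your proposal is correct and is essentially the paper's proof: apply Lemma \ref{lem:blhd1} to $\sh^2(\sff E)$, use right exactness to get $\nu^*\sh^2(\sff E)=\sh^2(\nu^*\sff E)$, and conclude that $\ker(\nu^*d_1)$ is locally free, so $\tau^{\le1}(\nu^*\sff E)\simeq\{\nu^*E_0\to\ker(\nu^*d_1)\}$. The only difference is that you prove the local freeness of $\ker(\nu^*d_1)$ in two explicit steps, through its image in $\nu^*E_2$, where the paper just cites exactness of $\nu^*E_1\to\nu^*E_2\to\nu^*\cF\to0$.
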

\begin{proof} Fix a locally free resolution
$$\sff{E} \simeq \{ E_0\xr{\s_0} E_1\xr{\s_1}E_{2}\}.$$
Applying Lemma \ref{lem:blhd1} to $\cc F:=\sh^2(\sff{E})$, there is a blow up $\nu\colon Y\to X$, such that $\nu^*\cc F=\sh^2(\nu^* \sff{E})$ has homological dimension at most 1. Then $K:=\ker(\nu^*\sigma_1)$ is locally free since $\nu^*E_1\to \nu^*E_2\to\nu^*\cF\to 0$ is exact. Thus,
$$\tau^{\leq1}(\nu^*\sff{E})\simeq\{\nu^*E_0\to K\}\in\Perf(Y,0,1)$$
\end{proof}

\begin{definition} \label{def:Esuit}
    \emph{For any $\sff{E}\in \Perf(X,0,2)$, we will call a blow up of $X$ with the property from Corollary \ref{cornu} an \emph{$\sff E$-suitable blow up}}.
\end{definition}

Let $\sff{E}\in \Perf(X,0,2)$ and $\nu\colon Y\to X$ be an $\sff E$-suitable blow up. Fix a locally free resolution
$$\sff{E} \simeq\  \{E_0\xr{\s_0} E_1\xr{\s_1}E_{2}\}.$$
For $K:=\ker(\nu^*\sigma_{1})$, we have the quasi-isomorphisms
 \begin{align}\label{eq:df35}
     &\tau^{\le 1}(\nu^*\sff{E}) \simeq \big \{\nu^* E_0 \xr{\nu^* \s_0}  K\big\},\\ &\sh^2(\nu^*\sff{E})[-2]\simeq\{\nu^* E_{1}/K\xr{\nu^*\sigma_1}\nu^*E_2\}\notag\end{align}  
 to 2-term complexes of locally free sheaves. For any $r\geq 1$, we have the commutative diagrams of fiber squares
$$\xymatrix{\wt{\op{D}}_r(\nu^*\sff{E})=\Gr(\sh^0(\nu^*\sff E^\vee),r) \ar@{^(->}[r] \ar[d]^{\nu} &\op {Gr}( \nu^*E_0^*,r) \ar[r]^-{p} \ar[d]^-{ \nu} & Y\ar[d]^\nu\\
 \wt{\op{D}}_r(\sff{E})= \Gr(\sh^0(\sff E^\vee),r) \ar@{^(->}[r] &\op {Gr}( E_0^*,r) \ar[r]^-{p} & X,}$$
\begin{equation}\label{fig:def1st}
    \xymatrix{\wt{\op{D}}_r(\nu^*\sff E) \ar[d] \ar[r]& \Gr(\nu^* E_0^*,r)\ar[d]^-{\wt {\nu^*\s_0} }\\ \Gr(\nu^*E_0^*,r) \ar[r]^-{0_{K_1}} &  Q_{\Gr}\otimes p^* K=: K_1,}
\end{equation} 
where $\wt{\nu^*\sigma_0}$ is the section of the vector bundle $K_1$ induced by the composition
$$Q^*_{\Gr(\nu^*E_0^*,r)}\into{}p^* \nu^*E_0\xr{p^*\nu^*\sigma_0}p^*K.$$
Similarly, from the commutative diagrams of fiber squares
$$\xymatrix{\wt{\op{D}}_r(\nu^*\sff E^\vee)=\Gr(\sh^2(\nu^*\sff E),r) \ar@{^(->}[r] \ar[d]^{\nu} &\op {Gr}( \nu^*E_2,r) \ar[r]^-{p} \ar[d]^-{ \nu} & Y\ar[d]^\nu\\
 \wt{\op{D}}_r(\sff E^\vee)=\Gr(\sh^2(\sff E),r)  \ar@{^(->}[r] &\op {Gr}( E_2,r) \ar[r]^-{p} & X,}$$ 
\begin{equation}\label{fig:def2nd}
\xymatrix{\wt{\op{D}}_r(\nu^*\sff{E}^\vee) \ar[d] \ar[r]& \Gr(\nu^* E_2,r)\ar[d]^-{\wt {\nu^*\s_1^*} }\\ \Gr(\nu^*E_2,r) \ar[r]^-{0_{K_2}} & Q_{\Gr}\otimes p^*(\nu^*E_1/K)^*=:K_2,}
\end{equation}
where the section $\wt{\nu^*\sigma_1^*}$ of $K_2$ is induced by  the composition
$$Q^*_{\Gr(\nu^*E_2,r)}\into{}p^* \nu^*E_2^*\xr{p^*\nu^*\sigma^*_1}p^*(\nu^*E_1/K)^*.$$

\begin{definition} \label{def:1st2nd}
\emph{Given $\sff{E}\in\Perf(X,a,a+2)$ and an  integer $r\ge 1$, after replacing $\sff{E}$ by $\sff{E}[a]$, we may assume that $a=0$. Then, we use \eqref{fig:def1st} and \eqref{fig:def2nd} to define \emph{the first virtual cycle} and \emph{the second virtual cycle} of $\sff{E}$ as} 
    \begin{align*} 
\label{class} &\llb{\sff{E},r}:= \nu_*\left(0_{ K_1}^! [\op {Gr}( \nu^*E_0^*,r)]\right) \in A_{d-r(r-\sff e+\rkh{\sff E}^2)}(\wt{\op{D}}_r(\sff E)),\\
&\llbb{\sff{E},r}:=\nu_*\left(0_{ K_2}^! [\op {Gr}( \nu^*E_2,r)]\right) \in A_{d-r(r-\rkh{\sff{E}}^2)}(\wt{\op{D}}_r(\sff E^\vee)).\end{align*}
\emph{In Lemma \ref{lem:indep}, we show that these classes are independent of the choices of an $\sff E$-suitable blow up and of a resolution of $\sff E$ justifying the notation used above. If the number $r$ is clear from the context, we omit it  and use $\llb{\sff{E}}$ and $\llbb{\sff{E}}$ to denote these two classes.}

\end{definition}

\begin{lemma} \label{Lq*} Let $q\colon Z\to Y$  be a birational morphism of quasi-projective varieties.
 If $\cc F$ is a coherent sheaf of homological dimension at most 1 on  $Y$, then $ L q^* \cc F=q^*\cc F$.  
\end{lemma}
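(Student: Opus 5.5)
Since $\hd(\cF)\le 1$, locally (and globally on the quasi-projective $Y$, after choosing a surjection from a locally free sheaf) there is a two-term locally free resolution
$$0\To G\xr{\ \varphi\ } H\To \cF\To 0 .$$
The plan is to compute $Lq^*\cF$ with this resolution. We get $Lq^*\cF\simeq\{q^*G\xr{q^*\varphi}q^*H\}$. Hence $L^{-i}q^*\cF=0$ for $i\ge 2$, $L^0q^*\cF=q^*\cF$, and $L^{-1}q^*\cF=\ker(q^*\varphi)$. The claim therefore reduces to showing that $q^*\varphi\colon q^*G\to q^*H$ is injective.

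The key step is a torsion-freeness argument using that $Z$ is a variety (integral) and $q$ is birational. Let $U\subseteq Y$ be a dense open set over which $q$ is an isomorphism. Since $\varphi$ is injective and $Y$ is integral, $\varphi$ is injective at the generic point, so $\ker(q^*\varphi)$ vanishes at the generic point of $Z$. More concretely, over the open dense $q^{-1}(U)\cong U$ the map $q^*\varphi$ is identified with $\varphi|_U$, which is injective. Thus $\ker(q^*\varphi)$ is a subsheaf of the locally free sheaf $q^*G$ supported on the proper closed subset $Z\setminus q^{-1}(U)$. A locally free sheaf on an integral scheme has no nonzero subsheaves supported on a proper closed subset, since it is torsion-free. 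Hence $\ker(q^*\varphi)=0$.

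It follows that $L^{-1}q^*\cF=0$, so $Lq^*\cF\simeq q^*\cF$. Moreover, $q^*\cF$ again has homological dimension at most $1$, with resolution $0\to q^*G\to q^*H\to q^*\cF\to 0$.

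The only point requiring care is existence of a global two-term resolution. If one prefers, the statement is local, so it suffices to work locally on $Y$. There $\hd\le1$ gives $0\to G\to H\to\cF\to0$ with $G,H$ free after taking $H$ free surjecting onto $\cF$, because the kernel is then locally free. The argument above goes through verbatim on each such open set. I do not expect a genuine obstacle; the only real input is that birational means isomorphic over a dense open, together with irreducibility and reducedness of $Z$.
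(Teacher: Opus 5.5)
Your proposal is correct and is essentially the paper's argument. You resolve $\cF$ by $0\to G\to H\to\cF\to 0$, note that only $L^{-1}q^*\cF=\ker(q^*\varphi)$ can be nonzero, and kill it because it is a subsheaf of the locally free, hence torsion-free, sheaf $q^*G$ on the integral scheme $Z$ that vanishes on a dense open.

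The only difference is cosmetic. The paper takes that dense open to be the preimage of the locus where $\cF$ is locally free, since there the resolution is locally split and pullback stays exact. You use instead the locus over which $q$ is an isomorphism (or the generic point). Either choice works, and both really only use that $q$ is dominant.
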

\begin{proof}
By assumption  we can choose a locally free resolution $$0\to G\to H\to \cc F\to0.$$ So $L^{i\geq 2}q^*\cc F$ are clearly 0. Since $\cF$ is locally free on an open dense subset of $Y$, if we apply the derived functor $Lq^*$ to this sequence we see that $L^1q^* (\cc F)$ is a torsion subsheaf of $q^* G$, and hence it must be 0 as well.
\end{proof}

\begin{lemma} \label{lem:indep}
Both classes $\llb{\sff{E}}$ and $ \llbb{\sff{E}}$ are independent of the choices of an $\sff E$-suitable blow up and of a resolution of $\sff E$. In particular, they only depend on the quasi-isomorphism class of $\sff{E}$.
\end{lemma}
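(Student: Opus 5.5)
Independence is proved in two steps: first the resolution is varied with the blow up fixed, then the blow up is varied. Throughout, the classes are those of Definition \ref{def:1st2nd}, obtained from the 2-term complexes $\tau^{\le 1}(\nu^*\sff E)\simeq\{\nu^*E_0\to K\}$ and $\sh^2(\nu^*\sff E)[-2]\simeq\{\nu^*E_1/K\to \nu^*E_2\}$ in \eqref{eq:df35}.

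\textbf{Step 1: independence of the resolution.} Fix an $\sff E$-suitable blow up $\nu\colon Y\to X$. The refined Gysin class $0_{K_1}^![\Gr(\nu^*E_0^*,r)]$ is the class $[\wt{\op D}_r(\tau^{\le1}\nu^*\sff E)]^{\vir}$ attached to the 2-term complex $\{\nu^*E_0\to K\}$ in Section \ref{virres}. Although $Y$ need not be smooth, the class is still defined as a refined Gysin pullback along the zero section of $K_1$. I would then reprove \cite[Proposition 1]{G2} in this setting. Any two 2-term resolutions of the same object of $\Perf(Y,0,1)$ are connected by a roof of quasi-isomorphisms. Each leg can be arranged to be a degreewise surjection whose kernel is an acyclic complex $\{A\xr{\cong}A\}$. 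For such a leg, $\Gr(\nu^*E_0^*\oplus A^*,r)$ contains $\Gr(\nu^*E_0^*,r)$ as the zero locus of a regular section of $Q\otimes A$. Excess intersection and functoriality of Gysin maps \cite[Ch.~6]{fu} then show that the two Gysin classes agree on the common zero scheme $\Gr(\sh^0(\nu^*\sff E^\vee),r)$. Two resolutions of $\sff E$ on $X$ induce quasi-isomorphic truncations $\tau^{\le1}\nu^*\sff E$, and also quasi-isomorphic complexes $\sh^2(\nu^*\sff E)[-2]$. The latter is the dual-side statement for $\Gr(\sh^2,r)$, handled by the same argument applied to $\{(\nu^*E_1/K)\to\nu^*E_2\}$. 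This gives independence of the resolution for both classes.

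\textbf{Step 2: independence of the blow up.} Given two $\sff E$-suitable blow ups $\nu_1\colon Y_1\to X$ and $\nu_2\colon Y_2\to X$, take $Z$ to be the closure of the diagonal image of a common dense open set in $Y_1\times_X Y_2$. This gives birational maps $q_i\colon Z\to Y_i$, and $Z\to X$ is again $\sff E$-suitable. Indeed, by Lemma \ref{Lq*}, $Lq_i^*\sh^2(\nu_i^*\sff E)=q_i^*\sh^2(\nu_i^*\sff E)=\sh^2(\nu^*\sff E)$ for $\nu=\nu_i q_i$, and this sheaf has homological dimension $\le1$. Moreover, $q_i^*$ of the 2-term complexes on $Y_i$ computes the corresponding truncations on $Z$, because the kernel $K$ pulls back to the kernel. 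It therefore suffices to compare $Y$ with a birational $q\colon Z\to Y$. The two Gysin classes sit in the fiber square over $\Gr(q^*\nu^*E_0^*,r)\to\Gr(\nu^*E_0^*,r)$, and the vector bundle $K_1$ on $Z$ is the pullback of the one on $Y$. Since $q$ is proper and birational, $q_*[\Gr_Z]=[\Gr_Y]$ on these Grassmannian bundles, which are irreducible. Refined Gysin maps commute with proper pushforward \cite[Thm.~6.2]{fu}, so
$$q_*0^!_{K_1}[\Gr_Z]=0^!_{K_1}q_*[\Gr_Z]=0^!_{K_1}[\Gr_Y].$$
Pushing forward to $\wt{\op D}_r(\sff E)$ gives equality of the first classes; the second class is identical with $E_2$ in place of $E_0^*$.

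\textbf{Main obstacle.} The step I expect to be delicate is the compatibility $q^*K_Y=K_Z$ together with the identification of the pulled-back truncation. This is where Lemma \ref{Lq*} is essential: without it, the pullback of $\sh^2$ could acquire $L^1$ torsion, and the kernels would fail to match. Everything else is formal Gysin functoriality.
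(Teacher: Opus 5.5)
Your proposal is correct in outline, but your Step 1 takes a different route from the paper's. Step 2 is essentially the paper's first half. You use the dominant component $Z$ of $Y\times_X Y'$, and Lemma \ref{Lq*} to see that $Z$ is $\sff E$-suitable and that $K$, hence $K_1$ and its section, pulls back. You then use compatibility of the localized top Chern class with proper pushforward (the paper cites \cite[Prop.~14.1(d)]{fu}).

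You diverge from the paper in Step 1. The paper never proves quasi-isomorphism invariance on the possibly singular $Y$. It first proves blow-up independence. It then passes to a further blow up $\mu\colon Z\to Y$ with $Z$ nonsingular, which relies on resolution of singularities. On $Z$ both 2-term complexes represent $\tau^{\le1}(\mu^*\nu^*\sff E)$, and the classes are virtual cycles of perfect obstruction theories, so \cite[Proposition 1]{G2} gives equality; pushing down via the first part finishes the proof. Your route proves the invariance directly by intersection theory: roofs of 2-term complexes, plus the stabilization $\{E_0\to E_1\}\rightsquigarrow\{E_0\oplus A\to E_1\oplus A\}$ handled by commuting Gysin maps. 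This is precisely the alternative the paper gives in Appendix \ref{quasiinvariance} (Lemmas \ref{lem:invar1}, \ref{lem:invar2} and Proposition \ref{prop:invariance}; cf.\ Remark \ref{rem:remonappA}). The paper's argument is two lines but leans on Hironaka and on obstruction theory. Yours is elementary, works on singular $Y$, and makes the order of your two steps irrelevant.

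One point in your Step 1 needs more care. A leg $P_\bullet\to\{\nu^*E_0\to K\}$ that is degreewise surjective with kernel $\{A\xrightarrow{\cong}A\}$ only exhibits $P_0$ and $P_1$ as extensions. On a non-affine $Y$ these need not split, so you cannot simply write $\Gr(P_0^*,r)=\Gr(\nu^*E_0^*\oplus A^*,r)$ and split the section into $(s_1,s_2)$. Even in the split case the differential is only lower triangular, though that is harmless after a change of coordinates. A clean repair is Jouanolou's trick. Pull back to an affine bundle $W\to Y$; flat pullback to a torsor under a vector bundle is an isomorphism on Chow groups and commutes with refined Gysin maps. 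On $W$ the derived isomorphism is an honest chain map $f\colon E_\bullet\to F_\bullet$, and $E_\bullet\oplus\{F_0\xrightarrow{\id}F_0\}\to F_\bullet$ is a degreewise surjective quasi-isomorphism whose kernel $C_\bullet$ is contractible. That short exact sequence splits degreewise, and since $C_\bullet$ is contractible it splits as complexes, giving
$E_\bullet\oplus\{F_0\xrightarrow{\id}F_0\}\cong F_\bullet\oplus C_\bullet$.
The split stabilization argument (Lemma \ref{lem:invar2}) then applies on both sides.
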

\begin{proof} We prove the claims for $\llb{\sff E}$. For $\llbb{\sff E}$ they are proven similarly. 
Suppose $\nu'\colon Y'\to X$ is another $\sff E$-suitable blow up. Working with the same locally free resolution for $\sff E$, we can set up a similar fiber square to that of \eqref{fig:def1st} in which $\nu, Y, K, K_1$ are replaced by $\nu', Y', K', K'_1$, respectively. Let $Z$ be the irreducible component of $Y\times_X Y'$ dominating $X$, and consider the commutative square 
\beq{ZYY'}\xymatrix{Z\ar[r]^-q \ar[d]^-{q'}&Y\ar[d]^-{\nu} \\  Y' \ar[r]^-{\nu'} &X.}\eeq
By Lemma \ref{Lq*}, $\hd(\sh^2(q^*\nu^*\sff E))=\hd(\sh^2(q'^*\nu'^*\sff E))\le 1$ and also $$q^* \tau^{\le 1}(\nu^*\sff{E})\cong  \tau^{\le 1} (q^*\nu^*\sff{E})\cong \tau^{\le 1} (q'^*\nu'^*\sff{E})\cong q'^* \tau^{\le 1}(\nu'^*\sff{E}).$$  
By \cite[Prop 14.1.d]{fu}, 
\begin{align*}
    0_{K_1}^! [\op {Gr}( \nu^*E_0^*,r)]&=q_*0_{q^*K_1}^! [\op {Gr}( (\nu\circ q)^*E_0^*,r)],\\
     0_{K'_1}^! [\op {Gr}( \nu'^*E_0^*,r)]&=q'_*0_{q'^*K'_1}^! [\op {Gr}((\nu'\circ q')^*E_0^*,r)].
\end{align*}
From these and the identification $q^*K_1\cong q'^*K'_1$, we get 
$$\nu_*0_{K_1}^! [\op {Gr}( \nu^*E_0^*,r)]=\nu'_*0_{K'_1}^! [\op {Gr}( \nu'^*E_0^*,r)].$$ 
This shows that $\llb{\sff{E}}$ is independent of the choice of an $\sff E$-suitable blow up. 

 Suppose that $\{E'_0\xr{\s'_0} E'_1\xr{\s'_1}E'_{2}\}$ is another locally free resolution for $\sff E$. Let $\nu\colon Y\to X$ be an $\sff E$-suitable blow up, and $\mu\colon Z\to Y$ be any blow up with $Z$ nonsingular. By Lemma \ref{Lq*}, $Z$ is also an $\sff E$-suitable blow up of $X$. By the first part of the proof, \beq{EbuE'bu}\llb{\nu^* E_\bu}=\mu_*\llb{\mu^*\nu^* E_\bu},\qquad \llb{\nu^* E'_\bu}=\mu_*\llb{\mu^*\nu^* E'_\bu}.\eeq
On the other hand, the 2-term complexes of vector bundles $$\{\mu^*\nu^*E_0\xr{\mu^*\nu^*\s_0} \ker(\mu^*\nu^*\s_1)\},\qquad \{\mu^*\nu^*E'_0\xr{\mu^*\nu^*\s'_0} \ker(\mu^*\nu^*\s'_1)\}$$
are quasi-isomorphic to $\tau^{\le 1}(\mu^*\nu^*\sff E)$. Since $Z$ is nonsingular, and the virtual cycles resulted from a perfect obstruction theory discussed in Section \ref{virres} only depend on the quasi-isomorphism type,  $$\llb{\mu^*\nu^* E_\bu}=[\wt {\op D}_r(\mu^*\nu^*\s_0)]^{\vir}=[\wt {\op D}_r(\mu^*\nu^*\s'_0)]^{\vir}=\llb{\mu^*\nu^* E'_\bu}.$$ Applying $\nu_*\mu_*$ to both sides, and using \eqref{EbuE'bu} and the first part of this proof again, we get $\llb{E_\bu}=\llb{E'_\bu}$, as desired.
\end{proof}

\begin{remark} \label{rem:remonappA}
    The last part of the proof of Lemma \ref{lem:indep} relies on the resolution of singularities to ensure the existence of a nonsingular suitable blow up. In Appendix \ref{quasiinvariance}, we will give an alternative proof of independence from the choice of locally free resolutions.
\end{remark}

\begin{remark} \label{rem:hd=1,2}
    i) In the case that $\sff{E}$ satisfies $\hd(\sh^2(\sff{E}))\leq1$ and $X$ is nonsingular,  no blow up is required to define  $\llb{\sff{E}}$ and it coincides  with  $[\wt{\op{D}}_r(\tau^{\leq1}(\sff{E}))]^{\vir}$ (cf. Section \ref{virres}). An important special case is when $\sff E\in \Perf(X,0,1)$ in which case $$\llb{\sff E}=[\wt{\op{D}}_r(\sff{E})]^{\vir},\qquad \llbb{\sff E}=0.$$ ii) If $X$ is nonsingular and  $\hd(\sh^2(\sff{E}))=2$, even though  $\tau^{\leq1}(\sff{E})\in\Perf(X,0,1)$,  the classes  $[\wt{\op{D}}_r(\tau^{\leq1}(\sff{E}))]^{\vir}$ and $\llb{\sff{E}}$ may not still agree.
\end{remark}

\begin{corollary} \label{cor:pullpush}
        For any blow up $\nu \colon Y\to X$, we have $$ 
\llb{\sff{E}}=\nu_*\llb{\nu^*\sff{E}},\qquad 
\llbb{\sff{E}}=\nu_*\llbb{\nu^*\sff{E}}.$$  
\end{corollary}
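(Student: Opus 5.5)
The plan is to choose a single blow up that works for both $X$ and $Y$, and then apply Lemma \ref{lem:indep} on each side. Let $\nu\colon Y\to X$ be any blow up, i.e. a birational morphism of quasi-projective varieties.

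First choose a $\nu^*\sff E$-suitable blow up $\mu\colon Z\to Y$; it exists by Corollary \ref{cornu} applied to $\nu^*\sff E\in\Perf(Y,0,2)$. Put $\lambda:=\nu\circ\mu\colon Z\to X$. This is again a birational morphism of varieties. Since $\lambda^*\sff E\simeq\mu^*\nu^*\sff E$, we have $\sh^2(\lambda^*\sff E)=\sh^2(\mu^*\nu^*\sff E)$, and this sheaf has homological dimension at most $1$ by the choice of $\mu$. So $\lambda$ is itself an $\sff E$-suitable blow up of $X$.

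Next, fix a locally free resolution $\{E_0\to E_1\to E_2\}$ of $\sff E$. Pulling it back gives the resolution $\{\nu^*E_0\to\nu^*E_1\to\nu^*E_2\}$ of $\nu^*\sff E$, and pulling back once more along $\mu$ gives the resolution $\{\lambda^*E_0\to\lambda^*E_1\to\lambda^*E_2\}$. The groups on $Z$ are therefore literally the same in both computations. The kernel $K=\ker(\lambda^*\sigma_1)$ is the same, so the bundle $K_1=Q_{\Gr}\otimes p^*K$ on $\Gr(\lambda^*E_0^*,r)$ and its section $\wt{\lambda^*\sigma_0}$ coincide. Hence the refined Gysin class $0^!_{K_1}[\Gr(\lambda^*E_0^*,r)]$ is one and the same class on $\wt{\op D}_r(\lambda^*\sff E)$, whether we regard it as computing $\llb{\nu^*\sff E}$ via $\mu$ or $\llb{\sff E}$ via $\lambda$.

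By Definition \ref{def:1st2nd} and Lemma \ref{lem:indep} we then get
\begin{align*}
\llb{\nu^*\sff E}&=\mu_*\,0^!_{K_1}[\Gr(\lambda^*E_0^*,r)],\\
\llb{\sff E}&=\lambda_*\,0^!_{K_1}[\Gr(\lambda^*E_0^*,r)].
\end{align*}
Here the pushforwards are along the induced maps $\wt{\op D}_r(\lambda^*\sff E)\to\wt{\op D}_r(\nu^*\sff E)\to\wt{\op D}_r(\sff E)$. These maps come from the fiber squares of Grassmannians of $\sh^0$ of the duals, which are compatible with composition. Functoriality of proper pushforward, $\lambda_*=\nu_*\mu_*$, then gives $\llb{\sff E}=\nu_*\llb{\nu^*\sff E}$. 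The argument for $\llbb{\,\cdot\,}$ is identical, using the bundle $K_2$ and the Grassmannian $\Gr(\lambda^*E_2,r)$ from \eqref{fig:def2nd}.

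The main point needing care is the bookkeeping that the intermediate objects agree on the nose. We must check that $\wt{\op D}_r(\lambda^*\sff E)$ is the base change of $\wt{\op D}_r(\nu^*\sff E)$, and that the induced map to $\wt{\op D}_r(\sff E)$ is the composite. This holds because $\Gr(\sh^0(\,\cdot\,^\vee),r)$ commutes with base change, $\sh^0$ of the dual being right exact. We must also check that the degree shifts match: the Chow degree $d-r(r-\sff e+\rkh{\sff E}^2)$ is unchanged, because $\dim Y=\dim X$ and the generic ranks $\rkh{\nu^*\sff E}^2=\rkh{\sff E}^2$ agree, $\nu$ being birational. Both checks are routine.
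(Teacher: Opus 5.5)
Your proposal is correct and is essentially the paper's proof. The paper likewise chooses a $\nu^*\sff E$-suitable blow up $\mu\colon Z\to Y$ and appeals to the argument of the first part of Lemma~\ref{lem:indep}. That argument amounts to your observation that $\nu\circ\mu$ is $\sff E$-suitable, so that $\llb{\sff E}$ and $\llb{\nu^*\sff E}$ are pushforwards of the same class on $\wt{\op{D}}_r(\mu^*\nu^*\sff E)$, combined with $(\nu\circ\mu)_*=\nu_*\mu_*$. Your write-up simply makes explicit the bookkeeping the paper leaves to the reader: base change of the Grassmannians, equality of $K_1$, $K_2$ and their sections, and matching of the Chow degrees.
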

\begin{proof}
Choose a $\nu^*\sff E$-suitable blow up $\mu\colon Z\to Y$, and use the argument for the first part of the  proof of Lemma \ref{lem:indep}.
\end{proof}

In some of our applications, it will be more appropriate to define a (finite) sequence of 1st classes, starting with $\llb{\sff E,r}$.

\begin{definition}\label{def:refinedE}
    \emph{Given $\sff E\in\Perf(X,0,2)$, integers $r\ge 1, k\ge 0$ and an $\sff E$-suitable $\nu:Y\to X$  blow up, we define} 
    $$\llb{\sff E,r}_k:=\nu_*\left(s_k\big(\sh^2(\nu^*\sff E)\otimes Q_{\wt{\op{D}}_r(\nu^*\sff E)}\big)\cap\llb{\nu^*\sff E, r}\right)$$
    \emph{in $ A_{d-r(r-\sff e+\rkh{\sff E}^2)-k}(\wt{\op{D}}_r(\sff E))$. Here, to simplify the notation, we have omitted derived pullback inside the Segre class, and think of (cf. \eqref{eq:df35}) $$\sh^2(\nu^*\sff E)\otimes Q_{\wt{\op{D}}_r(\nu^*\sff E)} \in \Perf(\wt{\op{D}}_r(\nu^*\sff E),-1,0).$$ By the same argument as in Lemma \ref{lem:indep}, these classes are also independent of the choices of  blow ups and resolutions for $\sff E$, and as before, we sometimes omit $r$ from the notation. Note that by Corollary \ref{cor:pullpush}, $\llb{\sff E}_0=\llb{\sff E}$.}
\end{definition}

\subsection{Special 3-term complexes}
These are 3-term complexes of vector bundles that in our applications behave like the 2-term ones. 
Let $\sff E\in\Perf(X,0,2)$. We say that $\sff E$ is \emph{$r$-special} if one of the following equivalent conditions is satisfied:
\begin{equation} \label{UDr}
\begin{split}
 & \op{D}_r(\sff E)\cap \op{Supp}(\frak h^2(\sff E))=\emptyset\iff {\mathsf{E}}|_{\mathrm{D}_r(\sff E)}\in\Perf(\op{D}_r(\sff E),0,1) \iff  \\ & 
\exists \text{ open neighborhood } U  \text{ of } \mathrm{D}_r(\sff E)  
\text{ such that } \sff{E}|_{U}\in\Perf(U,0,1).
\end{split}
\end{equation}
\begin{remark}\label{rem:UDr}
When $r=1$, Condition \eqref{UDr} can be equivalently phrased as $${\op{D}}_1(\sff E)\cap {\op{D}}_1(\sff E^\vee)=\emptyset.$$ 
\end{remark}

If $\sff E$ is $r$-special and $X$ is nonsingular, then the construction of Section \ref{virres} can be applied to ${\sff E}|_{U}$ to equip $\wt{\op{D}}_r(\sff{E})$ with a perfect obstruction theory, which only depends on the quasi-isomorphism class of $\sff E\in D^b(X)$ (cf. \cite{G2}). We denote the resulting virtual cycle by
$$[\wt{\op D}_r(\sff E)]^{\vir}\in A_{d-r(r-\sff e)}(\wt{\op D}_r(\sff E)).$$
However, even in this case, we cannot in general get a useful Thom-Porteous formula as in Section \ref{virres} for the pushforward of  $[\wt{\op{D}}_r(\sff{E})]^{\vir}$ to $X$. As we will see, such a formula exists for the first and second classes of $\sff E$ over any $\sff E$-suitable blow up (cf. Theorem \ref{thm:T-Pformulas}), and so does for $[\wt{\op D}_r(\sff E)]^{\vir}$ by the following proposition.

\begin{prop}\label{prop:red1st}
    If $\sff E\in\Perf(X,0,2)$ is $r$-special and $X$ is nonsingular, then
$$\big[\wt{\op{D}}_r(\sff{E})\big]^{\vir}=\llb{\sff E, r}.$$
\end{prop}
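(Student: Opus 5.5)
Since $\sff E$ is $r$-special, condition \eqref{UDr} gives that $\op{D}_r(\sff E)\cap\op{Supp}(\sh^2(\sff E))=\emptyset$. So $\rkh{\sff E}^2=0$, and the two classes live in the same group $A_{d-r(r-\sff e)}(\wt{\op D}_r(\sff E))$. The plan is to choose an $\sff E$-suitable blow up $\nu\colon Y\to X$ that is an isomorphism over the open set $U:=X\setminus\op{Supp}(\sh^2(\sff E))$, and then compare the two refined Gysin pullbacks that define the classes.

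\textbf{Step 1: choice of blow up.} In the construction of Lemma \ref{lem:blhd1} applied to $\cF=\sh^2(\sff E)$, the cokernel $\cC$ is locally free on the locus where $\cF$ has homological dimension $\le 1$, in particular on $U$ (where $\cF=0$). Hence the section $s_\cC$ is defined over $U$ and $\nu$ is an isomorphism over $U$. Then $\nu^{-1}(\op D_r(\sff E))\cong\op D_r(\sff E)$, and $\wt{\op D}_r(\nu^*\sff E)\to\wt{\op D}_r(\sff E)$ is an isomorphism, since both lie over $U$.

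\textbf{Step 2: compare the obstruction theories over $U$.} Over $U$ we have $\sh^2(\sff E)=0$, so $\sigma_1$ is surjective there and $K=\ker\sigma_1$ is already locally free. The complex $\{E_0\to K\}$ represents $\sff E|_U\simeq\tau^{\le1}\sff E|_U$ in $\Perf(U,0,1)$. By definition, $[\wt{\op D}_r(\sff E)]^{\vir}$ is the virtual class of the zero locus of the section $\wt{\sigma_0}$ of $Q_{\Gr}\otimes p^*K$ on $\Gr(E_0^*|_U,r)$. Concretely, it is $0^!_{K_1|_U}[\Gr(E_0^*|_U,r)]$, using that the virtual class of a zero locus of a section equals the localized top Chern class (Fulton, Prop. 14.1; cf. \cite{G2}). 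This class is independent of the chosen 2-term model by the quasi-isomorphism invariance recalled in Section \ref{virres}.

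\textbf{Step 3: excision of the refined Gysin map.} The class $\llb{\sff E,r}$ equals $\nu_*0^!_{K_1}[\Gr(\nu^*E_0^*,r)]$, and its support is contained in $\wt{\op D}_r(\nu^*\sff E)$, which lies over $U$. The refined Gysin map for a section of a vector bundle is compatible with restriction to open sets containing the zero locus. Its value in $A_*(Z(s))$ depends only on the restriction of the bundle and section to any open neighborhood of $Z(s)$, since it is computed by the intersection with the normal cone $C_{Z}\Gr$, which is local near $Z$. So we may replace $\Gr(\nu^*E_0^*,r)$ by its open subset over $\nu^{-1}(U)\cong U$. There the data $(K_1,\wt{\nu^*\sigma_0})$ coincide with $(K_1|_U,\wt{\sigma_0})$, and $\nu_*$ is the identity. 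This yields the equality.

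The main obstacle is the locality claim in Step 3, together with making sure that the virtual class from the perfect obstruction theory on a nonsingular $U$ agrees with $0^!$ of the defining section. One must also check that $X$ nonsingular is used only through $U$ being smooth. If the choice of blow up in Step 1 is inconvenient, I would instead invoke Lemma \ref{lem:indep} to pass to any $\sff E$-suitable blow up. I would then note that every $\sff E$-suitable blow up can be dominated by one that is an isomorphism over $U$, since both dominate the universal one of Appendix \ref{equivblow up}, using Corollary \ref{cor:pullpush}.
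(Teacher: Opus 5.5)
Your proposal is correct and is essentially the paper's proof: the blow up of Lemma~\ref{lem:blhd1} is an isomorphism over the open set $U$ where $\sh^2(\sff E)$ vanishes, and $U$ contains $\op{D}_r(\sff E)$ and hence the image of $\wt{\op{D}}_r(\nu^*\sff E)$; restricting the refined Gysin pullback to that open set therefore identifies $\llb{\sff E,r}$ with the localized top Chern class over $U$, i.e.\ with the virtual class of Section~\ref{virres}, exactly as in the argument of Lemma~\ref{lem:indep}. There are two small slips that do not affect the argument: first, $\cC$ need not be locally free wherever $\hd(\cF)\le 1$ (for $\cF=\cO_D$ with $D$ a Cartier divisor one gets $\cC=\cO_D(D)$), though on $U$ it is, since $\sigma_1^*$ is a subbundle there; second, the fallback in your last paragraph is unnecessary, because Lemma~\ref{lem:indep} already lets you use the blow up of Lemma~\ref{lem:blhd1}, and its domination claim is false as stated (it fails, e.g., for a suitable blow up further blown up at a point over $U$ when $\dim X\ge 2$).
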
 
\begin{proof}
    If Condition \eqref{UDr} is satisfied, the blow up $\nu$ constructed in Lemma \ref{lem:blhd1} is an isomorphism over the open set $U$, where $\sh^2(\sff E|_U)=0$. Therefore, $$\sff{E}|_{U} \cong \tau^{\le 1}( \nu^* \sff{E}) |_{\nu^{-1}(U)},$$ and hence $\op{D}_r(\nu^*\sff{E}) \subset \nu^{-1}(U)$ is isomorphic to  $\op{D}_r(\sff{E})  \subset U$.
    By the discussion of Section \ref{virres} and the same argument as in Lemma \ref{lem:indep} the claim is proven. 
\end{proof}

Without the nonsingularity assumption in Proposition \ref{prop:red1st}, the left hand side of the equality that relies on a perfect obstruction theory is not defined. In the following proposition, we prove an exact analog of Theorem \ref{compGT} for the $r$-special complexes.

\begin{prop}\label{prop:comparisonSpecial}
    If $\sff E, \sff F\in\Perf(X,0,2)$ are $r$-special, and  $u \colon  \sff F|_U\to \sff{E}|_U$ is a map in the derived category, where $U\subset X$ is as in Condition \eqref{UDr}, then the parts (1) and (2) of Theorem \ref{compGT} are true after replacing $\big[\wt{\op{D}}_r(\sff{E})\big]^{\vir}$ and $\big[\wt{\op{D}}_r(\sff{F})\big]^{\vir}$ by $\llb{\sff E, r}$ and $\llb{\sff F, r}$, respectively. 
    \end{prop}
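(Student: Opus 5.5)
The proof reduces to the 2-term situation over an open neighbourhood of the degeneracy loci and then spreads the answer back over $X$.

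\textbf{Choice of blow up.} Let $U$ be as in Condition \eqref{UDr} for both complexes; shrinking, we may take one open set $U$ containing $\op{D}_r(\sff E)\cup\op{D}_r(\sff F)$ with $\sff E|_U,\sff F|_U\in\Perf(U,0,1)$. Choose a blow up $\nu\colon Y\to X$ that is simultaneously $\sff E$- and $\sff F$-suitable. For instance, take the component of the fibre product of the two suitable blow ups dominating $X$, and use Lemma \ref{Lq*}. Since $\sh^2$ vanishes on $U$, the construction of Lemma \ref{lem:blhd1} is an isomorphism over $U$, so $\nu$ can be arranged to be an isomorphism over $U$.

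\textbf{Support of the classes.} The classes $\llb{\nu^*\sff E,r}$ and $\llb{\nu^*\sff F,r}$ are refined Gysin pullbacks along the zero sections of $K_1$. They are therefore supported on $\wt{\op{D}}_r(\nu^*\sff E)$ and $\wt{\op{D}}_r(\nu^*\sff F)$. Over $Y$, the degeneracy locus of $\tau^{\le1}(\nu^*\sff E)$ equals $\op{D}_r(\nu^*\sff E)$, because $\sh^0$ is unchanged by truncation. This locus lies in $\nu^{-1}(U)\cong U$, and the same holds for $\sff F$.

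\textbf{Reduction to an open set.} The refined Gysin class $0_{K_1}^![\Gr(\nu^*E_0^*,r)]$ lies in $A_*(\wt{\op{D}}_r)$, and $\wt{\op{D}}_r$ is contained in the open set $p^{-1}\nu^{-1}(U)$. Refined Gysin maps commute with flat pullback to open sets \cite[Prop.~14.1]{fu}, so the class is computed by the same construction over $U$ alone. On $U$ the complex $\tau^{\le1}(\sff E|_U)\simeq\sff E|_U$ is a 2-term complex of vector bundles. Hence $\llb{\sff E,r}$ equals the localized top Chern class of $Q_{\Gr}\otimes K$ for the section induced by a 2-term resolution of $\sff E|_U$, taken on $\Gr(E_0^*|_U,r)$. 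This step does not need $X$ nonsingular: the localized top Chern class makes sense on any variety.

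\textbf{Running the comparison proof on $U$.} It remains to establish the two formulas of Theorem \ref{compGT} for these localized Chern classes over $U$, using the map $u\colon\sff F|_U\to\sff E|_U$ with cone $G$ in degree $0$ or $1$. I would redo the proof of \cite[Theorem 3.5]{G2}, now phrased via localized Chern classes rather than perfect obstruction theories.

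\emph{Case (1): cone $G$ in degree $0$.} Choose resolutions with $F_0\to E_0$ injective with cokernel $G$ and $F_1= E_1$. The embedding $\iota$ is the zero locus of the induced section of $G\otimes Q$ on $\wt{\op{D}}_r(\sff E)$. The excess/functoriality of localized top Chern classes \cite[Prop.~14.1, Ex.~14.1.8]{fu} then gives
$$\iota_*\llb{\sff F,r}=c_{r(\sff e-\sff f)}\big(G\otimes Q_{\wt{\op{D}}_r(\sff E)}\big)\cap\llb{\sff E,r}.$$

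\emph{Case (2): cone $G$ in degree $1$.} Here $F_1$ surjects onto $E_1$ with kernel $G$. The section for $\sff F$ takes values in $Q\otimes F_1$, which is an extension of $Q\otimes E_1$ by $Q\otimes G$, and its composite to $Q\otimes E_1$ is the section for $\sff E$. The zero loci therefore coincide. The Whitney-type formula for localized Chern classes of such an extension gives
$$\llb{\sff E,r}=c_{r(\sff f-\sff e)}\big(G\otimes Q_{\wt{\op{D}}_r(\sff E)}\big)\cap\llb{\sff F,r}.$$

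\textbf{Main obstacle.} The hard part is realising $u$ by honest maps of vector-bundle resolutions on $U$ of the required shape. Such realisations may exist only after pulling back to an affine bundle over $U$. I would handle this with the affine-bundle trick (footnote in Section \ref{sec:setupB}) together with the flat pullback isomorphism on Chow groups of affine bundles. If that becomes messy, a fallback is to pass to a resolution of $U$ and apply Theorem \ref{compGT} directly, though pushing forward requires care with birational invariance. Independence of resolution is supplied by Lemma \ref{lem:indep}.
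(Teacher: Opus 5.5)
\textbf{Case (2) is set up the wrong way round.} Suppose $F_0=E_0$ and $F_1\twoheadrightarrow E_1$ with kernel $G$, as you propose. Then the cone of $\sff F\to\sff E$ is $\{E_0\to F_1\oplus E_0\to E_1\}$ in degrees $-1,0,1$. This complex is quasi-isomorphic to $G$ sitting in degree $0$, not degree $1$. Consequently $\sff e-\sff f=g$, so your exponent $r(\sff f-\sff e)$ is negative. The zero-locus claim also fails: a section of $Q\otimes F_1$ whose image in the quotient $Q\otimes E_1$ vanishes need not vanish itself. You only get $Z(s_{\sigma_F})\subseteq Z(s_{\sigma_E})$, so ``the zero loci therefore coincide'' does not follow. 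In fact this configuration is just another presentation of case (1).

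The correct shape is as follows. $\Cone(u)\simeq G[-1]$ means that, once $u$ is realized by a chain map, $\sff F|_U\simeq\Cone(\sff E|_U\to G[-1])[-1]=\{E_0\to E_1\to G\}$ in degrees $0,1,2$. Since $\sh^2(\sff F|_U)=0$, the map $\sh^1(\sff E|_U)\to G$ is surjective, hence so is $E_1\to G$. Therefore $\sff F|_U\simeq\{E_0\to F_1\}$, where $F_1:=\ker(E_1\to G)$ is a \emph{subbundle} of $E_1$ with $E_1/F_1\cong G$, and $g=\sff f-\sff e$.

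With this shape the argument goes through:
\begin{itemize}
\item $s_{\sigma_E}$ is the image of $s_{\sigma_F}$ under $Q\otimes F_1\hookrightarrow Q\otimes E_1$. Locally $s_{\sigma_E}=(s_{\sigma_F},0)$, so the zero schemes agree.
\item Factor the zero section of $Q\otimes E_1$ through $Q\otimes F_1$ and apply excess intersection. This gives $\Z(s_{\sigma_E})=c_{rg}(Q\otimes G)\cap\Z(s_{\sigma_F})$ for the localized top Chern classes, which is the required formula.
\end{itemize}
Case (1) is fine as you wrote it.

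\textbf{Comparison with the paper.} With this correction your strategy is sound, and it differs from the paper's proof. The paper takes an $(\sff E,\sff F)$-suitable blow up $\nu\colon Y\to X$ with $Y$ nonsingular, using resolution of singularities. It sets $V=\nu^{-1}(U)$, where both pullbacks are 2-term, and quotes Theorem \ref{compGT} (the perfect obstruction theory version) on the nonsingular $V$. It then transfers the result back to $X$ using Proposition \ref{prop:red1st}, the projection formula and Lemma \ref{lem:indep}. Your fallback is exactly this argument.

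Your main route is different. It uses that the blow up of Lemma \ref{lem:blhd1} is an isomorphism over $U$, and that the classes are localized top Chern classes supported over $U$. It then reproves the comparison directly with Fulton's calculus and the Jouanolou trick. This needs neither nonsingularity nor perfect obstruction theories, and it is the ``more direct proof'' that the paper's remark after the proposition points to (Appendix \ref{quasiinvariance}).

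If you want this route to be genuinely resolution-free, do not use Lemma \ref{lem:indep} for independence of the 2-term resolution you choose on $U$ (or on the affine bundle): its proof passes through a nonsingular blow up. Use instead the quasi-isomorphism invariance of localized top Chern classes for 2-term complexes, Proposition \ref{prop:invariance}, which holds on singular varieties.
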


\begin{proof} Take an $(\sff E, \sff F)$-suitable blow up $\nu\colon Y\to X$ in which $Y$ is nonsingular. Let $V:=\nu^{-1}(U)\subset Y$. Then, by the assumption $$\nu^*\sff{E}|_{V} \cong \tau^{\le 1}( \nu^* \sff{E}) |_{V}, \qquad \nu^* \sff{F}|_{V} \cong \tau^{\le 1}( \nu^* \sff{F}) |_{V}$$ are in $\Perf(V,0,1)$. Thus, we may apply Theorem \ref{compGT} on the nonsingular $V$. The claim then follows from Proposition  \ref{prop:red1st}, the projection formula and Lemma \ref{lem:indep}.
\end{proof}

\begin{remark} One could give a more direct proof Proposition \ref{prop:comparisonSpecial} that does not require a perfect obstruction theory and the resolution of singularities (cf. Appendix \ref{quasiinvariance}).
\end{remark}

\subsection{Deformation invariance} \label{sec:defInvar}
For $\sff E\in \Perf(X,0,2)$ the integrals against the classes $\llb{\sff{E},r}_k$ and $\llbb{\sff E,r}$ are invariant under \emph{certain deformations} of $\sff E$ and $X$ that we will make more precise in this section. One should compare the situation with the integrals against the classes obtained from reduced perfect obstruction theories, whose deformation invariance is rather delicate (cf. \cite[Remark 3.1]{Kool_2014}). For  more general deformations, we will work out explicit \emph{correction terms} required in order to retain the deformation invariance. We only discuss the first classes $\llb{\sff E,r}_k$ in this section. The second class can be treated similarly.

Suppose $\cc X \to C$ is a flat projective morphism of relative dimension $d$ and with integral fibers  to a complex nonsingular quasi-projective pointed curve $(C,0)$, such that $\cc X_0=X$.   Let   
$$\mathscr{E_\bu}\simeq \{\mathscr E_0\xr{\s_0} \mathscr E_1 \xr{\s_1} \mathscr E_2\}$$ be a complex of vector bundles over $\cc X$, such that $\scr E_\bu|_{\cc X_0}\simeq \sff E$. We  apply Lemma \ref{lem:blhd1} to the presentation $$\mathscr{E}_1\to \mathscr{E}_2\to \sh^2(\mathscr{E}_\bu)\to 0,$$ to construct an $\scr E_\bu$-suitable blow up $\pi \colon \cc Y \to \cc X$, which is also fibered over $C$. By construction $\cc Y$ is an irreducible component of $$\Gr(\scr C, c) \subset \Gr(\scr E^*_1,c)$$ dominating $\cc X$, where $\scr C:=\coker(\s_1^*)$ is of generic rank $c$. Let $\cc U\subset \cc X$ be the open subset over which $\scr C$ is locally free of rank $c$. We assume that $\cc U_0:=\cc U\cap \cc X_0\neq \emptyset$, or equivalently $\sff h^2_{\scr E_\bu}=\sff h^2_{\sff E}$. Thus, $\cc C:=\scr C|_{\cc X_0}$ is locally free of rank $c$ over $\cc U_0$.
Since $\cc Y$ is irreducible, after shrinking $C$ (i.e. discarding finitely many points), we may assume that all the fibers of $\cc Y$ are integral, except for possibly the central fiber $\cc Y_0$.

Let $Y\subset \cc Y_0$ be the irreducible component dominating $X=\cc X_0$. 
By our assumption $\cc U_0\neq \emptyset$, the induced morphism $\nu\colon Y\to X$ is an $\sff E$-suitable blow up. The main source of complications in the deformation invariance that we are formulating in this section is the possibility for $\cc Y_0$ to be reducible, that is $\cc Y_0\neq Y$. 
This can happen if some components of the exceptional divisor on $ \cc Y$ lie over $0\in C$. Let $\pi_0:=\pi|_{\cc Y_0}\colon \cc Y_0\to \cc X_0$.
\begin{definition}
 In the set up above,  we say the class $\llb{\sff E,r}_k$ obeys deformation invariance on $C$ if $\llb{\scr E_\bu,r}_k|_{\cc X_0}=\llb{\sff E,r}_k$.
\end{definition}

The following proposition, if applied to 2-term complexes of vector bundles (cf. Section \ref{virres}) in case $\cc X\to C$ is smooth,  gives the deformation invariance for the classes arising from perfect obstruction theories.
\begin{prop} \label{prop:hd<=1central}
In either of the following two cases the class $\llb{\sff E,r}_k$ obeys deformation invariance on $C$ for each $ k\geq0 $.
\begin{enumerate}
    \item  $\hd(\sh^2(\scr E_\bu))\leq 1$.
    \item $\scr E_\bu$ is $r$-special.
\end{enumerate}
\end{prop}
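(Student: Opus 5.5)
In both cases the plan is to reduce deformation invariance to the standard fact that refined Gysin maps commute with restriction to the Cartier divisor $\cc X_0\subset\cc X$, that is, with the specialization map $0^!$ of \cite[Section 10.1]{fu}.

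Case (1). Since $\hd(\sh^2(\scr E_\bu))\le 1$, no blow up is needed over $\cc X$. The kernel $\scr K:=\ker(\s_1)$ is locally free, and $\tau^{\le1}\scr E_\bu\simeq\{\scr E_0\to\scr K\}$. Set $\scr K_1:=Q_\Gr\otimes p^*\scr K$ on $\Gr(\scr E_0^*,r)$; this gives a global class $0^!_{\scr K_1}[\Gr(\scr E_0^*,r)]$. I will first check that the restriction to $\cc X_0$ stays suitable. As in Lemma \ref{Lq*}, the derived restriction of $\sh^2(\scr E_\bu)$ to $\cc X_0$ agrees with the underived one. The reason is that the resolution $0\to \scr E_1/\scr K\to\scr E_2$ restricts injectively: the torsion $L^1$ term would be supported on $\cc X_0$ but must vanish because $\cc U_0\neq\emptyset$ and $\cc X_0$ is integral. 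Hence $\sh^2(\sff E)$ has $\hd\le1$, $\scr K|_{\cc X_0}=\ker(\s_1|_{\cc X_0})$, and the identity map is an $\sff E$-suitable blow up. Now $\Gr(\scr E_0^*,r)\to C$ is flat, so $0^![\Gr(\scr E_0^*,r)]=[\Gr(E_0^*,r)]$. Because $0^!$ commutes with refined Gysin maps \cite[Theorem 6.4]{fu}, we get $0^!\,0^!_{\scr K_1}[\Gr]=0^!_{K_1}[\Gr(E_0^*,r)]$. The Segre class cap in Definition \ref{def:refinedE} also commutes with $0^!$, since $\sh^2\otimes Q$ is a 2-term complex of bundles that restricts correctly. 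This proves the claim for all $k$.

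Case (2). Here $\scr E_\bu$ is $r$-special, so there is an open $\cc U'\supset \op D_r(\scr E_\bu)$ on which $\scr E_\bu\in\Perf(\cc U',0,1)$. The suitable blow up $\pi$ is an isomorphism over $\cc U'$, and the degeneracy loci $\wt{\op D}_r$ lie over $\cc U'$. The plan is to write $\llb{\scr E_\bu,r}_k$ as a class computed entirely on $\Gr(\scr E_0^*,r)|_{\cc U'}$, exactly as in the proof of Proposition \ref{prop:red1st}. On $\cc U'$ we have $\sh^2=0$, so only $k=0$ is nonzero and $s_k$ is trivial. The refined Gysin map with supports in the closed set $\wt{\op D}_r$ only uses a neighborhood of the support (by excision for $0^!$), so the classes can be computed without the blow up. Restriction to $\cc X_0$ then works as in Case (1) applied to $\cc U'$, where $\hd(\sh^2)=0$. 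Note that $\cc U'\cap\cc X_0$ contains $\op D_r(\sff E)$, so the central restriction is $r$-special as well.

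The main obstacle is ensuring that nothing coming from extra components of $\cc Y_0$ interferes. In Case (1) there is no blow up at all, and in Case (2) the blow up is trivial near the support. Both reductions therefore need a careful identification of $\op D_r(\scr E_\bu)\cap\cc X_0$ with $\op D_r(\sff E)$ as schemes. This holds because Fitting ideals commute with base change and $\sh^0(\scr E_\bu^\vee)$ restricts to $\sh^0(\sff E^\vee)$ by right exactness.
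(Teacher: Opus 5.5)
Your proposal is correct and follows essentially the same route as the paper. In case (1) there is no blow up, $\scr K=\ker(\sigma_1)$ restricts to $K=\ker(\sigma_1|_{\cc X_0})$, and restriction to $\cc X_0$ commutes with the refined Gysin map $0^!_{K_1}$ and with the Segre class cap. In case (2) everything is computed on an open neighborhood of $\op D_r(\scr E_\bu)$ where the complex lies in $\Perf(-,0,1)$ and the blow up is trivial. Your Tor-vanishing argument, using $\cc U_0\neq\emptyset$ and the integrality of $\cc X_0$, shows that $\scr E_1/\scr K|_{\cc X_0}\to E_2$ stays injective and hence $\scr K|_{\cc X_0}\cong K$; this fills in a step the paper only asserts.
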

\begin{proof}
(1) Let $E_i:=\scr E_i|_{\cc X_0}$ and $K:=\ker(\s_1|_{\cc X_0})$. The assumption on the homological dimension implies that $\cc Y=\cc X$, and also $\hd(\sh^2(\sff E))\leq 1$. Therefore, $K$ and $\scr K:=\ker(\s_1)$ are locally free and $K\cong \scr K|_{\cc X_0}$.  As a result,  $$\tau^{\le 1}(\scr E_\bu) |_{\cc X_0}\simeq \{E_0\to K\}\simeq \tau^{\le 1}(\sff E).$$
Consider the fiber diagram
$$\xymatrix{\wt{\op{D}}_r(\sff E) \ar@{^(->}[r] \ar[d]& \Gr(E_0^*,r)\ar[d]^-s\\
\Gr(E_0^*,r)\ar[r]^-{0} & K\otimes Q_{\Gr}}$$ in which $s$ is the section induced by the composition $Q^*_\Gr\to E_0\to K$. We can use the projection formula and the analogous diagram for $\scr E_\bu$ to write
\begin{align*}
{\llb{\scr E_\bu}_k|}_{\cc X_0}
&=\left(s_k\big(\sh^2(\cc E_\bu)\otimes Q_{\wt{\op{D}}_r}\big)\cap 0^![\Gr(\scr E_0^*,r)]\right)|_{\cc X_0}\\
&=s_k\big(\sh^2(\sff E)\otimes Q_{\wt{\op{D}}_r}\big)\cap 0^![\Gr(E_0^*,r)]=\llb{\sff E}_k.  
\end{align*}
(2) Let $\cc V$ be an open neighborhood of $\op{D}_r(\scr E_\bu)$ in $\cc X$, such that $\scr E_\bu|_{\cc V}\in\Perf(\cc V,0,1)$. Let $\cc V_0:=\cc V\cap \cc X_0$ and $E_i:=\scr E_i|_{\cc X_0}$. Then, as in part (1),
\begin{align*}
{\llb{\scr E_\bu}_k|}_{\cc X_0}
={\llb{\scr E_\bu|_{\cc V}}_k|}_{\cc V_0}&=s_k\big(\sh^2(\sff E)|_{\cc V_0}\otimes Q_{\wt{\op{D}}_r}\big)\cap 0^![\Gr(E_0^*|_{\cc V_0},r)]\\
&=\llb{\sff E|_{\cc V_0}}_k=\llb{\sff E}_k.  
\end{align*}

\end{proof}

We now formulate a general formula that indicates the failure of deformation invariance by means of some corrections terms arising from the extra components of $\cc Y_0$. Let $S$ be the set of irreducible components of $\overline{\cc Y_0\setminus Y}$. For any $W\in S$ with its reduced induced structure, denote by $m(W)$ the multiplicity of $W$ in the Cartier divisor $\cc Y_0\subset \cc Y$. Let  $\wt{\op{D}}_{r,W}:=\wt{\op{D}}_{r}(\tau^{\le 1}(\pi^*\scr E_{\bu})|_W)$.
\begin{theorem} \label{thm:defInv} For any $k\ge 0$,
    we have \begin{align*}&\llb{\scr E_\bu,r}_k|_{\cc X_0}=\llb{\sff E,r}_k\\&+\sum_{W\in S}m(W)\pi_{0*}\left(s_k\big(\sh^2(\pi^*\scr E_{\bu})\otimes Q_{\wt {\op{D}}_{r,W}}\big)\cap \llb{\tau^{\le 1}(\pi^*\scr E_{\bu})|_W,r}\right).\end{align*}
    In particular, if for every $W\in S$,  $\op D_r(\pi_0^* \sff E)\cap W=\emptyset$, then $\llb{\sff E,r}_k$ obeys deformation invariance on $C$. This is the case for example if  $Y=\cc Y_0$ that is, when $S=\emptyset$.
\end{theorem}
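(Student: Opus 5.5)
The plan is to compute the left-hand side on the total space $\cc Y$, where $\tau^{\le 1}(\pi^*\scr E_\bu)$ is a 2-term complex of vector bundles $\{\pi^*\scr E_0\to \scr K\}$ with $\scr K=\ker(\pi^*\s_1)$. Consider the analogue of diagram \eqref{fig:def1st} over $\cc Y$: the relative Grassmannian $G_{\cc Y}:=\Gr(\pi^*\scr E_0^*,r)\to \cc Y$, the bundle $\scr K_1:=Q_{\Gr}\otimes p^*\scr K$ with its section $\wt{\pi^*\s_0}$, and the localized class $0^!_{\scr K_1}[G_{\cc Y}]$ on $\wt{\op D}_r(\pi^*\scr E_\bu)$. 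Capping with $s_k(\sh^2(\pi^*\scr E_\bu)\otimes Q)$ and pushing forward by $\pi$ gives $\llb{\scr E_\bu,r}_k$. The restriction to $\cc X_0$ is the refined Gysin pullback $0^!$ along the regular embedding $0\hookrightarrow C$, i.e. intersection with the Cartier divisor $\cc X_0$. Since $\pi$ is proper, this commutes with $\pi_*$ \cite[Thm 6.2]{fu}. It also commutes with $0^!_{\scr K_1}$ and with Chern/Segre class operations \cite[Thm 6.4, Prop 6.3]{fu}. So the task reduces to computing $[G_{\cc Y}]$ restricted to the fiber over $0$.

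Since $G_{\cc Y}\to\cc Y$ is a Grassmann bundle, hence flat, the Gysin restriction $0^![G_{\cc Y}]$ equals the flat pullback of the divisor class $[\cc Y_0]\in A_*(\cc Y_0)$. As a Cartier divisor,
\[
[\cc Y_0]=[Y]+\sum_{W\in S}m(W)[W].
\]
Here $Y$ has multiplicity one: $\pi$ is an isomorphism over the dense open $\cc U$, which meets $\cc X_0$ in the nonempty set $\cc U_0$, and $\cc X_0$ is integral, so $\cc Y_0$ is reduced generically along $Y$. This is the step I expect to need the most care. One must check that no other component of $\cc Y_0$ dominates $X$, and that the multiplicity of $Y$ is $1$. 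Both follow since $\cc Y_0\to\cc X_0$ is an isomorphism over $\cc U_0$.

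Next, apply $0^!_{\scr K_1}$ to each component $[\Gr(\pi^*\scr E_0^*|_V,r)]$ for $V\in\{Y\}\cup S$. By the compatibility of refined Gysin maps with restriction to closed subschemes, each term becomes $0^!_{\scr K_1|_V}$ of the Grassmannian over $V$. By construction this is $\llb{\tau^{\le1}(\pi^*\scr E_\bu)|_V,r}$, defined as in Section \ref{virres} (for $W$ possibly singular, use the defining formula directly). For $V=Y$, first check that $\scr K|_Y\cong K$, the kernel used for the $\sff E$-suitable blow up $\nu$. This holds because $\scr K$ is locally free with locally free cokernel into $\pi^*\scr E_2$ on the relevant part, and $\hd(\sh^2(\pi^*\scr E_\bu))\le 1$ with Lemma \ref{Lq*} makes the restriction of $\sh^2$ and its resolution compatible. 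Hence $\tau^{\le1}(\pi^*\scr E_\bu)|_Y\simeq\tau^{\le1}(\nu^*\sff E)$. Capping with $s_k$ and pushing forward by $\pi_0$ gives $\llb{\sff E,r}_k$ by Definition \ref{def:refinedE}. The $W$-terms give exactly the stated correction sum.

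For the final assertion, note that $\wt{\op D}_{r,W}$ maps into $\op D_r(\pi_0^*\sff E)\cap W$. If this is empty, each correction class lives in $A_*(\emptyset)=0$ and vanishes. The case $S=\emptyset$ is immediate.
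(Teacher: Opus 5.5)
Your proposal is correct and is essentially the paper's argument: restrict the localized class from $\cc Y$ to $\cc Y_0$, write the fundamental class of the Grassmann bundle over $\cc Y_0=Y+\sum_{W\in S} m(W)W$ componentwise, apply $0^!$ for $\scr K|_{\cc Y_0}\otimes Q$ using $\scr K|_Y=K$, and push forward by $\pi_0$. You go slightly beyond the paper by justifying that $Y$ has multiplicity one and by treating general $k$, which the paper leaves to the reader; one imprecision is that Lemma \ref{Lq*} is stated only for birational morphisms, so for the closed immersion $Y\hookrightarrow\cc Y$ you should instead rerun its argument directly (the relevant $\mathrm{Tor}_1$ is a torsion subsheaf of a locally free sheaf on the integral $Y$, and it vanishes because $Y$ meets $\pi^{-1}(\cc U)$).
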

\begin{proof} For simplicity, we only consider the case $k=0$ here, and leave the straightforward modifications for $k>0$ to the reader. 
    If we define $\scr K:=\ker(\pi^*\s_1)$ and $K=\ker((\pi^*\s_1)|_Y)$ we have that $\scr K|_{\cc Y_0}\subset \ker(( \pi^*\s_1)|_{\cc Y_0})$, and only on further restriction to the component $Y\subset \cc Y_0$, we get the equality $\scr K|_{Y}=K$. Using the fiber diagram
$$\xymatrix{\wt{\op{D}}_r(\pi_0^*\sff E) \ar@{^(->}[r] \ar[d] & \Gr(\pi_0^*E_0^*,r)\ar[d]^-{s}\\
\Gr(\pi_0^*E_0^*,r)\ar[r]^-0 & \scr K|_{\cc Y_0}\otimes Q_{\Gr},}$$ 
we see that
    \begin{align*}
        \llb{\pi^*\scr E_\bu}|_{\cc Y_0}&=0^![\Gr(\pi_0^* E_0^*,r)]\\&=0^!\left([\Gr(\pi_0^* E_0^*|_Y,r)]+\sum_{W\in S}m(W)[\Gr(\pi_0^* E_0^*|_W,r)]\right).
    \end{align*}
The first claim now follows by 
pushing both sides down via $$\pi_0\colon \wt {\op D}_r(\pi_0^*\sff E)\to  \wt {\op D}_r(\sff E).$$ The second claim also follows, because in that case $ \llb{\tau^{\le 1}(\pi^*\scr E_\bu)|_{W},r}$ vanishes for any $W\in S$, and so no terms in the summation over $S$ will contribute.
\end{proof}

\begin{remark}
    The second part of Theorem \ref{thm:defInv} says that in order to guarantee deformation invariance for $\llb{\sff E,r}_k$, one only needs to ensure that the extra components of the central fiber $\cc Y_0$ of the blow up are kept away from the degeneracy locus of interest in $\cc Y$.
\end{remark}

\noindent \textbf{Example}. Let  $\mathcal X\To C$ be as above and $D\subset\cc X$ be a Cartier divisor meeting $X=\cc X_0$ transversely, with $Z:=X\cap D$. The following 3-term Koszul complex denoted by $\scr E_\bu$ gives a locally free resolution
$$0\To\cO_{\cc X}(-X-D)\xr{{-s_D\choose  s_X}} \cO_{\cc X}(-X)\oplus \cO_{\cc X}(-D)\xr{(s_X,s_D)}\cO_{\cc X}\To\cO_Z\To 0$$
with $s_X,s_D$ the natural inclusions.  The complex $\scr E_\bu$ is exact off $Z$ and $\sh^0(\scr E_\bu^\vee)\cong \cO_Z(D+X)$, so that
$$\op{D_0}(\scr E_\bu)=\cc X,\quad \op{D_1}(\scr E_\bu)= Z,\quad\op{D_2}(\scr E_\bu)=\emptyset.$$
The blow up $\pi:\cc Y:=\op{Bl}_Z\cc X\to \cc X$ is $\scr E_\bu$-suitable; denote by $W$ its exceptional divisor, and by $\wt X,\wt D$ the proper transforms of $X$ and $D$. The complex $\pi^*\scr E_\bu$ is given by
$$\cO_{\cc Y}(-\wt X-\wt D-2W)\xr{{-s_{\wt D+W}\choose s_{\wt X+W}}} \cO_{\cc Y}(-\wt X-W)\oplus \cO_{\cc Y}(-\wt{D}-W)\xr{(s_{\wt X+W},s_{\wt D+W})}\cO_{\cc Y}.$$
The kernel of the right arrow is $\cO_{\cc Y}(-\wt X-\wt D-W)$, so 
$$\tau^{\leq1}(\scr E_\bu)\cong \cO_{\cc Y}(-\wt X-\wt D- 2W)\xr{s_W}\cO_{\cc Y}(-\wt X-\wt D-W)$$
from which we find that
$$\llb{\pi^*\scr E_\bu,1}=[W]\in A_{d}(W).$$
and $\llb{\scr E_\bu,1}=0$ because $\dim Z<d$. Also, for $p\in C\setminus\{0\}$
$$\llb{\scr E_\bu|_{\cc X_p},1}=\pi_*\llb{\pi^*\scr E_\bu|_{\cc Y_p},1}=\pi_*(W\cdot \cc Y_p)=0.$$
At $p=0$, we have $\cc Y_0=\wt X+W$ and  $W\cdot \cc Y_0=0$. Therefore,
\begin{align*}
\pi_*\llb{\tau^{\leq1}(\pi^*\scr E_\bu)|_{W}}=&\pi_*(W\cdot W)=-[Z]\in A_{d-1}(Z),\\
    \llb{\scr E_\bu|_{X}}=&\pi_*\llb{\pi^*\scr E_\bu|_{\wt X}}=[Z]\in A_{d-1}(Z),
\end{align*} whose sum is $0=\llb{\scr E_\bu,1}|_{\cc X_0}$ as in Theorem \ref{thm:defInv}.

\begin{corollary}
The classes $\llb{\scr E_\bu|_p,r}_k$ for $p\in C\setminus 0$ obey deformation invariance.
\end{corollary}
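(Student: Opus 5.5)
The plan is to apply Theorem \ref{thm:defInv} with the base point moved from $0$ to an arbitrary $p\in C\setminus 0$, and to check that for such $p$ the correction set $S$ is empty. By the set-up preceding Theorem \ref{thm:defInv}, after shrinking $C$ (discarding finitely many points, none of them $p$) all fibers $\cc Y_p$ with $p\neq 0$ are integral.

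\textbf{Step 1.} Replace the pointed curve $(C,0)$ by $(C,p)$, with $X_p:=\cc X_p$ and $\sff E_p:=\scr E_\bu|_{\cc X_p}$. I will check that this new situation satisfies all the hypotheses used for the central fiber.
\begin{itemize}
\item $\cc X_p$ is integral of dimension $d$ by assumption.
\item The open set $\cc U$ where $\scr C$ is locally free of rank $c$ is dense in the irreducible $\cc X$, so it meets $\cc X_p$ for all but finitely many $p$. Shrinking $C$ further, we get $\cc U_p\neq\emptyset$, that is $\sff h^2_{\scr E_\bu}=\sff h^2_{\sff E_p}$.
\end{itemize}
The same blow up $\pi\colon\cc Y\to\cc X$, constructed from the global presentation $\mathscr E_1\to\mathscr E_2\to\sh^2(\mathscr E_\bu)\to0$, serves for every base point. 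So the construction before Theorem \ref{thm:defInv} applies verbatim with $0$ replaced by $p$.

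\textbf{Step 2.} Since $\cc Y_p$ is integral, it equals its unique component dominating $\cc X_p$. That component is the $\sff E_p$-suitable blow up $Y_p\to X_p$. Hence $\overline{\cc Y_p\setminus Y_p}=\emptyset$, so $S=\emptyset$.

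\textbf{Step 3.} The last sentence of Theorem \ref{thm:defInv} (the case $Y=\cc Y_0$) now gives
$$\llb{\scr E_\bu,r}_k|_{\cc X_p}=\llb{\sff E_p,r}_k \quad\text{for all } k\geq 0,$$
which is deformation invariance at $p$.

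\textbf{Main obstacle.} The only delicate point is verifying that the Gysin restriction $|_{\cc X_p}$ is compatible with the identification of the component $Y_p$ as an $\sff E_p$-suitable blow up. In particular, Lemma \ref{lem:blhd1}'s construction applied to $\sff E_p$ might produce a different blow up from $\cc Y_p$. This is harmless: by Lemma \ref{lem:indep}, any $\sff E_p$-suitable blow up computes the same class, and $\cc Y_p\to X_p$ is one because $\sh^2(\pi^*\scr E_\bu)$ has homological dimension at most $1$ and so restricts well to the Cartier divisor $\cc Y_p$ (Lemma \ref{Lq*}-type argument).
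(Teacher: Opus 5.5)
Your proposal is correct and follows the paper's proof: with the base point moved to $p$, the fiber $\cc Y_p$ is integral and is itself the $\scr E_\bu|_p$-suitable blow up, so $S=\emptyset$ and the last clause of Theorem \ref{thm:defInv} applies. You also shrink $C$ further to guarantee $\cc U_p\neq\emptyset$, i.e.\ $\sff h^2_{\scr E_\bu}=\sff h^2_{\scr E_\bu|_p}$. The paper's one-line proof leaves this hypothesis implicit, but it is genuinely needed: integrality of $\cc Y_p$ alone does not imply it.
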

\begin{proof}
In our set up, for each $p\in C\setminus 0$, $\cc Y_p$ is irreducible and an $\scr E_\bu|_p$-suitable blow up.   
\end{proof}

\begin{remark}
    Without shrinking the base curve $C$ in our setting, the corollary above says that the classes $\llb{\scr E_\bu|_p,r}_k$ obey deformation invariance for all $p\in C$ except for possibly finitely many points.
\end{remark}

\begin{corollary}\label{cor:irredDefInv}
If $\Gr(\cc C,c)$ has a unique irreducible component of maximal dimension $d$, then $\llb{\sff E,r}_k$ obeys deformation invariance on $C$ (cf.  Corollary \ref{cor:1irred} for sufficient condition).    
\end{corollary}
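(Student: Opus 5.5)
The plan is to reduce Corollary \ref{cor:irredDefInv} to the vanishing criterion in the second part of Theorem \ref{thm:defInv}: I will show that under the hypothesis the set $S$ of extra components of $\cc Y_0$ is empty, so that $\cc Y_0=Y$ as cycles.

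First I pin down the dimensions. The family blow up $\cc Y$ is the irreducible component of $\Gr(\scr C,c)$ dominating $\cc X$, so $\dim \cc Y=d+1$. The central fiber $\cc Y_0$ is a Cartier divisor in $\cc Y$, since it is the pullback of the point $0\in C$ and $\cc Y\to C$ is dominant. Hence every irreducible component of $\cc Y_0$ has dimension exactly $d$. Moreover, $\cc Y_0\subset \Gr(\scr C,c)|_{\cc X_0}=\Gr(\cC,c)$, because forming $\Gr$ of a cokernel commutes with base change. Every component of $\cc Y_0$, including each $W\in S$ and $Y$ itself, is therefore a $d$-dimensional irreducible closed subset of $\Gr(\cC,c)$.

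Next I identify the distinguished component. The component $Y$ dominating $X$ has dimension $d$, and it is the closure of the section $s_{\cC}$ over $\cc U_0$. This uses the assumption $\cc U_0\neq\emptyset$. By hypothesis $\Gr(\cC,c)$ has a unique irreducible component of dimension $d$; this forces every $d$-dimensional irreducible closed subset to coincide with $Y$, provided no component has dimension exceeding $d$. I will read "maximal dimension $d$" as saying all components have dimension at most $d$ and exactly one has dimension $d$. It then follows that each $W\in S$ equals $Y$. This contradicts $W\subset\overline{\cc Y_0\setminus Y}$ being a component distinct from $Y$, so $S=\emptyset$.

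The conclusion then follows from the second part of Theorem \ref{thm:defInv}: with $S=\emptyset$ the correction sum vanishes and $\llb{\scr E_\bu,r}_k|_{\cc X_0}=\llb{\sff E,r}_k$.

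The main obstacle is a subtlety: $\cc Y_0$ might equal $Y$ set-theoretically yet carry multiplicity $m>1$ along $Y$, which would give $\llb{\scr E_\bu}_k|_{\cc X_0}=m\llb{\sff E}_k$. To rule this out I would argue that $\cc Y\to\cc X$ is an isomorphism over $\cc U$. Since $\cc U_0\neq\emptyset$ is dense in $X$, the divisor $\cc Y_0$ agrees with $\cc X_0=X$, which is reduced, over a dense open subset of $Y$. So the multiplicity of $Y$ in $\cc Y_0$ is $1$, and $[\cc Y_0]=[Y]$. The computation in the proof of Theorem \ref{thm:defInv} then yields the equality directly.
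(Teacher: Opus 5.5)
Your proposal is correct and follows the paper's approach: the paper's proof simply observes that under the hypothesis $\cc Y_0=Y$ and invokes the second part of Theorem \ref{thm:defInv}. You additionally supply the details the paper leaves implicit: every component of the Cartier divisor $\cc Y_0$ is a $d$-dimensional irreducible closed subset of $\Gr(\cC,c)$, and $Y$ has multiplicity one in $\cc Y_0$ because $\cc Y\to\cc X$ is an isomorphism over $\cc U$ with $\cc U_0\neq\emptyset$.
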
\begin{proof}
    In this case, $Y=\cc Y_0$, so the claim follows from the second part of Theorem \ref{thm:defInv}.
\end{proof}

For the next corollary, let $s:=\rk(\scr E_2)-\rk(\scr E_1)+c$, and $\op{D}_t:=\op D_{t}
    ((\scr E_\bu)^\vee)\subset \cc X$. If $\cc I_t\subset \cc O_{\cc X}$ is the ideal sheaf of $\op D_t$, for any integer $n>0$, denote by $\op D^n_t\subset \cc X$ the closed subscheme defined by the ideal sheaf $\cc I_t^n$. Also, denote the normal cone of $\op{D}_t$ in $\cc X$ by $$\op{C}_{\op D_{t}
/\cc X}:=\op{Spec} (\bigoplus_{n>0}\cc I^{n-1}_t/\cc I^n_t).$$ 

\begin{corollary} \label{cor:flatDefInv}
    Suppose that either of the equivalent conditions below is satisfied.
    \begin{enumerate}
        \item For any $n>0$ and any $t>s$, each associated point of $\op D^n_{t}$ specializing to a point of $\op D_r(\scr E_\bu)$ is mapped to the generic point of $C$.
        \item For any $t>s$, each associated point of the normal cone $\op{C}_{\op D_{t}
/\cc X}$ whose image in $\cc X$ specializes to a point of $\op D_r(\scr E_\bu)$ is mapped to the generic point of $C$. 
    \end{enumerate}
    Then, $\llb{\sff E,r}_k$ obeys deformation invariance on $C$.  
 \end{corollary}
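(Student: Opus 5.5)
Following Theorem \ref{thm:defInv}, it suffices to show that every component $W\in S$ of $\overline{\cc Y_0\setminus Y}$ is disjoint from $\op D_r(\pi_0^*\sff E)$. Indeed, in that case each correction term $\llb{\tau^{\le1}(\pi^*\scr E_\bu)|_W,r}$ is supported on $W\cap \op D_r(\pi^*\scr E_\bu)=\emptyset$ and vanishes. So I argue by contradiction: suppose some $W\in S$ contains a point $y$ lying over $\op D_r(\scr E_\bu)$, and use $y$ to produce an associated point violating (1) or (2).

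\textbf{Locating $W$.} First I identify where the extra components of $\cc Y_0$ can sit. Since $\cc Y$ is the closure of the section $s_{\scr C}$ over $\cc U$, the map $\pi$ is an isomorphism over $\cc U$, so $\pi(W)\subset \cc X\setminus\cc U$. The complement of $\cc U$ is stratified by the fiber dimension of $\scr C=\coker(\s_1^*)$. Dually, $\scr C$ has rank $\ge t'$ exactly where $\sh^0((\scr E_\bu)^\vee)$, computed from the presentation $\scr E_2^*\to\scr E_1^*$, jumps. With the normalization $s=\rk\scr E_2-\rk\scr E_1+c$, the locus where $\scr C$ fails to be locally free of rank $c$ is $\bigcup_{t>s}\op D_t$, with the Fitting ideal bookkeeping matching the shift by $s$.

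\textbf{Rees algebra description.} Next I would use Appendix \ref{equivblow up}, or an analogue of Remark \ref{remk:blsupp}, to identify $\cc Y$ with a blow up of $\cc X$ along the Fitting ideals $\cc I_t$ for $t>s$ (the Rossi/Villamayor-type flattening by blowing up Fitting ideals). Then $\cc Y$ is (the main component of) $\op{Proj}\bigoplus_n \prod_{t>s}\cc I_t^n$, and its exceptional divisors over $\op D_t$ are governed by $\op{Proj}$ of the normal cone $\op C_{\op D_t/\cc X}$. A component $W$ of $\cc Y_0$ not dominating $X$ is then a component of the exceptional locus lying entirely over $0\in C$. Its generic point therefore maps to the image of an associated point (irreducible component) of some normal cone $\op C_{\op D_t/\cc X}$ whose image lies in $\cc X_0$, i.e.\ is not the generic point of $C$. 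Since $W$ meets the preimage of $\op D_r(\scr E_\bu)$, this image specializes to a point of $\op D_r(\scr E_\bu)$, contradicting (2).

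\textbf{Equivalence of (1) and (2).} This is standard commutative algebra. The associated points of $\op{Spec}\bigoplus \cc I^{n-1}/\cc I^n$ map to associated points of the quotients $\cc I^{n-1}/\cc I^n$. Conversely, the associated points of $\cc O/\cc I^n$ arise from those graded pieces by induction on $n$ via $0\to\cc I^{n-1}/\cc I^n\to\cc O/\cc I^n\to\cc O/\cc I^{n-1}\to0$. Mapping to the generic point of $C$ is the same as not lying in a closed fiber, so the two conditions match.

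\textbf{Main obstacle.} The main obstacle is the second step: rigorously identifying $\cc Y$ with a blow up along the Fitting ideals, so that the components of $\cc Y_0$ are controlled by normal cones of the $\op D_t$. I would first try the universal property from Appendix \ref{equivblow up}, which characterizes $\cc Y$ as the minimal modification making $\pi^*\sh^2$ of homological dimension $\le1$. I would then compare it with the blow up of the product of the $\cc I_t$ for $t>s$. If the identification holds only up to a finite birational map, that map is still an isomorphism over $\cc U$ and surjective on exceptional components, which suffices for the support argument.
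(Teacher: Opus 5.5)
Your proposal is essentially the paper's proof. You reduce to the second part of Theorem \ref{thm:defInv}, replace $\cc Y$ by the blow up of $\cc X$ along the degeneracy loci $\op D_t$, $t>s$ (Remark \ref{rmk:LiHu}), use the normal-cone hypothesis to keep every $W\in S$ away from $\op D_r(\pi_0^*\sff E)$, and obtain (1)$\Leftrightarrow$(2) from the graded pieces $\cc I_t^{n-1}/\cc I_t^n$ and the sequences $0\to\cc I_t^{n-1}/\cc I_t^n\to\cO_{\cc X}/\cc I_t^n\to\cO_{\cc X}/\cc I_t^{n-1}\to0$. The only difference is phrasing: the paper argues via flatness over $C$ ($\tau$ a nonzero divisor, flat projective cones whose exceptional divisors form flat families, blowing up the deepest locus first), whereas you use associated points and a dimension count on components.

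The step you single out as the main obstacle is unnecessary. The paper never identifies $\cc Y$ with the Fitting-ideal blow up; it simply uses the latter as an alternative $\scr E_\bu$-suitable blow up, as Lemma \ref{lem:indep} permits. Your fallback through the universal property of Rossi's blow up would also work, but for a different reason than the one you give: the proper birational map $Z\to\cc Y$ is surjective, so every extra component of $\cc Y_0$ is the image of an extra component of $Z_0$. Finiteness of that map plays no role.
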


\begin{proof}
We first show the equivalence of (1) and (2).  Condition (1) (respectively, (2)) implies that $\op D^n_t$ for each $n>0$ (respectively, $\op{C}_{\op D_{t}/\cc X}$) is flat over $C$ in an open neighborhood of $\cc V$ of $\op D_r(\scr E_\bu)$. Without loss of generality, for simplicity, we assume $\cc X=\cc V$. Let $\tau$ be a uniformizing parameter for the discrete valuation ring $\cc O_{C,0}$. 
The flatness of $\op D^n_t$ is equivalent to $\tau$ being a nonzero divisor for $\cO_{\cc X}/\cc I^n_t$ and hence for $\cc I^{n-1}_t/\cc I^n_t$ for each $n>0$. Thus $\oplus_{n>0} \cc I^{n-1}_t/\cc I^n_t$ is flat  proving that (1) implies (2). Conversely, if $\oplus_{n>0} \cc I^{n-1}_t/\cc I^n_t$ is flat, so is each direct summand. Now by induction on $n$ and the short exact sequence $$0\to \cc I^{n-1}_t/\cc I^n_t\to \cO_{\cc X}/\cc I^n_t\to \cO_{\cc X}/\cc I^{n-1}_t\to 0,$$ we see that for each $n>0$, $\cO_{\cc X}/\cc I^n_t$ and hence $\op D^n_t$ is flat over $C$.

To see the claim about deformation invariance, by either of the conditions, the projective cone $\bb P(\op{C}_{\op D_{t}/\cc X})$ is flat over $C$, and moreover, $$\bb P(\op{C}_{\op D_{t}
    /\cc X})_0\cong \bb P(\op{C}_{\op D_{t}
    (\sff E^\vee)/\cc X_0}).$$ By Remark \ref{rmk:LiHu}, an alternative $\scr E_\bu$-suitable blow up (cf. Lemma \ref{lem:indep}), also denoted by $\pi:\cc Y\to \cc X$, can be obtained   by blowing up $\cc X$ at the degeneracy loci $\op D_t=\op D_{t}
    ((\scr E_\bu)^\vee)$ for $t>s$, starting with the deepest one. Thus, the exceptional divisors of the blow ups given by the projective cones above form flat families over $C$. In particular, in the notation of Theorem \ref{thm:defInv} $W\cap\op D_r(\pi_0^*\sff E)=\emptyset$ for any $W\in S$, so the claim follows from the second part of that theorem.
\end{proof}

\begin{remark}
\begin{enumerate}[i)]
\item A special case of Condition (2) in Corollary \ref{cor:flatDefInv} occurs, when each degeneracy locus $\op D_{t>s}
    ((\scr E_\bu)^\vee)$ is reqularly embedded and flat over $C$ in an open  neighborhood of $\op D_r(\scr E_\bu)$. Then, the normal cone $\op{C}_{\op D_{t}
    ((\scr E_\bu)^\vee)/\cc X}$ is a vector bundle over $\op{D}_{t}
    ((\scr E_\bu)^\vee)$, and hence also flat over $C$ in that neighborhood.   
    \item 
    In \cite[Theorem 4.1]{colley2001detecting}, another sufficient condition for ensuring the flatness of the normal cones needed in the proof of  Corollary \ref{cor:flatDefInv} is proven that uses a Segre class test and the notion of internal flatness.
    \item Condition (2) in Corollary \ref{cor:flatDefInv} is similar to the notion of  the normal flatness introduced by Hironaka in \cite[Chapter II]{Hir}.   
    \end{enumerate}\end{remark}

\noindent \textbf{Example}. We give an example of a 1-parameter flat family of subschemes, whose normal cone  in the ambient space does not give a flat family of normal cones. This justifies imposing  Condition (1) or (2) in Corollary \ref{cor:flatDefInv}.
Let $$\cc X:=\bb A^3_\bb C=\op{Spec} \bb C[\tau, x,y]\to C:=\op{Spec} \bb C[\tau]$$ be the projection, and consider the ideal $\cc I:=\langle x^2,xy,y^2-\tau x\rangle$. Then, $\op D:=\op{Spec}(\cc O_X/\cc I)$ is flat over $C$, whereas,  $\op D^2:=\op{Spec}(\cc O_X/\cc I^2)$ is not. In fact, $$\tau x^3=-x^2(y^2-\tau x)+x^2y^2\in \cc I^2$$ showing that $\tau$ is a zero divisor for $\cc O_X/\cc I^2$. 
We use Macaulay2 to calculate the normal cone  $$\op C_{\op D/\cc X}=\op{Spec} \bb C[\tau, x,y,u,v, w]/(\cc I+ \cc J),$$
where $\cc J:=\langle yu-\tau v+xw, xu-yv\rangle$. The radical of $\cc I+\cc J$ is calculated to be $\langle \tau v, x,y \rangle$. Thus, $(\op{C}_{\op D/\cc X})_{\op{red}}$ has two 3-dimensional components: one is flat over $C$, and the other one is contracted to the closed point $0=\langle \tau \rangle \in C$.

\subsection{Comparison theorems}
\label{sec:moddual}
 For a given $\sff E\in \Perf(X,0,2)$, fix a locally free resolution
 $$\sff E\simeq \{E_0\xrightarrow[]{\sigma_0}E_1 \xrightarrow[]{\sigma_{1}}E_2\}.$$
    Let $\nu:Y\to X$ be an $(\sff E, \sff E^\vee)$-suitable blow up i.e.  $$\hd(\sh^2(\nu^*\sff{E}))\leq1,\qquad \hd(\sh^{0}(\sff{\nu^*E}^\vee))\leq1.$$ Then $K:=\ker(\nu^*\sigma_1)$ and $C:=\ker(\nu^*\sigma^*_{0})$ as well as  $\nu^*E_1/K$  and $\nu^*E^*_1/C$ are locally free.  Since $\nu^*\sigma_1^*$ factors as $\nu^*E^*_{2}\to C\to \nu^*E^*_1$,  $\nu^*\sigma_1$ factors as $\nu^*E_{1}\to C^*\to \nu^*E_{2}$, so there is an induced surjection $C^*\twoheadrightarrow \nu^*E_1/K$. Similarly, there is an induced surjection $K^*\twoheadrightarrow \nu ^*E_1^*/C$, where after dualizing  give an injection of vector bundles $\nu ^*E_1/C^*\hookrightarrow K$. These maps fit into the following commutative diagram with exact rows:
\[\begin{tikzcd}
	0 & {(\nu^*E_1^*/C)^*} & {E_1} & {C^*} & 0 \\
	0 & K & {E_1} & {\nu^*E_1/K} & 0
	\arrow[from=1-1, to=1-2]
	\arrow[from=1-2, to=1-3]
	\arrow[hook, from=1-2, to=2-2]
	\arrow[from=1-3, to=1-4]
	\arrow[shift right, no head, from=1-3, to=2-3]
	\arrow[no head, from=1-3, to=2-3]
	\arrow[from=1-4, to=1-5]
	\arrow[two heads, from=1-4, to=2-4]
	\arrow[from=2-1, to=2-2]
	\arrow[from=2-2, to=2-3]
	\arrow[from=2-3, to=2-4]
	\arrow[from=2-4, to=2-5]
\end{tikzcd}\]
Using the snake lemma we find a locally free sheaf on $Y$ \begin{equation}\label{def:T}
    T:=\ker(C^*\twoheadrightarrow \nu^* E_1/K)\cong \coker(\nu^* E_1/C^*\hookrightarrow K)
\end{equation}
of rank
$\op{rk}(T)=k-e_1-c=\op{rk}(\ker\sigma_1)-\op{rk}(\im \sigma_{0})=\rkh{\sff{E}}^1.$

\begin{lemma}\label{lem:hd1surjections}
    Given a surjection $s:\cF\twoheadrightarrow \cG$ of coherent sheaves $\cF, \cG$ on $X$ of $\hd\leq1$, we have that $\hd(\ker(s))\le 1$.
\end{lemma}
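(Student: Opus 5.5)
The plan is a local computation with Ext sheaves. Homological dimension is a local invariant, so it suffices to work at the stalk of an arbitrary point $x\in X$. Since $\hd\le 1$, all Ext sheaves $\sExt^i(-,\cO_X)$ vanish for $i\ge2$. The tool I will use is the criterion that, on a quasi-projective variety, a coherent sheaf $\cc H$ has $\hd(\cc H)\le 1$ exactly when it has a two-term locally free resolution. Equivalently, the kernel of any surjection from a locally free sheaf onto $\cc H$ is locally free.

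Set $\cK:=\ker(s)$, which gives the short exact sequence $0\to\cK\to\cF\to\cG\to 0$. Choose a surjection $H\twoheadrightarrow \cF$ with $H$ locally free, and let $N:=\ker(H\to\cF)$; by the hypothesis $\hd(\cF)\le1$, $N$ is locally free. Composing with $s$ gives a surjection $H\twoheadrightarrow\cG$, and its kernel $M$ is locally free because $\hd(\cG)\le 1$. There are natural inclusions $N\subset M\subset H$, and the snake lemma, applied to the map of sequences $(N\to H\to\cF)\to(M\to H\to\cG)$, gives $M/N\cong \cK$. This yields a resolution
$$0\to N\to M\to \cK\to 0$$
by locally free sheaves, and therefore $\hd(\cK)\le 1$.

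For a cleaner alternative, or as a cross-check, one can use the long exact $\sExt(-,\cO_X)$ sequence or $\operatorname{Tor}$ against residue fields. For every point $x$ it gives
$$\operatorname{Tor}_2(\cG,k(x))\to \operatorname{Tor}_1(\cK,k(x))\to\operatorname{Tor}_1(\cF,k(x))$$
and
$$\operatorname{Tor}_3(\cG,k(x))\to\operatorname{Tor}_2(\cK,k(x))\to \operatorname{Tor}_2(\cF,k(x))=0.$$
Since $\operatorname{Tor}_3(\cG,k(x))=0$, this forces $\operatorname{Tor}_2(\cK,k(x))=0$ at every point. Local projective dimension over a noetherian local ring is detected by these Tor groups, so $\hd(\cK)\le1$.

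I expect no serious obstacle here. The only point needing care is the justification that the kernel $M$ of $H\to\cG$ is locally free, which requires the local fact that if $\operatorname{pd}\cG\le1$ then every first syzygy module is projective (Schanuel's lemma). I would cite that fact and be done.
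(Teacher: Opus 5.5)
Your proposal is correct and is essentially the paper's own proof: the paper takes a locally free $V\twoheadrightarrow\cF$, notes that the kernels $K_{\cF}$ and $K_{\cG}$ of $V\to\cF$ and $V\to\cG$ are locally free by the hypothesis, and applies the snake lemma to identify $\ker(s)$ with the cokernel of the injection $K_{\cF}\hookrightarrow K_{\cG}$, which is exactly your resolution $0\to N\to M\to\cK\to 0$. Your $\operatorname{Tor}$ cross-check is also valid, but it is not needed.
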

\begin{proof}
     Let $V$ be any locally free sheaf surjecting onto $\cF$.     We have a commutative diagram with exact rows
    \[\begin{tikzcd}[cramped,row sep=scriptsize]
	0 & {K_{\sff \cF}} & V & {\cF} & 0 \\
	0 & {K_{\sff \cG}} & V & {\cG} & 0
	\arrow[from=1-1, to=1-2]
	\arrow[from=1-2, to=1-3]
	\arrow["t",from=1-2, to=2-2]
	\arrow[from=1-3, to=1-4]
	\arrow[shift right, no head, from=1-3, to=2-3]
	\arrow[no head, from=1-3, to=2-3]
	\arrow[from=1-4, to=1-5]
	\arrow["s", two heads, from=1-4, to=2-4]
	\arrow[from=2-1, to=2-2]
	\arrow[from=2-2, to=2-3]
	\arrow[from=2-3, to=2-4]
	\arrow[from=2-4, to=2-5]
\end{tikzcd}\]
    in which  $K_{\cF}$ and $K_{ \cG}$ are locally free by the homological dimension assumption. By the snake Lemma, $t\colon K_{\sff \cF}\to K_{\sff \cG}$ is injective, and
    $\ker(s)\cong\coker(t)$
    has homological dimension $\leq 1$. 
\end{proof}

\begin{lemma}\label{lem:truncatecone}
    Let $\sff E,\sff F\in\Perf(X,0,2)$ and $u:\sff F\to \sff E$ be map in the derived category with $\sff G:=\Cone(u)$. If $\sff G\in\Perf(X,1,2)$ and $\nu:Y\to X$ is any $(\sff E, \sff F, \sff G)$-suitable blow up, there is an exact triangle
    $$\tau^{\leq1}(\nu^*\sff F)\To\tau^{\leq1}(\nu^*\sff E)\To A[-1]$$
    where $A:=\ker(\sh^1(\nu^*\sff G)\to\sh^2(\nu^*\sff F))$ is locally free.
\end{lemma}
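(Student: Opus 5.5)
Since $\nu^*$ is exact on triangles, the plan is to apply it to $\sff F\xr{u}\sff E\to\sff G\to\sff F[1]$ and read off the cohomology long exact sequence. Here $\sff G\in\Perf(X,1,2)$, so $\sh^0(\nu^*\sff G)=0$ and $\nu^*\sff G\in\Perf(Y,1,2)$, while $\sh^i$ vanishes outside $[0,2]$ for $\nu^*\sff E$ and $\nu^*\sff F$. The long exact sequence is
\[0\to\sh^0(\nu^*\sff F)\to\sh^0(\nu^*\sff E)\to 0\to\sh^1(\nu^*\sff F)\to\sh^1(\nu^*\sff E)\to\sh^1(\nu^*\sff G)\xr{\delta}\sh^2(\nu^*\sff F)\to\sh^2(\nu^*\sff E)\to\sh^2(\nu^*\sff G)\to 0.\]
From it, $\sh^0(\nu^*\sff F)\cong\sh^0(\nu^*\sff E)$, and $0\to\sh^1(\nu^*\sff F)\to\sh^1(\nu^*\sff E)\to A\to0$ is exact with $A=\ker\delta$. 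Therefore $u$ induces $\tau^{\le1}(\nu^*\sff F)\to\tau^{\le1}(\nu^*\sff E)$ by functoriality of truncation. Its cone $C$ has cohomology only in degree $1$, namely $A$, by the five lemma applied to the long exact sequences of the truncations. Hence $C\simeq A[-1]$, which gives the exact triangle.

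It remains to prove that $A$ is locally free, using the suitability hypotheses. Since $\hd\sh^2(\nu^*\sff F)\le1$ and $\hd\sh^2(\nu^*\sff E)\le1$, the kernel $\cK:=\ker(\sh^2(\nu^*\sff F)\to\sh^2(\nu^*\sff E))$ has $\hd\le1$, but it is not the kernel of a surjection. The better route is through $\sff G$. It has amplitude $[1,2]$ and is $\nu^*\sff G$-suitable, so $\hd\sh^2(\nu^*\sff G)\le1$, and as in Corollary~\ref{cornu} we get $\sh^1(\nu^*\sff G)=\coker(\nu^*G_1\to K_G)$ up to shift. Moreover $\sh^1(\nu^*\sff G)$ is itself locally free: $\nu^*\sff G\simeq\{\nu^*G_1\to\nu^*G_2\}$ with $\sh^2$ of $\hd\le1$, so the kernel $\sh^1(\nu^*\sff G)$ of $\nu^*G_1\to\nu^*G_2$ is the kernel of a map whose cokernel has $\hd\le1$. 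That makes the image locally free, and hence the kernel locally free.

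Consider next $\cK=\operatorname{im}\delta$, which also equals $\ker(\sh^2(\nu^*\sff F)\twoheadrightarrow\operatorname{im})$. The image $\cI$ of $\sh^2(\nu^*\sff F)\to\sh^2(\nu^*\sff E)$ is $\ker(\sh^2(\nu^*\sff E)\twoheadrightarrow\sh^2(\nu^*\sff G))$, and by Lemma~\ref{lem:hd1surjections} it has $\hd\le1$. A second application of Lemma~\ref{lem:hd1surjections}, to the surjection $\sh^2(\nu^*\sff F)\twoheadrightarrow\cI$, gives $\hd\cK\le1$.

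Finally, $0\to A\to\sh^1(\nu^*\sff G)\to\cK\to0$ is exact, with the middle term locally free and $\hd\cK\le1$. Resolving $\cK$ as $0\to P_1\to P_0\to\cK\to0$ and comparing with this sequence via Schanuel's lemma shows that $A$ is stably isomorphic to $P_1$, so $A$ is locally free. The main point to check carefully is the local freeness of $\sh^1(\nu^*\sff G)$, which relies on $\sff G$ having amplitude $[1,2]$ together with the $\sff G$-suitability of $\nu$.
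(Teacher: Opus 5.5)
Your proposal is correct and follows the paper's proof essentially step for step. You truncate the pulled-back triangle to get a cone $A[-1]$, show that $\operatorname{im}\delta=\ker\sh^2(\nu^*u)$ has $\hd\le1$ by two applications of Lemma~\ref{lem:hd1surjections}, deduce that $\sh^1(\nu^*\sff G)$ is locally free from $\hd\sh^2(\nu^*\sff G)\le1$, and conclude from $0\to A\to\sh^1(\nu^*\sff G)\to\operatorname{im}\delta\to0$. The only blemish is the stray remark calling $\sh^1(\nu^*\sff G)$ a cokernel ``as in Corollary~\ref{cornu}''; with $\sff G\simeq\{G_1\to G_2\}$ in degrees $1,2$ it is a kernel, which is what your next sentence correctly uses.
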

\begin{proof}
    Truncating
    $\nu^*\sff{F}\To\nu^*\sff{E}\To \nu^*\sff G$, we 
    get the exact triangle    
        \beq{eq:2triang}\tau^{\leq1}(\nu^*\sff{F})\To\tau^{\leq1}(\nu^*\sff{E})\To  A.\eeq
        By assumptions, we also have an exact sequence
        $$ 0\To A\To \sh^1(\nu^*\sff G)\To\sh^2(\nu^*\sff F)\xr{\sh^2(\nu^* u)}\sh^2(\nu^*\sff E)\To \sh^2(\nu^*\sff G)\To0$$ 
        By Lemma \ref{lem:hd1surjections} for $\cK:=\ker(\sh^2(\nu^*\sff E)\to\sh^2(\nu^*\sff G))$, $\hd(\cK)\leq1$. In turn, $\cK$ fits in a short exact sequence
    $$0\To \ker(\sh^2(\nu^*u))\To \sh^2(\nu^*\sff F)\To \cK\To 0,$$
    and so by Lemma \ref{lem:hd1surjections} again,  $\hd(\ker(\sh^2(\nu^*u)))\le 1$.

Since $\hd(\sh^2(\nu^*\sff G))\leq1$, fixing any resolution 
    $\sff{G}\simeq\{G_1\to G_2\}$, from the exact sequence $$0\To\sh^1(\nu^*\sff G)\To G_1\To G_2\To\sh^2(\sff \nu^*G)\To 0,$$
    we see that $\sh^1(\nu^*\sff G)$ must be locally free. 
    The cohomology sequence  of \eqref{eq:2triang}, can be truncated to the short exact sequence
    $$0\To A\To \sh^1(\nu^*\sff G)\To\ker(\sh^2(\nu^*u))\To0.$$
    As $\sh^1(\nu^*\sff G)$ is locally free and $\hd(\ker(\sh^2(\nu^*u)))\leq1$, $A$ is locally free.
\end{proof}

\begin{theorem}\label{thm:compdoub}
Let $\sff E,\sff F\in\Perf(X,0,2)$ and $u:\sff F\to \sff E$ be a map in the derived category with $\sff G:=\Cone(u)$. Suppose that $\nu:Y\to X$ is any $(\sff E,\sff F)$-suitable blow up and $r\ge 1$ in an integer.
\begin{enumerate}
    \item If $\sff{G}$ is represented by a locally free sheaf $G$ in degree 0, then there is a canonical closed immersion $\iota:\widetilde{\op{D}}_r(\sff{F})\hookrightarrow\widetilde{\op{D}}_r(\sff{E})$. For any $k\ge 0$
    $$\iota_*\llb{\sff{F},r}_k=c_{r(\sff e-\sff f)}({G}\otimes Q_{\widetilde{\op{D}}_r(\sff{E})})\cap \llb{\sff{E},r}_k.$$
    Moreover, $\widetilde{\op{D}}_r(\sff{F}^\vee)\cong\widetilde{\op{D}}_r(\sff{E}^\vee)$ with 
    $$\llbb{\sff F,r}=\llbb{\sff E,r}.$$
    \item If $\sff{G}\in\Perf(X,1,2)$, there is a canonical isomorphism $\widetilde{\op{D}}_r(\sff{F})\cong \widetilde{\op{D}}_r(\sff{E})$. If $\nu$ is in addition $\sff G$-suitable, then
$$\llb{\sff{E},r}=\nu_*\left(c_{r(\sff f-\sff e+\rkh{\sff E}^2-\rkh{\sff F}^2)}(A\otimes Q_{\widetilde{\op{D}}_r(\nu^*\sff{E})})\cap \llb{\nu^*\sff{F},r}\right),$$ where 
       $A:=\ker( \sh^1(\nu^*\sff G)\to \sh^2(\nu^*\sff{F}))$ is locally free of rank 
       $\sff f-\sff e+\rkh{\sff E}^2-\rkh{\sff F}^2$. 
       \begin{itemize}
           \item[(i)] If $\sh^2(\nu^*u)$ is an isomorphism, then $ \nu^*\sff G\simeq A[-1]$ and 
$$\llb{\sff{E},r}=s_{r(\sff f-\sff e)}(\sff G\otimes Q_{\widetilde{\op{D}}_r(\sff{E})})\cap \llb{\sff{F},r}.$$
           \item[(ii)] If $\sh^2(\sff E)=0$, then $\sff G$ is represented by a locally free sheaf $G$ in degree 1 and
           \begin{align*}
\llb{\sff{E},r}&=\sum_{i=0}^{r(\sff f-\sff e-\rkh{\sff F}^2)}c_{{r(\sff f-\sff e-\rkh{\sff F}^2)-i}}(G\otimes Q_{\wt{\op{D}}_r(\sff F)})\cap\llb{\sff F,r}_i
           \end{align*}
       \end{itemize}
    \item If $\sff G$ is represented by a locally free sheaf $G$ in degree $2$, then 
    $$\widetilde{\op{D}}_r(\sff{F})\cong \widetilde{\op{D}}_r(\sff{E})\text{ and }\llb{ \sff{F},r}=\llb{ \sff{E},r}.$$
\end{enumerate}
    \end{theorem}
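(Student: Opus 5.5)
The strategy is to reduce every statement to the $2$-term comparison Theorem \ref{compGT}, applied on a suitable blow up. By Lemma \ref{lem:indep} and Corollary \ref{cor:pullpush} we may replace $X$ by a nonsingular $(\sff E,\sff F,\sff G)$-suitable (and, for the second classes, $(\sff E^\vee,\sff F^\vee)$-suitable) blow up $Y$. Two facts hold on $Y$:
\begin{itemize}
\item the first classes are the perfect-obstruction-theory classes of $\tau^{\le1}(\nu^*\sff E)$ and $\tau^{\le1}(\nu^*\sff F)$;
\item the second classes are those of the $2$-term complexes $\{\nu^*E_1/K\to\nu^*E_2\}$, shifted to degrees $[0,1]$ after dualizing.
\end{itemize}
The isomorphisms and immersions of virtual resolutions are read off the long exact cohomology sequences of $\sff F\to\sff E\to\sff G$ dualized. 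This identifies $\sh^0(\sff E^\vee)$ with $\sh^0(\sff F^\vee)$ or a quotient of it, and $\sh^2(\sff E)$ with $\sh^2(\sff F)$ or a quotient. The Grassmannians of quotients then coincide or embed, and this is compatible with pullback to $Y$.

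\textbf{Part (1).} With $\sff G=G[0]$, truncation gives an exact triangle $\tau^{\le1}\nu^*\sff F\to\tau^{\le1}\nu^*\sff E\to \nu^*G$, and $\sh^2(\nu^*\sff F)\cong\sh^2(\nu^*\sff E)$. Theorem \ref{compGT}(1) then gives the formula on $Y$ for $k=0$.

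For general $k$, the Segre factor $s_k(\sh^2\otimes Q)$ is the same on both sides, because $Q_{\wt{\op D}_r(\sff E)}$ pulls back to $Q_{\wt{\op D}_r(\sff F)}$. The projection formula then transports the $k=0$ formula to all $k$, and pushing forward by $\nu_*$ finishes.

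For the second classes, $\sff E\to \sff F[1]\to\ldots$ shows that $\sh^2$ and the degree $[1,2]$ parts agree. Concretely, $\{\nu^*E_1/K\to\nu^*E_2\}$ is quasi-isomorphic to $\sh^2[-2]$ with $\hd\le1$. Hence the $2$-term complexes defining the second classes are quasi-isomorphic, and their classes agree by Lemma \ref{lem:indep}.

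\textbf{Part (2).} By Lemma \ref{lem:truncatecone} there is a triangle $\tau^{\le1}\nu^*\sff F\to\tau^{\le1}\nu^*\sff E\to A[-1]$ with $A$ locally free. This is exactly the situation of Theorem \ref{compGT}(2), which yields the displayed formula. The rank of $A$ is computed additively from ranks of truncations:
\[
(\sff f+\rkh{\sff F}^2\text{ adjusted})=\sff f-\rkh{\sff F}^2-(\sff e-\rkh{\sff E}^2).
\]

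\emph{Case (i).} If $\sh^2(\nu^*u)$ is an isomorphism, then $\nu^*\sff G\simeq A[-1]$. Using $c(A)=s(A[-1])$ and pulling back along $\nu$, we get $s_{r(\sff f-\sff e)}(\sff G\otimes Q)$, and the pushforward follows from the projection formula.

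\emph{Case (ii).} If $\sh^2(\sff E)=0$, then $\sh^1(\sff G)\to\sh^2(\sff F)$ is surjective and $\sh^2(\sff G)=0$, so $\sff G=G[-1]$. The sequence $0\to A\to\nu^*G\to\sh^2(\nu^*\sff F)\to0$ gives, in $K$-theory, $A\otimes Q=\nu^*G\otimes Q-\sh^2(\nu^*\sff F)\otimes Q$. Hence
\[
c(A\otimes Q)=c(G\otimes Q)\,s(\sh^2(\nu^*\sff F)\otimes Q).
\]
Expanding the degree-$r\,\rk A$ part and using Definition \ref{def:refinedE} produces the stated sum over $i$.

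\textbf{Part (3).} With $\sff G=G[-2]$, truncation gives $\tau^{\le1}\nu^*\sff F\simeq\tau^{\le1}\nu^*\sff E$. This holds since $\sh^1$ agree and the only change is an extension in $\sh^2$, which is absorbed; the $\hd\le1$ condition is preserved by Lemma \ref{lem:hd1surjections}. The classes are then equal by quasi-isomorphism invariance.

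\textbf{Main obstacle.} The delicate point is Part (2): keeping track of how the identifications of virtual resolutions on $Y$ and on $X$ interact with $\nu_*$. In particular, in (ii) one must check that the Segre-class expansion matches the refined classes, which are themselves defined via $\nu_*$. I would handle this by proving everything on $Y$ first and pushing forward only at the end.
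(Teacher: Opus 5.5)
Your proposal is correct and follows the paper's proof essentially step for step. The paper likewise reduces (1) and (2) to Theorem~\ref{compGT} on a nonsingular suitable blow up, applied to the truncated triangles (this is exactly Proposition~\ref{prop:comparisonSpecial}), uses Lemma~\ref{lem:truncatecone} for $A$, derives (i) and (ii) from $\nu^*\sff G\simeq A[-1]$ and $A=\nu^*G-\sh^2(\nu^*\sff F)$ in $K$-theory together with the projection formula, and gets the second-class claim in (1) and all of (3) from $\tau^{\ge2}(\nu^*\sff F)\simeq\tau^{\ge2}(\nu^*\sff E)$ and $\tau^{\le1}(\nu^*\sff F)\simeq\tau^{\le1}(\nu^*\sff E)$, respectively. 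The only fixes needed are cosmetic: the left-hand side of your displayed rank computation is garbled, though the value you end with, $(\sff f-\rkh{\sff F}^2)-(\sff e-\rkh{\sff E}^2)$, is right; in (1) the surjection runs $\sh^0(\sff E^\vee)\twoheadrightarrow\sh^0(\sff F^\vee)$, which is what gives $\wt{\op D}_r(\sff F)\hookrightarrow\wt{\op D}_r(\sff E)$; and no $\sff E^\vee$-suitability is needed for the second classes.
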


    \begin{proof}
        For (1), the first claim follows from the projection formula and Proposition \ref{prop:comparisonSpecial} applied to  the exact triangle
        $$\tau^{\leq1}(\nu^*\sff{F})\to\tau^{\leq1}(\nu^*\sff{E})\to \nu^*\sff G$$ 
        by noting that $\sh^2(\sff F)\cong\sh^2(\sff E)$.
        The second claim follows easily, because  $\tau^{\geq2}(\nu^*\sff{F})\simeq\tau^{\geq2}(\nu^*\sff{E})$.

        For (2), we are in the setup of Lemma \ref{lem:truncatecone}. Working with the exact triangle as in that lemma, the claim is proven by an application of  Proposition \ref{prop:comparisonSpecial}. 
        In the case $\sh^2(\nu^*u)$ is an isomorphism, $\sh^2(\sff G)=0$ and $$A=\ker(\sh^1(\nu^*\sff G)\xr{0}\sh^2(\nu^*\sff F))=\sff \nu^*G[1],$$ and the other claim follows from the projection formula. In the case $\sh^2(\sff E)=0$, from the cohomology exact sequence we see that $\sh^1(\sff G)\xr{}\sh^2(\sff F)$ is surjective and $\sh^2(\sff G)=0$. Therefore, $\sff G\simeq G[-1]$ for some locally free $G$. Since $\hd(\sh^2(\nu^*\sff F))\leq 1$, in $K$-theory
        $$A=\ker(G\twoheadrightarrow \sh^2(\nu^*\sff F))=G- \sh^2(\nu^*\sff F).$$
        Thus, we have 
                   \begin{align*}
\llb{\sff{E}}&=\nu_*\left(c_{r\left(\sff f-\sff e-\rkh{\sff F}^2\right)}((\nu^*G-\sh^2(\nu^*\sff E))\otimes Q_{\widetilde{\op{D}}_r(\nu^*\sff{E})})\cap \llb{\nu^*\sff{F}}\right),
           \end{align*}
which give the claimed formula after expanding the Chern class of the difference and applying the projection formula.
        
        For $(3)$, we have $\tau^{\leq1}(\sff{E})\simeq\tau^{\leq1}(\sff F)$ from which the claim easily follows. 

        \end{proof}

The next proposition demonstrates how the first and the second classes are related in general. 

\begin{prop} \label{prop:aboutT}
 If $\nu\colon Y\to X$ is an $(\sff E, \sff E^\vee)$-suitable blow up, then  \begin{align*}&\llb{\sff{E},r}=\nu_*\left(c_{r\rkh{\sff{E}}^1}(T\otimes Q_{\widetilde{\op{D}}_r(\sff{E})})\cap\llbb{\nu^*\sff{E}^\vee, r}\right),\\
 &\llb{\sff E^\vee,r}=\nu_*\left(c_{r\rkh{\sff{E}}^1}(T^*\otimes Q_{\widetilde{\op{D}}_r(\sff E^\vee )})\cap\llbb{\nu^*\sff{E},r}\right),
 \end{align*}
    where $T$, defined in \eqref{def:T}, is locally free of rank $\rkh{\sff E}^1$ with $K$-theory class 
    \beq{eq:ktheoryT}\sh^2(\nu^*\sff E)+\sh^0(\nu^*\sff E^\vee)^\vee-\nu^*\sff E,\eeq which is independent of the choice of a locally free resolution for $\sff E$.
        
\end{prop}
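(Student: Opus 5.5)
The plan is to compare the two zero-locus descriptions on $Y$ directly. Both cycles should come out as refined intersections inside the same Grassmannian $\Gr(\nu^*E_0^*,r)$, and they will differ only by passing from a subbundle to the ambient bundle. No smoothness of $Y$ is used, so Proposition \ref{prop:comparisonSpecial} is not needed.

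\textbf{Step 1: identify both classes as zero loci in $\Gr(\nu^*E_0^*,r)$.} By Definition \ref{def:1st2nd}, $\llb{\sff E,r}$ is the pushforward of $0_{K_1}^![\Gr(\nu^*E_0^*,r)]$. Here $K_1=Q_{\Gr}\otimes p^*K$, and its section comes from $Q^*_{\Gr}\hookrightarrow p^*\nu^*E_0\to p^*K$.

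Next consider $\nu^*\sff E^\vee$, shifted to degrees $[0,2]$; it is represented by $\{\nu^*E_2^*\to\nu^*E_1^*\to\nu^*E_0^*\}$. For this complex:
\begin{itemize}
\item the kernel of the last differential is $C=\ker(\nu^*\sigma_0^*)$;
\item the relevant Grassmannian is $\Gr(\nu^*E_0^*,r)$;
\item its second-class locus is $\wt{\op D}_r(\nu^*\sff E)$.
\end{itemize}
By \eqref{fig:def2nd}, $\llbb{\nu^*\sff E^\vee,r}=0^!_{K_2'}[\Gr(\nu^*E_0^*,r)]$ with $K_2'=Q_{\Gr}\otimes p^*(\nu^*E_1^*/C)^*$. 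The section of $K_2'$ is induced by $Q^*_{\Gr}\hookrightarrow p^*\nu^*E_0\to p^*(\nu^*E_1^*/C)^*$.

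\textbf{Step 2: factor the section through the subbundle.} The composite $\nu^*E_0\to\nu^*E_1\to C^*$ vanishes, since $C=\ker\nu^*\sigma_0^*$. Hence $\nu^*\sigma_0$ lands in $(\nu^*E_1^*/C)^*=\ker(\nu^*E_1\to C^*)$. This subsheaf sits inside $K$ as a subbundle, with locally free quotient $T$ by \eqref{def:T}.

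Therefore the section of $K_1$ is the image of the section of the subbundle $K_2'\subset K_1$. The two zero schemes coincide, both being $\wt{\op D}_r(\nu^*\sff E)$. The standard excess formula for a section factoring through a subbundle (Fulton, Example 6.3.x / Proposition 14.1) then gives
$$0^!_{K_1}[\Gr]=c_{r\rkh{\sff E}^1}\big(Q_{\Gr}\otimes T\big)\cap 0^!_{K_2'}[\Gr].$$
Applying $\nu_*$ yields the first formula.

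\textbf{Step 3: the second formula.} Apply the same argument with $\sff E$ replaced by $\sff E^\vee$. The roles of $K$ and $C$ are exchanged, so the cokernel bundle becomes $T^*$.

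\textbf{Step 4: the $K$-theory class of $T$.} From the two exact sequences in the construction, $T=C^*-\nu^*E_1/K$.

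\begin{itemize}
\item Since $\sh^0(\nu^*\sff E^\vee)=\coker(\nu^*E_1^*\to\nu^*E_0^*)$ has $\hd\le1$, we get $C=\sh^0(\nu^*\sff E^\vee)+\nu^*E_1^*-\nu^*E_0^*$. Dualizing, which is legitimate in $K$-theory as a derived dual, gives $C^*=\sh^0(\nu^*\sff E^\vee)^\vee+\nu^*E_1-\nu^*E_0$.
\item Similarly, $\nu^*E_1/K=\nu^*E_2-\sh^2(\nu^*\sff E)$.
\end{itemize}
Subtracting gives \eqref{eq:ktheoryT}. This expression visibly depends only on $\nu^*\sff E$, not on the chosen resolution.

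\textbf{Main obstacle.} The only delicate points are the bookkeeping in Step 1 and the well-definedness of the derived dual in Step 4. In Step 1, one must check that the second-class construction for $\nu^*\sff E^\vee$ really takes place on $\Gr(\nu^*E_0^*,r)$ with the bundle $(\nu^*E_1^*/C)^*$, after the shift and dualization conventions. In Step 4, one needs $\hd\le1$ for the derived dual of $\sh^0(\nu^*\sff E^\vee)$ to be well defined.
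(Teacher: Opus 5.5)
Your proof is correct, but it takes a different route from the paper's.

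\textbf{The paper's route.} The paper writes down the map of two-term complexes $u\colon\{\nu^*E_0\to(\nu^*E_1^*/C)^*\}\to\{\nu^*E_0\to K\}$ with $\Cone(u)\simeq T[-1]$. It recognizes the source as computing $\llbb{\nu^*\sff E^\vee,r}$ and the target as computing $\llb{\sff E,r}$ on $Y$. It then invokes part (2) of Theorem \ref{thm:compdoub}, the case where the cone is locally free in degree $1$. The second formula comes from swapping the roles of $\sff E$ and $\sff E^\vee$, and the $K$-theory class is read off from $T=C^*+K-\nu^*E_1$.

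\textbf{Your route.} You use the fact that the two complexes share the degree-$0$ term $\nu^*E_0$. So both classes are localized top Chern classes on the same Grassmannian $\Gr(\nu^*E_0^*,r)$. The section of $Q_{\Gr}\otimes K$ factors through the subbundle $Q_{\Gr}\otimes(\nu^*E_1^*/C)^*$, whose quotient $Q_{\Gr}\otimes T$ is locally free. You then factor the zero section as $0_{K_1}=i\circ 0_{K_2'}$. The excess intersection formula applies to the regular embedding $i$: the section lands entirely in $K_2'$, so the excess bundle is the whole normal bundle, namely the pullback of $Q_{\Gr}\otimes T$. Combined with commutation of Gysin maps with Chern classes, this gives the identity. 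In effect you prove from scratch the special case of the comparison theorem that is needed here.

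\textbf{What each approach buys.} The paper's route is shorter and uniform with its other comparison formulas. However, through Proposition \ref{prop:comparisonSpecial} it ultimately rests on Theorem \ref{compGT}, which uses a nonsingular blow-up (resolution of singularities) and perfect obstruction theories. Your argument works directly on the possibly singular $Y$ and needs neither, in the spirit of Appendix \ref{quasiinvariance}.

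Your $K$-theory computation agrees with the paper's. Your remark that $\hd\le 1$ is what makes $\sh^0(\nu^*\sff E^\vee)$ and $\sh^2(\nu^*\sff E)$ perfect, so that their classes and derived duals make sense in $K^0(Y)$, is the right justification.
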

\begin{proof}
    In the notation of \eqref{def:T}, there is an induced map of complexes $$u:\{\nu^*E_0\to\nu^*E_1/C^*\}\to\{\nu^*E_0\to K\}$$ with $\Cone(u)\simeq T[-1]$. The first formula now follows by applying part (2) of Theorem \ref{thm:compdoub}. The second formula is similarly proven by switching the roles of $\sff{E}$ and $\sff E^\vee$. The $K$-theory class of $T$ is $C^*+K-\nu^*E_1$ from which the last claim easily follows. 
\end{proof}

        

Suppose 
that we are in the setting of part (2) of Theorem \ref{thm:compdoub} and suppose that  
$\sff{E}$ is $r$-special, that is, it satisfies Condition \eqref{UDr} with $\sff E|_U\in\Perf(U,0,1)$ for some open neighborhood of $U$ of $\op D_r(\sff E)$. Then, $\sff G|_U=\Cone(u)|_U$ is quasi- isomorphic to a locally free sheaf $G$  of rank $g=\sff f-\sff e$ sitting in degree $1$. The following result extends part (2-ii) of Theorem \ref{thm:compdoub}.

\begin{prop}\label{prop:deeprefined}
Let $\sff E,\sff F\in\Perf(X,0,2)$ and $u:\sff F\to \sff E$ be map in the derived category with $\sff G:=\Cone(u)$. Suppose that $\sff{E}$ is $r$-special for some $r\ge 1$ and $\nu:Y\to X$ is any $\sff F$-suitable blow up. Then, 
   \begin{align*}
       \llb{\sff{E},r}&=\nu_*\left(c_{r(g-\rkh{\sff{F}}^2)}\big((\nu^* G-\sh^2(\nu^*\sff{F}))\otimes Q_{\wt{\op{D}}_r(\sff{F})}\big)\cap\llb{\nu^*\sff{F},r}\right).\\
       &=\sum_{i=0}^{r(\sff f-\sff e-\rkh{\sff F}^2)} c_{r(g-\rkh{\sff{F}}^2)-i}(G\otimes Q_{\wt{\op D}_r(\nu^*\sff{F})})\cap\llb{\sff{F},r}_i,
   \end{align*}
where $G$ is as defined before the proposition. If $\nu$ is $(\sff F,\sff F^\vee)$-suitable, then 
    \begin{align*}\llb{\sff{E},r}&=\nu_*\left(c_{r(g-\rkh{\sff{F}}^2+\rkh{\sff{F}}^1)}\big((T+\nu^* G-\sh^2(\nu^*\sff{F}))\otimes Q_{\wt{\op{D}}_r(\sff{F})}\big)\cap\llbb{\nu^*\sff{F}^\vee,r}\right),
   \end{align*} where the vector bundle $T$ is as defined in \eqref{def:T}.
\end{prop}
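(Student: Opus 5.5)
Since $\sff E$ is $r$-special, we have $\sff E|_U\in\Perf(U,0,1)$, and everything localizes to the open set $U$ and its preimage $V:=\nu^{-1}(U)$. By Proposition \ref{prop:red1st} and Lemma \ref{lem:indep}, $\llb{\sff E,r}$ is computed on $U$ using the 2-term complex $\sff E|_U$. On $U$, $\sff G|_U\simeq G[-1]$ with $G$ locally free of rank $g=\sff f-\sff e$. Because $\nu^*\sff E|_V$ has no $\sh^2$, the blow up $\nu$ is automatically $\sff E$-suitable over $V$. Moreover $\tau^{\le1}(\nu^*\sff E)|_V=\nu^*\sff E|_V$.

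The plan is to apply Lemma \ref{lem:truncatecone} on $V$ to the triangle $\nu^*\sff F\to\nu^*\sff E\to\nu^*G[-1]$. This gives an exact triangle $\tau^{\le1}(\nu^*\sff F)\to\tau^{\le1}(\nu^*\sff E)\to A[-1]$ with $A=\ker(\nu^*G\to\sh^2(\nu^*\sff F))$.

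The $\sff G$-suitability needed in Lemma \ref{lem:truncatecone} is automatic on $V$, since $\sh^2(\nu^*\sff G)=0$ there. Indeed, $\sh^2(\nu^*\sff E)|_V=0$ and $\sh^2(\nu^*\sff F)\to\sh^2(\nu^*\sff E)\to\sh^2(\nu^*\sff G)\to0$ is exact. For the same reason the map $\nu^*G\to\sh^2(\nu^*\sff F)$ is surjective on $V$. Since $\hd(\sh^2(\nu^*\sff F))\le1$, $A$ is locally free of rank $g-\rkh{\sff F}^2$ with $K$-class $\nu^*G-\sh^2(\nu^*\sff F)$.

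Next, apply part (2) of Theorem \ref{compGT} on $V$, in the form of Proposition \ref{prop:comparisonSpecial} (both truncated complexes are 2-term there). This gives
\[\llb{\nu^*\sff E,r}=c_{r(g-\rkh{\sff F}^2)}(A\otimes Q)\cap\llb{\nu^*\sff F,r}\]
near the degeneracy locus, and the identification $\wt{\op D}_r(\nu^*\sff F)\cong\wt{\op D}_r(\nu^*\sff E)$ holds there. One point needs care: $\wt{\op D}_r(\nu^*\sff F)$ must lie over $V$, so that nothing is lost away from $U$. This holds because $\sh^0(\sff F^\vee)\cong\sh^0(\sff E^\vee)$ when the cone lies in degrees $[1,2]$, so $\op D_r(\sff F)=\op D_r(\sff E)\subset U$.

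Pushing forward by $\nu$ and using Corollary \ref{cor:pullpush} yields the first displayed formula. For the second equality, I will expand $c(A\otimes Q)=c(\nu^*G\otimes Q)\,s(\sh^2(\nu^*\sff F)\otimes Q)$ and pick out the degree $r(g-\rkh{\sff F}^2)$ part. The $i$-th Segre term paired with $\llb{\nu^*\sff F,r}$ is, after $\nu_*$, exactly $\llb{\sff F,r}_i$ by Definition \ref{def:refinedE}. Since $G$ lives on $X$, the projection formula moves $c(G\otimes Q)$ outside $\nu_*$.

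For the last formula, when $\nu$ is also $(\sff F,\sff F^\vee)$-suitable, substitute Proposition \ref{prop:aboutT} for $\sff F$:
\[\llb{\nu^*\sff F,r}=c_{r\rkh{\sff F}^1}(T\otimes Q)\cap\llbb{\nu^*\sff F^\vee,r}.\]
This requires a compatible further blow up and the projection formula. Then combine the two top Chern classes via Whitney sum, since $T$ and $A$ are genuine bundles of ranks $\rkh{\sff F}^1$ and $g-\rkh{\sff F}^2$, so their top Chern classes multiply to the top Chern class of the sum.

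The main obstacle is the localization step. It must be justified that the Gysin-type classes for $\sff F$ computed on $\nu^{-1}(U)$ agree with the global ones, i.e. that restriction to the open neighbourhood of the support commutes with the refined intersection $0^!$. I would argue this via the support of $\wt{\op D}_r$ lying in $V$, together with compatibility of refined Gysin maps with open restriction.
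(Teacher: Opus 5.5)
Your proposal is correct and follows essentially the same route as the paper. You restrict to $U$ and $\nu^{-1}(U)\supset\wt{\op D}_r(\nu^*\sff F)$, form the locally free $A=\ker(\nu^*G\twoheadrightarrow\sh^2(\nu^*\sff F))$ and the triangle $\tau^{\le1}(\nu^*\sff F)\to\tau^{\le1}(\nu^*\sff E)\to A[-1]$, and apply the 2-term comparison; the paper just cites part (2-ii) of Theorem \ref{thm:compdoub}, which packages Lemma \ref{lem:truncatecone}, Proposition \ref{prop:comparisonSpecial} and the Chern/Segre expansion you describe. For the last formula you merge Proposition \ref{prop:aboutT} with $c_{\mathrm{top}}(A\otimes Q)$ on $\nu^{-1}(U)$, also as the paper does, and the only superfluous point is the ``compatible further blow up'': since $\nu$ is already $(\sff F,\sff F^\vee)$-suitable, Proposition \ref{prop:aboutT} applies directly on $Y$.
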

\begin{proof}
    Since ${G}$  is locally free and $\hd(\sh^2(\nu^*\sff{F}))\le 1$, as before $$A:=\ker( \nu^*G\twoheadrightarrow \sh^2(\nu^*\sff{F}|_U))$$ is  locally free, and we get the exact triangle
        $$\tau^{\leq1}(\nu^*\sff{F}|_U)\to\tau^{\leq1}(\nu^*\sff{E}|_U)\to A[-1].$$
    The claim for the first class now follows from part (2-ii) of Theorem \ref{thm:compdoub}. 
    
    To see the second assertion, note that by Proposition \ref{prop:aboutT} and the first part, we have 
$$\llb{\sff{E}}=\nu_*\left(c_{r\sff h^1_{\sff E}}(T\otimes Q_{\wt{\op{D}}_r(\sff{F})})\,c_{r(g-\rkh{\sff{F}}^2)}\big(A\otimes Q_{\wt{\op{D}}_r(\sff{F})}\big)\cap\llbb{\nu^*\sff{F}^\vee}\right).$$
    Since $A$ is locally free of rank $g-\rkh{\sff{F}}^2$ over the open set $\nu^{-1}(U)$, which contains $\wt{\op{D}}_r(\nu^*\sff{F})$, we can combine the two top Chern classes to finish the proof.
\end{proof}

The last comparison formula we prove in this section is based on the following set up.  Let $\sff{E},\sff{F},\sff{G},\sff{H}\in\Perf(X,0,2)$ sitting in a commutative diagram in the derived category
\begin{equation}\label{eq:squarecompare}
    \begin{tikzcd}
	{\sff{G}} & {\sff{E}} \\
	{\sff{F}} & {\sff{H}}
	\arrow["\alpha", from=1-1, to=1-2]
	\arrow["\beta"', from=1-1, to=2-1]
	\arrow["\gamma", from=1-2, to=2-2]
	\arrow["\delta", from=2-1, to=2-2]
\end{tikzcd}
\end{equation}
Suppose that  $\Cone(\a)$ is a locally free sheaf in degree 0, $\Cone(\gamma)\in\Perf(X,1,2)$ and 
$\Cone(\delta),\Cone(\beta)\in\Perf(X,0,2)$. Let $\nu:Y\to X$ be any $(\sff E,\sff F,\sff G, \sff H, \Cone(\gamma))$-suitable blow up.

\begin{theorem}
   \label{thm:comparesquare}
    In the set up of the last paragraph, $$G :=\tau^{\leq1}(\nu^*\sff E)-\tau^{\leq1}(\nu^*\sff F)$$ is represented in $K$-theory by a vector bundle of rank $\sff e-\sff f+\rkh{\sff E}^2-\rkh{\sff F}^2$. There is a canonical closed immersion $i:\wt{\op{D}}_r(\sff F)\hookrightarrow \wt{\op{D}}_r(\sff E)$ and for any $r\ge 1$,
    $$i_*\llb{\sff F,r }=\nu_*\left(c_{\sff e-\sff f+\rkh{\sff E}^2-\rkh{\sff F}^2}(G\otimes Q_{\op{\wt D_r(\sff E)}})\cap\llb{\nu^*\sff E, r}\right).$$
 
\end{theorem}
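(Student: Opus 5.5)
The plan is to factor the square \eqref{eq:squarecompare} through $\sff H$ and compose two comparisons already proven: part (1) of Theorem \ref{thm:compdoub} for $\alpha$, which gives a closed immersion and a top Chern class, and part (2) for $\gamma$, which gives an isomorphism of virtual resolutions and another top Chern class. Everything will be computed on the $(\sff E,\sff F,\sff G,\sff H,\Cone(\gamma))$-suitable blow up $\nu\colon Y\to X$, where all the truncations $\tau^{\le 1}(\nu^*\,\cdot\,)$ are 2-term complexes of vector bundles in degrees $[0,1]$ (Corollary \ref{cornu}).

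First I will construct the immersion $i$. Since $\Cone(\alpha)\simeq G_\alpha$ is locally free in degree 0, part (1) of Theorem \ref{thm:compdoub} gives a closed immersion $\wt{\op D}_r(\sff G)\hookrightarrow\wt{\op D}_r(\sff E)$. Since $\Cone(\gamma)\in\Perf(X,1,2)$, part (2) gives $\wt{\op D}_r(\sff E)\cong\wt{\op D}_r(\sff H)$. On $\sh^0$ of the duals, $\beta$ and $\delta$ induce surjections, and the diagram says the two routes agree. So I expect $\wt{\op D}_r(\sff F)$ to embed in $\wt{\op D}_r(\sff H)\cong\wt{\op D}_r(\sff E)$ canonically, and the triangle $\sh^0(\sff H^\vee)\to\sh^0(\sff F^\vee)$ to be surjective. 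This surjectivity follows from $\Cone(\delta)\in\Perf(X,0,2)$, because $\sh^{1}(\Cone(\delta)^\vee)$ vanishes in the relevant degree.

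Next comes the $K$-theory claim. By Lemma \ref{lem:truncatecone} applied to $\gamma$, $\tau^{\le1}(\nu^*\sff E)\to\tau^{\le1}(\nu^*\sff H)\to A[-1]$ is exact with $A$ locally free. For the cone of $\alpha$, truncation gives $\tau^{\le1}(\nu^*\sff G)\to\tau^{\le1}(\nu^*\sff E)\to\nu^*G_\alpha$, using $\sh^2(\sff G)\cong\sh^2(\sff E)$. The key point, and the main obstacle, is to show that $\tau^{\le1}(\nu^*\sff F)\to\tau^{\le1}(\nu^*\sff H)$ has cone represented, on a neighbourhood of $\wt{\op D}_r(\nu^*\sff F)$, by a single vector bundle $A'$ in degree 1. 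That vector bundle should play the role of $G_\alpha$ and $A$ combined. I would try to prove it by comparing $\tau^{\le 1}\nu^*\Cone(\beta)$ with $\tau^{\le 1}\nu^*\Cone(\gamma)$ via the octahedral axiom on the square, using the $\hd\le1$ statements of Lemmas \ref{lem:hd1surjections} and \ref{lem:truncatecone}. If it holds, then $G=\tau^{\le1}\nu^*\sff E-\tau^{\le1}\nu^*\sff F=[\nu^*G_\alpha]+[A]-[A']$. I would then show that this difference is an honest bundle by exhibiting $A'\hookrightarrow \nu^*G_\alpha\oplus A$ (or an appropriate extension) with locally free cokernel. The rank is computed from $\rk\tau^{\le1}\nu^*\sff E=\sff e+\rkh{\sff E}^2$, and similarly for $\sff F$.

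Finally, I will assemble the classes. I apply Proposition \ref{prop:comparisonSpecial} in form (1) to the 2-term map $\tau^{\le1}\nu^*\sff F\to\tau^{\le1}\nu^*\sff E$, whose cone is, up to the correction above, the bundle $G$ in degree 0. This gives
$$i_*\llb{\nu^*\sff F}=c_{\rm top}(G\otimes Q)\cap\llb{\nu^*\sff E}$$
on $Y$. Pushing forward by $\nu$ and using Corollary \ref{cor:pullpush} together with the compatibility of $i$ with $\nu$ then yields the stated formula. If the direct construction of a degree-0 cone fails, the fallback is to write the class as the composition of the part (1) formula for $\alpha$ and the part (2) formula for $\gamma$ (and the analogous ones for $\beta$ and $\delta$), and to combine the resulting top Chern classes via the Whitney formula. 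This is valid because all the bundles involved are honest bundles near the degeneracy locus.
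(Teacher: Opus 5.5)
Your construction of $i$ is fine: $\sh^0(\sff H^\vee)\cong\sh^0(\sff E^\vee)$ because $\Cone(\gamma)\in\Perf(X,1,2)$, and $\sh^0(\sff H^\vee)\to\sh^0(\sff F^\vee)$ is onto because $\Cone(\delta)\in\Perf(X,0,2)$.

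The gap is in the class formula. Write $\tilde\alpha,\tilde\beta,\tilde\gamma,\tilde\delta$ for the induced maps of truncations $\tau^{\le1}(\nu^*-)$.

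\begin{itemize}
\item The square has no arrow between $\sff F$ and $\sff E$. Your last step applies Proposition \ref{prop:comparisonSpecial} to a map $\tau^{\le1}(\nu^*\sff F)\to\tau^{\le1}(\nu^*\sff E)$ that you never construct.
\item The fallback cannot supply it. The formulas for $\alpha$ and $\gamma$ only relate $\llb{\sff G}$, $\llb{\sff E}$, $\llb{\sff H}$. Theorem \ref{thm:compdoub} says nothing about $\beta$ or $\delta$, whose cones are only assumed to lie in $\Perf(X,0,2)$.
\item Your ``key point'' is false. Take $\sff G=\sff F$, $\beta=\id$, $\sff H=\sff E$, $\gamma=\id$, $\delta=\alpha$. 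All hypotheses hold, yet $\Cone(\tilde\delta)\simeq\nu^*\Cone(\alpha)$ sits in degree $0$. In general $[\Cone(\tilde\delta)]=G-[A]$, a degree-$0$ bundle minus a degree-$1$ one.
\item Even a degree-$1$ cone would only give, via part (2), $\llb{\sff H}$ in terms of $\llb{\sff F}$. That is the wrong direction for the statement.
\end{itemize}

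Two smaller slips. First, $\nu^*G_\alpha$ does not belong in the $K$-class of $G$, since it compares $\sff G$ with $\sff E$. Second, $\rk\tau^{\le1}(\nu^*\sff E)=\sff e-\rkh{\sff E}^2$, not $\sff e+\rkh{\sff E}^2$ (cf.\ the degree in Definition \ref{def:1st2nd}). So the rank displayed in the statement has a sign slip.

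The missing idea is to manufacture the map by splitting $\tilde\gamma$.

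\begin{enumerate}
\item Lemma \ref{lem:truncatecone} gives a triangle $\tau^{\le1}(\nu^*\sff E)\xrightarrow{\tilde\gamma}\tau^{\le1}(\nu^*\sff H)\to A[-1]$ with $A$ locally free.
\item Pull back to a Jouanolou affine bundle over $Y$. The connecting morphism $A[-1]\to\tau^{\le1}(\nu^*\sff E)[1]$ is an $\op{Ext}^2$ from a bundle to a complex of bundles in degrees $[0,1]$, so it vanishes there. Hence there is $v$ with $v\circ\tilde\gamma=\id$.
\item Put $u:=v\circ\tilde\delta$. At each closed point $y$, $\sh^0(u|_y)$ is injective: $\sh^0(\tilde\gamma|_y)$ is an isomorphism and $\sh^0(\tilde\delta|_y)$ is injective.
\item $\sh^1(u|_y)$ is surjective. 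Indeed $\sh^1(\tilde\alpha|_y)$ is onto because $\Cone(\alpha)$ is in degree $0$. Commutativity of the square then gives $\im\sh^1(\tilde\gamma|_y)\subset\im\sh^1(\tilde\delta|_y)$, and $v$ retracts $\tilde\gamma$.
\item Thus $\Cone(u)$ is a vector bundle in degree $0$ representing $G$. Part (1) of Theorem \ref{thm:compdoub} then gives the formula in one step.
\end{enumerate}

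Note that $\alpha$ is used only for this surjectivity on $\sh^1$, not to contribute a Chern class factor of its own, so no Whitney-formula combination is needed.
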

\begin{proof}
    After pulling back \eqref{eq:squarecompare} to $Y$ and truncating, we get the commuting diagram
    \[\begin{tikzcd}
	{\tau^{\leq1}(\nu^*\sff{G}}) & {\tau^{\leq1}(\nu^*\sff{E})} \\
	{\tau^{\leq1}(\nu^*\sff{F})} & {\tau^{\leq1}(\nu^*\sff{H}).}
	\arrow["\nt\alpha", from=1-1, to=1-2]
	\arrow["\nt\beta"', from=1-1, to=2-1]
	\arrow["\nt\gamma", from=1-2, to=2-2]
	\arrow["\nt\delta", from=2-1, to=2-2]
\end{tikzcd}\]
By the assumption and Lemma \ref{lem:truncatecone}, $$\Cone(\nt\a)\simeq B,\qquad \Cone(\nt\gamma)=A[-1]$$ for some locally free sheaves  $A$ and $B$. We then get the following diagram 
\begin{equation}\label{eq:diagutrunc}
    \begin{tikzcd}[cramped,]
	\tau^{\leq1}(\nu^* \sff G)& {\tau^{\leq1}(\nu^*\sff{E})}  & B\\
	{\tau^{\leq1}(\nu^*\sff{F})} & {\tau^{\leq1}(\nu^*\sff{H})}\\
	& {A[-1]}
    \arrow[ from=1-1, to=1-2]
     \arrow[ from=1-1, to=2-1]
	\arrow["{\nt\gamma}"', from=1-2, to=2-2]
	\arrow["u", dotted, from=2-1, to=1-2]
	\arrow["{\nt\delta}", from=2-1, to=2-2]
    \arrow[from=1-2, to=1-3]
	\arrow["v"',curve={height=10pt}, dotted, from=2-2, to=1-2]
	\arrow[from=2-2, to=3-2]
	\arrow[curve={height=10pt}, dotted, from=3-2, to=2-2]
\end{tikzcd}
\end{equation}
in which the top row and the middle column are exact triangles.
We may use Jouanolou trick: after pulling back the diagram to an affine bundle over $Y$ which is an affine variety, the connecting map $A[-1]\to \tau^{\leq1}(\nu^*\sff E)[1]$ will be zero. Then, we may choose the splittings marked with the dotted arrows in the middle column. As a result, we get a map $$u:=v\circ \nt \de :\tau^{\leq1}(\nu^*\sff F)\to \tau^{\leq1}(\nu^*\sff E).$$
We show that $\Cone(u)$ is isomorphic to a vector bundle in degree 0. At a closed point $y\in Y$, taking $\sh^0$  and using the assumptions, we get a commutative diagram as follows
\[\begin{tikzcd}[cramped,row sep=scriptsize]
	& {\sh^0(\nu^*\sff{E}|_y)} \\
	{\sh^0(\nu^*\sff{F}|_y)} & {\sh^0(\nu^*\sff{H}|_y)}
	\arrow[no head, from=1-2, to=2-2]
	\arrow[shift left, no head, from=1-2, to=2-2]
	\arrow["{\sh^0(u|_y)}", from=2-1, to=1-2]
	\arrow[hook, from=2-1, to=2-2]
\end{tikzcd}\]
in which the horizontal arrow is injective. From this, ${\sh^0(u|_y)}$ is injective, and hence $\sh^{<0}(\Cone(u)|_y)=0$. On the other hand,   taking $\sh^1$ of \eqref{eq:diagutrunc} restricted to $y$ gives the diagram with commutative squares and with exact rows
\[\begin{tikzcd}[cramped]
	{\sh^1(\tau^{\leq1}(\nu^*\sff{G})|_y)} & {\sh^1(\tau^{\leq1}(\nu^*\sff{E})|_y)} & 0 \\
	{\sh^1(\tau^{\leq1}(\nu^*\sff{F})|_y)} & {\sh^1(\tau^{\leq1}(\nu^*\sff{H})|_y)} & {\sh^1(\Cone(\nt\delta)|_y).}
	\arrow[from=1-1, to=1-2]
	\arrow[from=1-1, to=2-1]
	\arrow[from=1-2, to=1-3]
	\arrow["r",hook', from=1-2, to=2-2]
	\arrow[from=1-3, to=2-3]
    \arrow["{\sh^1(u|_y)}"{description}, from=2-1, to=1-2]
    \arrow["\sh^1(\nt\delta|_y)", from=2-1, to=2-2]
	\arrow["q",from=2-2, to=2-3]
\end{tikzcd}\]
By the exactness of the top row, the middle arrow $r$ maps into $\ker(q)=\im(\sh^1(\nt\delta|_y))$. Since $\sh^1(u|_y)=\sh^1(v|_y)\circ \sh^1(\nt\delta|_y)$ and $\sh^1(v|_y) \circ r=\id$, $\im(r)\subset\im (\sh^1(\nt\delta|_y))$ implies that $\sh^1(u|_y)$ is surjective. Therefore,  $\sh^{\geq1}(\Cone(u)|_y)=0$. Since $y$ is an arbitrary closed point and $\Cone(u)$ is a perfect complex, by the basechange theorem $\Cone(u)$ is locally free in degree $0$. The claim now follows from using part (1) of Theorem \ref{thm:compdoub} .\end{proof}

\subsection{Deepest degeneracy loci}\label{sec:deepest}
Let $\sff{E}\in \Perf(X,0,2)$ with $\sff{E}\simeq\{ E_0\to E_1\to E_2\}$ a locally free resolution, and $\nu:Y\to X$ any $\sff E$-suitable blow up. Suppose we have a bundle $B$, whose dual fits into Diagram  \eqref{fig:defsurjB}. One can take $B=E_0$, but in the applications we have in mind there are more choices of $B$ that independent of our choice of resolution. 

From the map $u:q^*\sff E\to q^*B/Q^*_{\Gr}$ obtained from \eqref{fig:defsurjB} on  $q:\Gr(B^*,r^*)\to X$, we get the perfect complex 
\begin{equation}\label{def:complexdeepest}
    \sff{H}:=\Cone(u):=\{q^*E_0\xr{\phi_1} q^*B/Q^*_\Gr\oplus q^*E_1\xr{\phi_2} q^*E_2\},
\end{equation}
such that $\wt{\op{D}}_r(\sff{E})\cong \op{D}_r(\sff{H})\subset\Gr(B^*,r)$. From the fiber squares
\[\begin{tikzcd}
	{ \op{D}_r(\nu^*\sff{H})} & {\Gr(\nu^*B^*,r)} & Y\\
	{ \op{D}_r(\sff{H})} & {\Gr(B^*,r)} & X
    \arrow[draw=none, from=1-2, to=2-3]
    \arrow[ draw=none, from=1-1, to=2-2]
	\arrow["{\tilde \iota}", hook, from=1-1, to=1-2]
	\arrow["\nu", from=1-1, to=2-1]
	\arrow["{\tilde q}", from=1-2, to=1-3]
	\arrow["\nu", from=1-2, to=2-2]
	\arrow["\nu", from=1-3, to=2-3]
	\arrow["\iota", hook, from=2-1, to=2-2]
	\arrow["q", from=2-2, to=2-3]
\end{tikzcd}\]
since $q$ is flat, $q^*\sh^2(\sff{E})=\sh^2(\sff{H})$ and $\sh^2(\nu^*\sff{H})=\tilde q^* \sh^2(\nu^*\sff{E})$, we see that $\nu:\Gr(\nu^*B^*,r)\to \Gr(B^*,r)$ is an $\sff H$-suitable blow up. Applying the construction of Section \ref{sec:setupB} over a resolution of singularities of $Y$\footnote{Alternatively, applying Theorem \ref{thm:A8}, one does not have to use the resolution of singularities or a perfect obstruction theory.}, using Proposition \ref{prop:red1st} and Lemma \ref{lem:indep}, and pushing down, we get 

\beq{eq:HandE}\llb{\sff{E},r}=\llb{\sff{H},r}\in A_{d+r(\sff e-\sff h^2_{\sff E}-r)}(\wt{\op{D}}_r(\sff E)).\eeq

Similarly, if $\nu$ is in addition $\sff E^\vee$-suitable and we define $T,C,K$ as in \eqref{def:T},  since
$$\llbb{\sff E^\vee,r}=\nu_* \llb{\{\nu^*E_0\to (\nu^*E_1^*/C)^*\},r},$$
 for $\sff{L}:=\{\nu^*E_0\to q^*\nu^*B/Q^*_{\Gr}\oplus q^*\nu^*E_1/C^*\}$, we get
\beq{eq:LandE}\llbb{\sff E^\vee, r}=\nu_*\llb{\sff{L},r}\in A_{d-r(r-\rkh{\sff E}^0)}(\wt{\op{D}}_r(\sff E)).\eeq
 In the following theorem, we use the equalities \eqref{eq:HandE} and \eqref{eq:LandE} to find Thom-Porteous formulas over $\Gr(B^*,r)$.

\begin{theorem}\label{thm:T-Pformulas}
Using the notation above and omitting the pullbacks $q^*$, for any $(\sff E,\sff E^\vee)$-suitable blow up $\nu:Y\to X$, we have   
\begin{align}\label{eq:modB}
\iota_*\llb{\sff{E},r}=\nu_*\left(c_{r(b-\sff e+\rkh{\sff{E}}^2)}\big((\nu^*B-\tau^{\leq1}(\nu^*\sff{E}))\otimes Q_{\Gr}\big)\right),\\
\label{eq:2ndmodB}\iota_*\llbb{\sff{E}^\vee,r}=\nu_*\left(c_{r(b-\rkh{\sff E}^0)}\big((\nu^*B-\nu^*E_0+\nu^*E_1-C^*)\big)\otimes Q_{\Gr}^*)\right).
\end{align}
\end{theorem}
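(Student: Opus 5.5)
The plan is to reduce both formulas to the $2$-term Thom--Porteous formula \eqref{eq:TP-Gr(B)} of Section \ref{sec:setupB}, using the identifications \eqref{eq:HandE} and \eqref{eq:LandE}. In each case we replace the $3$-term data by a $2$-term complex on the blow up $\Gr(\nu^*B^*,r)$ whose deepest degeneracy locus is $\wt{\op D}_r(\nu^*\sff E)$ (respectively $\wt{\op D}_r(\nu^*\sff E^\vee)$). We then apply the deepest-locus Thom--Porteous formula there and push forward along $\nu$.

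For \eqref{eq:modB} we work on $Y$ with the truncation $\tau^{\le1}(\nu^*\sff E)\simeq\{\nu^*E_0\to K\}\in\Perf(Y,0,1)$.
\begin{enumerate}
\item By \eqref{eq:HandE} and Corollary \ref{cor:pullpush}, $\llb{\sff E,r}=\nu_*\llb{\nu^*\sff H,r}$. Moreover $\tau^{\le1}(\nu^*\sff H)\simeq\{\nu^*E_0\to \nu^*B/Q^*_{\Gr}\oplus K\}$, since truncation commutes with adding the degree-one bundle $\nu^*B/Q^*_\Gr$.
\item The surjection $B^*\twoheadrightarrow\sh^0(\sff E^\vee)$ pulls back to a surjection onto $\sh^0(\nu^*\sff E^\vee)=\sh^0(\tau^{\le1}(\nu^*\sff E)^\vee)$, because $\sh^0$ of the dual is right exact and unaffected by the truncation. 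So the lift $\psi$ of \eqref{fig:defsurjB} still exists through $K^*\to\nu^*E_0^*$, after the Jouanolou trick if necessary.
\item The map $(\nu^*\sigma_0,\nu^*\psi^*)\colon \nu^*E_0\to K\oplus\nu^*B$ is an injection of bundles. Hence $\nu^*B-\tau^{\le1}(\nu^*\sff E)$ is represented by a vector bundle of rank $b-\sff e+\rkh{\sff E}^2$.
\end{enumerate}
We are then exactly in the setting of Section \ref{sec:setupB}, with $\tau^{\le1}(\nu^*\sff E)$ in place of $\sff E$, and $\llb{\nu^*\sff H,r}=[\op D_r(\rho)]^{\vir}$ is given by \eqref{eq:TP-Gr(B)}. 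Strictly, the perfect-obstruction-theory statement needs nonsingularity. To handle this, we pass to a resolution $\mu\colon Z\to Y$, using Lemma \ref{Lq*} to see that $Z$ is still suitable and Lemma \ref{lem:indep}. Alternatively, we interpret $\llb{\,\cdot\,}$ directly as a localized top Chern class $0^!$ of a section of $Q_\Gr\otimes(\nu^*B/Q^*_\Gr\oplus K)$ and apply \cite[14.1, 14.4]{fu}. Either way, $\tilde\iota_*\llb{\nu^*\sff H,r}=c_{r(b-\sff e+\rkh{\sff E}^2)}\big((\nu^*B-\tau^{\le1}(\nu^*\sff E))\otimes Q_\Gr\big)$. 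Pushing forward along $\nu$ and using $\iota\circ\nu=\nu\circ\tilde\iota$ gives \eqref{eq:modB}.

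For \eqref{eq:2ndmodB} we argue the same way with \eqref{eq:LandE} and the $2$-term complex $\{\nu^*E_0\to(\nu^*E_1^*/C)^*\}=\{\nu^*E_0\to \nu^*E_1/C^*\}$. Its class in $K$-theory is $\nu^*E_0-\nu^*E_1+C^*$. The complex $\sff L$ is its modification by $\nu^*B/Q^*_\Gr$, so the same computation yields $c_{r(b-\rkh{\sff E}^0)}$ of $(\nu^*B-\nu^*E_0+\nu^*E_1-C^*)\otimes Q_\Gr$. Here $\rkh{\sff E}^0$ arises as the rank of $\nu^*E_0-\nu^*E_1/C^*$.

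The main obstacle is sign and duality bookkeeping, not the structure of the argument. The stated formula has $Q_\Gr^*$, while \eqref{eq:TP-Gr(B)} produces $Q_\Gr$. So either the Grassmannian conventions for the dual complex differ, or one applies $c_{top}(V\otimes Q^*)=(-1)^{top}c_{top}(V^*\otimes Q)$ and the identity $c_{top}(V^\vee\otimes Q)$ for the dual $K$-class. I would first check carefully which universal bundle appears when $\wt{\op D}_r(\sff E^\vee)$ is embedded via $B$ for the dual complex, and whether an overall sign must be absorbed. The secondary subtlety is making sure the lift $\psi$ and the injectivity survive the truncation on $Y$, which is where step 2 is used.
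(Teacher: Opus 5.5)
Your argument is the paper's. It applies Fulton's Thom--Porteous theorem (Thm.~14.4, which needs no nonsingularity, so your second option is exactly what the paper uses) to the deepest degeneracy loci of the $2$-term complexes $\tau^{\le1}(\nu^*\sff H)$ and $\sff L$ supplied by \eqref{eq:HandE} and \eqref{eq:LandE}. It then rewrites $\Delta^{(r)}$ as a top Chern class of a tensor product and pushes forward by $\nu$.

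On the obstacle you flag, trust your computation. The expression the paper's own proof writes for the second class, $\Delta^{(r)}_{b-\rkh{\sff E}^0}c\big(\nu^*B/Q^*_{\Gr}-\nu^*E_0+\nu^*E_1/C^*\big)$, has exactly the same shape as the one for the first formula. It therefore equals $c_{r(b-\rkh{\sff E}^0)}\big((\nu^*B-\nu^*E_0+\nu^*E_1-C^*)\otimes Q_{\Gr}\big)$. So the $Q_{\Gr}^*$ in \eqref{eq:2ndmodB} (repeated in the last line of the paper's proof) is a misprint. The later applications in Theorems \ref{thm:gendual} and \ref{thm:dualdeep} do twist by $\cO(1)=Q_{\Gr}$.

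No sign or duality bookkeeping can rescue the starred version. Take $X$ a point, $\sff E=E_0$ in degree $0$, $B=E_0\oplus\C^m$, $r=1$, and $H=c_1(Q_{\Gr})$. Then $\iota_*\llbb{\sff E^\vee,1}$ is the class $H^m$ of a codimension-$m$ linear subspace of $\prj(B)$, whereas $c_m(\C^m\otimes Q_{\Gr}^*)=(-H)^m$.
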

\begin{proof}
The Thom-Porteous formula from Theorem 14.4 in \cite{fu} gives
\begin{align*}\iota_*\llb{\sff{E}}&=\nu_*\left(\Delta^{(r)}_{b-\sff e+\rkh{\sff{E}}^2}c\big(\nu^*B/Q^*_{\Gr}-\tau^{\leq1}(\nu^*\sff{E})\big)\right)\\
&=\nu_*\left(c_{r(b-\sff e+\rkh{\sff{E}}^2)}\big((\nu^*B-\tau^{\leq1}(\nu^*\sff{E}))\otimes Q_{\Gr}\big)\right)\end{align*}
(cf.  \eqref{eq:TP-Gr(B)}). Similarly,
\begin{align*}
\iota_*\llbb{\sff{E}^\vee}&=\nu_*\left(\Delta^{(r)}_{b-\rkh{\sff{E}}^0}c\big(\nu^*B/Q^*_{\Gr}-\nu^*E_0+\nu^*E_1/C^*\big)\right)
    \\
    &=\nu_*\left(c_{r(b-\rkh{\sff E}^0)}\big((\nu^*B-\nu^*E_0+\nu^*E_1-C^*)\otimes Q_{\Gr}^*\big)\right).
\end{align*}\end{proof}

\begin{remark}\label{rmk:doubledeep}
    If $B^*$ also surjects onto $\sh^0(E^\vee)$ (e.g.  $B=E_0\oplus E_2^*$ has this property) we have the embedding
$$\wt{\op{D}}_r(\sff{E}^\vee)\subset\Gr(B^*,r),$$
    which allows us to work with $\llb{\sff E^\vee,r}$ and $\llbb{\sff E,r}$ over the same ambient space $\Gr(B^*,r)$.
\end{remark}

In the special case  $r=1$, let $q:\prj(B)\to X$ and  $\sff H$ be as defined in \eqref{def:complexdeepest}. We have the following result. 

\begin{prop} \label{prop:refined} Suppose that for some positive integer $\ell$, there is a map $v:\cO_{\prj(B)}^{\oplus \ell}(-1)[-2]\to \sff H$ in the derived category, such that $\sff V:=\Cone(v)$ is 1-special.
    Then,
$$\llb{\sff{V},1}=\llb{\sff{E},1}_{g-\rkh{\sff{E}}^2}.$$
\end{prop}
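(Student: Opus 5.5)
The plan is to deduce the statement from Proposition \ref{prop:deeprefined}, applied over $\prj(B)$ with $\sff F:=\sff H$ and with $\sff E$ replaced by $\sff V$. Throughout I read the index $g$ in the statement as the number $\ell$ of copies of $\cO_{\prj(B)}(-1)$; this matches the indices appearing in the Duality theorem of the introduction.

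\textbf{Step 1: the cone of $\sff H\to\sff V$.} Rotating the exact triangle
$$\cO^{\oplus\ell}_{\prj(B)}(-1)[-2]\xr{v}\sff H\xr{u}\sff V\To \cO^{\oplus\ell}_{\prj(B)}(-1)[-1]$$
gives a map $u\colon\sff H\to\sff V$ with $\Cone(u)\simeq G[-1]$, where $G:=\cO_{\prj(B)}(-1)^{\oplus\ell}$. So $\Cone(u)$ is represented by a locally free sheaf of rank $\ell=\sff h-\sff v$ sitting in degree $1$. Both $\sff H$ and $\sff V$ lie in $\Perf(\prj(B),0,2)$, and $\sff V$ is $1$-special by hypothesis. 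These are exactly the hypotheses of Proposition \ref{prop:deeprefined} with $r=1$.

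\textbf{Step 2: apply Proposition \ref{prop:deeprefined}.} Take the $\sff H$-suitable blow up $\nu\colon\prj(\nu^*B)\to\prj(B)$ induced from an $\sff E$-suitable blow up of $X$, as in Section \ref{sec:deepest}. Proposition \ref{prop:deeprefined} then gives
$$\llb{\sff V,1}=\sum_{i} c_{\ell-\rkh{\sff H}^2-i}\big(G\otimes Q_{\wt{\op D}_1(\nu^*\sff H)}\big)\cap\llb{\sff H,1}_i .$$
On $\op D_1(\sff H)\cong\wt{\op D}_1(\sff E)\subset\prj(B)$, the universal rank-one quotient $Q$ is the restriction of $\cO_{\prj(B)}(1)$. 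Hence $G\otimes Q\cong\cO^{\oplus\ell}$ is trivial, its total Chern class is $1$, and only the term with $i=\ell-\rkh{\sff H}^2$ survives. This yields
$$\llb{\sff V,1}=\llb{\sff H,1}_{\ell-\rkh{\sff H}^2}.$$
Since $q$ is flat and $\sh^2(\sff H)=q^*\sh^2(\sff E)$, we have $\rkh{\sff H}^2=\rkh{\sff E}^2$.

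\textbf{Step 3: refined version of \eqref{eq:HandE}.} It remains to show $\llb{\sff H,1}_k=\llb{\sff E,1}_k$ for every $k$. The argument of \eqref{eq:HandE} identifies $\llb{\nu^*\sff H,1}$ with $\llb{\nu^*\sff E,1}$ under $\op D_1(\nu^*\sff H)\cong\wt{\op D}_1(\nu^*\sff E)$, and this identification is compatible with the universal quotients. Moreover $\sh^2(\nu^*\sff H)=\tilde q^*\sh^2(\nu^*\sff E)$. So the Segre classes $s_k\big(\sh^2(\cdot)\otimes Q\big)$ in Definition \ref{def:refinedE} agree, and pushing forward along $\nu$ gives the desired equality.

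\textbf{Expected obstacle.} The delicate point is the twist convention. I must check that the universal quotient bundle on $\wt{\op D}_1(\sff E)\subset\prj(B)=\Gr(B^*,1)$ is $\cO(1)$, so that it exactly cancels the $\cO(-1)$ in the source of $v$. If the convention turned out to be $\cO(-1)$, then $G\otimes Q$ would be $\cO(-2)^{\oplus\ell}$ rather than trivial, and the formula would acquire extra Chern class terms. Hence the twist in $v$ is presumably chosen to make $G\otimes Q$ trivial. A secondary check is that the refined classes are compatible with the identification \eqref{eq:HandE}, which requires tracking the resolution-of-singularities step of Section \ref{sec:deepest} together with the Segre class insertion.
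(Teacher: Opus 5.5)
Your proposal is correct and follows essentially the same route as the paper's proof. The paper first identifies $\llb{\sff H,1}_k=\llb{\sff E,1}_k$ via \eqref{eq:HandE} and $\sh^2(\sff H)\cong q^*\sh^2(\sff E)$. It then applies Proposition~\ref{prop:deeprefined} to the triangle $\sff H\to\sff V\to\cO^{\oplus\ell}_{\prj(B)}(-1)[-1]$ and uses $Q_{\op D_1(\sff H)}=\cO(1)$ to kill every $c_{i>0}$ term.

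Your reading of the index $g$ as $\ell$ is also the intended one: the paper later invokes this proposition in the form $\llb{\Cone(v),1}=\llb{\sff H,1}_{\ell-\rkh{\sff E}^2}$.
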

\begin{proof}
     By  \eqref{eq:HandE} $\llb{\sff H}=\llb{\sff E}$. Since $\sh^2(\sff H)\cong\sh^2(q^*\sff{E})$, 
     $$\llb{\sff{H}}_k=\llb{\sff{E}}_k\quad\text{ for }k\geq0.$$ An application of Proposition \ref{prop:deeprefined} to the exact triangle $$\sff H \to \sff V\to \cO^{\oplus \ell}_{\prj(B)}(-1)[-1],$$  and  the vanishing  
     $c_{i>0}\big(\cO^{\oplus \ell}_{\prj(B)}(-1)\otimes Q_{\op D_r(\sff H)}\big)=0$, resulting from $Q_{\op D_r(\sff H)}=\cO_{\wt{\op D}_r}(1)$,  proves the claim.
\end{proof}

\subsection{Wall-crossing and duality}\label{sec:wallform}
In this section, we take $r=1$. Let $$\sff{E}, \;\hat{\sff{E}}:=\sff{E}^\vee[-2]\in\Perf(X,0,2).$$  Let $B$ be a vector bundle over $X$ as in Remark \ref{rmk:doubledeep}  and  $q:\prj(B)\to X$ be the projection. Denote the inclusions
$$\iota:\wt {\op{D}}_1(\sff E)\hookrightarrow \prj(B),\qquad \jmath:\wt {\op{D}}_1(\hat{\sff{E}})\hookrightarrow\prj(B).$$
and let $p= q\circ\iota$, $\hat p=q\circ\jmath$.
Suppose that there exists an integer $\ell \ge 0$ and maps in the derived category
$$\cO_{\prj(B)}^{\oplus \ell}(-1)[-2]\to q^*\sff E,\qquad  \cO_{\prj(B)}^{\oplus \ell}(-1)[-2]\to q^*\hat{\sff{E}},$$
such that both cones are 1-special, i.e. they satisfy Condition \eqref{UDr} for $r=1$. 

Let $\nu:Y\to X$ be any $(\sff E,\hat{\sff{E}})$-blow up. As in Section \ref{sec:deepest}, denote by $\nu:\prj(\nu^*B)\to \prj(B)$ the induced blow up and  by $\tilde q:\prj(\nu^* B)\to Y$ the basechange of $q$. Fix a locally free resolution 
$$\sff{E}\simeq \{E_0\xr{\sigma_0}E_1\xr{\sigma_1} E_2\},$$
and define $C,K, T$ over $Y$ as in \eqref{def:T}.  In the following two subsections we will distinguish two cases, when $\ell =0$ and when $\ell >0$.

\subsubsection{Case $\ell=0$} In this case, we are simply asking that both $\sff{E}$ and $\hat{\sff{E}}=\sff E^\vee[-2]$ satisfy Condition \eqref{UDr} for $r=1$. By the virtue of Remark \ref{rem:UDr}, we phrase the main result of this subsection as follows.
\begin{theorem}    
\label{thm:gendual}
    Given $\sff E\in\Perf(X,0,2)$ with ${\op{D}}_1(\sff{E})\cap {\op{D}}_1(\hat{\sff{E}})=\emptyset$, 
    $$p_*\llb{\sff{E},1}+(-1)^{\sff e}\hat p_*\llb{\hat{\sff{E}},1}=c_{1-\sff e}(-\sff{E})\in  A_{d+\sff e-1}(X),$$ where $p, \hat p$ were introduced at the beginning of this section. 
\end{theorem}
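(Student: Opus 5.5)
Since both $\sff E$ and $\hat{\sff E}=\sff E^\vee[-2]$ are $1$-special, I would reduce everything to Thom--Porteous formulas on the single ambient space $\prj(B)$ of Remark \ref{rmk:doubledeep}, and then compare with a global degeneracy class on $\prj(B)$. The condition ${\op{D}}_1(\sff E)\cap{\op{D}}_1(\hat{\sff E})=\emptyset$ lets me choose open sets $U\supset \op D_1(\sff E)$ and $\hat U\supset \op D_1(\hat{\sff E})$ covering $X$. On $U$ we have $\sff E|_U\in\Perf(U,0,1)$, and on $\hat U$ we have $\hat{\sff E}|_{\hat U}\in \Perf(\hat U,0,1)$. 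By Proposition \ref{prop:red1st}, or more precisely its blow-up free version (the classes only see the relevant open neighbourhood), $\llb{\sff E,1}$ and $\llb{\hat{\sff E},1}$ are then the classes of the 2-term complexes there.

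The key construction is a section. Over $\prj(B)$, consider the complex $q^*\sff E\otimes\cO(1)$. The tautological map $\cO(-1)\to q^*B$, composed with the lift $\psi^*$ of \eqref{fig:defsurjB}, gives the degeneracy data for $\sff E$. Dually, the map built from $B^*\to E_2$, which exists since $B^*\twoheadrightarrow\sh^0(\sff E^\vee)$ and also surjects onto $\sh^2(\sff E)$, gives the data for $\hat{\sff E}$. I would write the Thom--Porteous formula \eqref{eq:modB} for both classes. On the locus where $\sh^2(\nu^*\sff E)=0$ (respectively where $\sh^0=0$), each becomes $c_{b-\sff e}\big((B-\sff E)\otimes\cO(1)\big)$, respectively $c_{b-\sff e}\big((B-\hat{\sff E})\otimes\cO(1)\big)$, localized. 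Pushing these forward by $q_*$ then yields Segre-type classes via \cite[Theorem 14.4]{fu}. Concretely, $q_*c_{b-\sff e}((B-\sff E)(1))=c_{1-\sff e}(-\sff E)$, and the hat version gives $c_{1-\sff e}(-\hat{\sff E})=(-1)^{\ldots}c_{1-\sff e}(\sff E\ldots)$, with the sign $(-1)^{\sff e}$ coming from dualization $c_i(F^\vee)=(-1)^ic_i(F)$.

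The heart of the argument is the following. The global top Chern class $c_{b-\sff e}(q^*(B-\sff E)\otimes\cO(1))$ is the class of a section of a genuine vector bundle on $\prj(B)$ after the $\sff E$-suitable blow-up. Its zero locus splits into two disjoint pieces, lying over $U\setminus\hat U$ and over $\hat U\setminus U$ near the respective degeneracy loci. By localization (the localized top Chern class is additive over connected components of the zero scheme), the global class equals the sum of the contribution from $\wt{\op D}_1(\sff E)$ and the contribution from $\wt{\op D}_1(\hat{\sff E})$. The first contribution is $\iota_*\llb{\sff E,1}$ by Theorem \ref{thm:T-Pformulas}. The second should be identified, via the duality $\sff E\leftrightarrow\hat{\sff E}$ and Proposition \ref{prop:aboutT} (where $T$ is trivial away from both loci), with $(-1)^{\sff e}\jmath_*\llb{\hat{\sff E},1}$ up to the sign convention of the statement. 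Applying $q_*$ and \cite[Example 14.2.1 / Theorem 14.4]{fu} then gives $c_{1-\sff e}(-\sff E)$ on the right-hand side.

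I expect the main obstacle to be the explicit construction of a single bundle map on $\prj(\nu^*B)$ whose degeneracy locus is exactly $\wt{\op D}_1(\sff E)\sqcup\wt{\op D}_1(\hat{\sff E})$, with the correct excess and orientation at the $\hat{\sff E}$ component. To get this, I would first pass via the Jouanolou trick to an affine bundle on which $\sff E$ splits as $\tau^{\le1}\oplus\sh^2[-2]$. I would then use the 3-term complex $\sff H$ of \eqref{def:complexdeepest}, twisted so that its $K$-class is $(B-\sff E)(1)$, and compute the sign by comparing the local models on $U$ and $\hat U$ separately.
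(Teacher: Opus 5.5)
Your argument has a genuine gap at the step you call its heart, and it is not a technicality. There is no vector bundle on $\prj(\nu^*B)$ with a section whose zero scheme is $\wt{\op D}_1(\nu^*\sff E)\sqcup\wt{\op D}_1(\nu^*\hat{\sff E})$ and whose top Chern class is $c_{b-\sff e}\big((B-\nu^*\sff E)(1)\big)$.

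In $K$-theory, $B-\nu^*\sff E=(\nu^*B\oplus K)/\nu^*E_0-\sh^2(\nu^*\sff E)$. Only the first summand is a bundle, and its tautological section vanishes exactly on $\wt{\op D}_1(\nu^*\sff E)$; this is where \eqref{eq:modB} comes from. The corresponding bundle for $\hat{\sff E}$ is $(\nu^*B\oplus C)/\nu^*E_2^*$, which has the same rank but is a different bundle. So additivity of localized top Chern classes has nothing to act on.

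Worse, the decomposition you want is false in $A_*(\prj(\nu^*B))$. Write $B-\nu^*\sff E=V+(C^*-\nu^*E_2)$, where $V:=\nu^*B+\nu^*E_1-\nu^*E_0-C^*$ is represented by a rank $b$ bundle and $C^*-\nu^*E_2=T-\sh^2(\nu^*\sff E)$. Then $c_{b-\sff e}\big((B-\nu^*\sff E)(1)\big)=\sum_{j\ge0}c_{b-j}(V(1))\,c_{\rkh{\sff E}^1+j}\big((C^*-\nu^*E_2)(1)\big)$, and $\iota'_*\llb{\nu^*\sff E,1}$ is only the $j=0$ term. The terms with $j\ge1$ restrict to zero off $\tilde q^{-1}(\op D_1(\nu^*\hat{\sff E}))$, a union of whole fibres of $\tilde q$. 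They are not localized on $\wt{\op D}_1(\nu^*\hat{\sff E})$, and in general they are not $\pm\jmath'_*\llb{\nu^*\hat{\sff E},1}$.

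Already when $\op D_1(\sff E)=\emptyset$ this fails. There your claim would read $c_{b-\sff e}((B-\sff E)(1))=(-1)^{1-\sff e}c_{b-\sff e}((B-\sff E^\vee)(1))$, with the sign forced by comparing pushforwards. Yet the $H^{b-2}$-coefficients of the two sides differ by $2c_{2-\sff e}(-\sff E)$, which need not vanish. The identity you want holds only after pushing down to $Y$.

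Also, $T$ is not ``trivial away from both loci''; it is a rank $\rkh{\sff E}^1$ bundle. What is true is that $T$ and $C^*-\nu^*E_2$ agree in $K$-theory near $\wt{\op D}_1(\nu^*\sff E)$.

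The missing idea is to use the genuine bundle $T$ of \eqref{def:T} to separate the two contributions, and to compare them on $Y$, as the paper does.

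Proposition \ref{prop:aboutT} gives $\llb{\nu^*\sff E}=c_{\rkh{\sff E}^1}(T(1))\cap\llbb{\nu^*\hat{\sff E}}$. Since $\sh^2(\nu^*\sff E)$ vanishes near $\wt{\op D}_1(\nu^*\sff E)$, one may replace $T$ by $C^*-\nu^*E_2$ there (see \eqref{eq:vanci}). Then \eqref{eq:2ndmodB} and Lemma \ref{lem:pushpicn0} give
\[\tilde q_*\iota'_*\llb{\nu^*\sff E}=\sum_{j=0}^{\rkh{\sff E}^1}c_{\rkh{\sff E}^1-j}(C^*-\nu^*E_2)\,c_{1+j}(\nu^*E_1-\nu^*E_0-C^*).\]
This is the degree $\rkh{\sff E}^1+1$ part of $c(C^*-\nu^*E_2)\,c(\nu^*E_1-\nu^*E_0-C^*)=c(-\nu^*\sff E)$ with exactly one term missing, namely $c_{\rkh{\sff E}^1+1}(C^*-\nu^*E_2)$.

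On the other side, \eqref{eq:modB} for $\hat{\sff E}$ together with Lemma \ref{lem:pushpicn0} gives $\tilde q_*\jmath'_*\llb{\nu^*\hat{\sff E}}=c_{\rkh{\sff E}^1+1}(C-\nu^*E_2^*)=(-1)^{\rkh{\sff E}^1+1}c_{\rkh{\sff E}^1+1}(C^*-\nu^*E_2)$. This is precisely the missing term. The Whitney formula then yields $c_{\rkh{\sff E}^1+1}(-\nu^*\sff E)$, and one finishes by applying $\nu_*$.

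The sign therefore comes out of this dualization and equals $(-1)^{\rkh{\sff E}^1+1}=-(-1)^{\sff e}$, since $\sff e=-\rkh{\sff E}^1$ here. This is the minus-sign version stated in the introduction. It must be computed, not fixed ``up to the sign convention of the statement''.
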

 \begin{proof}
 Since $\nu$ maps ${\op{D}}_1(\nu^* \sff{E})$ into ${\op{D}}_1(\sff{E})$ and  $ {\op{D}}_1(\nu^*\hat{\sff{E}})$ into ${\op{D}}_1(\hat{\sff{E}})$, we conclude that $${\op{D}}_1(\nu^*\sff{E})\cap {\op{D}}_1(\nu^*\hat{\sff{E}})=\emptyset.$$ 
As sets, ${\op{D}}_1(\nu^*\sff{E})= \op{Supp}(\sh^2(\nu^*\hat{\sff{E}}))$ and ${\op{D}}_1(\nu^*\hat{\sff{E}})= \op{Supp}(\sh^2(\nu^*\sff{E}))$, so $$\sh^2(\nu^*\sff{E})|_{{\op{D}}_1(\nu^*\sff{E})}=0,\qquad \sh^2(\nu^*\hat{\sff{E}})|_{{\op{D}}_1(\nu^*\hat{\sff{E}})}=0.$$  Both vanishings are open  conditions and remain true on some open neighborhoods of $\op D_1(\nu^*\sff E)$ and $\op D_1(\nu^* \hat{\sff{E}})$ in $Y$. We have an exact sequence
\beq{eq:sesh2}0\to K\to \nu^*E_1\to \nu^*E_2\to \sh^2(\nu^*\sff{E})\to 0, \eeq
 Pulling back along the flat morphism $\tilde q:\prj(\nu^*B)\to Y$,  \eqref{eq:sesh2} remains a locally free resolution of $\tilde q^*\sh^2(\nu^* \hat{\sff{E}})$. This sheaf vanishes on an open subset of $\prj(\nu^*B)$ containing $\widetilde {\op{D}}_1(\nu^*\sff{E})$.  Therefore, for any   $\alpha\in  A_*(\wt{\op{D}}_1(\nu^*\sff{E}))$ 
 \beq{eq:vanci}c_{i> 0}(\nt q^*(\nu^* E_1-\nu^*E_2-K)(1))\cap\alpha=0.\eeq
 Proposition \ref{prop:aboutT} gives
$$\llb{\nu^* \sff{E}}=c_{\rkh{\sff{E}}^1}\big(\nt q^*T\otimes \cO_{\wt{\op D}_r(\nu^*\sff E)}(1)\big)\cap\llbb{\nu^*\hat{\sff{E}}}.$$
Together with \eqref{eq:vanci}, we see that 
$$\llb{\nu^*\sff{E}}=c_{\rkh{\sff{E}}^1}\big(\nt q^*(T-K+\nu^*E_1-\nu^*E_2)(1)\big)\cap\llbb{\nu^*\hat{\sff{E}}}.$$
We push forward by $\iota':\widetilde {\op{D}}_1(\nu^*\sff{E})\hookrightarrow\prj(\nu^*B)$ and use \eqref{eq:2ndmodB} and \eqref{eq:ktheoryT} 
\begin{align*}
     \iota_*'\llb{\nu^*\sff{E}}&=c_{\rkh{\sff{E}}^1}\big(\nt q^*(T-K+\nu^*E_1-\nu^*E_2)(1)\big)\\ &\qquad \cap c_{b-\sff e-c}\big(\nt q^*(\nu^*B+\nu^* E_1/C^*-\nu^*E_0)(1)\big)\\
     &=c_{\rkh{\sff{E}}^1}\big(\nt q^*(C^*-\nu^*E_2)(1)\big)\\ &\qquad \cap c_{b+(e_1-e_0-c)}\big(\nt q^*(\nu^* B+\nu^*E_1-\nu^*E_0-C^*)(1)\big).
\end{align*}
Since $e_1-e_0-c=0$, Lemma \ref{lem:pushpicn0} gives 
$$\tilde q_*\iota_*'\llb{\nu^*\sff{E}}=\sum_{j=0}^{\rkh{\sff{E}}^1}c_{\rkh{\sff{E}}^1-j}(C^*-\nu^*E_2)\cap c_{1+j}(\nu^*E_1-\nu^*E_0-C^*).$$
On the other hand, since $\op{rk}(\tau^{\leq1}(\hat{\sff{E}}))=\op{rk}(\sff{E})=-\rkh{\sff{E}}^1$, Theorem \ref{thm:T-Pformulas} gives 
$$\jmath'_*\llb{\nu^*\hat{\sff{E}}}=c_{b+\rkh{\sff E}^1}\big((\nt q^*B+\nt q^*C-\nt q^*\nu ^*E_2^*)(1)\big).$$
for the inclusion $\jmath':\wt {\op{D}}_1(\nu^*\hat{\sff E})\hookrightarrow \prj(\nu^* B^*)$. By the special case of Lemma \ref{lem:pushpicn0}
$$\tilde q_*\jmath'_*\llb{\nu^*\hat{\sff{E}}}=c_{\rkh{\sff{E}}^1+1}(C-\nu ^*E_2^*)=(-1)^{\rkh{\sff{E}}^1+1}c_{\rkh{\sff{E}}^1+1}(C^*-\nu^*E_2).$$
 The two expressions combine to give
$$\tilde q_*\iota_*'\llb{\nu^*\sff{E}}+(-1)^{\rkh{\sff{E}}^1+1}\tilde q_*\jmath'_*\llb{\nu^*\hat{\sff{E}}}=c_{\rkh{\sff{E}}^1+1}(-\nu^*\sff{E}).$$
We now apply $\nu_*$ to both sides to conclude the proof of the theorem. 
\end{proof}


\subsubsection{Case $\ell >0$} In this case, we define the complexes in $\Perf(\prj(B),0,2)$
$$\sff{H}:=\Cone(q^*\sff E\to q^*B/\cO(1)),\qquad \hat{\sff{H}}:=\Cone(q^*\hat{\sff{E}}\to q^*B/\cO(1)),\ $$
as in Section \ref{sec:deepest}, so that
$$\llb{\sff{E},1}=\llb{\sff{H},1},\qquad \llb{\hat{\sff{E}},1}=\llb{\hat{\sff{H}},1}.$$
By our assumption there are maps
$$v:\cO_{\prj(B)}^{\oplus \ell}(-1)[-2]\to q^*\sff H,\qquad \hat v:\cO_{\prj(B)}^{\oplus \ell}(-1)[-2]\to q^*\hat{\sff{H}},$$
such that both $\Cone(v)$ and $\Cone(\hat v)$ satisfy \eqref{UDr}. By Proposition \ref{prop:refined}, 
$$\llb{\Cone (v),1}=\llb{\sff{H},1}_{\ell-\rkh{\sff E}^2},\qquad \llb{\Cone(\hat v),1}=\llb{\hat{\sff{H}},1}_{\ell-\rkh{\hat{\sff E}}^2}.$$

\begin{theorem}\label{thm:dualdeep}
    We have 
    $$(-1)^{\sff e}p_*\llb{\sff E,1}_{\ell-\rkh{\sff E}^2}=\hat p_*\llb{\sff E^\vee,1}_{\ell-\rkh{\sff E}^0}\in A_{d+\sff e-\ell-1}(X),$$ where $p, \hat p$ were introduced at the beginning of this section.
\end{theorem}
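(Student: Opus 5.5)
The plan is to repeat the computation in the proof of Theorem \ref{thm:gendual}, but with $\sff E,\hat{\sff E}$ replaced by the 1-special cones $\sff V:=\Cone(v)$ and $\hat{\sff V}:=\Cone(\hat v)$. By Proposition \ref{prop:refined},
$$\llb{\sff V,1}=\llb{\sff E,1}_{\ell-\rkh{\sff E}^2},\qquad \llb{\hat{\sff V},1}=\llb{\hat{\sff E},1}_{\ell-\rkh{\hat{\sff E}}^2}.$$
Since $\hat{\sff E}=\sff E^\vee[-2]$, we have $\rkh{\hat{\sff E}}^2=\rkh{\sff E}^0$. It therefore suffices to show that $(-1)^{\sff e}p_*\llb{\sff V,1}=\hat p_*\llb{\hat{\sff V},1}$. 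Here both classes live on the common ambient space $\prj(B)$ (Remark \ref{rmk:doubledeep}), and we use the identifications $\wt{\op D}_1(\sff V)=\op D_1(\sff H)\cong\wt{\op D}_1(\sff E)$ and likewise for the hatted objects.

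Step 1 is to compute both pushforwards via Thom--Porteous. Pass to an $(\sff E,\hat{\sff E})$-suitable blow up $\nu\colon Y\to X$, with the induced $\prj(\nu^*B)$, and use Corollary \ref{cor:pullpush} so that all computations happen there. Because $\sff V$ is 1-special and $X$ may be replaced by a nonsingular blow up, Proposition \ref{prop:red1st} together with the first formula of Proposition \ref{prop:deeprefined} expresses $\iota'_*\llb{\nu^*\sff V}$ as a top Chern class on $\prj(\nu^*B)$. In $K$-theory this class is $c_{b-\sff e+\ell-1}\big((\nu^*B-\nu^*\sff E-\cO(-1)^{\oplus\ell}\cdot[\,\cdot\,])(1)\big)$; more precisely, the bundle $\cO(-1)^{\oplus\ell}\otimes\cO(1)=\cO^{\oplus \ell}$ contributes only a trivial factor, so the Segre-type correction is exactly what selects the refined index $\ell-\rkh{\sff E}^2$. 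Concretely, I will write
$$\iota'_*\llb{\nu^*\sff V}=\Big[c\big(\tilde q^*(\nu^*B-\tau^{\le1}\nu^*\sff E)(1)\big)\,s\big(\sh^2(\nu^*\sff E)(1)\big)\Big]_{b-\sff e-1+\ell},$$
using \eqref{eq:modB} and Definition \ref{def:refinedE}. The analogous formula for $\hat{\sff V}$ uses $C$, $\nu^*E_2^*$ in place of $K$, $\nu^*E_0$, as in the proof of Theorem \ref{thm:gendual}. Combining the Chern and Segre factors, the first expression becomes the degree-$(b-\sff e-1+\ell)$ part of $c\big(\tilde q^*(\nu^*B-\nu^*\sff E)(1)\big)$ up to terms supported where $\sh^2$ is nonzero. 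The key observation is that 1-speciality of $\sff V$ kills exactly those correction terms after capping, in the same way as \eqref{eq:vanci}. The same holds on the dual side with $\nu^*\sff E^\vee$.

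Step 2 is to push down along $\tilde q$. Using Lemma \ref{lem:pushpicn0}, i.e. $\tilde q_*c_{b-1+m}(\tilde q^*F(1))=c_m(F)$ for virtual $F$ of rank $b+m'$, I expect
$$\tilde q_*\iota'_*\llb{\nu^*\sff V}=c_{\ell-\sff e}(-\nu^*\sff E),\qquad \tilde q_*\jmath'_*\llb{\nu^*\hat{\sff V}}=c_{\ell-\sff e}(-\nu^*\sff E^\vee).$$
Then $c_m(-\sff E^\vee)=(-1)^m c_m(-\sff E)$, which gives the sign $(-1)^{\ell-\sff e}$. Here I expect to need to recheck the degree bookkeeping: the stated dimension $d+\sff e-\ell-1$ suggests the degree is $1-\sff e+\ell$, giving $(-1)^{\ell+1-\sff e}$. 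Any mismatch with the stated $(-1)^{\sff e}$ must then be absorbed by the sign conventions in the identification of $\hat{\sff E}$ with $\sff E^\vee[-2]$, i.e. shifts changing the rank sign. The absence of a wall term $c(-\sff E)$, unlike in Theorem \ref{thm:gendual}, should come from $\ell>0$: the summand that produced $c_{1-\sff e}(-\sff E)$ there now cancels, because the $\cO^{\oplus\ell}$ twist shifts both sides so that they match exactly rather than differing by the full class.

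The main obstacle is Step 1. One must justify that the refined Segre corrections coming from $\sh^2$ are annihilated near the degeneracy locus, using the 1-speciality of the cones (the analog of \eqref{eq:vanci}), and then arrange the two pushforward expressions into comparable Chern classes of $-\nu^*\sff E$ and $-\nu^*\sff E^\vee$. Finally, $\nu_*$ concludes the argument.
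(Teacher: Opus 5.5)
Your reduction to the cones via Proposition \ref{prop:refined} is fine. But Steps 1--2 rest on a false claim, and the idea that actually links $\sff E$ to $\hat{\sff E}$ is missing.

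\textbf{The claim that 1-speciality kills the $\sh^2$ corrections is false.} 1-speciality of $\sff V=\Cone(v)$ does not make $\sh^2(\nu^*\sff E)$ vanish near $\wt{\op D}_1(\nu^*\sff E)$. It only says that $\cO^{\oplus\ell}\to\sh^2(\nu^*\sff E)(1)$ is surjective there, so $\cO^{\oplus\ell}-\sh^2(\nu^*\sff E)(1)$ is a bundle of rank $\eta:=\ell-\rkh{\sff E}^2$. The Segre factor is exactly what encodes the refined index, and there is no analogue of \eqref{eq:vanci} removing it.

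Moreover, after Thom--Porteous you are working with classes on all of $\prj(\nu^*B)$, so a vanishing near the degeneracy locus has nothing to act on. Also, $s_\eta(\sh^2(\nu^*\sff E)(1))\,c_N\big((\nu^*B-\tau^{\le1}\nu^*\sff E)(1)\big)$, with $N=b-\sff e+\rkh{\sff E}^2$, is not a graded piece of $c\big((\nu^*B-\nu^*\sff E)(1)\big)$.

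\textbf{What the direct computation actually gives.} Done correctly, \eqref{eq:modB}, the projection formula and Lemma \ref{lem:pushpic} (with $n=\ell>0$, not Lemma \ref{lem:pushpicn0}) give
\[
p_*\llb{\sff E,1}_\eta=\nu_*\sum_{j=0}^{\ell-1}(-1)^j\binom{\ell-1}{j}\,c_{1-\sff e+\rkh{\sff E}^2+j}\big(-\tau^{\le1}(\nu^*\sff E)\big)\,c_{\ell-\rkh{\sff E}^2-j}\big(-\sh^2(\nu^*\sff E)\big).
\]
This depends on the splitting of $\nu^*\sff E$ into $\tau^{\le1}$ and $\sh^2$, not on $\nu^*\sff E$ alone. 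The dual side has the same shape in $\tau^{\le1}(\nu^*\hat{\sff E})$ and $\sh^2(\nu^*\hat{\sff E})$. These two expressions are in different variables and cannot be compared directly.

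\textbf{Your predicted answer fails two sanity checks.}
\begin{enumerate}
\item For $\ell=0$ the same reasoning would give $p_*\llb{\sff E,1}=c_{1-\sff e}(-\sff E)$. This contradicts Theorem \ref{thm:gendual}: the terms you discard are exactly the $\hat p_*$-term there.
\item It would produce the sign $(-1)^{\ell+1-\sff e}$, which differs from $(-1)^{\sff e}$ whenever $\ell$ is even. This cannot be absorbed by conventions, since the shift in $\hat{\sff E}=\sff E^\vee[-2]$ is even and does not change $K$-theory classes.
\end{enumerate}

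\textbf{The missing bridge is the bundle $T$ of \eqref{def:T}.} Set $\zeta:=\ell-\rkh{\sff E}^0$. The paper rewrites $\iota'_*\llb{\nu^*\sff E}$ via Proposition \ref{prop:aboutT} as $c_{\rkh{\sff E}^1}(\tilde q^*T(1))$ capped with the second class $\llbb{\nu^*\sff E^\vee}$. The Thom--Porteous formula \eqref{eq:2ndmodB} for that class involves $\nu^*B-\sh^2(\nu^*\hat{\sff E})^\vee$.

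This is where 1-speciality is genuinely used. Near the support of $\llbb{\nu^*\sff E^\vee}$ one has
\[
s_\eta\big(\sh^2(\nu^*\sff E)(1)\big)\,c_{\rkh{\sff E}^1}(T(1))=c_{\rkh{\sff E}^1+\eta}\big((T-\sh^2(\nu^*\sff E))(1)\big),
\]
because $\cO^{\oplus\ell}-\sh^2(\nu^*\sff E)(1)$ is a rank-$\eta$ bundle there. Moreover $T-\sh^2(\nu^*\sff E)=C^*-\nu^*E_2=-\tau^{\le1}(\nu^*\hat{\sff E})^\vee$ in $K$-theory.

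As a result, both $\iota_*\llb{\sff E,1}_\eta$ and $\jmath_*\llb{\hat{\sff E},1}_\zeta$ become products of two Chern classes in the \emph{same} variables $\tau^{\le1}(\nu^*\hat{\sff E})$ and $\sh^2(\nu^*\hat{\sff E})$, with duals on one side. Lemma \ref{lem:pushpic} turns each into a sum weighted by $(-1)^j\binom{\ell-1}{j}$. The reindexing $j\mapsto\ell-1-j$, together with $c_i(F^\vee)=(-1)^ic_i(F)$, matches the two sums with sign $(-1)^{\sff e}$.

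The absence of a wall term comes from this exact matching of the $\binom{\ell-1}{j}$-sums for $\ell>0$. For $\ell=0$ the two pieces are instead complementary partial sums of $c_{1-\sff e}(-\sff E)$. It is not a cancellation caused by the $\cO^{\oplus\ell}$ twist.
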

\begin{proof}
We denote by $\eta:=\ell-\rkh{\sff E}^2$ and $\zeta:=\ell-\sff h^0_{\sff E}=\ell-\rkh{\hat{\sff E}}^2$. 
Recall that we defined $C,K$ in Section \ref{sec:moddual} which give the locally free resolutions
\[\begin{tikzcd}[cramped,row sep=tiny]
	0 & K & {\nu^*E_1} & {\nu^*E_2} & {\sh^2(\nu^*\sff E)} & 0, \\
	0 & C & {\nu^*E_1^*} & {\nu^*E_0^*} & {\sh^2(\nu^*\hat{\sff E})} & 0.
	\arrow[from=1-1, to=1-2]
	\arrow[from=1-2, to=1-3]
	\arrow[from=1-3, to=1-4]
	\arrow[from=1-4, to=1-5]
	\arrow[from=1-5, to=1-6]
	\arrow[from=2-1, to=2-2]
	\arrow[from=2-2, to=2-3]
	\arrow[from=2-3, to=2-4]
	\arrow[from=2-4, to=2-5]
	\arrow[from=2-5, to=2-6]
\end{tikzcd}\]

Pulling the complexes back to $\prj(\nu^*B)$ and using \eqref{eq:2ndmodB} in the 3rd equality below, we can write 
\begin{align*}
    \iota_*\llb{ \sff E}_\eta&=\iota_*\nu_*\left(s_\eta\big(\sh^2(\nu^*\sff H)(1)\big)\cap\llb{\nu^*\sff H}\right)\\
    &=\nu_*\left(s_\eta\big(\wt q^*\sh^2(\nu^*\sff E)(1)\big)\cap\iota_*'\llb{\nu^*\sff E}\right)\\
    &=\nu_*\left(s_\eta\big(\wt q^*\sh^2(\nu^*\sff E)(1)\big)c_{\rkh{\sff E}^1}\big(\wt q^*T(1)\big)\cap\iota'_*\llbb{\nu^*\sff E^\vee}\right)\\
    &=\nu_*\left(c_{\rkh{\sff E}^1+\eta}\big(\wt q^*(T-\sh^2(\nu^*\sff E))(1)\big)c_{b-\rkh{\sff E}^0}\big(\wt q^*(B-\sh^2(\nu^*\hat{\sff{E}})^\vee)(1)\big)\right)\\
    &=\nu_*\left(c_{\ell+\rkh{\sff E}^1-\rkh{\hat{\sff{E}}}^0}\big(\wt q^*(C^*-E_2)(1)\big)c_{b-\rkh{\hat{\sff{E}}}^2}(\wt q^*(B-\sh^2(\nu^*\hat{\sff{E}})^\vee)(1)\big)\right)\\
    &=\nu_*\left(c_{\ell+\sff e-\rkh{\hat{\sff{E}}}^2}\big(\wt q^*(\tau^{\leq1}(\sff \nu^*\hat{\sff{E}}))^\vee(1)\big)c_{b-\rkh{\hat{\sff{E}}}^2}\big(\wt q^*(B-\sh^2(\nu^*\hat{\sff{E}})^\vee)(1)\big)\right),
\end{align*}
where $s_\eta\big(\wt q^*\sh^2(\nu^*\sff E)(1)\big)c_{\rkh{\sff E}^1}\big(\wt q^*T(1)\big)=c_{{\rkh{\sff E}^1}+\eta}\big(\wt q^*(T-\sh^2(\nu^*\sff E))(1)\big)$ as in the proof of Proposition \ref{prop:deeprefined}, since by the assumption there is an open $\wt U\subset\prj(\nu^*B)$ containing $\wt{\op{D}}_r(\sff E)$, such that $\cO_U^{\oplus \ell}-\wt q^*\sh^2(\nu^*\sff{ E})(1)|_{\wt U}$ in $K$-theory is represented by a locally free sheaf of rank $\eta$.

We get a similar formula for $\hat{\sff{E}}$:
\begin{align*}
    \jmath_*\llb{\hat{\sff{E}}}_\zeta&=\jmath_*\nu_*\left(s_\zeta\big(\sh^2(\nu^*\hat{\sff{H}})(1)\big)\cap\llb{\nu^*\hat{\sff{H}}}\right)\\
    &=\nu_*\left(s_\zeta \big(\wt q^*\sh^2(\nu^*\hat{\sff{E}})(1)\big)\cap\jmath'_*\llb{\nu^*\hat{\sff{E}}}\right)\\
     &=\nu_*\left(c_{\ell-{\rkh{\hat{\sff E}}^2}}\big(-\wt q^*\sh^2(\nu^*\hat{\sff{E}})(1)\big)c_{b-\sff e+\rkh{\hat{\sff{E}}}^2}\big(\wt q^*(B-\tau^{\leq1}(\nu^*\hat{\sff E}))(1)\big)\right).
\end{align*}
Both of these classes are the pushforward under $\nu$ of some classes over $\prj(\nu^*B)$. We then use  Lemma \ref{lem:pushpic} and the following fiber square to push the classes further down to $X$
\[\begin{tikzcd}
	{\prj(\nu^*B)} & {\prj(B)} \\
	Y & {X.}
	\arrow[" \nu", from=1-1, to=1-2]
	\arrow["{\tilde q}"', from=1-1, to=2-1]
	\arrow["q", from=1-2, to=2-2]
	\arrow["\nu"', from=2-1, to=2-2]
\end{tikzcd}\]
\begin{align*}
    p_*\llb{\sff E}_\eta=&
    \nu_*\sum_{j=0}^{\ell}(-1)^j{\ell-1\choose j}c_{1+j-\rkh{\hat{\sff{E}}}^2}\big(-\sh^2(\nu^*\hat{\sff{E}})^\vee\big)c_{\ell-\sff e+\rkh{\hat{\sff{E}}}^2-j}\big(\tau^{\leq1}(\nu^*\hat{\sff{E}})^\vee\big)\\
    =&\nu_*\sum_{j=0}^{\ell}(-1)^{j+1+\ell-\sff e}{\ell-1\choose j}c_{1+j-\rkh{\hat{\sff{E}}}^2}\big(-\sh^2(\nu^*\hat{\sff{E}})\big)c_{\ell-\sff e+\rkh{\hat{\sff{E}}}^2-j}\big(\tau^{\leq1}(\nu^*\hat{\sff{E}})\big),  
\end{align*}
      \begin{align*}
    \hat p_*\llb{\hat{\sff{E}}}_\zeta=&\nu_*\sum_{j=0}^{\ell}(-1)^j{\ell-1\choose j}c_{1+j-\sff e+\rkh{ \hat{\sff{E}}}^2}\big(\tau^{\leq1}(\nu^*\hat{\sff{E}})\big)c_{\ell-\rkh{\hat{\sff{E}}}^2-j}\big(-\sh^2(\nu^*\hat{\sff{E}})\big)\\
    =&\nu_*\sum_{k=0}^{\ell}(-1)^{\ell-1-k}{\ell-1\choose k}c_{\ell-k-\sff e+\rkh{\hat{\sff{E}}}^2}\big(\tau^{\leq1}(\nu^*\hat{\sff{E}})\big)c_{1+k-\rkh{\hat{\sff{E}}}^2}\big(-\sh^2(\nu^*\hat{\sff{E}})\big).  
\end{align*}
Comparing these two expressions, we get 
$$(-1)^{\sff e+\ell+1}p_*\llb{\sff E}_\eta=(-1)^{\ell-1}\hat p_*\llb{\hat{\sff{E}}}_\zeta$$ from which the claim follows.

\end{proof}

\subsection{Split 3-term complexes}\label{sec:splitting}
In this section, we assume that the complex $\sff{E}\in\Perf(X,0,2)$ splits as follows   
$$\sff{E}= \sff{A}\oplus \sff{G}[-1]\simeq\{A_0\xr{(\sigma_0,0)}A_1\oplus G_0\xr{(0,\sigma_1)} G_1\}$$ for some $\sff A, \sff G\in \Perf(X,0,1)$, and for which we have also fixed a locally free resolution accordingly.  
We further assume that $\sh^2(\sff E)=\coker(\s_1)$ is a torsion sheaf,  $\cC:=\coker(\s_1^*)$ has rank $c>0$ and the Grassmannian $\Gr(\cC,c)$ is irreducible. (Proposition \ref{prop:irred} provides a sufficient condition for the irreducibility).

Choose a vector bundle $B$ as in Section \ref{sec:deepest}, and define the complex  $$\sff{H}:=\{q^*A_0\to q^*(A_1\oplus G_0)\oplus q^*B/Q^*_\Gr\to q^*G_1\},$$  where $q:\Gr(B^*,r)\to X$. Since $q$ is a smooth morphism, the basechanged Grassmannian  $\Gr(q^*\cC,c)$ is also irreducible. 
By Corollary \ref{cor:irred}, $$\nu\colon Y:=\Gr(\cC,c)\to X, \qquad \nu\colon \Gr(\nu^*B^*,r)\cong \Gr(q^*\cC,c)\to X$$ are respectively $\sff E$- and $q^*\sff E$-suitable blow ups. They fit into the following commutative diagram
\[\begin{tikzcd}
	& {\Gr(q^*G_0^*,c)} & {\Gr(G_0^*,c)} \\
	{ \wt{\op{D}}_r(\nu^* \sff{E})} & {\Gr(\nu^*B^*,r)} & Y \\
	{ \wt{\op{D}}_r(\sff{E})} & {\Gr(B^*,r)} & X.
	\arrow[, from=1-2, to=1-3]
	\arrow["{\tilde p}"{pos=0.25}, shift left, curve={height=-6pt}, from=1-2, to=3-2]
	\arrow["p", shift left, curve={height=-8pt}, from=1-3, to=3-3]
	\arrow["{{\tilde \iota}}", hook, from=2-1, to=2-2]
	\arrow["\nu"', from=2-1, to=3-1]
	\arrow["j", hook, from=2-2, to=1-2]
	\arrow["{{\tilde q}}", from=2-2, to=2-3]
	\arrow["\nu"', from=2-2, to=3-2]
	\arrow["i", hook, from=2-3, to=1-3]
	\arrow["\nu"', from=2-3, to=3-3]
	\arrow["\iota", hook, from=3-1, to=3-2]
	\arrow["q", from=3-2, to=3-3]
\end{tikzcd}\]

For $K:=\ker(q^*\sigma_1)=(\nu^*q^*\cC)^*$ and $\sff e:=\rank{\tau^{\leq1}(\sff{E})}=\rank(\sff{E})$ (as $\sh^2(\sff{E})$ is a torsion sheaf), the Thom-Porteous formula \eqref{eq:modB} gives
$$\tilde\iota_*\llb{\nu^*\sff{E}}=c_{r(b-\sff e)}\big((\nu^*q^*B-\nu^*q^*\sff A+\nt q^* Q^*_{\Gr(G_0^*)}|_Y)\otimes Q_{\Gr(\nu^*B^*)}\big).$$
 Using Corollary\ref{cor:irred} and the identification $K=Q_{\Gr(q^*G_0^*,c)}^*|_{\Gr(\nu^*B^*,r)}$, after omitting the obvious pullback symbols, we get
\begin{align}&\label{eq:ps1}j_*\tilde\iota_*\llb{\nu^*\sff{E}}\\ \nonumber &=
    c_{r(b-{\sff e})}\big((B-\sff A+Q^*_{\Gr(G_0^*)})\otimes Q_{\Gr(\nu^*B^*)})\big) \Delta_c^{(g_1)}c\big(Q_{\Gr(G_0^*)}-G_1^*\big),
\end{align}
so that
$$\iota_*\llb{\sff{E}}=\tilde p_*\left(c_{r(b-{\sff e})}\big((B-\sff A+Q^*_{\Gr(G_0^*)})\otimes Q_{\Gr(\nu^*B^*)}\big)\Delta_c^{(g_1)}c\big(Q_{\Gr(G_0^*)}-G_1^*\big)\right).$$

In the case of $r=1$, a more explicit formula can be obtained for the first classes, which can also be extended to include all the sequence of classes defined in \eqref{def:refinedE}. We start by identifying $\Gr(G^*_0,c)\cong\Gr(G_0^*(-1),c)$, where $\cO(1)=\cO_{\prj(B)}(1)$. Under this isomorphism, $Q_{\Gr(G_0^*)}(-1)$ corresponds to $Q_{\Gr}:=Q_{\Gr(G_0^*(-1))}$. We then have the identity 
$$\Delta_c^{(g_1)}c\big({Q}_{\Gr(G_0^*)}-G_1^*\big)=c_{rg_1}\big({Q}_{\Gr(G_0^*)}\otimes G_1\big)=\Delta_c^{(g_1)}c\big({Q}_{\Gr(G_0^*)}(-1)-G_1^*(-1)\big),$$
and since $\sh^2(\nu^*q^*\sff{E})=q^*K-\nu^*q^*\sff{G}=Q^*_{\Gr}(-1)-\nu^*q^*\sff{G}$ in $K$-theory over $\Gr(q^*\cC,c)$, 
\begin{align*}\label{ps3}
     \iota_*&\llb{\sff{E},1}_m=
    \tilde p_*\left(c_m\big (\sff{G}(1)-Q_{\Gr}^*\big)c_{b-\sff e}\big((B-\sff A)(1)+Q^*_{\Gr}\big)\Delta_c^{(g_1)}c\big(\wt{Q}\big)\right),
\end{align*}
where  $\widetilde {Q}:= Q_{\Gr}-G_1^*(-1)$.

\begin{theorem}\label{thm:refinedforsplit}
    Under the assumptions of this subsection, for $$\sff E=A\oplus \sff G[-1]\in \Perf(X,0,2),$$ and omitting the pullback symbols,  for any integer $m\ge 0$, 
    $$\iota_*\llb{\sff{E},1}_m=c_{b-{\sff e}-c}\big(B(1)-\sff{A}(1)\big )c_{c+m}\big(\sff{G}(1)\big)\in A_{\sff n-1+\sff e-m}(\prj(B)).$$
\end{theorem}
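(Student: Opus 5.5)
Starting from the displayed formula just before the statement,
\[
\iota_*\llb{\sff{E},1}_m=\tilde p_*\Big(c_m\big(\sff{G}(1)-Q_{\Gr}^*\big)\,c_{b-\sff e}\big((B-\sff A)(1)+Q^*_{\Gr}\big)\,\Delta_c^{(g_1)}c\big(\wt{Q}\big)\Big),
\]
the remaining task is a pushforward along the relative Grassmannian $\tilde p\colon\Gr(q^*G_0^*(-1),c)\to\prj(B)$. I will treat this as a Gysin-type computation: express the integrand as a polynomial in the Chern roots of $Q_{\Gr}$ with coefficients pulled back from $\prj(B)$, and use that $\tilde p_*$ of a class in $Q_{\Gr}$ is determined by the Grassmannian Gysin formula (the Jacobi--Trudi / Józefiak--Lascoux--Pragacz form of $\tilde p_*$). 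All bundles from $X$ are already twisted by $\cO(1)$, so I will set $\cc A:=(B-\sff A)(1)$ and $\cc G:=\sff G(1)=G_0(1)-G_1(1)$. Since $\wt Q=Q_{\Gr}-G_1^*(-1)$, the factor $\Delta^{(g_1)}_c c(\wt Q)=c_{cg_1}(Q_{\Gr}\otimes G_1(1))$ is the top Chern class cutting out $\Gr(q^*\cC(-1),c)$, exactly as in Corollary \ref{cor:irred}.

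\textbf{Step 1 (split off the $Q^*$-parts).} Write $S:=Q^*_{\Gr}$, of rank $c$, which is the kernel bundle $K(1)^{\vee\vee}$-twist, and expand
\[
c_{b-\sff e}(\cc A+S)=\sum_i c_{b-\sff e-i}(\cc A)\,c_i(S),\qquad c_m(\cc G-S)=\sum_j c_{m-j}(\cc G)\,c_j(-S).
\]
Then $\tilde p_*$ is applied to $c_i(S)c_j(-S)\,c_{cg_1}(Q_{\Gr}\otimes G_1(1))$, with coefficients from $\prj(B)$ pulled out by the projection formula.

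\textbf{Step 2 (the Grassmannian pushforward).} Next I reinterpret $c_{cg_1}(Q_{\Gr}\otimes G_1(1))\cap[\Gr]$ as the class of $Z=\Gr(q^*\cC(-1),c)$, which is birational over $\prj(B)$, and then compute $\tilde p_*\big(P(S)\cdot[Z]\big)$ with the Józefiak--Lascoux--Pragacz formula on $\Gr(G_0^*(-1),c)$. For a tautological polynomial $P$ this gives a Schur determinant in $c(G_0(1)-G_1(1))=c(\cc G)$. The key identity I expect is that the combinations collapse to single Chern classes of $\cc G$. In particular,
\[
\tilde p_*\big(c_i(S)c_j(-S)[Z]\big)
\]
should be $\delta$-like in $i$ and equal to $c_{c+j}(\cc G)$ up to sign. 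This mirrors the rank-$1$ Lemma \ref{lem:pushpicn0}, where $\tilde p_*$ of $c_{top}$ over a twisted bundle yields $c_{\mathrm{top}+1}$. Heuristically, on $Z$ one has $S=K(1)$ and $\cc G-S=\sh^2(\sff E)(1)$, which is torsion on $X$. Consequently $c(\cc G-S)$ pushes forward like the Segre-type expression $c_{c+m}(\cc G)$.

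\textbf{Step 3 (reassemble).} Summing over $i,j$, I expect the $i>0$ terms to cancel. This is because $c_i(S)$ for $i\ge1$ combined with $[Z]$ pushes to classes that recombine with $c_j(-S)$ through $c(S)c(-S)=1$. The result should leave $c_{b-\sff e-c}(\cc A)\,c_{c+m}(\cc G)$, with the degree shift $b-\sff e\mapsto b-\sff e-c$ absorbing the relative dimension $c(g_0-c)$ minus the codimension $cg_1$, that is, $c(g_0-g_1-c)=0$ from irreducibility and expected dimension. After that I only need to check the dimension bookkeeping against $A_{\sff n-1+\sff e-m}(\prj(B))$.

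The main obstacle is Step 2: getting the correct Gysin identity for $\tilde p_*\big(c_i(Q^*)c_j(-Q^*)\,c_{cg_1}(Q\otimes G_1(1))\big)$ in terms of $c(G_0(1)-G_1(1))$, with signs. I would first try to verify it by splitting principle for $c=1$, where it reduces to Lemma \ref{lem:pushpicn0}-type projective bundle pushforwards. I would then attempt the general case via the JLP determinantal formula, using the vanishing of Schur determinants with too-short partitions to kill the unwanted terms.
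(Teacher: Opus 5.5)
Your Step 1 is fine. The mechanism you propose for Steps 2--3 is wrong, however, and the one input that makes the formula come out is missing.

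\textbf{The expected identity in Step 2 fails.} Already for $c=1$ it is false. There $\Gr(G_0^*(-1),1)$ is the bundle of lines in $G_0(1)$, $S=\cO(-1)$, $h:=c_1(Q_{\Gr})$ and $[Z]=\sum_l c_l(G_1(1))h^{g_1-l}$. Since $g_0-g_1=1$ and $\tilde p_*h^{g_0-1+t}=c_t(-G_0(1))$, one gets
\[
\tilde p_*\big(c_0(S)c_j(-S)\cap[Z]\big)=c_j(-\cc G),\qquad \tilde p_*\big(c_1(S)c_j(-S)\cap[Z]\big)=-c_{j+1}(-\cc G).
\]
Neither expression is $\delta$-like in $i$. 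The second equals $\pm c_{1+j}(\cc G)$ only for $j=0$.

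More generally, $Z\to\prj(B)$ is birational, so $\tilde p_*\big(c_i(S)c_j(-S)\cap[Z]\big)$ has codimension $i+j$. It can therefore only be compared with $c_{c+j}(\cc G)$ when $i=c$. So the surviving index is $i=c$, not $i=0$ as your Step 3 asserts. The shift $b-\sff e\mapsto b-\sff e-c$ is exactly the factor $c_c(S)$; it cannot come from $c(g_0-g_1-c)=0$.

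Nor do the $i<c$ terms all die under $\tilde p_*$. For $c=1$, $m=0$, the $i=0$ term contributes $c_{b-\sff e}(\cc A)\cdot\tilde p_*[Z]=c_{b-\sff e}(\cc A)$.

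\textbf{What actually kills those terms.} The point is that $\cc A=(B-\sff A)(1)$ is represented by an honest vector bundle of rank $b-\sff e-c$, because $A_0\to A_1\oplus B$ is an injection of bundles (Section \ref{sec:setupB}). Your proposal never uses this, and without it the $m=0$ case cannot be reached.

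With it, Whitney's formula gives $c_{b-\sff e}(\cc A+S)=c_{b-\sff e-c}(\cc A)\,c_c(S)$. What remains is the Gysin identity
\[
\tilde p_*\big(c_c(Q^*_{\Gr})\,c_m(\cc G-Q^*_{\Gr})\cap[Z]\big)=c_{c+m}(\cc G).
\]
This holds only for the whole class $c_m(\cc G-S)=\sum_j c_{m-j}(\cc G)c_j(-S)$, not term by term in $j$. For $c=1$ it reads $-\sum_j c_{m-j}(\cc G)c_{j+1}(-\cc G)=c_{m+1}(\cc G)$, which follows from $c(\cc G)c(-\cc G)=1$. Proving it for general $c$ is the real content of the theorem, and your proposal only expects it, in an incorrect form.

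\textbf{How the paper does it.} The paper does not factor first. It rewrites $c_m(\cc G-Q^*_{\Gr})=c_m(U^*_{\Gr}-G_1(1))$ and expands everything in Schur determinants of $\wt U=U_{\Gr}-G_1^*(-1)$. It then combines these with the Pieri-type rule \cite[Lemma 14.5.2]{fu} and pushes forward using Pragacz's formula for $\tilde p_*\big(s_I(\wt U)\,s_{(g_1)^c}(\wt Q)\big)$ \cite[Proposition 2.2]{Pragacz1988}. The rank fact for $B-\sff A$ enters only at the end, to dispose of the case $m=0$.
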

\begin{proof}
Denote by $U_{\Gr}$  the universal subbundle of $\Gr(q^*G_0^*(-1),c)$, so that $$Q_{\Gr}\cong q^*G_0^*(-1)/U_{\Gr},$$ and let $\widetilde U:=U_{\Gr}-G_1^*(-1)$. For a sequence of nonnegative integers $I=(i_1,\ldots, i_n)$, denote the determinant of Segre classes by $$s_I(-):=|s_{i_p-p+q}(-)|_{1\leq p,q\leq n}.$$  We can now express $\iota_*\llb{\sff E,1}_m$ as the image under $\tilde p_*$ of 
    \begin{align*}     
&c_m\big (U^*_{\Gr}-G_1(1)\big)c_{b-{\sff e}}\big(G_0(1)-U_{\Gr}^*- \sff{A}(1)+B(1)\big)(-1)^{cg_1}s_{(g_1)^{c}}\big(\widetilde {Q}\big)\\
    =&\sum_{i=0}^{b-{\sff e}}(-1)^{m+i+cg_1}c_m\big(\wt U\big)c_{b-{\sff e}-i}\big((\sff{G}- \sff A+B)(1)\big) s_i\big(U_{\Gr}-G_1^*(-1)\big)s_{(g_1)^{c}}\big(\widetilde {Q}\big)\\
     =&\sum_{i=0}^{b-{\sff e}}(-1)^{m+i+cg_1}c_{b-{\sff e}-i}\big((\sff{G}- \sff A+B)(1)\big)(-1)^ms_{(1)^m}\big(\widetilde U\big)s_i\big(\widetilde U\big)s_{(g_1)^{c}}\big(\widetilde {Q}\big)\\
 =&\sum_{i=0}^{b-{\sff e}}(-1)^{i+cg_1}c_{b-{\sff e}-i}\big((\sff{G}- \sff A+B)(1)\big)\left( s_{i,(1)^m}(\widetilde U)+s_{i+1,(1)^{m-1}}(\widetilde U)\right)s_{(g_1)^{c}}(\widetilde {Q}).
     \end{align*}
    For the last equality we used  \cite[Lemma 14.5.2]{fu}. Here, $s_{i+1,(1)^{m-1}}(\widetilde U)=0$ if $m=0$. By  \cite[Proposition  2.2]{Pragacz1988}, for a sequence of integers $I$ and $J=(g_1)^c$,
    $$\nt p_*\left(s_I\big(U-G_1^*(1)\big)s_{J}(Q-G_1^*(1))\right)=(-1)^{cg_1}s_{(0)^{c},I}\big(G_0^*(-1)-G_1^*(-1)\big).$$
    After this simplification, the projection formula allows us to get rid of the pushforward $\tilde p_*$. For $m\geq 1$, $\iota_*\llb{\sff{E},1}_m=$
    \begin{align*}
     =&\sum_{i=0}^{b-{\sff e}}(-1)^{i}c_{b-{\sff e}-i}\big((\sff{G}- \sff{A}+B)(1)\big)\Big( s_{(0)^{c},i,(1)^m}\big(\sff{G}^\vee(-1)\big)\\
     &+s_{(0)^{c},i+1,(1)^{m-1}}\big(\sff{G}^\vee(-1)\big)\Big)\\
    =&\sum_{i=c}^{b-{\sff e}}(-1)^ic_{b-{\sff e}-i}\big((\sff{G}- \sff{A}+B)(1)\big)(-1)^{c}\Big(s_{i-c,(1)^{c+m}}\big(\sff{G}^\vee(-1)\big)\\
    &+s_{i+1-c,(1)^{c+m-1}}\big(\sff{G}^\vee(-1)\big)\Big)\\
    =&\sum_{i=c}^{b-{\sff e}}(-1)^{m+i}c_{b-{\sff e}-i}\big((\sff{G}- \sff{A}+B)(1)\big) s_{i-c}\big(\sff{G}^\vee(-1)\big)c_{c+m}\big(\sff{G}^*(-1)\big)\\
    =&c_{b-{\sff e}-c}\big(B(1)- \sff{A}(1)\big)c_{c+m}\big(\sff{G}(1)\big).
    \end{align*}
    For $m=0$, we get instead  
    \begin{equation}\label{eqn:casem0}  
    \iota_*\llb{\sff{E},1}=\sum_{i=0}^{b-{\sff e}}(-1)^{c}c_{b-{\sff e}-i}\big((\sff{G}- \sff{A}+B)(1)\big) s_{i-c,(1)^{c}}\big(\sff{G}(1)\big).
    \end{equation}
    But after expanding the determinant, one can show that  
\begin{align*}
        &s_{i-c,(1)^{c}}\big(\sff{G}(1)\big)=(-1)^c \sum_{j=0}^{c}s_{i-c+j}\big(\sff{G}(1)\big)c_{c-j}\big(\sff{G}(1)\big).
    \end{align*}
    Note that for $i\leq c$, the right hand is $(-1)^cc_i\big(\sff{G}(1)-\sff{G}(1)\big)=0$. Substituting back into \eqref{eqn:casem0}, we get
\begin{align*}\iota_*\llb{\sff{E},1}
    &=\sum_{i=0}^{b-{\sff e}}c_{b-{\sff e}-i}\big((\sff{G}- \sff{A}+B)(1)\big) \sum_{j=0}^{c}s_{i-c+j}\big(\sff{G}(1)\big)c_{c-j}\big(\sff{G}(1)\big)\\
    &=\sum_{j=0}^{c}c_{c-j}\big(\sff{G}(1)\big) \sum_{i=c-j}^{b-{\sff e}}c_{b-{\sff e}-i}\big((\sff{G}- \sff{A}+B)(1)\big) s_{i-c+j}\big(\sff{G}(1)\big)\\
    &=\sum_{j=0}^{c}c_{c-j}\big(\sff{G}(1)\big)c_{b-{\sff e}-c+j}(B(1)- \sff{A}(1)\big).
\end{align*}
Now, by the same reasoning as in Section \eqref{sec:setupB}, $B-\sff{A}$ is represented by a vector bundle of rank $b-{\sff e}-c$ in $K$-theory, so for $j>0$
$$c_{b-{\sff e}-c+j}\big(B(1)- \sff{A}(1)
\big)=0,$$
and hence the last summation above involves only one possibly non-vanishing term $c_{c}\big(\sff{G}(1)\big)c_{b-{\sff e}-c}\big(B(1)-\sff{A}(1)\big)$. This completes the proof. 
\end{proof}

\bigskip

\section{Nested Hilbert schemes of surfaces}\label{sec:NHS}

Let $S$ be a nonsingular complex projective surface and $S^{[n_1,n_2]}_\b$ be the Nested Hilbert scheme as introduced in Section \ref{sec:introNested}. It carries a perfect obstruction theory leading to a virtual cycle denoted by $$[S^{[n_1,n_2]}_\b]^{\vir}\in A_{n_1+n_2+\vd_\b}(S^{[n_1,n_2]}_\b),$$ where $\vd_\b=\b(\b-K_S)/2$ (cf. \cite{gholampour2020nested, G1, G2}). This perfect obstruction theory is the fixed part of the Donaldson-Thomas obstruction theory on the total space of a line bundle on $S$ (cf. \cite{gholampour2020localized}), and is also the monopole part of the Vafa-Witten obstruction theory of $S$ (cf. \cite{tanaka2017vafa, tanaka2018vafa}). 
We refer to this as the DT virtual cycle.

When $H^2(L)=0$ for every effective  $L\in \Pic_\beta(S)$ and $H^2(\cO_S)\neq 0$, the virtual cycle above vanishes. In this case, $S^{[n_1,n_2]}_\b$ carries a reduced perfect obstruction theory leading to a reduced cycle denoted by 
$$[S^{[n_1,n_2]}_\b]^{\op{red}}\in A_{n_1+n_2+\vd_\b+p_g}(S^{[n_1,n_2]}_\b),$$ where $p_g=h^2(\cO_S)$ is the geometric genus of $S$ (cf. \cite{gholampour2020nested, G1, G2}). This class has been related to curve counting on $S$ (cf. \cite{Kool_2014,Kool_2014_2}). 

In the rest of the paper, we recover these two classes by the construction of Section \ref{Sec:vir3-term}, and study some of their properties. To simplify the notation we will sometimes omit the obvious pullback symbols in what follows.

\subsection{Recovering the reduced cycle}\label{sec:setupHilb}
    Let 
    $$X:=S^{[n_1]}\times S^{[n_2]}\times \Pic_\beta(S).$$
    Letting $\pi:S\times X\to X$ to be the projection, we define     \beq{complexE}\sff{E}:=R\sHom_\pi(\cI_1,\cI_2(\cP_\beta))\in\Perf(X,0,2).\eeq Here, $\cI_1, \cI_2, \cP_\b$ are the universal objects defined in Section \ref{sec:introNested}. The construction of Section \ref{Sec:vir3-term} applies and enables us to define 
\begin{equation}\label{def:firsthilb}
\llhb{S_\beta^{[n_1,n_2]}}:=\llb{\sff{E},1}\in A_{n_1+n_2+\vd_\beta+p_g-\rkh{\sff E}^2}(S_\beta^{[n_1,n_2]})
\end{equation}
by the virtue of the canonical isomorphism 
$S_\beta^{[n_1,n_2]}\cong\widetilde {\op{D}}_1(\sff{E})$ established in \cite{G2}. If $\sff h^2_{\sff E}=0$, this class has the same dimension as that of the reduced virtual cycle. In fact, according to the following proposition, the two classes coincide when the reduced cycle is defined. 

\begin{prop} \label{proph2l=0} If $H^2(L)=0$ for all effective $L \in \Pic_\b(S)$, we have
    $$\llhb{S_\beta^{[n_1,n_2]}}=[S_\beta^{[n_1,n_2]}]^{\op{red}}.$$  
    
\end{prop}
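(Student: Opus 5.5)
The plan is to show that the hypothesis $H^2(L)=0$ for all effective $L\in\Pic_\b(S)$ makes $\sff E$ a $1$-special complex in the sense of Condition \eqref{UDr}. Then Proposition \ref{prop:red1st} identifies $\llb{\sff E,1}$ with the virtual cycle $[\wt{\op D}_1(\sff E)]^{\vir}$ coming from the natural perfect obstruction theory. The last step is to match that obstruction theory with the reduced one of \cite{gholampour2020nested, G2}.

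\textbf{Step 1 ($1$-specialness).} A point of $X$ is a triple $(I_1,I_2,L)$, and the fiber of $\sh^2(\sff E)$ there is $\op{Ext}^2(I_1,I_2\otimes L)$. This follows by cohomology and base change, since $\sff E$ has amplitude $[0,2]$ on the surface $S$. By Serre duality this equals $\op{Hom}(I_2\otimes L, I_1\otimes K_S)^*$. If the point lies in $\op D_1(\sff E)$, then $\op{Hom}(I_1,I_2\otimes L)\neq 0$. Such a nonzero map forces $L=\cO(D)$ with $D$ effective and $I_1(-D)\subset I_2$. For the same point, the group $\op{Ext}^2(I_1,I_2\otimes L)$ is dominated by $H^2(L)$. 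To see this, use the sequences $0\to I_i\to\cO_S\to\cO_{Z_i}\to 0$: higher Ext's against zero-dimensional sheaves contribute nothing to the top degree beyond $H^2$ terms. Concretely, the map $\op{Ext}^2(\cO_S,I_2\otimes L)\to \op{Ext}^2(I_1,I_2\otimes L)$ is surjective, because $\op{Ext}^3$ vanishes on a surface. Moreover $H^2(I_2\otimes L)\cong H^2(L)$. Hence $H^2(L)=0$ gives $\sh^2(\sff E)|_x=0$ at every $x\in\op D_1(\sff E)$. By Nakayama, $\op D_1(\sff E)\cap\op{Supp}\sh^2(\sff E)=\emptyset$, so $\sff E$ is $1$-special.

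\textbf{Step 2 (apply Proposition \ref{prop:red1st}).} Since $X$ is nonsingular, Proposition \ref{prop:red1st} gives $\llb{\sff E,1}=[\wt{\op D}_1(\sff E)]^{\vir}$. Here the right side is defined through $\sff E|_U\in\Perf(U,0,1)$ on an open neighborhood $U$ of $\op D_1(\sff E)$. Its dimension is $d-(1-\sff e)$, with $d=2n_1+2n_2+g$ and $\sff e=\chi(\cO_S)+\vd_\b-n_1-n_2$. This gives $n_1+n_2+\vd_\b+p_g$, as expected because $\sh^2_{\sff E}$ has rank $0$ in this case.

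\textbf{Step 3 (main obstacle: identifying the obstruction theories).} We must show that the perfect obstruction theory on $\wt{\op D}_1(\sff E|_U)\cong S^{[n_1,n_2]}_\b$, coming from the 2-term complex $\tau^{\le1}\sff E|_U$ and the smooth ambient $U\subset X$, is the reduced obstruction theory of \cite{G2}. I would argue that this is essentially how the reduced theory is constructed in \cite{gholampour2020nested, G2}. There, the reduced class is realized as the virtual class of $\wt{\op D}_1$ of a 2-term complex over $X$, with $\Pic_\b(S)$ accounting for the reduction by $H^1(\cO_S)$ and $H^2(\cO_S)$ directions, and it is given by the Thom–Porteous type formula of \cite[Theorem 3]{G2}.

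So I would compare the two classes via Thom–Porteous rather than at the level of obstruction theories. Both push forward to $\Gr(B^*,1)$ as $c_{b-\sff e}\big((B-\sff E)(1)\big)$: for ours this follows from Theorem \ref{thm:T-Pformulas} with $\sh^2$ vanishing near the locus, and for theirs from \cite{G2}. Equality of pushforwards does not by itself give equality in $A_*(S^{[n_1,n_2]}_\b)$. Instead, I would verify that both are defined as $0^!$ of the same section of the same bundle $Q\otimes K$ over $\Gr(E_0^*|_U,1)$, after choosing the resolution with $K=\ker\s_1$ on $U$. Then the equality is a refined statement, invoking independence of resolution (Lemma \ref{lem:indep}) to match the choice made in \cite{G2}.
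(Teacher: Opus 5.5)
Your proof is correct and follows the paper's route. The paper also notes that the hypothesis is equivalent to Condition \eqref{UDr} for $r=1$; your Step 1 supplies the needed direction explicitly, via $\op{Ext}^3=0$ on a surface and $H^2(I_2\otimes L)\cong H^2(L)$. The paper then applies Proposition \ref{prop:red1st} and, for your Step 3, simply cites \cite{G2}, which identifies $[S_\beta^{[n_1,n_2]}]^{\op{red}}$ with $[\wt{\op D}_1(\sff E)]^{\vir}$ built from the 2-term complex $\sff E|_U$, so your Thom--Porteous detour is unnecessary.
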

\begin{proof}
Under the assumption of the proposition, \cite{G2} proves that
$$[S_\beta^{[n_1,n_2]}]^{\op{red}}:=[\wt{\op{D}}_1(\sff{E})]^{\vir}\in A_{n_1+n_2+\vd_\beta+p_g}(S_\beta^{[n_1,n_2]}).$$
But the condition in the proposition is equivalent to Condition \eqref{UDr} for $r=1$, where $U$ can be taken to be  an open neighborhood of the image of the Abel-Jacobi map $\op{AJ}(S^{[n_1]}\times S^{[n_1]}\times S_\beta)\subset X$. Therefore, the claim is an immediate consequence of Proposition \ref{prop:red1st}.

\end{proof}

\subsection{Ample divisor} \label{sec:embvia}

Fix a sufficiently ample divisor $A\subset S$ so that for all $L\in\Pic_\beta(S)$, $H^{\geq1}(L(A))=0$. Define the locally free sheaf 
\begin{equation}\label{def:B}B:=R\pi_*(\cP_\beta(A))=\pi_*(\cP_\beta(A))
\end{equation}
with the map $s_A:R\pi_*\cP_\beta\to B$ induced by the canonical section $\cO_S\to \cO_S(A)$. As in (4.12) of \cite{G2}, we  have the surjection $B^*\twoheadrightarrow \sh^0(\sff E^\vee)$ defined using Serre duality by the composition
\begin{align}
    B^*\cong R^2\pi_*(\cP_\beta^*(K_S-A))
    &\onto{s^*_A}R^2\pi_*(\cP^*_\beta\otimes K_S)\cong R^2\pi_*(\cI_1\otimes\cP^*_\beta\otimes K_S)\nonumber\\
&\onto{}\sExt^2_\pi(\cI_2,\cI_1\otimes\cP^*_\beta\otimes K_S)\cong \sh^0(\sff E^\vee).\label{fig:surjB}
\end{align} 

Following Section \ref{sec:setupB}, we get an embedding $\iota: S_\beta^{[n_1,n_2]}\hookrightarrow \prj(B)$  with $\cO_{\prj(B)}(-1)$ restricting to the dual of the quotient line bundle of $S_\beta^{[n_1,n_2]}\cong \wt {\op{D}}_1(\sff{E})$, and in the diagram
\begin{equation}\label{fig:BtoE0}\begin{tikzcd}
	&& {B^*} \\
	{E_{1}^*} & {E_0^*} & {\sh^0(\sff{E}^\vee)} & 0
	\arrow["\psi"', dashed, from=1-3, to=2-2]
	\arrow[two heads, from=1-3, to=2-3]
	\arrow[from=2-1, to=2-2]
	\arrow[from=2-2, to=2-3]
	\arrow[from=2-3, to=2-4]
\end{tikzcd}\end{equation}
we can find a lifting $\psi$ after replacing $\prj(B)$ by an affine bundle over it, which is an affine variety.

  We also have the canonical embeddings $i$ and $\jmath$ as below resulted from the surjections in \eqref{fig:surjB}.
    \begin{equation}\label{fig:contentionshilb}
    \begin{tikzcd}[cramped]
	{S_\beta^{[n_1,n_2]}} & {S^{[n_1]}\times S^{[n_2]}\times S_\beta} & {\prj(B)}
	\arrow["i", hook, from=1-1, to=1-2]
	\arrow["\iota"', hook, curve={height=12pt}, from=1-1, to=1-3]
	\arrow["\jmath", hook, from=1-2, to=1-3]
\end{tikzcd}
    \end{equation} Pulling back to $\prj(B)$, the dual of $\psi$ in \eqref{fig:BtoE0} gives the map $u:\sff{E}\to {B}/{\cO(-1)}$ as in Section \ref{sec:deepest}. For $\sff{H}:=\Cone(u)$, we have the identifications $$S_\beta^{[n_1,n_2]}\cong\widetilde {\op{D}}_1(\sff{H})\cong{\op{D}}_1(\sff{H})$$ from which we may conclude as in Section \ref{sec:deepest} that 
\begin{equation}\label{eq:hilbdeepest1}
    \llhb{S_\beta^{[n_1,n_2]}}=\llb{\sff{H},1}.
\end{equation}

\subsection{Recovering DT virtual cycle I} \label{sec:recvircyc} Let $\widetilde X:=S^{[n_1]}\times S^{[n_2]}\times S_\beta,$
and $\pi: \widetilde X\times S\to \widetilde X$ be the projection. Denote by $\cD_\b$ the universal divisor of $S_\b$.  By \cite[Section 4.2]{G2}, we have the following commutative diagram of complexes on $\widetilde X$
\begin{equation}\label{fig:h2diag}
    \begin{tikzcd}[column sep=tiny]
	& {R^2\pi_*\cO[-2]} &  \\
	{R\pi_*\cI_2(\cD_\beta)} && {R\pi_*\cO(\cD_\beta)}   \\
	{R\sHom_\pi(\cI_1,\cI_2(\cD_\beta))} && {R\sHom_\pi(\cI_1,\cO(\cD_\beta)),}
	\arrow[dotted,from=1-2, to=2-1]
	\arrow["s_
    \beta",from=1-2, to=2-3]
	\arrow[from=2-1, to=2-3]
	\arrow[from=2-1, to=3-1]
	\arrow[from=2-3, to=3-3]
	\arrow["f",from=3-1, to=3-3]
\end{tikzcd}
\end{equation} 
where  the left diagonal arrow is obtained after pulling back to an affine bundle, which is an affine variety. On the second cohomology, we get the surjections over a closed point $(I_1, I_2, D)\in \wt X$ 
\begin{equation}\label{fig:h2surj}
    \begin{tikzcd}
	& {H^2(\cO_S)} \\
	{H^2(I_2(D))} && {H^2(\cO(D))} \\
	{\text{Ext}^2(I_1,I_2(D))} && {\text{Ext}^2(I_1,\cO(D)).}
	\arrow[two heads, from=1-2, to=2-1]
	\arrow[two heads, from=1-2, to=2-3]
	\arrow[no head, from=2-1, to=2-3]
	\arrow[shift left, no head, from=2-1, to=2-3]
	\arrow[two heads, from=2-1, to=3-1]
	\arrow[two heads, from=2-3, to=3-3]
	\arrow[two heads, from=3-1, to=3-3]
\end{tikzcd}
\end{equation}
Denote by $\rho(\sff A)=\Cone(R^2\pi_*\cO[-2]\to\sff A)$ for a complex $\sff A$ in the diagram \eqref{fig:h2diag}. By \eqref{fig:h2surj}, $$\sh^{i\geq2}(\rho(\sff A))=0.$$
For the embedding $\jmath: \wt X\hookrightarrow \prj(B)$ in \eqref{fig:contentionshilb}, $\cO_{\wt X}(D_\b)\cong \cP_\b(1)|_{\wt X}$. One can find an open $U\subset\prj(B)$ containing $\wt X$ over which there is a diagram as in \ref{fig:h2diag} with $\cO(\cD_\b)$ replaced by $\cP_\b(1)$, such that the fiberwise surjectivity on the second cohomology as in \eqref{fig:h2surj} is satisfied. 

If  $u:\sff E\to B/\cO(-1)$ is the map over $\prj(B)$ as in Section \ref{sec:embvia}, after applying $\rho(-)$ as above, we get $$w:\rho(\sff E(1))(-1)\to B/\cO(-1),$$ which also satisfies the conditions from Section \ref{sec:deepest}. Let $\sff V:=\Cone(w)$.

\begin{prop}\label{prop:hilbvir}
    We have $[S_\beta^{[n_1,n_2]}]^{\vir}=\llb{\sff V,1}.$
\end{prop}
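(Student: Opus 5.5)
The plan is to reduce the statement to Proposition \ref{prop:red1st} applied to $\sff V$ on $\prj(B)$, or rather on the open set $U\subset\prj(B)$ containing $\wt X$ where the modified diagram \eqref{fig:h2diag} exists. Three things are needed: $\sff V$ is $1$-special, $\wt{\op D}_1(\sff V)\cong S^{[n_1,n_2]}_\b$, and the perfect obstruction theory that $\sff V$ induces on this degeneracy locus agrees with the DT obstruction theory of \cite{G2}.

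\textbf{Step 1: $\sff V$ is $1$-special.} By \eqref{fig:h2surj}, $\sh^{i\ge2}(\rho(\sff E(1)))=0$ on $U$. Hence $\rho(\sff E(1))(-1)\in\Perf(U,0,1)$, and its cone $\sff V$ under $w$ also lies in $\Perf(U,0,1)$, since $B/\cO(-1)$ is a vector bundle in degree $0$. Every point of $\op D_1(\sff V)$ lies over $\wt X\subset U$, because the nonzero $\sh^0$ there forces an effective divisor. So Condition \eqref{UDr} holds with this $U$. Since $\prj(B)$ is nonsingular (as $X$ is), Proposition \ref{prop:red1st} gives $\llb{\sff V,1}=[\wt{\op D}_1(\sff V)]^{\vir}$, the latter coming from the 2-term complex $\sff V|_U$ as in Section \ref{virres}.

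\textbf{Step 2: identify the degeneracy locus.} We have $\sh^0(\rho(\sff E(1)))=\sh^0(\sff E(1))$, since the cone of $R^2\pi_*\cO[-2]$ only changes degrees $\ge1$. Therefore $\op D_1(\sff V)$ is cut out exactly as $\op D_1(\sff H)$ in Section \ref{sec:embvia}: it is the deepest degeneracy locus of the composite of $\sh^0$ with the map to $B/\cO(-1)$. By the identifications there, $\op D_1(\sff V)\cong\op D_1(\sff H)\cong S^{[n_1,n_2]}_\b$, and $\wt{\op D}_1(\sff V)=\op D_1(\sff V)$ because it is deepest. I would verify this equality of scheme structures through Fitting ideals of $\sh^0(\sff V^\vee)$, which agree with those of $\sh^0(\sff H^\vee)$ on $U$.

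\textbf{Step 3: match the obstruction theories (main obstacle).} The virtual cycle from Section \ref{virres} for $\sff V|_U$ has obstruction complex built from $\sff V$ restricted to $S^{[n_1,n_2]}_\b$. By the construction of $w$, this is the cone of $R\sHom_\pi(\cI_1,\cI_2(\cD_\b))\to B/\cO(-1)$ with $R^2\pi_*\cO[-2]$ removed, i.e. the $H^2(\cO_S)$ part of the obstruction is killed. I would compare with \cite[Section 4]{G2}, where the DT perfect obstruction theory on $S^{[n_1,n_2]}_\b$ is written as a degeneracy-locus obstruction theory in $\prj(B)$ for exactly this cone. There the tangent–obstruction comes from $R\sHom_\pi(\cI_1,\cI_2(\cD_\b))$ together with the deformations of $\cD_\b$, and the $H^2(\cO_S)$ summand cancels against the $\Pic$ direction. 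The delicate point is that the dotted lift in \eqref{fig:h2diag} exists only after passing to an affine bundle. I would handle this by pulling everything back along the affine bundle, where flat pullback is injective on Chow groups, proving equality there, and then descending.

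As a check, I would compare virtual dimensions. Removing $p_g$ from $\sh^2$ lowers the dimension from $n_1+n_2+\vd_\b+p_g$ to $n_1+n_2+\vd_\b$, matching the DT class. Given the two obstruction theories are identified as quasi-isomorphic on a neighbourhood, the equality of cycles then follows from the quasi-isomorphism invariance in Section \ref{virres} together with Lemma \ref{lem:indep}.
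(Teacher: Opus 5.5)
Your proposal is correct and follows essentially the paper's argument. The paper likewise notes that $\sff V$ is $1$-special on $U$, applies Proposition \ref{prop:red1st} to get $\llb{\sff V,1}=[\op D_1(\sff V)]^{\vir}$, and then concludes by citing \cite[Theorem 4.34]{G1}. Your Step 3 is exactly the content of that cited theorem, so you do not need to match the obstruction theories by hand. Your Steps 1--2 make explicit two points the paper leaves implicit: amplitude $[0,1]$ follows from the fiberwise surjectivity in \eqref{fig:h2surj}, and the identification $\sh^0(\sff V^\vee)\cong\sh^0(\sff H^\vee)$ gives the degeneracy locus.
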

\begin{proof}
   By the discussion above, $\sff V$  satisfies Condition \eqref{UDr}, so from Proposition \ref{prop:red1st},  $\llb{\sff V,1}=[{\op D}_r(\sff V)]^{\vir}$. The claim follows from \cite[Theorem 4.34]{G1}.
\end{proof}


\subsection{Recovering DT virtual cycle II}
Recall the complexes $\sff H=\Cone(u)$ from Section \ref{sec:embvia} and $\sff V=\Cone(w)$ from Section \ref{sec:recvircyc} defined over some open $U\subset \prj(B)$. We have shown that $$\llhb{S_\beta^{[n_1,n_2]}}=\llb{\sff{H},1},\qquad [S_\beta^{[n_1,n_2]}]^{\vir}=\llb{\sff{V},1}.$$
Consider the following commutative diagram with exact columns and rows in which the map $\alpha$ is induced from the rest of the diagram. 
\begin{equation}\label{vircred}
\begin{tikzcd}[column sep=small]
	{R^2\pi_*\cO(-1)[-2]} & {R\sHom_\pi(\cI_1,\cI_2(\cP_\beta))} & {\rho \big(R\sHom_\pi(\cI_1,\cI_2(\cP_\beta)(1))\big)(-1)} \\
	0 & {B/\cO(-1)} & {B/\cO(-1)} \\
	{R^2\pi_*\cO(-1)[-1]} & {\sff{H}[1]} & {\sff{V}[1]}
	\arrow[from=1-1, to=1-2]
	\arrow[from=1-1, to=2-1]
	\arrow[from=1-2, to=1-3]
	\arrow["u",from=1-2, to=2-2]
	\arrow["w",from=1-3, to=2-3]
	\arrow[from=2-1, to=2-2]
	\arrow[from=2-1, to=3-1]
	\arrow[from=2-2, to=2-3]
	\arrow[from=2-2, to=3-2]
	\arrow[from=2-3, to=3-3]
	\arrow[from=3-1, to=3-2]
	\arrow["\alpha", dashed, from=3-2, to=3-3]
\end{tikzcd}
\end{equation}
From this, we see that $\Cone(\sff{H}\to \sff{V})$ is represented by the vector bundle $\cO(-1)^{\oplus p_g}$ sitting in degree $1$ and $\sh^2(\sff{V})=0$. An application of Proposition \ref{prop:refined} now gives the following result.
\begin{theorem}\label{redtovir}
The virtual cycle of $S_\beta^{[n_1,n_2]}$ can be obtained as
$$[S_\beta^{[n_1,n_2]}]^{\vir}=\llb{\sff E,1}_{p_g-\rkh{\sff E}^2}.$$ \qed
\end{theorem}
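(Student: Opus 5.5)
The statement has three ingredients, each already available: Proposition \ref{prop:hilbvir}, which gives $[S_\beta^{[n_1,n_2]}]^{\vir}=\llb{\sff V,1}$; the identification \eqref{eq:hilbdeepest1}, which gives $\llhb{S_\beta^{[n_1,n_2]}}=\llb{\sff H,1}$ on $\prj(B)$; and Proposition \ref{prop:refined}, which compares a deepest-locus complex with the cone of a map from a trivial-type bundle placed in degree $2$. The plan is to show that $\sff V$ is exactly such a cone over $\sff H$ with $\ell=p_g$, and then read off the refined index $\ell-\rkh{\sff E}^2=p_g-\rkh{\sff E}^2$.

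First, I would extract the map $v$ from diagram \eqref{vircred}. The bottom row gives a morphism $R^2\pi_*\cO(-1)[-1]\to\sff H[1]\xrightarrow{\alpha}\sff V[1]$, which should be an exact triangle. Shifting by $-1$ gives a triangle
$$R^2\pi_*\cO(-1)[-2]\xrightarrow{v}\sff H\to\sff V.$$
To justify this, I would apply the octahedral axiom (or the $3\times3$ lemma) to the composable maps in the top row and the vertical maps $u$ and $w$. Since $\cI_1,\cI_2$ have trivial determinant and $H^2(\cO_S)$ has dimension $p_g$, the bundle $R^2\pi_*\cO$ is trivial of rank $p_g$. Hence the source of $v$ is $\cO_{\prj(B)}^{\oplus p_g}(-1)[-2]$, which is exactly the shape required in Proposition \ref{prop:refined} with $\ell=p_g$.

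Second, I would check the hypothesis that $\sff V=\Cone(v)$ is $1$-special. By construction, $\rho(-)$ kills $\sh^{\ge2}$ fiberwise by the surjectivity in \eqref{fig:h2surj}, and this holds on an open set $U\supset\wt X\supset S^{[n_1,n_2]}_\beta$. So $\sh^2(\sff V)=0$ on a neighbourhood of $\op D_1(\sff V)=\op D_1(\sff H)$, which is Condition \eqref{UDr}. This step needs some care. The complexes are only defined over the open $U$ (and after an affine-bundle pullback for the lifts), so I would either apply Proposition \ref{prop:refined} on $U$ and use that all classes are supported on $S_\beta^{[n_1,n_2]}\subset U$, or invoke the flat-pullback invariance implicit in Lemma \ref{lem:indep}.

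Finally, Proposition \ref{prop:refined} gives $\llb{\sff V,1}=\llb{\sff E,1}_{p_g-\rkh{\sff E}^2}$, using $\llb{\sff H,1}_k=\llb{\sff E,1}_k$ as in its proof. Combining this with Proposition \ref{prop:hilbvir} gives the theorem.

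I expect the main obstacle to be verifying that the dashed map $\alpha$ is compatible with a genuine exact triangle whose third term is $R^2\pi_*\cO(-1)[-1]$, rather than a diagram that merely commutes up to non-unique choices. I would first try the octahedral axiom applied to $R^2\pi_*\cO(-1)[-2]\to\sff E\to B/\cO(-1)$: the map $R^2\pi_*\cO(-1)[-2]\to B/\cO(-1)$ vanishes for degree reasons, since $B/\cO(-1)$ is a bundle in degree $0$ and $\mathrm{Ext}^2$ from a bundle to a bundle vanishes on an affine. That vanishing would let me identify the cones directly.
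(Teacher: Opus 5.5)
Your proposal is correct and follows the paper's argument. The paper likewise reads off from diagram \eqref{vircred} that $\Cone(\sff H\to\sff V)\simeq\cO(-1)^{\oplus p_g}[-1]$ and $\sh^2(\sff V)=0$, applies Proposition \ref{prop:refined} with $\ell=p_g$, and combines the result with Proposition \ref{prop:hilbvir}. Your octahedral-axiom argument, which uses the vanishing of the composite $R^2\pi_*\cO(-1)[-2]\to\sff E\to B/\cO(-1)$, simply spells out what the paper leaves implicit when it says the map $\alpha$ is \emph{induced from the rest of the diagram}.
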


This theorem motivates the following definition (cf. Definition \ref{def:refinedE}).

\begin{definition} \label{def:refinedNested}
\emph{Define} the $k$-th refined class \emph{on the nested Hilbert scheme as}
\begin{equation}\label{def:refpoin}
    \llhb{S_\beta^{[n_1,n_2]}}_k:=\llb{\sff{E},1}_k\in A_{n_1+n_2+\vd_\beta+p_g-\rkh{\sff E}^2-k}(S_{\beta}^{[n_1,n_2]})
\end{equation}
\emph{for each} $0\leq k \leq p_g-\rkh{\sff E}^2$. 
\end{definition}
In general, this gives us classes in decreasing degrees with the two extremes
$$\llhb{S_\beta^{[n_1,n_2]}}_0=\llhb{S_\beta^{[n_1,n_2]}},\qquad \llhb{S_\beta^{[n_1,n_2]}}_{p_g-\rkh{\sff{E}}^2}=[S_\beta^{[n_1,n_2]}]^{\vir}.$$
In the situation of Proposition \ref{proph2l=0}, $\sh^2(\sff E)|_{S_\beta^{[n_1,n_2]}}=0$, and hence
$$\llhb{S_\beta^{[n_1,n_2]}}_{k>0}=0.$$

\begin{remark} \label{rmk:diffvirdim} Let $0=\tau_0, \tau _1, \dots, \tau_s\in H^2(S,\bb Z)_{\op{tor}}$ be all the distinct torsion classes. Denote by $\sff E_{\tau_i}$ the analog of the complex \eqref{complexE} for the cohomology class $\b+\tau_i$. Then, for any $\min(\sff{h}^2_{\sff E_{\tau_0}},\dots,  \sff{h}^2_{\sff E_{\tau_s}})\le m \le p_g$, the refined cycles  
$$\llhb{S_{\beta+\tau_i}^{[n_1,n_2]}}_{m-\sff{h}^2_{\sff E_{\tau_i}}}\qquad i=0,\dots, s$$ have the same virtual dimension $n_1+n_2+\vd_\b+p_g-m$. 
    \end{remark}

The following proposition gives a sufficient condition for deformation invariance of the refined classes defined above in the sense of Section \ref{sec:defInvar}. It should be compared to the deformation invariance property of the reduced cycles  (cf. \cite[Remark 3.1]{Kool_2014} and Remark \ref{rmk:prop4.5}).

\begin{prop} \label{prop:defInvmink}
Let $\cc S\to C$ be a smooth family of projective surfaces over a nonsingular curve $C$, $\b$ be a class in the cohomology of the fibers, and $k$, $l$ be fixed nonnegative integers. Let $\Pic_\beta(\cc S/C)$ be the relative Picard variety and $\op{BN}_t\subset \Pic_{\b}(\cc S/C)$ be the Brill-Noether locus, where $h^2(L)\ge t$. Suppose that for each fiber $\cc S_p$, \begin{enumerate}[(i)]
\item $\b$ is of type $(1,1)$, \item  $l=\min \{h^2(L):L\in\Pic_\beta(\cc S_p)\}$,
\item for any $t>l$, each associated point of the normal cone $\op{C}_{\op{BN}_t/\Pic}$, whose image in $\Pic_\b(\cc S/C)$ specializes to a point of $\op{AJ}(\cc S_{p,\b})\subset \Pic_\b(\cc S/C)$ is mapped to the generic point of $C$.
\end{enumerate}    
Then, the classes 
    $\llhb{\cc S_{p,\beta}^{[0,n]}}_k$ obey deformation invariance on $C$. 

\end{prop}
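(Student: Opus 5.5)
The plan is to reduce the statement to Corollary \ref{cor:flatDefInv}, applied to the family version of the complex \eqref{complexE} with $n_1=0$, $n_2=n$. Set $\cc X:=\cc S^{[n]}/C\times_C \Pic_\b(\cc S/C)$, with $\cc S^{[n]}/C$ the relative Hilbert scheme. It is smooth over $C$ and has integral fibers $X_p=\cc S_p^{[n]}\times\Pic_\b(\cc S_p)$. On $\cc X$ take
$$\scr E_\bu:=R\sHom_\pi(\cO,\cI(\cP_\b))=R\pi_*(\cI\otimes\cP_\b)\in\Perf(\cc X,0,2).$$
By cohomology and base change, $\scr E_\bu|_{X_p}$ is the complex $\sff E$ of \eqref{complexE} for $\cc S_p$. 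Assumption (i) makes the relative Picard scheme and the Abel--Jacobi locus nonempty in each fiber. The refined classes $\llhb{\cc S_{p,\b}^{[0,n]}}_k=\llb{\sff E_p,1}_k$ are then defined fiberwise through $\wt{\op D}_1(\sff E_p)\cong \cc S_{p,\b}^{[0,n]}$, and the family version $\wt{\op D}_1(\scr E_\bu)$ is the relative nested Hilbert scheme.

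First I will check the standing hypothesis of Section \ref{sec:defInvar}, namely $\sff h^2_{\scr E_\bu}=\sff h^2_{\sff E_p}$ for every $p$. Since $\sh^2(\sff E)=R^2\pi_*(\cI\otimes\cP_\b)\cong R^2\pi_*\cP_\b$ (because $\cI$ differs from $\cO$ only in dimension zero), this rank equals the generic value of $h^2(L)$ over $\Pic_\b(\cc S_p)$. By (ii) that generic value is $l$ in every fiber, so the condition $\cc U_0\neq\emptyset$ holds at every point of $C$.

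Next I will identify the degeneracy loci $\op D_t((\scr E_\bu)^\vee)$ appearing in Corollary \ref{cor:flatDefInv}. By construction these are the Fitting loci of $\sh^2(\scr E_\bu)\cong R^2\pi_*\cP_\b$ pulled back from $\Pic_\b(\cc S/C)$, so they equal $\cc S^{[n]}/C\times_C\op{BN}_{t'}$ for the Brill--Noether strata. I need to match the index shift between $t$ and $t'$ carefully. Here $s=\rk(\scr E_2)-\rk(\scr E_1)+c$ is exactly the generic rank $l$ of $\sh^2$, so the condition $t>s$ corresponds to $h^2(L)>l$.

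Because $\cc S^{[n]}/C\to C$ is smooth, the normal cone pulls back along the smooth projection:
$$\op C_{\op D_t/\cc X}=\cc S^{[n]}/C\times_C \op C_{\op{BN}_t/\Pic}.$$
Its associated points are therefore the pullbacks of those of $\op C_{\op{BN}_t/\Pic}$. Moreover, $\op D_1(\scr E_\bu)$ is the image of the relative nested Hilbert scheme, which lies over $\op{AJ}(\cc S_{p,\b})$ in the $\Pic$ factor. Hypothesis (iii) then becomes exactly condition (2) of Corollary \ref{cor:flatDefInv}, and that corollary gives the deformation invariance. Finally, by Proposition \ref{prop:hd<=1central}-style locality, together with the fact that the argument works for every base point after shrinking $C$ around it, the statement holds for every $p$.

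The main obstacle is the identification step. I have to show that the ideals defining $\op D_t((\scr E_\bu)^\vee)$, which are Fitting ideals of $\sh^0(\scr E_\bu^\vee)$ shifted by the Koszul-type presentation, agree with the pulled-back Brill--Noether ideals on $\Pic$, and that the index matches $t>s$. I would first use Serre duality, $\sh^2(\scr E_\bu)^\vee$-type identifications as in \eqref{fig:surjB}, to reduce to the complex $R\pi_*\cP_\b$ on $\Pic_\b(\cc S/C)$ alone. I would then handle the extra $\cI$ factor by noting that the map $R\pi_*(\cI\otimes\cP_\b)\to R\pi_*\cP_\b$ has cone in degrees $[0,1]$, so it does not change $\sh^2$. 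After that reduction, the Fitting ideals of $\sh^2$ depend only on $\sh^2$ itself.
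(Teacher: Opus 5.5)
Your proposal is correct and is essentially the paper's argument. It uses the same family $\cc X=(\cc S/C)^{[n]}\times_C\Pic_\b(\cc S/C)$ with $\scr E_\bu=R\pi_*\scr I(\scr P_\b)$; hypothesis (ii) supplies the standing assumption $\sff h^2_{\scr E_\bu}=\sff h^2_{\scr E_\bu|_p}$ of Section~\ref{sec:defInvar}; and hypothesis (iii) becomes condition (2) of Corollary~\ref{cor:flatDefInv} because $\sh^2(\scr E_\bu)\cong R^2\pi_*\scr P_\b$.

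Your extra details, the index match $s=l$ and the flat pullback of $\op C_{\op{BN}_t/\Pic}$ along the smooth projection $\cc X\to\Pic_\b(\cc S/C)$, spell out what the paper leaves as ``one can check''. Two small corrections. First, $\op D_t((\scr E_\bu)^\vee)$ is by definition cut out by $F_{t-1}(\sh^2(\scr E_\bu))$, so it is directly pulled back from $\op{BN}_t$ and no Serre-duality detour is needed. Second, $\sh^2$ is unchanged because the cone of $R\pi_*\scr I(\scr P_\b)\to R\pi_*\scr P_\b$ is the bundle $\scr P_\b^{[n]}$ in degree $0$; amplitude $[0,1]$ alone would only give surjectivity on $\sh^2$.
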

\begin{proof}
     For $(\cc S/C)^{[n]}$ the relative Hilbert scheme of points, let $$\cc X:=(\cc S/C)^{[n]}\times_C\Pic_\beta(\cc S/C)$$ with the universal ideal $\scr I$ and a  Poincar\'e line bundle $\scr P_\b$ corresponding to the two factors. For the projection $\pi\colon \cc S\times_C \cc X\to \cc X$, let $\scr E_\bu:=R\pi_*\scr I(\scr P_\b)$. By the item (i),  $\cc X$ is smooth over $C$ of relative dimension $2n+g$, where the Hodge number $g:=h^1(\cO_{\cc S_p})$ remains constant for the family of the surfaces. The item (ii) is equivalent to $\sff h^2_{\scr E_\bu}=\sff h^2_{\scr E_\bu|_p}$ for any $p\in C$, which was part of the set up of Section \ref{sec:defInvar}. One can check that the item (iii) translates to the requirement of  Corollary \ref{cor:flatDefInv} by noting that $(\scr E_\bu)^\vee$ and $(R\pi_* \scr P_\b)^\vee$ have identical degeneracy loci (cf. Remark \ref{rmk:n2=0 stronger}). This completes the proof.    


\end{proof}

\begin{remark} \label{rmk:prop4.5}
\begin{enumerate}[i)]
    \item A special case of the item (iii) in Proposition \ref{prop:defInvmink} occurs when $h^2(L)=l$  for all effective line bundles $L \in \Pic_\b(\cc S_p)$. In the case $l=0$, by Proposition \ref{proph2l=0}, this gives the deformation invariance of the reduced cycles $[\cc S_{p,\beta}^{[0,n]}]^{\op{red}}$ (cf. \cite[Remark 3.1]{Kool_2014}).
    \item In the case $g=0$, the condition in the item (iii) in Proposition \ref{prop:defInvmink} is vacuous. When $g=1$ the item (iii) requires only the  flatness of all $\op{BN}_{t>s}$ over $C$.

    \item To ensure deformation invariance for the classes $\llhb{\cc S_{p,\b}^{[n_1,n_2]}}_k$, the item (iii) in Proposition \ref{prop:defInvmink} needs to be modified to  a more involved requirement in which the normal cone of $\op{BN}_t\subset\Pic_\b(\cS/C)$ is replaced with the normal cone of the degeneracy locus 
    $$\op{D}_t((\scr E_\bu)^\vee)\subset (\cS/C)^{[n_1]}\times_C (\cS/C)^{[n_2]}\times_C \Pic_\b(\cS/C),$$ where $\scr E_\bu=R\sHom_\pi(\scr I_1,\scr I_2(\scr P_\b))$ for the universal ideal sheaves $\scr I_1$ and $\scr I_2$ corresponding to the first two factors above. This is because when $n_1\neq 0$, $(\scr E_\bu)^\vee$ and $(R\pi_* \scr P_\b)^\vee$ may have different degeneracy loci.  
\item For $g>0$, instead of the item (iii) in Proposition \ref{prop:defInvmink}, one may formulate a sufficient condition for deformation invariance in terms of the dimension of $\op{BN}_{t>s}$ (or dimension of $\op{D}_{t>s}((\scr E_\bu)^\vee)$  in Remark iii above when $n_1\neq 0$)  by using Corollary \ref{cor:irredDefInv} (cf. Proposition \ref{prop:defInvell}).  
\end{enumerate}
    
\end{remark}

\subsection{Comparison formulas} In this section, we prove some formulas comparing the classes $\llhb{S^{[n_1,n_2]}_\b}_k$ and $\llhb{S_\b}_k$. Consider the commutative diagram over $X=S^{[n_1]}\times S^{[n_2]}\times \Pic_\beta(S)$
\begin{equation}\label{eq:hilbcompsquare}
    \begin{tikzcd}
	{R\pi_*\cI_2(\cP_\beta)} & {R\pi_*\cP_\beta}   \\
	{R\sHom_\pi(\cI_1,\cI_2(\cP_\beta))} & {R\sHom_\pi(\cI_1,\cP_\beta).}
	\arrow["\i_2",from=1-1, to=1-2]
    \arrow[from=1-1, to=2-1]
    \arrow["\i_1",from=1-2, to=2-2]
    \arrow[from=2-1, to=2-2]
\end{tikzcd}\end{equation}
As before, we denote by $\sff E=R\sHom_\pi(\cI_1,\cI_2(\cP_\beta))$ and let $\sff G:=R\pi_*\cP_\beta$. Define the $K$-theory class class $$\sff{CO}_\beta^{[n_1,n_2]}:=\sff G(1)-\sff E(1)$$ on $\bb P(B)$. It was shown in \cite[Corollary 8.11]{G1} that for $i>0$,
\beq{COvan}c_{n_1+n_2+i}(\sff{CO}_\beta^{[n_1,n_2]})=0 \quad \text{ on } \quad \wt X=S^{[n_1]}\times S^{[n_2]}\times S_\b.\eeq

The last two formulas in the following theorem were proven before in \cite{G2}.

\begin{theorem}\label{thm:comphilbCO}
    Given a blow up $\nu:Y\to X$ suitable for all the four complexes in \eqref{eq:hilbcompsquare}, then $$G:=\tau^{\leq1}(\nu^*\sff G)-\tau^{\leq1}(\nu^*\sff E)$$ is represented in $K$-theory by a vector bundle of rank $n_1+n_2+\rkh{\sff E}^2-\rkh{\sff G}^2$, and $$i_*\llhb{S_\beta^{[n_1,n_2]}}=\nu_*\left(c_{n_1+n_2+\rkh{\sff E}^2-\rkh{\sff G}^2}\big(G(1)\big)\cap\llb{ \nu^*\sff G,1}\right),$$
where $i$ is the inclusion $S_\beta^{[n_1,n_2]}\Into{} \wt X$. Moreover, as results,
$$i_*[S_\beta^{[n_1,n_2]}]^{\op{vir}}=c_{n_1+n_2}\big(\sff{CO}^{[n_1,n_2]}_\beta\big)\cap[S^{[n_1]}\times S^{[n_2]}]\times[ S_\beta]^{\op{vir}},$$
and whenever the reduced cycle is defined (cf. Proposition \ref{proph2l=0}),
$$i_*[S_\beta^{[n_1,n_2]}]^{\op{red}}=c_{n_1+n_2}\big(\sff{CO}^{[n_1,n_2]}_\beta\big)\cap[S^{[n_1]}\times S^{[n_2]}]\times[ S_\beta]^{\op{red}}.$$
\end{theorem}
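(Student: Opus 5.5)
The first formula will come from Theorem \ref{thm:comparesquare}, applied to the square \eqref{eq:hilbcompsquare} with $\sff G:=R\pi_*\cI_2(\cP_\b)$ in the top-left corner, $\sff E$ in the top-right slot replaced appropriately, and $\sff F$, $\sff H$ in the bottom row. Concretely, set $\sff G_{\mathrm{tl}}=R\pi_*\cI_2(\cP_\b)$, $\sff E_{\mathrm{tr}}=R\pi_*\cP_\b$, $\sff F_{\mathrm{bl}}=R\sHom_\pi(\cI_1,\cI_2(\cP_\b))$ and $\sff H_{\mathrm{br}}=R\sHom_\pi(\cI_1,\cP_\b)$. The hypotheses are checked as follows.

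The cone of $\i_2$ is $R\pi_*(\cO_{Z_2}\otimes\cP_\b)$, a rank $n_2$ locally free sheaf in degree $0$. The cone of $\i_1$ is $R\sHom_\pi(\cO_{Z_1},\cP_\b)[1]$, which by Serre/Grothendieck duality lives in degree $2$ and is locally free of rank $n_1$; in particular it lies in $\Perf(X,1,2)$. The other two cones are in $\Perf(X,0,2)$ by a short cohomology computation.

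Theorem \ref{thm:comparesquare} then gives a closed immersion $\wt{\op D}_1(\sff E)\hookrightarrow\wt{\op D}_1(R\pi_*\cP_\b)$, which is exactly $i\colon S^{[n_1,n_2]}_\b\hookrightarrow \wt X$, since $\wt{\op D}_1(R\pi_*\cP_\b)=S^{[n_1]}\times S^{[n_2]}\times S_\b$. It also gives the displayed formula with $G=\tau^{\le1}(\nu^*\sff G)-\tau^{\le1}(\nu^*\sff E)$ of rank $n_1+n_2+\rkh{\sff E}^2-\rkh{\sff G}^2$, using $\sff g-\sff e=n_1+n_2$ and $Q=\cO(1)$ for $r=1$. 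Since $\llb{\sff G,1}$ is computed on $X$ with the factor $S^{[n_1]}\times S^{[n_2]}$ inert, I will write it as $[S^{[n_1]}\times S^{[n_2]}]\times\llhb{S_\b}$.

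For the virtual cycle statement I will use Theorem \ref{redtovir}, which identifies $[S^{[n_1,n_2]}_\b]^{\vir}$ with $\llb{\sff V,1}$ for the modified complex $\sff V$ of Section \ref{sec:recvircyc}. I will do the same for $S_\b$ with $\sff V_{\sff G}=\Cone(w_{\sff G})$. After applying $\rho(-)$ the whole square \eqref{eq:hilbcompsquare}, twisted by $\cO(1)$ on a neighborhood $U\subset\prj(B)$, consists of 1-special complexes with vanishing $\sh^2$. Proposition \ref{prop:comparisonSpecial}, i.e. part (1) of Theorem \ref{compGT}, then applies directly to the map $\rho(\sff E(1))\to\rho(\sff G(1))$ after composing through the square. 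I will either route through $R\pi_*\cI_2(\cP_\b)$ or use the special-complex version of Theorem \ref{thm:comparesquare}. The bundle appearing is then $\rho(\sff G(1))-\rho(\sff E(1))=\sff G(1)-\sff E(1)=\sff{CO}^{[n_1,n_2]}_\b$ in $K$-theory, because the $R^2\pi_*\cO$ pieces cancel. Its Chern class is of the correct degree $n_1+n_2$, by \eqref{COvan}. This yields the second formula.

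The reduced formula is proved the same way. Under the hypothesis of Proposition \ref{proph2l=0} the complexes $\sff E$ and $\sff G$ are themselves 1-special, so $\sh^2$ vanishes near the loci, $\rkh{}^2$ terms drop out and no blow up is needed. The first formula, or Proposition \ref{prop:comparisonSpecial} directly, then gives $c_{n_1+n_2}(\sff{CO})$.

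The main obstacle is checking the hypotheses of Theorem \ref{thm:comparesquare}, in particular that $\Cone(\i_1)$ has the required amplitude and that the bottom and left cones are in $\Perf(X,0,2)$. A second difficulty is justifying the passage between $\rho(\cdot)$-modified complexes on the open set $U$ and the square, namely that the dotted lift of $R^2\pi_*\cO[-2]$ is compatible across the square. I expect to handle the latter using the affine-bundle (Jouanolou) trick as in \eqref{fig:h2diag}.
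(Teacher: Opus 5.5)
Your proof of the first formula and of the reduced formula is the paper's: Theorem \ref{thm:comparesquare} applied to \eqref{eq:hilbcompsquare} with your corner assignment, then Proposition \ref{proph2l=0} and the projection formula. One small correction: $R\sHom_\pi(\cO_{Z_1},\cP_\b)$, with $\cO_{Z_1}=\cO/\cI_1$, sits in degree $2$. So the cone of the right vertical arrow is $\sExt^2_\pi(\cO_{Z_1},\cP_\b)$ in degree $1$, not $2$; it lies in $\Perf(X,1,2)$ either way.

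\textbf{The gap.} The step you lead with for the virtual cycles fails. In \eqref{eq:hilbcompsquare}, $\sff E$ (bottom left) and $\sff G$ (top right) are opposite corners, so no composite of arrows gives a map $\rho(\sff E(1))\to\rho(\sff G(1))$. Applying $\rho$ compatibly does not change any edge cone. Routing through a third corner does not help either:
\begin{itemize}
\item The cone of $R\pi_*\cI_2(\cP_\b)\to\sff E$ is $R\sHom_\pi(\cO_{Z_1},\cI_2(\cP_\b))[1]$. Its degree-$0$ fiber cohomology $\op{Ext}^1(\cO_{Z_1},I_2\otimes L)\cong\op{Hom}(\cO_{Z_1},\cO_{Z_2})$ jumps where $Z_1$ meets $Z_2$.
\item The cone $R\sHom_\pi(\cI_1,\cO_{Z_2}\otimes\cP_\b)$ of $\sff E\to R\sHom_\pi(\cI_1,\cP_\b)$ has the same defect.
\end{itemize}
Neither cone is a bundle in a single degree, so Proposition \ref{prop:comparisonSpecial} never applies. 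This also rules out invoking it directly for the reduced cycles; the route through the first formula is the one that works there.

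A map from the bottom-left to the top-right corner whose cone is a bundle in degree $0$ exists only after splitting the right vertical arrow, whose cone is a bundle in degree $1$, on an affine bundle. That splitting is exactly the proof of Theorem \ref{thm:comparesquare}. So only your second option, a version of that theorem for the modified square, can work.

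\textbf{Repairing your route.} Running that option needs more than the $\rho$-complexes: these live over $U\subset\prj(B)$, and their $\wt{\op D}_1$ is not the Hilbert scheme. You need a commutative square, on an affine bundle over $U$, of the four complexes $\sff V_A=\Cone\big(w_A\colon \rho(\sff A(1))(-1)\to B/\cO(-1)\big)$, with the same edge cones as \eqref{eq:hilbcompsquare}. Note that the identification $[S^{[n_1,n_2]}_\b]^{\vir}=\llb{\sff V,1}$ is Proposition \ref{prop:hilbvir}, not Theorem \ref{redtovir}.

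This is the compatibility you left open, and it can be arranged:
\begin{itemize}
\item The lifts of $R^2\pi_*\cO(-1)[-2]$ all factor through the top-left corner, as in \eqref{fig:h2diag}.
\item On an affine scheme, a map $\rho(\sff A(1))(-1)\to B/\cO(-1)$ is determined by the induced map $B^*\to\sh^0(\sff A^\vee)$. Hence the $w_A$ agree by naturality of \eqref{fig:surjB}.
\end{itemize}
The argument of Theorem \ref{thm:comparesquare} then runs without blow ups. It outputs $c_{n_1+n_2}(\sff{CO}^{[n_1,n_2]}_\b)$ directly, since $\sff V_{\mathrm{tr}}-\sff V_{\mathrm{bl}}=\sff G-\sff E$. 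Contrary to what you wrote, \eqref{COvan} plays no role there: $n_1+n_2$ is simply the rank.

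\textbf{How the paper does it.} The paper builds no second square and instead deduces the virtual formula from the first one.
\begin{itemize}
\item On $\nu^{-1}(U)$, the classes $\cO^{\oplus p_g}-\sh^2(\nu^*\sff E(1))$ and $\cO^{\oplus p_g}-\sh^2(\nu^*\sff G(1))$ are represented by bundles $K_{\sff E}$ and $K_{\sff G}$, of ranks $p_g-\rkh{\sff E}^2$ and $p_g-\rkh{\sff G}^2$. Hence $G(1)=\sff{CO}^{[n_1,n_2]}_\b-K_{\sff E}+K_{\sff G}$.
\item Capping the first formula with $c_{p_g-\rkh{\sff E}^2}(K_{\sff E})$ and using Theorem \ref{redtovir} expresses $i_*[S^{[n_1,n_2]}_\b]^{\vir}$ through $c_{n_1+n_2+p_g-\rkh{\sff G}^2}(\sff{CO}^{[n_1,n_2]}_\b+K_{\sff G})\cap\llb{\nu^*\sff G,1}$.
\item The vanishing \eqref{COvan}, imported from earlier work, is what splits off $c_{p_g-\rkh{\sff G}^2}(K_{\sff G})$ for a second application of Theorem \ref{redtovir}.
\end{itemize}
So the paper avoids all the chain-level compatibility work at the price of that one external input. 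Your repaired route would not need \eqref{COvan}.
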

\begin{proof}
By the cohomology and basechange theorem, for the labeled maps in \eqref{eq:hilbcompsquare}, $\Cone(\i_2)\simeq\cP_\beta^{[n_2]}$ is a locally free sheaf sitting in degree 0, and $\Cone(\i_1)\simeq\sExt^2_\pi(\cO/\cI_1,\cP_\beta)[-1]$ is a locally free sheaf sitting in degree 1. In particular, Diagram \eqref{eq:hilbcompsquare} satisfies the same conditions as Diagram \eqref{eq:squarecompare}, which lets us apply Theorem \ref{thm:comparesquare} and prove the first formula. The claim about the reduced cycles follows immediately from this, Proposition \ref{proph2l=0} and the projection formula.

To deduce the claim about the virtual cycles from the first formula in the theorem, we work over $\prj(B)$. By Theorem \ref{redtovir}, the virtual cycles on both sides can be obtained as refined cycles of $\sff E$ and $\sff G$. By the surjectivity in Diagram \eqref{fig:h2surj} and the same argument given before Proposition  \ref{prop:deeprefined}, we know that there are locally free sheaves $K_{\sff E}$ of rank $p_g-\rkh{\sff E}^2$ and $K_{\sff G}$  of rank $p_g-\rkh{\sff G}^2$  representing  $\cO^{\oplus p_g}-\sh^2(\nu^*\sff E(1))$ and $\cO^{\oplus p_g}-\sh^2(\nu^*\sff G(1))$, respectively. Both locally free sheaves are defined over the same open set $\nu^{-1}(U)$, where $U\subset \prj(B)$ contains $\wt X$ and was defined after Diagram \eqref{fig:h2surj}.
The right hand side of the first formula can be written as 
\begin{align*}
    &c_{n_1+n_2+\rkh{\sff E}^2-\rkh{\sff G}^2}\big((\nu^*\sff G-\nu^*\sff E+\sh^2(\nu^*\sff E)-\sh^2(\nu^*\sff G))(1)\big)\cap\llb{ \nu^*\sff G}=\\
    &c_{n_1+n_2+\rkh{\sff E}^2-\rkh{\sff G}^2}\big((\nu^*\sff G-\nu^*\sff E)(1)-K_{\sff E}+K_{\sff G}\big)\cap\llb{ \nu^*\sff G}.
\end{align*}
Now, intersecting both sides of the first formula by the top Chern class $c_{p_g-\rkh{\sff E}^2}(K_{\sff E})$, applying Theorem \ref{redtovir} and pushing down by $\nu_*$, we get
$$i_*[S_\beta^{[n_1,n_2]}]^{\op{vir}}=\nu_*\left(c_{n_1+n_2+p_g-\rkh{\sff G}^2}\big(\nu^*\sff{CO}^{[n_1,n_2]}_\beta+K_{\sff G}\big)\cap\llb{ \nu^*\sff G}\right).$$
 By the vanishing of the higher Chern classes \eqref{COvan}, we can separate the top class of $K_{\sff G}$ and then apply Theorem \ref{redtovir} again to get
\begin{align*}
 i_*[S_\beta^{[n_1,n_2]}]^{\op{vir}}&=\nu_*\left(c_{n_1+n_2}\big((\nu^*\sff G-\nu^*\sff E)(1)\big)c_{p_g-\rkh{\sff G}^2}(K_{\sff G})\cap\llb{ \nu^*\sff G}\right)\\
 &=c_{n_1+n_2}(\sff{CO}^{[n_1,n_2]}_\beta)\cap[S^{[n_1]}\times S^{[n_2]}]\times[ S_\beta]^{\op{vir}}.
\end{align*}

\end{proof}

In the case when $n_1=0$ we get a stronger result involving all the refined classes of $S_\beta$ and $S_\beta^{[0,n_2]}$ (cf. Definition \ref{def:refinedNested}). This nested Hilbert scheme is identified with the moduli space of stable pairs on $S$ (cf. \cite[Appendix B.2]{pandharipande2010stable}). Replace $n_2$ by $n$ and $\cI_2$ by $\cI$ and consider  the exact triangle 
\beq{eq:coneI2}R\pi_*\cI(\cc P_\b) \xr{\i}  R\pi_* \cc P_\b \to \cc P_\b^{[n]} \quad  \text{over} \quad S^{[n]}\times \Pic_\b(S).\eeq
Here, $S_\beta^{[n]}:=S_\beta^{[0,n]}$ for simplicity. We have the following theorem.

\begin{theorem}\label{thm:compn1n2}
For any $0\le k\le p_g-\sff h^2_{\sff G}$,
$$i_*\llhb{S_\beta^{[n]}}_k=c_{n}\big(R\pi_*\cI(\cD_\beta)-R\pi_*\cO(\cD_\beta)\big)\cap[ S^{[n]}]\times\llhb{ S_\beta}_k,$$
where $i:S_\beta^{[n]}\Into  S^{[n]} \times S_\beta$.   
\end{theorem}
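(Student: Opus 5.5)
The plan is to run the argument of Theorem \ref{thm:comphilbCO} again with $n_1=0$. In this case the square \eqref{eq:hilbcompsquare} collapses to the single triangle \eqref{eq:coneI2}, so the comparison can be made refined-class by refined-class rather than only for the extreme classes.

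\textbf{Step 1: apply part (1) of Theorem \ref{thm:compdoub}.} Set $\sff E:=R\pi_*\cI(\cP_\b)$ and $\sff G:=R\pi_*\cP_\b$, pulled back to $X=S^{[n]}\times\Pic_\b(S)$. By \eqref{eq:coneI2} the map $\i\colon\sff E\to\sff G$ has cone the locally free sheaf $\cP_\b^{[n]}$ in degree $0$. Part (1) of Theorem \ref{thm:compdoub} then yields:
\begin{itemize}
\item a closed immersion $\wt{\op D}_1(\sff E)\hookrightarrow\wt{\op D}_1(\sff G)$, which under the identifications of \cite{G2} is $i\colon S^{[n]}_\b\hookrightarrow S^{[n]}\times S_\b$;
\item for every $k\ge0$,
$$i_*\llb{\sff E,1}_k=c_{n}\big(\cP_\b^{[n]}\otimes Q\big)\cap\llb{\sff G,1}_k.$$
\end{itemize}
For this to apply uniformly in $k$, the proof of Theorem \ref{thm:compdoub}(1) must be valid for the refined classes too. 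It is, because $\sh^2(\sff E)\cong\sh^2(\sff G)$ (the cone sits in degree $0$), so the Segre class factors $s_k(\sh^2\otimes Q)$ in Definition \ref{def:refinedE} agree on both sides and commute with the projection formula.

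\textbf{Step 2: identify the pieces.} On $\wt{\op D}_1(\sff G)=S^{[n]}\times S_\b$ the quotient line bundle is $Q=\cO(1)$. Since $\cP_\b(1)|_{S_\b}\cong\cO(\cD_\b)$, we get
$$\cP_\b^{[n]}\otimes Q\cong\cO(\cD_\b)^{[n]}=R\pi_*\cO(\cD_\b)-R\pi_*\cI(\cD_\b)$$
in $K$-theory.

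One also needs $\llb{\sff G,1}_k$ on $S^{[n]}\times\Pic_\b(S)$ to equal $[S^{[n]}]\times\llhb{S_\b}_k$. Here $\sff G$ is pulled back from $\Pic_\b(S)$ along the flat projection. A suitable blow up of $\Pic_\b(S)$ pulls back to a suitable blow up of the product, and the refined-cycle construction commutes with flat pullback (Fulton, Prop.~14.1 and flat pullback of Segre classes). This gives the product description.

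Step 1 writes the Chern class in the form $c_n(\cO(\cD_\b)^{[n]})$, which is stated with the opposite sign convention to the theorem. The theorem's $c_n\big(R\pi_*\cI(\cD_\b)-R\pi_*\cO(\cD_\b)\big)$ is $c_n$ of the negative, and the two agree only up to a sign or after rewriting via \eqref{COvan}-type vanishings. This must be reconciled against the normalisation in \cite{G2}. I would check it by comparing with the $k=0$ case of Theorem \ref{thm:comphilbCO}, where $\sff{CO}^{[0,n]}_\b=\sff G(1)-\sff E(1)$.

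\textbf{Step 3: the range of $k$.} The restriction $0\le k\le p_g-\sff h^2_{\sff G}$ is where the refined classes are defined as in Definition \ref{def:refinedNested}. Since $\sff h^2_{\sff E}=\sff h^2_{\sff G}$, the indices on the two sides match.

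\textbf{Main obstacle.} I expect the hardest point to be justifying that part (1) of Theorem \ref{thm:compdoub} holds for the refined classes using the same blow up for $\sff E$ and $\sff G$. The blow up depends only on $\sh^2$, and $\sh^2(\sff E)\cong\sh^2(\sff G)$, so a single $\sff G$-suitable blow up is $\sff E$-suitable as well. Along it the two triangles $\tau^{\le1}(\nu^*\sff E)\to\tau^{\le1}(\nu^*\sff G)\to\nu^*\cP_\b^{[n]}$ exist. I would verify this compatibility explicitly via Proposition \ref{prop:comparisonSpecial} applied on the blow up, and then cap both sides with $s_k(\sh^2(\nu^*\sff G)(1))$.
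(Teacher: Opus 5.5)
Your argument is the paper's. Its proof is a single application of part (1) of Theorem \ref{thm:compdoub} to the map $R\pi_*\cI(\cP_\b)\to R\pi_*\cP_\b$ in \eqref{eq:coneI2}. That part of the theorem is already stated for every $k\ge 0$, so your ``main obstacle'' is handled by the theorem itself. Your flat-pullback identification of the class on $S^{[n]}\times\Pic_\b(S)$ with $[S^{[n]}]\times\llhb{S_\b}_k$ spells out a step the paper leaves implicit.

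The one thing to fix is the sign. What the argument yields is
$c_n(\cP_\b^{[n]}(1))=c_n(\cO(\cD_\b)^{[n]})=c_n\big(R\pi_*\cO(\cD_\b)-R\pi_*\cI(\cD_\b)\big)$,
and that is the correct formula. The displayed statement has the two terms of the $K$-theory class interchanged, which is a misprint. Two checks confirm this:
\begin{enumerate}
\item The paper itself uses the theorem in your form to obtain \eqref{eq:KTpushpull}.
\item Your form is what Theorem \ref{thm:comphilbCO} gives at $n_1=0$, $k=0$, where $\sff{CO}^{[0,n]}_\b=\cP_\b^{[n]}(1)$. So the check you propose is the right one, and it settles the question.
\end{enumerate}

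However, the reconciliation you float (``up to a sign or via \eqref{COvan}-type vanishings'') does not exist.
\begin{itemize}
\item Since $c(-V)=c(V)^{-1}$, already $c_2(-V)=c_1(V)^2-c_2(V)\neq\pm c_2(V)$.
\item \eqref{COvan} is vacuous for $n_1=0$, because $\sff{CO}^{[0,n]}_\b$ is an honest rank $n$ bundle.
\item Even for $n=1$ the printed sign is wrong. Take $S=\prj^2$ and $\b$ the class of a line. Then $\llhb{S_\b}=[\prj^2]$, and $\llhb{S^{[1]}_\b}$ is the fundamental class of the incidence divisor $S^{[1]}_\b\subset\prj^2\times\prj^2$. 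Its class is $+c_1(\cO(\cD_\b))$, not the negative.
\end{itemize}
So state and prove the formula with $c_n\big(R\pi_*\cO(\cD_\b)-R\pi_*\cI(\cD_\b)\big)$, rather than trying to match the printed version.
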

\begin{proof}
The claim follows from part (1) of Theorem \ref{thm:compdoub} applied to the map $\i$ in the exact triangle \eqref{eq:coneI2}.\end{proof}


\begin{remark}\label{rmk:n2=0 stronger}
The reason that we get stronger result in Theorem \ref{thm:compn1n2} is that by the exact triangle \eqref{eq:coneI2}, $R^2\pi_*\cI(\cc P_\b)\cong R^2\pi_* \cc P_\b$. In particular, a blow up is suitable for the left complex in \eqref{eq:coneI2} if an only if it is suitable for the middle one.
\end{remark}

\subsection{No curves} In this subsection, suppose $\b=0$ and denote the nested Hilbert scheme of points by $S^{[n_1,n_2]}$. In \cite{G1}, it was shown that this is the only degeneracy locus (and hence the deepest) of the complex $R\sHom(\cI_1,\cI_2)$ on $S^{[n_1]}\times S^{[n_2]}$ and the pushforward of its DT virtual cycle is $$c_{n_1+n_2}(-R\sHom(\cI_1,\cI_2)).$$
By a similar argument, one can see that $S^{[n_1,n_2]}$ is the only degeneracy locus of the complex $$\sff E=R\sHom(\cI_1,\cI_2(\cc P_0))\quad \text{over} \quad  X=S^{[n_1]}\times S^{[n_2]}\times \op{Jac}(S),$$ and we get the refined cycles 
$$\llhb{S^{[n_1,n_2]}}_k\in A_{n_1+n_2+p_g-\sff h^2_{\sff E}-k}(S^{[n_1,n_2]}).$$
If $\nu\colon Y\to X$ is an $\sff E$-suitable blow up, and $\iota\colon S^{[n_1,n_2]}\hookrightarrow X$ denotes the inclusion an application of Thom-Porteous formula (cf. Theorem \ref{thm:T-Pformulas} with $B=\cO_{X}$) gives 
$$\iota_*\llhb{S^{[n_1,n_2]}}=\nu_*\left( c_{n_1+n_2+g-p_g+\sff h^2_{\sff E}}\big (-\nu^*\sff E+\fr h^2(\nu^*\sff E)\big)\right).$$
More generally, $\cO^{\oplus p_g
}-\sh^2(\nu^*\sff E)(1)$ behaves like a vector bundle of rank $p_g-\rkh{\sff E}^2$ by the argument as in the proof of Theorem \ref{thm:comphilbCO}. For a suitable bundle $B$, Lemma \ref{lem:pushpicn0}  shows
\begin{align*} &\iota_*\llhb{S^{[n_1,n_2]}}_k=\\&\nu_*\left(s_k\big(\sh^2(\nu^*\sff E)(1)\big) \,c_{b-1+n_1+n_2+g-p_g+\sff h^2_{\sff E}}\big((B-\nu^*\sff E+\fr h^2(\nu^*\sff E))(1)\big)\right)=\\&\sum_{j=0}^k{p_g-\rkh{\sff E}^2-k+j\choose j}\nu_*\left(s_{k-j}\big(\fr h^2(\nu^*\sff E)\big)\cap c_{n_1+n_2+g-p_g+\sff h^2_{\sff E}+j}\big (-\nu^*\sff E+\fr h^2(\nu^*\sff E)\big)\right).\end{align*}
In the case that $k=p_g-\sff h^2_{\sff E}$, one recovers the Chern class formula of \cite{G1} mentioned above.  


\subsection{Wall crossing and duality}
We apply the results of Section \ref{sec:wallform} to nested Hilbert schemes. Let $$p:S_\beta^{[n_1,n_2]}\to X,\qquad \hat p:S_{\hat{\beta}}^{[n_2,n_1]}\to X$$ the natural projections, where $\hat{\beta}:=K_S-\b$. For the perfect complex $\sff E=R\sHom(\cI_1,\cI_2(\cP_\beta))$ on $X$ as before, 
$$\sff e=\rank(\sff E)=\vd_ \beta+\chi(\cO_S)-n_1-n_2.$$

\begin{theorem}\label{thm:hilbwall}
    If $\sff E$ as above satisfies Condition \eqref{UDr}, then the reduced cycles are defined and satisfy 
    $$p_*[S_\beta^{[n_1,n_2]}]^{\op{red}}-(-1)^{\sff e}\hat p_*[S_{\hat{\beta}}^{[n_2,n_1]}]^{\op{red}}=c_{1-\sff e}(- \sff E).$$
    In particular, when $p_g=0$ since $[-]^{\op{red}}=[-]^{\vir}$, we see that
    $$p_*[S_\beta^{[n_1,n_2]}]^{\op{vir}}-(-1)^{\sff e}\hat p_*[S_\beta^{[n_2,n_1]}]^{\op{vir}}=c_{1-\sff e}(-\sff E).$$
\end{theorem}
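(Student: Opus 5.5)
The plan is to deduce the statement from Theorem \ref{thm:gendual}, applied to $\sff E=R\sHom_\pi(\cI_1,\cI_2(\cP_\beta))$ on $X=S^{[n_1]}\times S^{[n_2]}\times\Pic_\beta(S)$. Since the statement assumes Condition \eqref{UDr} for $r=1$, Remark \ref{rem:UDr} gives $\op D_1(\sff E)\cap\op D_1(\hat{\sff E})=\emptyset$ for $\hat{\sff E}=\sff E^\vee[-2]$. Theorem \ref{thm:gendual} then yields
$$p_*\llb{\sff E,1}+(-1)^{\sff e}\hat p_*\llb{\hat{\sff E},1}=c_{1-\sff e}(-\sff E)\in A_{d+\sff e-1}(X).$$
What remains is to identify each term with the corresponding reduced class and to account for the sign convention.

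First, the term for $\sff E$. Condition \eqref{UDr} for $r=1$ is exactly the hypothesis of Proposition \ref{proph2l=0}, namely $H^2(L)=0$ for effective $L\in\Pic_\beta(S)$, as explained in its proof. So $[S_\beta^{[n_1,n_2]}]^{\op{red}}$ is defined and equals $\llhb{S_\beta^{[n_1,n_2]}}=\llb{\sff E,1}$.

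Second, the term for $\hat{\sff E}$. By relative Serre duality on $S$,
$$\sff E^\vee\simeq R\sHom_\pi(\cI_2(\cP_\beta),\cI_1\otimes K_S)[2]=R\sHom_\pi(\cI_2,\cI_1(\cP_\beta^*\otimes K_S))[2],$$
so $\hat{\sff E}=R\sHom_\pi(\cI_2,\cI_1(\cP_{\hat\beta}))$. Here $\cP_{\hat\beta}:=\cP_\beta^*\otimes K_S$ is a Poincar\'e bundle for $\hat\beta=K_S-\beta$, pulled back along the isomorphism $\Pic_\beta(S)\cong\Pic_{\hat\beta}(S)$, $L\mapsto K_S\otimes L^{-1}$, with the factors $S^{[n_1]},S^{[n_2]}$ swapped. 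Under this identification $\hat{\sff E}$ is precisely the complex \eqref{complexE} attached to $(\hat\beta,n_2,n_1)$. Hence the canonical isomorphism of \cite{G2} gives $\wt{\op D}_1(\hat{\sff E})\cong S_{\hat\beta}^{[n_2,n_1]}$, compatible with the projections to $X$. The hypothesis $\op D_1(\hat{\sff E})\cap\op D_1(\hat{\sff E}^\vee)=\emptyset$ is symmetric in $\sff E$ and $\hat{\sff E}$, since $\hat{\sff E}^\vee[-2]=\sff E$. So $\hat{\sff E}$ also satisfies \eqref{UDr}, and Proposition \ref{proph2l=0} gives $\llb{\hat{\sff E},1}=[S_{\hat\beta}^{[n_2,n_1]}]^{\op{red}}$. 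Substituting both identifications produces the stated formula.

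The main obstacle is the sign. Theorem \ref{thm:gendual} as proved has $+(-1)^{\sff e}$, whereas the statement (and the introduction) has $-(-1)^{\sff e}$. I would recheck the combination step at the end of that proof, where it is written as $(-1)^{\rkh{\sff E}^1+1}$ and $\rkh{\sff E}^1=-\sff e$ because $\rkh{\sff E}^0=\rkh{\sff E}^2=0$ generically. This gives $(-1)^{1-\sff e}=-(-1)^{\sff e}$, which agrees with the statement; the theorem display likely contains a sign typo.

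The $p_g=0$ case then follows because in that case $p_g-\rkh{\sff E}^2=0$, so by Theorem \ref{redtovir} the virtual and reduced classes coincide. The second class in that display should read $S_{\hat\beta}^{[n_2,n_1]}$.
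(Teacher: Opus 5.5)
Your proposal is correct and follows the paper's route. The paper's proof says only that the statement is a direct corollary of Theorem~\ref{thm:gendual}; it leaves implicit the identifications $\llb{\sff E,1}=[S_\beta^{[n_1,n_2]}]^{\op{red}}$ and, via relative Serre duality, $\llb{\hat{\sff E},1}=[S_{\hat\beta}^{[n_2,n_1]}]^{\op{red}}$ from Proposition~\ref{proph2l=0}, which you spell out exactly as needed. Your sign check is also right: the combination step in the proof of Theorem~\ref{thm:gendual} gives $(-1)^{\rkh{\sff E}^1+1}=-(-1)^{\sff e}$, so the $+$ in that theorem's display is a typo, as is $S_\beta^{[n_2,n_1]}$ in place of $S_{\hat\beta}^{[n_2,n_1]}$ in the $p_g=0$ formula.
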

\begin{proof}
    This is a direct corollary of Theorem \ref{thm:gendual}.
\end{proof}

When $p_g>0$, we work over $q\colon \prj(B)\to X$ as before to obtain a map 
$$v:\cO^{\oplus p_g}_{\prj(B)}(-1)[-2]\To \sff E,$$
 whose cone $\rho(\sff E(1))(-1)$ satisfies Condition \eqref{UDr} because the induced map $\sh^2(u)|_{\wt X}$ is surjective. After replacing $A$ in $B=R\pi_*\cP_\beta(A)$ by a more positive divisor and identifying $\Pic_\beta(S)\cong\Pic_{\hat{\beta}}(S)$, we also get embeddings 
 $$S_{\hat{\beta}}^{[n_2,n_1]}\Into S_{\hat{\beta}}\times S^{[n_1]}\times S^{[n_2]}\Into \prj(B),$$
and for $\hat{\sff E}=\sff E^\vee[-2]$, we have an analogous map
$$\hat v:\cO_{\prj(B)}^{\oplus p_g}(-1)[-2]\To \hat{\sff E},$$
whose cone  satisfies Condition \eqref{UDr} as well. 

\begin{theorem}\label{thm:wallhilb} When $p_g>0$, 
    $$p_*[S_\beta^{[n_1,n_2]}]^{\vir}=(-1)^{\sff \chi(\cO_S)+\vd\beta+n_1+n_2}\hat p_*[S^{[n_2,n_1]}_{\hat{\beta}}]^{\vir}\in A_{n_1+n_2+\vd_\beta}(X).$$
\end{theorem}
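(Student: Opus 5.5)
The plan is to deduce the statement from Theorem \ref{thm:dualdeep} with $\ell=p_g$, and then to identify both sides with DT virtual cycles through Theorem \ref{redtovir}.

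First I check the hypotheses of Section \ref{sec:wallform} for $\ell>0$. The maps $v$ and $\hat v$ constructed just before the statement have $1$-special cones: $\rho(\sff E(1))(-1)$ has vanishing $\sh^2$ near $\wt X$, by the fiberwise surjectivity \eqref{fig:h2surj}. For $\hat v$, the point is to identify $\hat{\sff E}=\sff E^\vee[-2]$ via relative Serre duality with $R\sHom_\pi(\cI_2,\cI_1(\cP^*_\beta\otimes K_S))$. This is the complex that plays the role of $\sff E$ for the data $(\hat\beta,n_2,n_1)$, after identifying $\Pic_\beta(S)\cong\Pic_{\hat\beta}(S)$ by $L\mapsto K_S\otimes L^{-1}$ and swapping the two Hilbert scheme factors. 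Under this identification $\wt{\op D}_1(\hat{\sff E})\cong S^{[n_2,n_1]}_{\hat\beta}$, and the same argument (diagram \eqref{fig:h2diag} for $\hat\beta$, with $A$ chosen more positive so that $B$ works for both) gives $\hat v$. Theorem \ref{thm:dualdeep} then yields
$$(-1)^{\sff e}p_*\llb{\sff E,1}_{p_g-\rkh{\sff E}^2}=\hat p_*\llb{\sff E^\vee,1}_{p_g-\rkh{\sff E}^0}.$$

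Next I rewrite both sides. By Theorem \ref{redtovir} the left side is $(-1)^{\sff e}p_*[S^{[n_1,n_2]}_\beta]^{\vir}$. For the right side, note that $\llb{\sff E^\vee,1}$ means the class of the shift $\hat{\sff E}$, by the convention for complexes in degrees $[a,a+2]$. Also $\rkh{\sff E}^0=\rkh{\hat{\sff E}}^2$. Hence this refined class is exactly the $(p_g-\rkh{\hat{\sff E}}^2)$-th refined class of $S^{[n_2,n_1]}_{\hat\beta}$, and Theorem \ref{redtovir} applied to $\hat\beta$ identifies it with $[S^{[n_2,n_1]}_{\hat\beta}]^{\vir}$. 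Finally $\sff e=\chi(\cO_S)+\vd_\beta-n_1-n_2$, so $(-1)^{\sff e}=(-1)^{\chi(\cO_S)+\vd_\beta+n_1+n_2}$, which gives the claim. I will also verify that the dimensions agree, using $\vd_{\hat\beta}=\vd_\beta$.

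The main obstacle is the compatibility in the duality identification. The projection $\hat p$ and the embedding into the common $\prj(B)$ required by Remark \ref{rmk:doubledeep} must match the nested Hilbert scheme structure for $\hat\beta$, including the twist by $\cO(1)$ and the Poincaré bundle $\cP_\beta^*\otimes K_S$ versus $\cP_{\hat\beta}$. Since the refined classes are independent of choices (Lemma \ref{lem:indep}), I expect this to come down to checking that the surjection $B^*\twoheadrightarrow\sh^0(\hat{\sff E}^\vee)=\sh^2(\sff E)$ factors through $R^2\pi_*\cO$ as in \eqref{fig:h2surj}. I will handle that by Serre duality applied to \eqref{fig:surjB}.
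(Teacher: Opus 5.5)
Your proposal is correct and follows the paper's own (very brief) proof. It applies Theorem \ref{thm:dualdeep} with $\ell=p_g$, using the maps $v,\hat v$ with $1$-special cones, identifies both refined classes with DT virtual cycles via Theorem \ref{redtovir}, and reads off the sign from $\sff e=\chi(\cO_S)+\vd_\beta-n_1-n_2$; the extra bookkeeping you supply (Serre duality identifying $\hat{\sff E}$ with the complex for $(\hat\beta,n_2,n_1)$, $\rkh{\sff E}^0=\rkh{\hat{\sff E}}^2$, and $\vd_{\hat\beta}=\vd_\beta$) fills in details the paper leaves implicit.
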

\begin{proof}
 The complex $\sff E$ satisfies the hypotheses for Theorem \ref{thm:dualdeep}.  The complexes $\sff H$ and $\hat{\sff H}$ in the proposition correspond to $\Cone(\sff E\to B/\cO(-1))$ and $\Cone(\hat{\sff E}\to B/\cO(-1))$, and as in Theorem \ref{redtovir}, the refined classes in  Theorem \ref{thm:dualdeep} correspond to the virtual cycles in our claim.
\end{proof}



\section{Hilbert scheme of divisors} \label{sec:calcRefined}
In this section, we work with the Hilbert scheme of divisors  $S_\beta$  over a nonsingular complex projective surface $S$ with irregularity $g:=q(S)$ and geometric genus $p_g$. Let $$\sff{E}:=R\pi_*\cP_\beta,\qquad\sff{H}:=\Cone(\sff{E}\to B/\cO(-1)),$$ where $q:\prj(B)\to\Pic_\beta(S)$ is as defined in Section \ref{sec:embvia}. Let $\iota:S_{\beta}\hookrightarrow \prj(B)$. For surfaces with irregularity $g=0$, we find an explicit formula  for $\llhb{S_{\beta}}_k$  for any $\beta$ and $0\le k \le p_g-\sff h^2_{\sff E}$ in Section \ref{sec:g0}. When the irregularity $g\geq 1$, we find an explicit formula for $\iota_*\llhb{S_{K_S}}_k$ for any $0\le k \le p_g-\sff h^2_{\sff E}$ in Section \ref{sec:g1}, which specializes to the famous Seiberg-Witten theory formula $\deg [S_{K_S}]^{\vir}=(-1)^{p_g}$ proven before in \cite{chang2012}.  We also work out blow up formulas for the refined classes analogous to the ones in Seiberg-Witten theory. 

\subsection{Irregularity zero}\label{sec:g0}
When $g=0$ for any curve class $\b$, the Picard variety consists of a single line bundle, $\Pic_\beta(S)=\{L\}$. In this case,
$$\sff{E}\simeq\left\{\cO^{\oplus h^0(L)}\xr{0}\cO^{\oplus h^1(L)}\xr{0}\cO^{\oplus h^2(L)}\right\},$$
and hence no blow up is required in order to define its refined classes $\llhb{S_\b}_k$. These classes obey deformation invariance under the requirements of Proposition \ref{prop:defInvmink} (cf. part ii of Remark \ref{rmk:prop4.5}). 
 Also, $S_\beta=\prj(\sh^0(\sff E))=\prj^{h^0(L)-1}$. Let $H$ denote the hyperplane class of $S_\b$.

\begin{theorem}\label{thm:g=0total}
    We have 
    $$\sum_{k=0}^{p_g-h^2(L)}\llhb{S_\beta}_k=s\big(\cO^{\oplus h^2(L)}_{S_\beta}(1)\big)H^{h^1(L)}.$$
    Thus, for each $0\leq k\leq p_g-h^2(L)=h^0(L)-h^1(L)-1-\vd_\b$,
    $$\llhb{S_\beta}_k=(-1)^k{h^2(L)+k-1\choose k}H^{h^1(L)+k}\in A_{h^0(L)-h^1(L)-1-k}(S_\beta).$$
\end{theorem}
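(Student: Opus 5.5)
Since $g=0$, the base $X=\Pic_\beta(S)$ is a point, and $\sff E$ is quasi-isomorphic to the complex with zero differentials displayed above. I will compute directly from Definitions \ref{def:1st2nd} and \ref{def:refinedE}, with no blow up and with $r=1$. Here $\sh^2(\sff E)=\cO^{\oplus h^2(L)}$ is free, so $X$ is already $\sff E$-suitable and $\nu=\id$. Write $h^i:=h^i(L)$.

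In the notation of \eqref{eq:df35}, $E_0=\cO^{h^0}$ and $K=\ker(\s_1)=\cO^{h^1}$, with $\s_0=0$. Hence $\Gr(E_0^*,1)=\prj^{h^0-1}=S_\beta$, and $Q=\cO(1)$ is the universal quotient line bundle with hyperplane class $H$. The bundle $K_1=Q\otimes K=\cO(1)^{\oplus h^1}$ carries the zero section $\wt{\s_0}=0$, so its vanishing locus is all of $\prj^{h^0-1}$, consistent with $\wt{\op D}_1(\sff E)=S_\beta$. The refined Gysin map of a zero section is cap with the top Chern class, so
$$\llb{\sff E,1}=0^!_{K_1}[\prj^{h^0-1}]=c_{h^1}\big(\cO(1)^{\oplus h^1}\big)\cap[\prj^{h^0-1}]=H^{h^1}.$$

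Next, Definition \ref{def:refinedE} gives $\llb{\sff E,1}_k=s_k(\sh^2(\sff E)\otimes Q)\cap\llb{\sff E,1}=s_k(\cO(1)^{\oplus h^2})\,H^{h^1}$. Summing over $k$ yields the first formula,
$$\sum_{k=0}^{p_g-h^2(L)}\llhb{S_\beta}_k=s\big(\cO_{S_\beta}^{\oplus h^2(L)}(1)\big)H^{h^1(L)}.$$
This requires checking that terms with $k>p_g-h^2$ vanish for dimension reasons. Indeed, $\chi(L)=\chi(\cO_S)+\vd_\b$, and with $g=0$ we have $\chi(\cO_S)=1+p_g$. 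Therefore $h^0-1-h^1=p_g-h^2+\vd_\b$, so $\dim S_\beta-h^1-k<0$ once $k>p_g-h^2+\vd_\beta$. Where $\vd_\beta>0$ the truncated sum should be read as the range of refined classes allowed by Definition \ref{def:refinedNested}. I expect this range and dimension bookkeeping, together with the identity $p_g-h^2=h^0-h^1-1-\vd_\b$ as stated, to be the only delicate point; everything else is formal.

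Finally, $s(\cO(1)^{\oplus m})=(1+H)^{-m}=\sum_k(-1)^k\binom{m+k-1}{k}H^k$. Taking $m=h^2$ and multiplying by $H^{h^1}$ gives
$$\llhb{S_\beta}_k=(-1)^k\binom{h^2(L)+k-1}{k}H^{h^1(L)+k},$$
which is a class of dimension $h^0-1-h^1-k$ in $A_*(S_\beta)$.
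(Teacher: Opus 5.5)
Your computation is the same as the paper's. With $\nu=\id$, $\llhb{S_\beta}$ is the localized top Chern class of the zero section of $\cO(1)^{\oplus h^1(L)}$ on $\prj^{h^0(L)-1}$, hence $H^{h^1(L)}$. The refined classes are obtained by capping with $s_k(\cO(1)^{\oplus h^2(L)})$, which gives the per-$k$ formula exactly as you wrote.

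The only thing to fix is the point you flagged about the first identity. It needs no reinterpretation; it holds as written. However, your dimension count alone does not prove it, since it only kills the terms with $k>p_g-h^2(L)+\vd_\beta$.

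The missing input is the following. For every $D\in|L|$, multiplication by the section $s_D$ gives a surjection $H^2(\cO_S)\to H^2(L)$, because its cokernel lies in $H^2(L|_D)=0$. On $S_\beta$ this globalizes to a surjection of bundles $\cO^{\oplus p_g}\twoheadrightarrow\cO(1)^{\oplus h^2(L)}$. Its kernel $G$ is locally free of rank $p_g-h^2(L)$ with $c(G)=s(\cO(1)^{\oplus h^2(L)})$. Hence $s_k(\cO(1)^{\oplus h^2(L)})=0$ for $k>p_g-h^2(L)$, and the first identity follows verbatim; this is the argument used in the proof of Corollary \ref{cor:52}.

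Concretely, there are two cases:
\begin{itemize}
\item If $h^2(L)=0$, every term with $k>0$ vanishes trivially.
\item If $h^2(L)>0$, the vanishing forces $H^{p_g-h^2(L)+1}=0$ on $\prj^{h^0(L)-1}$. That is, $p_g-h^2(L)\ge h^0(L)-1$, which by your identity means $\vd_\beta\le -h^1(L)\le 0$.
\end{itemize}
So the range $p_g-h^2(L)<k\le p_g-h^2(L)+\vd_\beta$ you were worried about is either empty or contributes nothing.
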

\begin{proof}
   The class $\llhb{S_\beta}$ in $\wt{\op D}_1(\sff E)=\prj^{h^0(L)-1}=S_\beta$ 
    is defined using the composition
    $$\cO(-1)\Into\cO^{\oplus h^0(L)}\overset{0}{\To}\cO^{\oplus h^1(L)}.$$
    $\llhb{S_\beta}$ is then the localized top Chern class of the zero section of $\cO^{\oplus h^1(L)}\otimes\cO(1)$. Therefore,
    $$\llhb{S_\beta}=c_{h^1(L)}(\cO_{S_\beta}^{\oplus h^1(L)}(1))=H^{h^1(L)}.$$

    The claim about other classes follows since $\sh^2(\sff{E})=\cO_{S_\beta}^{\oplus h^2(L)}$.
\end{proof}

\begin{corollary}\label{cor:52}
    Suppose that $p_g>0$. If $h^2(L)=0$, then  
    $$\llhb{S_{\b}}_k=0\qquad \text{ for any } 0<k\leq p_g.$$
    In particular, 
$[S_\beta]^{\vir}=\llhb{S_\b}_{p_g}=0$. If $h^2(L)>0$ and $\vd_\b \ge 0$ then $\vd_\b=0$.
\end{corollary}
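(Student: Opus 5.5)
The corollary follows by specializing the explicit formula of Theorem \ref{thm:g=0total}, together with a dimension count.

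For the first claim, I set $h^2(L)=0$ in Theorem \ref{thm:g=0total}. The total class becomes $\sum_k \llhb{S_\beta}_k = s(0)\,H^{h^1(L)} = H^{h^1(L)}$, since the Segre class of the zero bundle is $1$. Equivalently, in the explicit formula the coefficient is ${k-1\choose k}$, which vanishes for every $k\ge 1$. Hence only $\llhb{S_\beta}_0=H^{h^1(L)}$ survives, and $\llhb{S_\b}_k=0$ for $0<k\le p_g$.

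For the statement about the virtual cycle, Theorem \ref{redtovir} with $n_1=n_2=0$ identifies $[S_\beta]^{\vir}$ with $\llhb{S_\b}_{p_g-\sff h^2_{\sff E}}$. Since $g=0$, the Picard variety is the single point $\{L\}$, so $\sff h^2_{\sff E}=h^2(L)=0$. The index is therefore $p_g>0$, and the class vanishes by the first claim. This is the only place where $p_g>0$ is used: it guarantees the index is positive rather than $0$.

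For the last claim, assume $h^2(L)>0$ and $\vd_\b\ge 0$. Theorem \ref{thm:g=0total} records the identity $p_g-h^2(L)=h^0(L)-h^1(L)-1-\vd_\b$. I rewrite it using $\chi(L)=h^0(L)-h^1(L)+h^2(L)$ and Riemann--Roch, $\chi(L)=\chi(\cO_S)+\vd_\b=1+p_g+\vd_\b$ (as $g=0$); this gives back the same identity. So the dimension count by itself forces nothing, and some extra input is needed.

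That extra input is Serre duality. Since $L$ is effective, $S_\b\ne\emptyset$, so $h^0(L)>0$. By Serre duality, $h^2(L)=h^0(K_S-L)>0$. Choose nonzero sections of $L$ and $K_S-L$; their product is a nonzero section of $K_S$, with divisor $D+D'$ where $D\in|L|$ and $D'\in|K_S-L|$. The natural map $H^0(L)\otimes H^0(K_S-L)\to H^0(K_S)$ is nonzero on pure tensors and injective in each factor separately. The Hopf lemma then gives
\[
p_g\ \ge\ h^0(L)+h^2(L)-1 .
\]
Substituting $p_g=h^0(L)-h^1(L)+h^2(L)-1-\vd_\b$ from Riemann--Roch yields $-h^1(L)-\vd_\b\ge 0$. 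Since $h^1(L)\ge 0$ and $\vd_\b\ge0$ by hypothesis, this forces $\vd_\b=0$ (and also $h^1(L)=0$).

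The main obstacle is this last step. The dimension count alone gives no constraint, so one needs the Hopf-lemma inequality on the multiplication map $H^0(L)\otimes H^0(K_S-L)\to H^0(K_S)$. If I wanted to avoid invoking it, I would try instead to compare $[S_\b]^{\vir}$ with the dual class $[S_{K_S-\b}]^{\vir}$ via Theorem \ref{thm:wallhilb}. The direct Hopf argument is cleaner, and it is the one I would use.
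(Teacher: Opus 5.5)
Your proof is correct. For the two vanishing claims it is exactly the paper's argument: you specialize Theorem~\ref{thm:g=0total} at $h^2(L)=0$, and you use Theorem~\ref{redtovir} with $\sff h^2_{\sff E}=h^2(L)=0$ to identify $[S_\b]^{\vir}$ with the class of index $p_g$.

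For the last claim you take a different route. The paper argues on $S_\b=\prj^{h^0(L)-1}$ with the rank $p_g-h^2(L)$ vector bundle $G$ representing $\cO^{\oplus p_g}-\sh^2(\sff E)(1)$. This $G$ is the kernel of the fiberwise surjection $H^2(\cO_S)\otimes\cO\to H^2(L)\otimes\cO(1)$. The paper proceeds in three steps:
\begin{enumerate}
\item It uses $h^0(L)\le p_g$ to get $h^2(L)>h^1(L)$.
\item It observes that the class $\cO^{h^1(L)-h^2(L)}(1)=G+\cO^{h^1(L)}(1)-\cO^{p_g}$ has Chern classes vanishing above degree $p_g-h^2(L)+h^1(L)=h^0(L)-1-\vd_\b$.
\item Since $(1+H)^{h^1(L)-h^2(L)}$ has a nonzero coefficient in degree $h^0(L)-1$, this forces $\vd_\b\le 0$.
\end{enumerate}
You instead apply Hopf's lemma to the multiplication map $H^0(L)\otimes H^0(K_S-L)\to H^0(K_S)$.

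The two mechanisms are Serre dual to each other. The paper's surjection is dual to multiplication by the tautological section, $H^0(K_S-L)\otimes\cO(-1)\hookrightarrow H^0(K_S)\otimes\cO$. Computing the Chern classes of its cokernel on $\prj^{h^0(L)-1}$ is precisely the standard proof of Hopf's inequality.

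Your version is shorter, uses none of the refined-cycle machinery, and yields $h^1(L)=0$ as well. The paper's version keeps the argument inside its own framework, using the same bundle $G$ that drives the refined classes, at the cost of the auxiliary inequality $h^0(L)\le p_g$.

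You were also right to state the effectivity of $L$ explicitly. Both arguments need $S_\b\neq\emptyset$, and the last claim fails without it. For example, $L=\cO_S(-A)$ with $A$ ample on a K3 surface has $h^2(L)>0$ and $\vd_\b=A^2/2>0$.
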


\begin{proof}
If $h^2(L)=0$, the vanishing claims follow directly from Theorem \ref{thm:g=0total}. Now, suppose that  $h^2(L)>0$ and $\vd_\b\geq0$.
 Recall that in the proof of Proposition \ref{prop:deeprefined},  we showed $\cO^{\oplus p_g}-q^*\sh^2(\sff E)(1)$ in $K$-theory is represented by  a locally free sheaf $G$ on $S_\b$ of rank $p_g-h^2(L)$. 
By the assumption $\vd_\b\ge 0$,
$$0\leq p_g-h^2(L)+h^1(L)=h^0(L)-1-\vd_\b\leq h^0(L)-1$$
Since $h^2(L)>0$, $K_S-L$ is effective, so $h^0(L)\leq h^0(K_S-L+L)=p_g$ and
$$h^2(L)-h^1(L)=\vd_\b+1+p_g-h^0(L)>0.$$
In our case, $\cO^{p_g}+\cO^{h^1(L)-h^2(L)}(1)=G+\cO^{h^1(L)}(1)$ in $K$-theory, so we have $$c_{j}\big(\cO^{h^1(L)-h^2(L)}(1)\big)=0 \qquad j>p_g-h^2(L)+h^1(L).$$
But since $h^1(L)-h^2(L)<0$, $c_{h^0(L)-1}(\cO^{h^1(L)-h^2(L)}(1))\neq 0$, so the only possibility is that 
$$p_g-h^2(L)+h^1(L)=h^0(L)-1,$$
which is equivalent to $\vd_\beta=0$. 
\end{proof}

\begin{corollary}
    When $p_g>0$, the only classes $\beta$ with $[S_\b]^{\vir}\neq0$ are those with $\vd_\beta=0$ and $h^2(L)>0$.
\end{corollary}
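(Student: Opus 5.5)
The statement is a direct consequence of Corollary \ref{cor:52}, so I would argue by contraposition, sorting classes $\beta$ according to $h^2(L)$ and $\vd_\b$. Throughout, $g=0$, so $\Pic_\b(S)=\{L\}$ and $S_\beta=\prj^{h^0(L)-1}$ (or empty). By Theorem \ref{redtovir} with $n_1=n_2=0$, we have $[S_\b]^{\vir}=\llhb{S_\b}_{p_g-h^2(L)}$, a class in $A_{\vd_\b}(S_\b)$.

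\emph{Case $\vd_\b<0$.} The class $[S_\b]^{\vir}$ lives in a negative-dimensional Chow group, so it vanishes for dimension reasons.

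\emph{Case $h^2(L)=0$.} If $S_\b=\emptyset$ there is nothing to prove. Otherwise Corollary \ref{cor:52}, which uses $p_g>0$, gives $[S_\b]^{\vir}=\llhb{S_\b}_{p_g}=0$. One can also see this directly from Theorem \ref{thm:g=0total}: the coefficient $\binom{k-1}{k}$ vanishes for $k>0$.

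\emph{Case $h^2(L)>0$ and $\vd_\b\ge 0$.} The last assertion of Corollary \ref{cor:52} forces $\vd_\b=0$.

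Combining the three cases, nonvanishing of $[S_\b]^{\vir}$ requires both $\vd_\b=0$ and $h^2(L)>0$.

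The one step needing care is the middle case when $S_\b$ is empty, which happens when $h^0(L)=0$; then every class on $S_\b$ is trivially zero, so this is not a real obstacle. I would also check that the hypotheses of Corollary \ref{cor:52} genuinely apply there, in particular the standing assumption $p_g>0$. Otherwise the argument is a bookkeeping combination of the dimension count with the two parts of that corollary.
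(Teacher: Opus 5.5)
Your proof is correct and follows the paper's route: the paper's proof simply says the statement is a direct application of Theorem \ref{thm:g=0total} and Corollary \ref{cor:52}. Your case split, including the dimension argument for $\vd_\beta<0$ (where $[S_\beta]^{\vir}\in A_{\vd_\beta}(S_\beta)=0$), which the paper leaves implicit, just spells that application out.
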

\begin{proof}
 This is a direct application of the Theorem \ref{thm:g=0total} and Corollary \ref{cor:52}. 
\end{proof}

\begin{corollary}\label{cor:degks}
    In the case of $\beta= K_S$,
    $$\sum_{k=0}^{p_g-1}\llhb{S_{K_S}}_k=s(\cO_{S_{K_S}}(1)).$$
    In particular, 
$\deg[S_\beta]^{\vir}=\deg\llhb{S_{K_S}}_{p_g-1}=(-1)^{\chi(\cO_S)}\in A_0(S_{K_S})$.
\end{corollary}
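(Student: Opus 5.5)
The plan is to specialize Theorem \ref{thm:g=0total} to $L=K_S$ (we are in the irregularity zero setting, so $\Pic_{K_S}(S)=\{K_S\}$), and then read off the degree of the top refined class using Theorem \ref{redtovir}. Implicitly we assume $p_g>0$, since otherwise $S_{K_S}=\emptyset$ and the range of $k$ is empty.

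First I compute the cohomology of $L=K_S$. We have $h^0(K_S)=p_g$. By Serre duality and $g=0$, $h^1(K_S)=h^1(\cO_S)=0$. Again by Serre duality, $h^2(K_S)=h^0(\cO_S)=1$. So $\sff h^2_{\sff E}=1$, and the refined classes run over $0\le k\le p_g-1$. Also $S_{K_S}=\prj^{p_g-1}$, with $\cO_{S_{K_S}}(1)$ the hyperplane bundle and $H=c_1(\cO_{S_{K_S}}(1))$.

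Substituting into the first formula of Theorem \ref{thm:g=0total} gives
\[
\sum_{k=0}^{p_g-1}\llhb{S_{K_S}}_k=s\big(\cO_{S_{K_S}}^{\oplus 1}(1)\big)H^{0}=s(\cO_{S_{K_S}}(1)),
\]
which is the first claim. Equivalently, the second formula there gives $\llhb{S_{K_S}}_k=(-1)^k\binom{k}{k}H^k=(-1)^kH^k$.

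For the degree statement, Theorem \ref{redtovir} with $n_1=n_2=0$ identifies $[S_{K_S}]^{\vir}=\llhb{S_{K_S}}_{p_g-\sff h^2_{\sff E}}=\llhb{S_{K_S}}_{p_g-1}$. By the formula above this equals $(-1)^{p_g-1}H^{p_g-1}$. On $\prj^{p_g-1}$ the class $H^{p_g-1}$ has degree $1$, so the degree is $(-1)^{p_g-1}$.

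Finally, $\chi(\cO_S)=1-g+p_g=1+p_g$, so $(-1)^{\chi(\cO_S)}=(-1)^{p_g+1}=(-1)^{p_g-1}$. This matches the degree computed above. There is no real obstacle; the only points needing care are the Serre-duality bookkeeping for $h^i(K_S)$ and the sign matching.
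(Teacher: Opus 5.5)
Your proposal is correct and follows the paper's own proof. Like the paper, you specialize Theorem~\ref{thm:g=0total} using the Serre-duality values $h^1(K_S)=0$ and $h^2(K_S)=1$, read the degree off as the coefficient $(-1)^{p_g-1}$ of $H^{p_g-1}$ in $s(\cO(1))=1/(1+H)$, and match it with $(-1)^{\chi(\cO_S)}$ via $\chi(\cO_S)=1+p_g$. Your explicit appeal to Theorem~\ref{redtovir} for $[S_{K_S}]^{\vir}=\llhb{S_{K_S}}_{p_g-1}$ only spells out a step the paper leaves implicit.
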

\begin{proof}
    Note that by Serre duality $h^2(K_S)=1$ and $h^1(K_S)=h^1(\cO_S)=0$ proving the first claim. For the second claim, note that the coefficient of $H^{p_g-1}$ in $\frac{1}{1+H}$ is $(-1)^{p_g-1}$, which is $(-1)^{\chi(\cO_S)}$ in this case.
\end{proof}

\subsection{Canonical class}\label{sec:g1}
In this section we assume that $g>0$ and $p_g>0$. These imply that the Hilbert scheme $S_{K_S}\neq \emptyset$, and the Picard variety $\Pic_{K_S}(S)$ is positive dimensional. Let $\sff E:=R\pi_*\cP_{K_S}$.
\begin{lemma} \label{lem:jac}
    Let $x=\{\cO(K_S)\}\in\Pic_{K_S}(S)$ be the closed point corresponding to the canonical line bundle, then
    $$\sh^2(\sff E)=\cO_{x}.$$
\end{lemma}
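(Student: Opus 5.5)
We will compute $\sh^2(\sff E)=R^2\pi_*\cP_{K_S}$ directly, using cohomology and base change together with relative Serre duality. Since $\pi\colon S\times\Pic_{K_S}(S)\to \Pic_{K_S}(S)$ has relative dimension $2$, the top direct image commutes with base change. Hence for every closed point $L\in\Pic_{K_S}(S)$ we have
$$\sh^2(\sff E)\otimes k(L)\cong H^2(S,L)\cong H^0(S,K_S\otimes L^{-1})^*.$$
Because $c_1(K_S\otimes L^{-1})=0$, this group vanishes unless $L\cong \cO(K_S)$, and at $L=\cO(K_S)$ it is one-dimensional. So $\sh^2(\sff E)$ is supported set-theoretically at the point $x$, and it is cyclic there (its fiber is one-dimensional), so by Nakayama $\sh^2(\sff E)\cong\cO_Z$ for some closed subscheme $Z$ supported at $x$. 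The remaining task is to show that $Z$ is reduced, i.e.\ that $Z=\{x\}$ scheme-theoretically.

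To determine $Z$ we dualize. By Grothendieck duality, $\sff E^\vee\cong R\pi_*(\cP_{K_S}^*\otimes K_S)[2]$. Normalize the Poincar\'e bundle so that $\cP_{K_S}^*\otimes K_S$ is a Poincar\'e bundle $\cP_0$ for $\Pic_0(S)=\Jac(S)$, with $x$ corresponding to the origin $0$. We then get
$$\sh^2(\sff E)\cong \sh^{0}(\sff E^\vee)^{\vee\text{-dual}}\cong \sExt^{2}\big(R\pi_*\cP_0,\cO\big)$$
in the appropriate degrees, up to this standard identification. It is cleaner to use the presentation instead: locally near $0$, represent $R\pi_*\cP_0$ by a complex $K^0\xr{d^0}K^1\xr{d^1}K^2$. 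The cokernel of $(d^0)^*$ equals $\sh^2(\sff E)$ after Serre duality. Equivalently, $Z$ is the scheme structure of the zeroth Fitting-type degeneracy locus $\op D_1(R\pi_*\cP_0)$, which is the locus where $h^0(\cP_0|_L)\ge1$.

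Next we find the scheme structure of that locus near $0$. A standard computation (Mumford's, as in the theory of the Fourier--Mukai transform on abelian varieties) handles this. Choose a minimal complex at $0$, so that $d^0|_0=0$ with $K^0=\cO$ of rank $h^0(\cO_S)=1$. The first-order part of $d^0$ is given by the map $H^0(\cO_S)\otimes T_0\Jac\to H^1(\cO_S)$, namely cup product with the Kodaira--Spencer class. Since $T_0\Jac= H^1(\cO_S)$, this map is the identity and hence an isomorphism. Therefore the $g$ entries of $d^0\colon \cO\to K^1$ have linearly independent differentials at $0$, and they form a regular system of parameters of the smooth $g$-dimensional $\Jac(S)$ at $0$. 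The cokernel of $(d^0)^*\colon (K^1)^*\to\cO$ is then $\cO/\mathfrak m_0=\cO_x$, which gives $Z=\{x\}$ reduced.

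The main obstacle is the linear-algebra step that identifies the derivative of $d^0$ with the tautological isomorphism $T_0\Jac(S)\cong H^1(\cO_S)$, and then checking carefully that Serre/Grothendieck duality converts $\sh^2(\sff E)$ into exactly $\coker((d^0)^*)$ for the dual family, with no extra twist by a line bundle pulled back from $\Pic$. That twist is harmless in any case, since a line bundle restricted to a reduced point is trivial. We will handle the derivative computation by restricting to the first-order neighborhood $\op{Spec}\,\cO/\mathfrak m_0^2$, where the connecting map of the extension $0\to \cO_S\otimes \mathfrak m_0/\mathfrak m_0^2\to \cP_0|_{2x}\to\cO_S\to0$ is precisely the Kodaira--Spencer map.
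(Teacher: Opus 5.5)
Your argument is correct, but it settles the key point by a different route than the paper. Both proofs make the same reduction. Since $\sh^2(\sff E)$ is supported at $x$ with one-dimensional fiber, it equals $\cO_Z$, and duality identifies $Z$ with $\op{D}_1(R\pi_*\cP_0)\subset\Jac(S)$. So everything comes down to showing that this locus is the reduced point.

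The paper then argues through the moduli interpretation. On the Artinian deepest degeneracy locus $\op{D}_1$, the sheaf $\sh^0(\bff L j^*R\pi_*\cP_0)$ is a line bundle by \cite[Lemma 3.3]{G1}, hence trivial. A generator gives a nowhere vanishing section of $\cP_0|_{S\times\op{D}_1}$, so this family is trivial. By the universal property of $\Jac(S)$, the inclusion $\op{D}_1\hookrightarrow\Jac(S)$ therefore factors through $\{\cO_S\}$.

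You instead compute infinitesimally. In a minimal complex at $0$, the linear part of $d^0$ is the connecting map of the first-order extension, i.e.\ cup product with the Kodaira--Spencer class. That map is the canonical isomorphism $T_0\Jac(S)\cong H^1(\cO_S)$, so the $g$ entries of $d^0$ form a regular system of parameters at $0$.

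The comparison is as follows.
\begin{itemize}
\item The paper's route is shorter and needs no minimal complex or derivative identification. It also never uses smoothness of $\Jac(S)$, only representability.
\item Your route is more explicit: it is the first step of Mumford's computation of $R\pi_*\cP$ on an abelian variety. It exhibits the local Koszul-type shape of the first differential and makes the role of the Kodaira--Spencer isomorphism visible. It does use smoothness of $\Jac(S)$ at $0$, which is automatic over $\C$.
\end{itemize}

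There is one small slip. The displayed $\sExt^{2}(R\pi_*\cP_0,\cO)$ has the wrong index. Since $R\pi_*\cP_0$ has amplitude $[0,2]$, its dual has amplitude $[-2,0]$, and the correct identification is $\sh^2(\sff E)\cong\sh^0\big((R\pi_*\cP_0)^\vee\big)$. However, the statement you actually use, $\sh^2(\sff E)\cong\coker((d^0)^*)$, is right.

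The base twist coming from the identification $\Pic_{K_S}(S)\cong\Jac(S)$ is indeed harmless. The cleaner reason is that tensoring with a line bundle pulled back from the base does not change the annihilator, i.e.\ the Fitting ideal, of the cokernel.
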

\begin{proof}
    If $L\in \Pic_{K_S}(S)$, then $L^*(K_S)$ is a degree 0 line bundle, by Serre duality $$h^2(L)=\begin{cases}\bb C & L=\cO(K_S),\\ 0 &L\neq \cO(K_S).\end{cases}$$ Therefore,  $\sh^2(\sff{E})$ is set theoretically supported on $x$, and  $\op{rk}(i^*\sh^2(\sff{E}))= 1$ where $i:\{x\} \hookrightarrow \Pic_{K_S}(S)$ is the inclusion. We claim that it is scheme theoretically supported on $x$. By Serre duality, the claim is equivalent to showing that the degeneracy locus  $${\op{D}}_1(R\pi_* \cP_0)\into{j} \Jac(S)$$ is the reduced point $\{\cO_S\}$.  By the discussion above, we certainly have ${\op{D}}_1(R\pi_* \cP_0)=\{\cO_S\}$ as sets, and also $\sh^0:=\sh^0(\bff Lj^* R\pi_* \cP_0)$ is a line bundle by \cite[Lemma 3.3]{G1}. But this line bundle must  be trivial  as ${\op{D}}_1(R\pi_* \cP_0)$ is Artinian. So there is a nowhere vanishing section of $\sh^0$ that gives a nowhere vanishing section of $\cP_0|_{S \times {\op{D}}_1}$. This line bundle is  trivial and therefore its classifying map ${\op{D}}_1(R\pi_* \cP_0)\to \Jac(S)$ factors through $\{\cO_S\}$. This proves  
${\op{D}}_1(R\pi_* \cP_0)=\{\cO_S\}$ as schemes.
\end{proof}

By Lemma \ref{lem:jac} and Remark \ref{remk:blsupp}, the blow up  of $\Pic_{K_S}(S)$ at the point $\{\cO_{K_S}\}$  is $\sff E$-suitable. 
\begin{prop}
    The refined classes $\llhb{S_{K_S}}_k$ for $k\ge 0$ obey deformation invariance for a smooth family of projective surfaces  over a nonsingular curve having the surface $S$ as a fiber. 
\end{prop}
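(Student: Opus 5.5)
The plan is to reduce the statement to Corollary \ref{cor:flatDefInv} (equivalently, to the second part of Theorem \ref{thm:defInv}), arguing as in Proposition \ref{prop:defInvmink} for $n=0$ and $\beta=K_S$. Let $\cc S\to C$ be a smooth projective family with $\cc S_0=S$. The Hodge numbers $g=h^1(\cO)$ and $p_g$ are constant in smooth projective families, and the relative canonical class $K_{\cc S/C}$ restricts to $K_{\cc S_p}$ on every fiber and is of type $(1,1)$ there. I will take $\cc X:=\Pic_{K}(\cc S/C)$, which is smooth over $C$ of relative dimension $g$, together with $\scr E_\bu:=R\pi_*\scr P_{K}$, a three-term complex whose restriction to each fiber is the complex $\sff E$ of that fiber. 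By Lemma \ref{lem:jac} applied fiberwise, $\sh^2(\scr E_\bu|_{\cc X_p})=\cO_{x_p}$, where $x_p=\{\omega_{\cc S_p}\}$. In particular the generic rank of $\sh^2$ is $0$ on every fiber, so the standing assumption $\sff h^2_{\scr E_\bu}=\sff h^2_{\sff E}$ of Section \ref{sec:defInvar} holds.

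The key step is to identify the degeneracy locus of $(\scr E_\bu)^\vee$ that is blown up. By relative Serre duality, $\sh^2(\scr E_\bu)$ is supported on the image $\Sigma\subset\cc X$ of the section $p\mapsto\omega_{\cc S_p}$ of $\cc X\to C$. I will show that $\sh^2(\scr E_\bu)\cong\cO_\Sigma$ scheme-theoretically by redoing the argument of Lemma \ref{lem:jac} in the relative setting. Under Serre duality the question becomes the scheme structure of $\op D_1(R\pi_*\scr P_0)\subset\Pic_0(\cc S/C)$. Here $\sh^0$ of the derived restriction is a line bundle, and its nowhere vanishing section trivializes the Poincar\'e bundle on this locus, so the classifying map factors through the identity section. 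The conclusion is that $\op D_t((\scr E_\bu)^\vee)=\Sigma$ for $t=1$ and is empty for $t\ge2$. Here $s=\rk\scr E_2-\rk\scr E_1+c=0$, since $\sh^2$ has generic rank $0$, so $c$ is the generic rank of $\coker(\sigma_1^*)$ and $\rk\scr E_2-\rk\scr E_1+c=\sff h^2=0$. Hence the only relevant locus is $\op D_1=\Sigma$, which is isomorphic to $C$ and therefore flat over $C$. Since $\cc X$ is smooth over $C$ and $\Sigma$ is a section, $\Sigma$ is regularly embedded and its normal cone is a vector bundle of rank $g$ over $\Sigma\cong C$, again flat over $C$.

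With this, condition (2) of Corollary \ref{cor:flatDefInv} holds, by part i) of the remark following that corollary. Concretely, the $\scr E_\bu$-suitable blow up is $\op{Bl}_\Sigma\cc X$, as in Remark \ref{remk:blsupp}(ii). Its central fiber is $\op{Bl}_{x_0}\Pic_{K_S}(S)$, which is irreducible, so $S=\emptyset$ in Theorem \ref{thm:defInv} and the classes $\llb{\sff E,1}_k=\llhb{S_{K_S}}_k$ restrict correctly. This gives the claim for all $k\ge0$, and also for the refined classes pushed to $\prj(B)$, since $B$ can be chosen relatively as in Section \ref{sec:embvia}.

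I expect the main obstacle to be the scheme-theoretic identification $\op D_1=\Sigma$ in the family. The pointwise argument of Lemma \ref{lem:jac} used that the degeneracy locus was Artinian. In the relative case one must instead check that $\op D_1(R\pi_*\scr P_0)\to C$ factors through the zero section. I would argue this via the universal property of the relative Picard scheme: a trivialized Poincar\'e bundle on a base $T$ classifies the map $T\to\Pic_0$ that factors through the identity section. That this $\sh^0$ is a line bundle on the (possibly nonreduced) locus follows as in \cite[Lemma 3.3]{G1}.
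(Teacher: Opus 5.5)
Your proof is correct and is essentially the paper's argument. You take $\cc Y=\op{Bl}_\Sigma \Pic_{K}(\cc S/C)$ along the section of canonical bundles, observe that its central fiber is the irreducible $\op{Bl}_{x_0}\Pic_{K_S}(S)$ so that no extra components $W$ occur, and conclude by the second part of Theorem \ref{thm:defInv}; the detour through Corollary \ref{cor:flatDefInv} is harmless but not needed.

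Your relative version of Lemma \ref{lem:jac} usefully supplies the justification, left implicit in the paper, that this blow up is $\scr E_\bu$-suitable. Note that it shows $\sh^2(\scr E_\bu)$ is a line bundle on $\Sigma$ (not necessarily $\cO_\Sigma$), which is all Remark \ref{remk:blsupp}(ii) requires.
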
 
\begin{proof}
    By the description of the blow up given above, we have $\cc Y_0=Y$ and hence $S=\emptyset$ in the notation of Theorem \ref{thm:defInv} in which we take $\cc Y$ to be the blow up of the relative Picard variety of the family of surfaces at the section corresponding to the canonical line bundle of each fiber.
\end{proof}

Denote by $\nu:Y\to\Pic_{K_S}(S)$ the blow up of $\Pic_{K_S}(S)$ at the point $\{\cO_{K_S}\}$. Let $N\cong\prj^{g-1}$ be its exceptional divisor. For $$Z=q^{-1}(\{\cO_{K_S}\})\subset\prj(B),$$ the induced blow up $\nu:Y_B\to\prj(B)$ is given by blowing up $Z\subset\prj(B)$ and it is $\sff F$-suitable. Denote by $\tilde Z:=\nu^{-1}(Z)\subset Y_B$ the exceptional divisor, and by $H:=c_1(\cO_{\prj(B)}(1))$. We get the cartesian diagram (so $Y_B\cong\prj(\nu^*B)$) 
\[\begin{tikzcd}[cramped]
	{Y_B} & Y \\
	{\prj(B)} & {\Pic_{K_S}(S).}
	\arrow["{\tilde q}", from=1-1, to=1-2]
	\arrow["\nu", from=1-1, to=2-1]
	\arrow["\nu", from=1-2, to=2-2]
	\arrow["q", from=2-1, to=2-2]
\end{tikzcd}\]
Note that   $\cO(\tilde Z)\cong\tilde q^*\cO(N)$ so that $\nu_*(\tilde Z^j)=0$ if $j\neq g$ and $\nu_*(\tilde Z^g)=(-1)^{g-1}Z$. Also,
$$\tilde Z\cong N\times_{x}Z\cong\prj^{g-1}\times\prj^{b-1}.$$

\begin{prop} \label{prop:SKS}
    After pushing forward into $\prj(B)$, 
    $$\iota_*\llhb{S_{K_S}}=c_{b-1+g-p_g}(-\sff{H}(1))+(-1)^{g-1}Z\cdot H^{b-1-p_g}\in A_{p_g}(\prj(B)).$$
\end{prop}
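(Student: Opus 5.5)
The plan is to apply the Thom--Porteous formula \eqref{eq:modB} of Theorem \ref{thm:T-Pformulas} with $r=1$, using the blow up $\nu\colon Y\to \Pic_{K_S}(S)$ at $x=\{\cO(K_S)\}$. By Lemma \ref{lem:jac} and Remark \ref{remk:blsupp}, this blow up is $\sff E$-suitable. Here $\sff e=\chi(K_S)=\chi(\cO_S)=1-g+p_g$ and $\rkh{\sff E}^2=0$, because $\sh^2(\sff E)=\cO_x$ is torsion. Hence
$$\iota_*\llhb{S_{K_S}}=\nu_*\Big(c_{b-1+g-p_g}\big((\nu^*B-\tau^{\le1}(\nu^*\sff E))(1)\big)\Big),$$
computed on $Y_B\cong\prj(\nu^*B)$.

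\textbf{Step 1 (the $K$-theory class).} In $K$-theory, $\tau^{\le1}(\nu^*\sff E)=\nu^*\sff E-\sh^2(\nu^*\sff E)$. Since $\sh^2$ is right exact, $\sh^2(\nu^*\sff E)=\nu^*\cO_x=\cO_N$ on $Y$. Pulling back to $Y_B$ gives $\cO_{\tilde Z}=\cO-\cO(-\tilde Z)$. The integrand is therefore
$$c\big((B-\sff E)(1)\big)\cdot\frac{1+H}{1+H-\tilde Z}
=c\big((B-\sff E)(1)\big)\Big(1+\sum_{m\ge1}\frac{\tilde Z^m}{(1+H)^m}\Big),$$
taken in degree $b-1+g-p_g$.

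\textbf{Step 2 (the main term).} The $m=0$ part is a class pulled back from $\prj(B)$, and $\nu_*[Y_B]=[\prj(B)]$. So it pushes forward to $c_{b-1+g-p_g}((B-\sff E)(1))$. This equals $c_{b-1+g-p_g}(-\sff H(1))$, because $-\sff H(1)$ differs from $(B-\sff E)(1)$ only by a trivial summand $\cO$ coming from $\cO(-1)\otimes\cO(1)$.

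\textbf{Step 3 (the correction terms).} Since $\cO(\tilde Z)=\tilde q^*\cO(N)$, each $m\ge1$ term has the form $\tilde Z^m\cdot\nu^*(\alpha)$. By the projection formula together with $\nu_*(\tilde Z^j)=0$ for $j\ne g$ and $\nu_*(\tilde Z^g)=(-1)^{g-1}Z$, only $m=g$ survives. It contributes $(-1)^{g-1}$ times the degree-$(b-1-p_g)$ part of $c((B-\sff E)(1))(1+H)^{-g}$ restricted to $Z$.

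On $Z=q^{-1}(x)$, the complex $\sff E|_x$ has trivial cohomologies of ranks $p_g,\,g,\,1$ (using $h^1(K_S)=g$). Hence $c((B-\sff E)(1))|_Z=(1+H)^{b-p_g+g-1}$. Multiplying by $(1+H)^{-g}$ gives $(1+H)^{b-1-p_g}$, whose top-degree part is $H^{b-1-p_g}$. This yields the term $(-1)^{g-1}Z\cdot H^{b-1-p_g}$.

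I expect the main obstacle to be the careful justification of Step 1 and Step 3. For Step 1, one must check that $\sh^2(\nu^*\sff E)$ is exactly $\cO_N$ (homological dimension $1$, so no derived corrections by Lemma \ref{Lq*}) and that the truncation identity holds in $K$-theory. For Step 3, one must legitimately restrict the remaining Chern classes to $\tilde Z\cong\prj^{g-1}\times\prj^{b-1}$ before pushing forward, while keeping the degree bookkeeping and signs consistent.
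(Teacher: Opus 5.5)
Your proof is correct and follows the same route as the paper: you apply the Thom--Porteous formula \eqref{eq:modB} on the blow up at $\{\cO(K_S)\}$, using the class $(\nu^*B-\nu^*\sff E+\cO_{\tilde Z})(1)$ (equivalently $-\nu^*\sff H(1)+\cO_{\tilde Z}(1)$). You then split off the term pulled back from $\prj(B)$, apply the projection formula with $\nu_*(\tilde Z^j)$, and use that $B-\sff E$ restricts to a trivial class of rank $\ell=b-1+g-p_g$ on $Z$.

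The one real difference is how you expand $c(\cO_{\tilde Z}(1))=\frac{1+H}{1+H-\tilde Z}$. The paper writes its degree-$i$ part as $(\tilde Z-H)^{i-1}\tilde Z$ and extracts the coefficient $\binom{i-1}{g-1}(-H)^{i-g}$ of $\tilde Z^g$. It multiplies this by $c_{\ell-i}(\cO(1)^{\oplus \ell})=\binom{\ell}{i}H^{\ell-i}$ on $Z$, and then needs Lemma \ref{lem:altbinom} to collapse $\sum_i(-1)^{i-1}\binom{\ell}{i}\binom{i-1}{g-1}$ to $(-1)^{g-1}$. Your geometric series $\sum_m\tilde Z^m(1+H)^{-m}$ isolates the powers of $\tilde Z$ at once, so the surviving term is $\{(1+H)^{\ell-g}\}_{\ell-g}=H^{b-1-p_g}$ and no binomial identity is needed. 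In effect you give a generating-function proof of Lemma \ref{lem:altbinom}. The same expansion, combined with $s_k(\cO_{\tilde Z}(1))=(-1)^k\tilde Z H^{k-1}$, would also shorten Proposition \ref{propSKSk}.

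The obstacles you anticipate are not serious. The sheaf $\nu^*\cO_x=\cO_Y/\mathfrak m_x\cO_Y=\cO_N$ has the resolution $0\to\cO(-N)\to\cO\to\cO_N\to 0$. You also never need to restrict to $\tilde Z$, since every factor other than $\tilde Z^m$ is pulled back from $\prj(B)$.
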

\begin{proof}
    Denoting by $\{-\}_{j}$ the part of codimension $j$ of an expression and defining $\ell:=b-1+g-p_g$, we have 
    \begin{align*}
         \iota_*\llhb{S_{K_S}}&=\nu_*c_{b-(p_g-g+1)}\big(-\nu^*\sff{H}(1)+\cO_{\tilde Z}(1)\big)\\
    &=c_\ell(-\sff{H}(1))+\sum _{i=1}^{\ell}c_{\ell-i}\big(-\sff{H}(1)\big)\nu_*c_i(\cO_{\tilde Z}(1))\\
    &=c_\ell(-\sff{H}(1))+\sum _{i=1}^{\ell}c_{\ell-i}\big(-\sff{H}(1)\big)\nu_*\big((\tilde Z-H)^{i-1}\tilde Z\big).
    \end{align*}
Recall that  $\nu_*(\tilde Z^j)=0$ for $j\neq g$ and $\nu_*(\tilde Z^g)=(-1)^{g-1}Z$. 
Thus,
\begin{align*}
         \iota_*\llhb{S_{K_S}}&=c_\ell(-\sff{H}(1))+\sum _{i=1}^{\ell}c_{\ell-i}(-\sff{H}(1)){i-1\choose g-1}\nu_*\big(\tilde Z^g(-H)^{i-g}\big)\\
         &=c_\ell(-\sff{H}(1))+\sum _{i=1}^{\ell}c_{\ell-i}(-\sff{H}(1)){i-1\choose g-1}(-1)^{g-1}Z(-H)^{i-g}.
    \end{align*}
    Since $Z$ is the fiber over a point of $\Pic_{K_S}(S)$, $-\sff{H}{|_Z}=\cO_Z^{\oplus\ell}$ in $K$-theory, so
\begin{align*}
         \iota_*\llhb{S_{K_S}}&=c_\ell(-\sff{H}(1))+\sum _{i=1}^{\ell}c_{\ell-i}(\cO^{\oplus\ell}(1)){i-1\choose g-1}(-1)^{i-1}Z H^{i-g}\\
         &=c_\ell(-\sff{H}(1))+\sum _{i=1}^{\ell}(-1)^{i-1}{\ell\choose i}{i-1\choose g-1}Z\cdot H^{b-1-p_g}\\
         &=c_\ell(-\sff{H}(1))+(-1)^{g-1}Z\cdot H^{b-1-p_g}.
    \end{align*}
    where the last equality follows from the following lemma.
\end{proof}

\begin{lemma}\label{lem:altbinom}
    Let $a>c\geq0$ be integers. Then
    $$\sum_{k=c+1}^{a}(-1)^k{a\choose k}{k-1\choose c}=(-1)^{c+1}.$$
\end{lemma}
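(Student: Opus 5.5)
The plan is to prove the identity
$$\sum_{k=c+1}^{a}(-1)^k{a\choose k}{k-1\choose c}=(-1)^{c+1}$$
by extending the summation range and then killing all but one term with a standard alternating-sum vanishing. The key observation is that ${k-1\choose c}$, viewed as a function of $k$, is the polynomial $P(k)=\frac{(k-1)(k-2)\cdots(k-c)}{c!}$ of degree $c$.

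First we check that $P$ agrees with the binomial coefficient on the extended range. For $1\le k\le c$, the product $(k-1)\cdots(k-c)$ contains the factor $0$, so $P(k)=0$, consistent with ${k-1\choose c}=0$. At $k=0$ we get $P(0)=\frac{(-1)(-2)\cdots(-c)}{c!}=(-1)^c$. Consequently the given sum equals
$$\sum_{k=0}^{a}(-1)^k{a\choose k}P(k)\;-\;P(0).$$

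Second, we use the classical fact that $\sum_{k=0}^a(-1)^k{a\choose k}k^j=0$ for every $0\le j<a$. This is the statement that the $a$-th finite difference of a polynomial of degree less than $a$ vanishes; equivalently, apply $(x\,d/dx)^j$ to $(1-x)^a$ and evaluate at $x=1$. Since $\deg P=c<a$ by hypothesis, the full alternating sum vanishes. The original sum is therefore $-P(0)=-(-1)^c=(-1)^{c+1}$, as claimed.

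There is no real obstacle. The only point needing care is the boundary bookkeeping, namely that the terms with $1\le k\le c$ vanish and that the $k=0$ term contributes exactly $(-1)^c$. The hypothesis $a>c$ is used precisely so that the degree condition gives vanishing of the full sum.
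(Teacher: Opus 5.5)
Your proof is correct, but it takes a different route from the paper.

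The paper argues by induction on $a$. It first disposes of the base cases $a=c+1$ and $c=0$; the latter is $(1-1)^a=0$ with the $k=0$ term removed. It then applies Pascal's rule twice: ${a\choose k}={a-1\choose k}+{a-1\choose k-1}$, and in the second piece ${k-1\choose c}={k-2\choose c}+{k-2\choose c-1}$. This gives three sums which, after reindexing, are evaluated by the inductive hypothesis at $(a-1,c)$, $(a-1,c)$ and $(a-1,c-1)$. Their values are $(-1)^{c+1}$, $(-1)^{c+2}$ and $(-1)^{c+1}$, which add to $(-1)^{c+1}$.

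Your argument replaces this bookkeeping with a single observation. ${k-1\choose c}$ is a polynomial of degree $c<a$ in $k$ that vanishes at $k=1,\dots,c$. So the sum equals the full $a$-th alternating difference, which is zero, minus the $k=0$ term $P(0)=(-1)^c$.

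What your route buys:
\begin{itemize}
\item It is shorter and needs no case distinctions.
\item It avoids checking boundary terms such as ${a-1\choose a}=0$ and ${c-1\choose c}=0$ in the inductive step.
\item It explains where the sign comes from.
\item It proves the more general identity $\sum_{k=1}^{a}(-1)^k{a\choose k}Q(k)=-Q(0)$ for any polynomial $Q$ of degree $<a$; the lemma is the case $Q=P$.
\end{itemize}

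The paper's induction, by contrast, stays entirely within Pascal-rule manipulations of ordinary binomial coefficients. It never extends ${k-1\choose c}$ to a polynomial in $k$. Its base case $c=0$ is exactly your vanishing fact with $j=0$.
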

\begin{proof}
    The claimed identity is clearly true for  $a=c+1$, and also for $c=0$ in which case the left hand side consists of all the terms of $(1-1)^a$ except for $1^a(-1)^0$. We use  induction on $a$. For $a=1$ it holds true, as then $c=0$. Suppose the identity is true for $a-1$, and let $0\leq c<a$. If $c=a-1$ or $c=0$ we know the equality holds, otherwise, 
    \begin{align*}
        &\sum_{k=c+1}^{a}(-1)^k{a\choose k}{k-1\choose c}\\
        &=\sum_{k=c+1}^{a}(-1)^k{a-1\choose k}{k-1\choose c}+\sum_{k=c+1}^{a}(-1)^k{a-1\choose k-1}{k-1\choose c}\\
        &=(-1)^{c+1}+\sum_{k=c+2}^{a}(-1)^k{a-1\choose k-1}{k-2\choose c}+\sum_{k=c+1}^{a}(-1)^k{a-1\choose k-1}{k-2\choose c-1}\\
        &=(-1)^{c+1}+(-1)^{c+2}+(-1)^{c+1}=(-1)^{c+1}.
    \end{align*}
\end{proof}

\begin{prop} \label{propSKSk}
    For $k>0$
    $$\iota_*\llhb{S_{K_S}}_k=(-1)^{k+g-1}Z H^{b-1+k-p_g}\in A_{p_g-k}(Z).$$
\end{prop}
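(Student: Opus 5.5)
Unwinding the definitions, $\llhb{S_{K_S}}_k=\llb{\sff E,1}_k=\llb{\sff H,1}_k$ by \eqref{eq:hilbdeepest1}, with $\sh^2(\sff H)=q^*\sh^2(\sff E)=\cO_Z$ by Lemma \ref{lem:jac}. Working on the $\sff H$-suitable blow up $\nu\colon Y_B\to\prj(B)$ of $Z$, Definition \ref{def:refinedE} gives
\[\iota_*\llhb{S_{K_S}}_k=\nu_*\Big(s_k\big(\sh^2(\nu^*\sff H)(1)\big)\cap \tilde\iota_*\llb{\nu^*\sff H}\Big),\]
where $\sh^2(\nu^*\sff H)=\cO_{\tilde Z}$ and $\tilde\iota_*\llb{\nu^*\sff H}=c_\ell\big(-\nu^*\sff H(1)+\cO_{\tilde Z}(1)\big)$ with $\ell=b-1+g-p_g$, as in the proof of Proposition \ref{prop:SKS}. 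Since $\tilde Z$ is a Cartier divisor and $\cO_{\tilde Z}(1)=\cO(1)-\cO(1)(-\tilde Z)$ in $K$-theory, I get $c(\cO_{\tilde Z}(1))=(1+H)/(1+H-\tilde Z)$ and $s(\cO_{\tilde Z}(1))=(1+H-\tilde Z)/(1+H)$. For $k>0$, the $k$-th Segre class is therefore
\[s_k(\cO_{\tilde Z}(1))=-\tilde Z\,(-H)^{k-1}=(-1)^k\tilde Z H^{k-1},\]
which is divisible by $\tilde Z$. This divisibility is the key simplification: everything can be restricted to $\tilde Z\cong\prj^{g-1}\times\prj^{b-1}$.

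Next I restrict the Chern class factor to $\tilde Z$. There $-\nu^*\sff H|_{\tilde Z}$ is pulled back from the point $x$, so it is trivial of rank $\ell$ in $K$-theory, as in the proof of Proposition \ref{prop:SKS}. Also $\cO_{\tilde Z}(1)|_{\tilde Z}$ has class $\cO(1)-\cO(1)(-\tilde Z)$ restricted to $\tilde Z$, with $\cO(\tilde Z)|_{\tilde Z}=\tilde q^*\cO_N(-1)$. Writing $h:=H|_{\tilde Z}$ and $\zeta:=\tilde Z|_{\tilde Z}$, the product to compute is
\[(-1)^k H^{k-1}\,\tilde Z\, c_\ell\big(\cO^{\oplus\ell}(1)+\cO(1)-\cO(1-\tilde Z)\big)=(-1)^k\,\tilde Z\,H^{k-1}\Big\{(1+H)^{\ell}\,\frac{1+H}{1+H-\tilde Z}\Big\}_\ell .\]
I expand the bracket as $\sum_{i}\binom{\ell}{\ell-i}H^{\ell-i}\,c_i(\cO_{\tilde Z}(1))$, using $c_i(\cO_{\tilde Z}(1))=(\tilde Z-H)^{i-1}\tilde Z$ for $i\ge1$, plus the $i=0$ term $H^\ell$.

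Finally I push forward with $\nu_*(\tilde Z^j H^m)=0$ for $j\ne g$ and $\nu_*(\tilde Z^gH^m)=(-1)^{g-1}ZH^m$. The $i=0$ term contributes $\tilde Z H^{k-1+\ell}$, which pushes to zero unless $g=1$. The other terms contribute multiples of $\tilde Z^{g}H^{k-1+\ell-g+1}$, with coefficients $\sum_i\binom{\ell}{i}\binom{i-1}{g-2}(-1)^{i-g+1}$, together with the $i=0$ term when $g=1$. I will evaluate this sum with Lemma \ref{lem:altbinom} (taking $c=g-2$, and treating $g=1$ separately). The result should be that the total coefficient of $\tilde Z^g H^{b-1+k-p_g}$ is $1$, giving $(-1)^{k+g-1}ZH^{b-1+k-p_g}$.

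The main obstacle is this sign and binomial bookkeeping; the $g=1$ edge case and the index shift relative to Proposition \ref{prop:SKS} are where errors are most likely. If the sum misbehaves, an alternative is to compute directly on $\tilde Z\cong\prj^{g-1}\times\prj^{b-1}$. There the integrand becomes $(-1)^kh^{k-1}(1+h)^\ell\,(1+h)/(1+h+\xi)$ with $\xi$ the hyperplane class of $\prj^{g-1}$, and one extracts the coefficient of $\xi^{g-1}$.
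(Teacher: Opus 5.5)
Your proposal is correct and follows the paper's proof essentially step for step. It uses the same Segre class $s_k(\cO_{\tilde Z}(1))=(-1)^k\tilde Z H^{k-1}$ and the same split into the $i=0$ term (which survives only for $g=1$) and the $i\ge 1$ terms (which survive only for $g\ge 2$), with the same pushforward rule $\nu_*(\tilde Z^g)=(-1)^{g-1}Z$ and Lemma \ref{lem:altbinom} with $c=g-2$. The one cosmetic difference is that you restrict $c_{\ell-i}(-\nu^*\sff H(1))$ to $\tilde Z$ before pushing forward, whereas the paper applies the projection formula first and then uses $-\sff H|_Z=\cO_Z^{\oplus\ell}$.
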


\begin{proof} Letting $\ell=b-1+g-p_g$ again, we have
\begin{align*}
\iota_*\llhb{S_{K_S}}_k&=\nu_*\left(s_{k}\big(\nu^*h^2(\sff{H})(1)\big)\cap\llb{\nu^*\sff{H}}\right)\\
    &=\nu_*\left(s_{k}(\cO_{\tilde Z}(1))c_{b-(p_g-g+1)}\big(-\nu^*\sff{H}(1)+\cO_{\tilde Z}(1)\big)\right)\\
    &=\nu_*\left(\left\{\frac{1+H-\tilde Z}{1+H}\right\}_{k}\sum _{i=0}^{\ell}c_{\ell-i}\big(-\nu^*\sff{H}(1)\big)c_i\big(\cO_{\tilde Z}(1)\big)\right)\\
    &=\sum _{i=0}^{\ell}c_{\ell-i}(-\sff{H}(1))\nu_*\left(c_i(\cO_{\tilde Z}(1))\left\{\frac{1+H-\tilde Z}{1+H}\right\}_{k}\right)\\
     &=\sum _{i=0}^{\ell}c_{\ell-i}(-\sff{H}(1))\nu_*\left(\left\{\frac{1+H}{1+H-\tilde Z}\right\}_{i}(-1)^{k}\tilde Z H^{k-1}\right).
\end{align*}
 For $i>0$, we have
\begin{align*}
    \left\{\frac{1+H}{1+H-\tilde Z}\right\}_{i}(-1)^{k}\tilde Z H^{k-1}&=(\tilde Z-H)^{i-1}\tilde Z(-1)^{k}\tilde Z H^{k-1}\\
    &=(-1)^{k} \tilde Z^2(\tilde Z-H)^{i-1}H^{k-1}.
\end{align*}
After applying $\nu_*$ to it, we get $$(-1)^{k}{i-1\choose g-2}(-1)^{g-1}Z H^{i-g+k}(-1)^{i-g+1}.$$ This is $0$, when $g=1$. For $i=0$, we have 
\begin{align*}
     \left\{\frac{1+H}{1+H-\tilde Z}\right\}_{0}(-1)^{k}\tilde Z H^{k-1}&=(-1)^{k}\tilde Z H^{k-1}.
\end{align*}
After applying $\nu_*$, this term vanishes when $g\geq2$. 

Recall that $-\sff{H}{|_Z}=\cO_Z^{\ell}$ in $K$-theory. We distinguish two cases: if $g=1$ only one term remains
    \begin{align*}
        \iota_*\llb {S_{K_S}}_{k}=&(-1)^{k}c_{\ell}(-\sff{H}(1))ZH|_Z^{k-1}\\
        =&(-1)^{k}c_{\ell}(\cO(1)^{\ell})H|_Z^{k-1}=(-1)^{k+g-1}H|_Z^{b-1+k-p_g},
    \end{align*}
and if $g\geq2$
\begin{align*}
    \iota_*\llhb{S_{K_S}}_k&=(-1)^{k}\sum _{i=1}^{\ell}(-1)^{i+k}{i-1\choose g-2}c_{\ell-i}(-\sff{H}(1))Z H^{i-g+k}\\
    &=\sum _{i=1}^{\ell}(-1)^{i+k}{i-1\choose g-2}c_{\ell-i}(\cO(1)^{\ell})H|_Z^{i-g+k}\\
    &=\sum _{i=1}^{\ell}(-1)^{i+k}{i-1\choose g-2}{{\ell\choose \ell-i}}H^{b-1+k-p_g}|_Z\\
    &=(-1)^kH^{b-1+k-p_g}|_Z\sum _{i=g-1}^{\ell}(-1)^{i}{i-1\choose g-2}{{\ell\choose i}}\\
    &=(-1)^{k+g-1}H|_Z^{b-1+k-p_g},
\end{align*}
where last equality follows from Lemma \ref{lem:altbinom}.
\end{proof}

Putting together Proposition \ref{prop:SKS} and \ref{propSKSk}, we prove the following theorem.

\begin{theorem}\label{thm:classKS}
    When $g, p_g >0$ for the surface $S$, we have
    $$\sum_{k=0}^{p_g}\iota_*\llhb {S_{K_S}}_k=c_{b-1+g-p_g}(-\sff{H}(1))+\sum_{k=0}^{p_g}(-1)^{k+g-1} H|_Z^{b-1+p_g-k}$$
    where $Z\cong\prj^{b-1}\subset\prj(B)$.
\end{theorem}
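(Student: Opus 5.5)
The plan is to sum the $k=0$ contribution from Proposition \ref{prop:SKS} with the $k>0$ contributions from Proposition \ref{propSKSk}. The only work is bookkeeping of degrees and signs, plus checking the range of $k$.

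First, recall from Lemma \ref{lem:jac} that $\sh^2(\sff E)=\cO_x$ is torsion, so $\rkh{\sff E}^2=0$. Hence by Definition \ref{def:refinedNested} the refined classes $\llhb{S_{K_S}}_k$ are defined exactly for $0\le k\le p_g$, which matches the range of summation in the statement. Proposition \ref{prop:SKS} gives
$$\iota_*\llhb{S_{K_S}}_0=c_{b-1+g-p_g}(-\sff H(1))+(-1)^{g-1}Z\cdot H^{b-1-p_g}.$$
Proposition \ref{propSKSk} gives, for $1\le k\le p_g$,
$$\iota_*\llhb{S_{K_S}}_k=(-1)^{k+g-1}Z\cdot H^{b-1+k-p_g}.$$
The $k=0$ correction term is exactly the $k=0$ case of the same expression. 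Summing therefore yields
$$c_{b-1+g-p_g}(-\sff H(1))+\sum_{k=0}^{p_g}(-1)^{k+g-1}Z\cdot H^{b-1+k-p_g}.$$

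Next, match this with the form in the statement. Since $Z\cong\prj^{b-1}$ is a fiber of $q$, we have $\cO_{\prj(B)}(1)|_Z=\cO_{\prj^{b-1}}(1)$, and $Z\cdot H^j$ is the pushforward of $H|_Z^{j}$. The statement is written with exponent $b-1+p_g-k$ rather than $b-1+k-p_g$. I expect this to be only a matter of convention, namely whether the exponent records dimension or codimension, or whether the sum is reindexed by $k\mapsto p_g-k$.

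I would check it by dimension count. $\llhb{S_{K_S}}_k$ lives in $A_{p_g-k}$, and $H|_Z^{j}$ on $Z\cong\prj^{b-1}$ has dimension $b-1-j$. With $j=b-1+k-p_g$ this gives dimension $p_g-k$, as it should. So the statement's exponent should be read as the one coming from Proposition \ref{propSKSk}. Alternatively, reindex the sum by $k'=p_g-k$ and track the resulting sign change $(-1)^{p_g-k'}$ versus $(-1)^{k}$. That check, dimension consistency together with the sign $(-1)^{k+g-1}$, is the only delicate step. No new geometric input is needed beyond the two propositions, which were proved via the blow up $Y_B\to\prj(B)$ along $Z$ and the identity $\nu_*(\tilde Z^g)=(-1)^{g-1}Z$.
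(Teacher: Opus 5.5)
Your proposal is correct and is exactly the paper's argument: the theorem is the sum of Proposition~\ref{prop:SKS} (the $k=0$ term) and Proposition~\ref{propSKSk} (the terms $0<k\le p_g$), with the range justified by $\rkh{\sff E}^2=0$ from Lemma~\ref{lem:jac}. Your dimension count also settles the exponent correctly: $b-1+p_g-k$ in the statement is a misprint for $b-1+k-p_g$, not a reindexing convention. Read literally, every term with $k<p_g$ would vanish on $Z\cong\prj^{b-1}$, which would lose the $k=0$ correction of Proposition~\ref{prop:SKS}. And the substitution $k\mapsto p_g-k$ matches neither the sign nor the exponent.
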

\begin{corollary} \label{cor:chang} 
    $\deg[S_{K_S}]^{\vir}=(-1)^{\chi(\cO_S)}$.
\end{corollary}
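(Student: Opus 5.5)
Since $p_g>0$, Theorem \ref{redtovir} applied with $n_1=n_2=0$ and $\beta=K_S$ identifies the virtual cycle with the top refined class. By Lemma \ref{lem:jac}, $\sh^2(\sff E)=\cO_x$ is torsion, so $\rkh{\sff E}^2=0$. Hence
$$[S_{K_S}]^{\vir}=\llhb{S_{K_S}}_{p_g}\in A_0(S_{K_S}),$$
and the degree can be computed after pushing forward along $\iota$ into $\prj(B)$, since pushforward preserves degree. The irregularity splits the argument into two cases.

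\emph{Case $g>0$.} Proposition \ref{propSKSk} with $k=p_g>0$ gives
$$\iota_*\llhb{S_{K_S}}_{p_g}=(-1)^{p_g+g-1}\,Z\cdot H^{b-1}.$$
Here $Z\cong\prj^{b-1}$ is a fiber of $q$ and $H|_Z$ is the hyperplane class $c_1(\cO_{\prj^{b-1}}(1))$. Therefore $\deg(H|_Z^{b-1})=1$, and the degree equals $(-1)^{p_g+g-1}$. Since $\chi(\cO_S)=1-g+p_g$, we have $(-1)^{p_g+g-1}=(-1)^{p_g-g+1}=(-1)^{\chi(\cO_S)}$, which is the claim.

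\emph{Case $g=0$.} Corollary \ref{cor:degks} already gives the statement. In this case $h^2(K_S)=1$, so the top class is $\llhb{S_{K_S}}_{p_g-1}$. Note that Proposition \ref{propSKSk} does not apply here, since its setup assumed $g>0$.

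The main point to check is the bookkeeping of indices: the top refined index is $p_g-\rkh{\sff E}^2$, which equals $p_g$ when $g>0$ but $p_g-1$ when $g=0$. One must also confirm that the class $\llhb{S_{K_S}}_{p_g}$ in Proposition \ref{propSKSk} lies in $A_{0}$ and is supported on $Z\cap S_{K_S}$, so that taking degrees commutes with $\iota_*$. Both facts follow from the dimension count $b-1-(b-1+p_g-p_g)=0$.
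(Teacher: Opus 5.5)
Your proof is correct and follows the paper's argument. Theorem \ref{redtovir} identifies $[S_{K_S}]^{\vir}$ with $\llhb{S_{K_S}}_{p_g}$; you make explicit that $\rkh{\sff E}^2=0$ via Lemma \ref{lem:jac}, which the paper leaves implicit. The $k=p_g$ case of Proposition \ref{propSKSk} (equivalently Theorem \ref{thm:classKS}) then gives $(-1)^{p_g+g-1}\deg(H|_Z^{b-1})=(-1)^{\chi(\cO_S)}$.

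Your separate $g=0$ case via Corollary \ref{cor:degks} matches what the paper records there; the corollary itself is stated in the $g,p_g>0$ setting. Your remark about the class being supported on $Z$ is unnecessary, since degree is preserved under proper pushforward, and that support statement comes from Proposition \ref{propSKSk} itself, not from a dimension count.
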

 \begin{proof}
    Using Theorems \ref{redtovir} and \ref{thm:classKS}, we can write
    \begin{align*}
\deg[S_{K_S}]^{\vir}&=\deg\llhb{S_{K_S}}_{p_g}=(-1)^{p_g+g-1}\deg (H|_Z^{b-1})=(-1)^{p_g+g-1}.
    \end{align*}
 \end{proof}

\subsection{Blow up formulas}
Let $f:\wh S\to S$ be the blow up of $S$ at a closed point $x \in S$ with $C$ the exceptional curve. Any curve class $\wh \b\in H^2(\wh S,\bb Q)$ can be uniquely written as $\wh \b=f^* \beta+ \ell C $ for some $\ell \in \bb Z$ and some curve class $\b \in H^2( S,\bb Q) $.

For the integer $\ell$ above, we have a morphism \beq{blmap} \mu_\ell:\wh S_{\wh \b}\to S_\b\eeq defined by pushing down divisors. 
If $\ell \ge 0$ then $\mu_\ell$ is an isomorphism, and if $\ell <0$ it is a closed immersion. We denote by $\wh {\sff  G}:=R\wh \pi_* \cc P_{\wh \b}$ and $\wh{\sff F}:=R\wh \pi_* \cc P_{f^*\b}$ over $$X=\Pic_{\b}(S)\cong \Pic_{\wh \b}(\wh S)$$ with the projection $\wh \pi \colon \wh S\times X\to X$. The following theorem is an analog of the blow up formula for the Seiberg-Witten invariants of algebraic surfaces proven in \cite[Section 3.3]{DKO}. Instead of the virtual cycles, it is about the refined and the reduced cycles of the Hilbert scheme of divisors. 

\begin{theorem} \label{thm:blow up} Let $\mu_\ell:\wh S_{\wh \b}\to S_\b$ be as in \eqref{blmap} and $\nu\colon Y\to X$ be any $(\wh{ \sff F},\wh{\sff E})$-suitable blow up. Then,
$$\mu_{\ell*} \llhb{\wh S_{\wh \b}}=\begin{cases} \llhb{S_\b} & \ell=0, \\ c_1\big(\cO(\cc D_\b\big)|_{\{x\}\times S_\b}\big)^{\ell(\ell+1)/2} \cap \llhb{S_\b} & \ell <0,
 \\  \nu_*\left (c_{{\ell\choose 2}-\rkh{\wh{\sff G} }^2+\rkh{\wh{\sff F}}^2}\big(\tau^{\leq1}\nu^*\wh{\sff G}(1)-\tau^{\leq1}\nu^*\wh{\sff F}(1)\big) \cap \llb{\nu^*\wh{\sff F},1} \right)& \ell >0. \end{cases}$$
 If the reduced cycle is defined (cf. Proposition \ref{proph2l=0}), then we can replace $\llhb{-}$ by $[-]^{\op{red}}$ in above formulas; in particular, the case of $\ell>1$ gives 
 $$\nu_{\ell*}[\wh S_{\wh\beta}]^{\op{red}}=c_1(\cO(\cc D_\b)|_{\{x\}\times S_\b})^{\ell(\ell-1)/2}  \cap [S_\beta]^{\op{red}}.$$
   \end{theorem}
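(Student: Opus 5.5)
The plan is to reduce all three cases to the comparison results for 3-term complexes, namely Theorem \ref{thm:compdoub} and Proposition \ref{prop:deeprefined}. The complexes involved are $\wh{\sff G}=R\wh\pi_*\cP_{\wh\b}$, $\wh{\sff F}=R\wh\pi_*\cP_{f^*\b}\simeq R\pi_*\cP_\b=\sff E$ (by $Rf_*\cO_{\wh S}=\cO_S$ and the projection formula), all taken over $X=\Pic_\b(S)\cong\Pic_{\wh\b}(\wh S)$. The first step is to identify $\wh S_{f^*\b}\cong S_\b=\wt{\op D}_1(\sff E)$. This immediately gives the case $\ell=0$: the complexes agree up to quasi-isomorphism, so by Lemma \ref{lem:indep} the classes coincide and $\mu_0$ is the induced isomorphism.

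For $\ell\neq0$ I would use the exact triangle
$$\cP_{f^*\b}\to\cP_{f^*\b}(\ell C)\to\cP_{f^*\b}(\ell C)|_{\ell C},\qquad \ell>0,$$
and the analogous one with the roles reversed for $\ell<0$. Pushing forward gives a map between $\wh{\sff F}$ and $\wh{\sff G}$ whose cone is $R\wh\pi_*$ of a sheaf on $C\times X$. Since $\cP_{f^*\b}|_{C\times X}$ is trivial along $C\cong\prj^1$, this cone is $R\Gamma$ of $\cO_{\prj^1}$-twists tensored with the line bundle $L_x:=\cP_\b|_{\{x\}\times X}$. I will then compute it case by case.

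\textbf{Case $\ell<0$.} The cone of $\wh{\sff G}\to\wh{\sff F}$ is $R\Gamma(\cO_{|\ell|C}(f^*\b))$. Filtering by $\cO_C(k)$, $0\le k\le|\ell|-1$, gives a locally free sheaf in degree 0 of rank $|\ell|(|\ell|+1)/2$ with all factors isomorphic to $L_x$. Part (1) of Theorem \ref{thm:compdoub} then yields
$$\mu_{\ell*}\llhb{\wh S_{\wh\b}}=c_{top}(G\otimes\cO(1))\cap\llhb{S_\b}=c_1(L_x(1))^{\ell(\ell+1)/2}\cap\llhb{S_\b}.$$
Finally, $L_x(1)$ restricted to $S_\b$ is $\cO(\cD_\b)|_{\{x\}\times S_\b}$.

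\textbf{Case $\ell>0$.} The cone lives in degrees $[1,2]$. Its $\sh^1$ comes from $H^1(\cO_{\ell C}(\ell C))$, which has rank $\binom{\ell}{2}$, but $\sh^2(\wh{\sff G})$ and $\sh^2(\wh{\sff F})$ may differ. So I would pass to a $(\wh{\sff F},\wh{\sff G},\mathrm{cone})$-suitable blow up and apply part (2) of Theorem \ref{thm:compdoub}. Its locally free sheaf $A$ has $K$-class $\tau^{\le1}\nu^*\wh{\sff G}-\tau^{\le1}\nu^*\wh{\sff F}$ and rank $\binom{\ell}{2}-\rkh{\wh{\sff G}}^2+\rkh{\wh{\sff F}}^2$. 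This is exactly the displayed formula, once the isomorphism of virtual resolutions is matched with $\mu_\ell$.

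For the reduced version I would assume $H^2$ vanishes on effective classes. Then both complexes are 1-special, and Proposition \ref{prop:red1st} identifies $\llhb{-}$ with $[-]^{\op{red}}$. In the case $\ell>0$ the two $\sh^2$ terms vanish near the loci, so Proposition \ref{prop:deeprefined} replaces $A$ by the honest bundle $R^1$ of the cone, whose factors are again $L_x$. This gives $c_1(L_x(1))^{\ell(\ell-1)/2}$.

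The main obstacle is the $\ell>0$ case. There I must check that the cone really lies in $\Perf(X,1,2)$, verify the rank and the filtration computation of $H^i(\cO_{\ell C}(\ell C))$, and make sure the identification $\wt{\op D}_1(\wh{\sff F})\cong\wt{\op D}_1(\wh{\sff G})$ from Theorem \ref{thm:compdoub} agrees with $\mu_\ell$ being an isomorphism.
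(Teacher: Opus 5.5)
Your route is the paper's. You use the same two exact triangles, with cone $R\wh\pi_*$ of a sheaf supported on $|\ell|C\times X$, then part (1) of Theorem \ref{thm:compdoub} for $\ell<0$ and part (2) for $\ell>0$. The Chern classes of the cone come from its filtration, whose graded pieces are $L_x\otimes H^i(\cO_{\prj^1}(k))$. Only the graded pieces, not the cone itself, have this form, but that is all the Chern class computation needs, and it is what the paper's induction on $\ell$ amounts to. For $\ell>0$ the cone is in fact a bundle purely in degree $1$, since the pieces are $\cO_{\prj^1}(-j)$, $1\le j\le\ell$, which have no $H^0$; this settles the obstacle you raise.

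However, in both cases you bend the conclusion to match the display instead of reading it off your own computation, and the printed display appears to carry sign slips.

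\textbf{Case $\ell<0$.} You correctly find the cone has rank $|\ell|(|\ell|+1)/2=\ell(\ell-1)/2$. This agrees with $\vd_{\wh\b}=\vd_\b-\ell(\ell-1)/2$, and with multiplicity $\ge|\ell|$ at $x$ imposing $\binom{|\ell|+1}{2}$ conditions. So part (1) gives the exponent $\ell(\ell-1)/2$. The exponent $\ell(\ell+1)/2$ that you write equals $|\ell|(|\ell|-1)/2$, which is $0$ for $\ell=-1$.

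\textbf{Case $\ell>0$.} Let $u\colon\wh{\sff F}\to\wh{\sff G}$. Lemma \ref{lem:truncatecone} gives the triangle $\tau^{\le1}\nu^*\wh{\sff F}\to\tau^{\le1}\nu^*\wh{\sff G}\to A[-1]$, so $[A]=\tau^{\le1}\nu^*\wh{\sff F}-\tau^{\le1}\nu^*\wh{\sff G}$. The exact sequence $0\to A\to\sh^1(\nu^*\Cone(u))\to\sh^2(\nu^*\wh{\sff F})\to\sh^2(\nu^*\wh{\sff G})\to0$ gives $\rk A=\binom{\ell}{2}+\rkh{\wh{\sff G}}^2-\rkh{\wh{\sff F}}^2$. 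This is exactly part (2) of Theorem \ref{thm:compdoub} with $(\sff F,\sff E)=(\wh{\sff F},\wh{\sff G})$, and a dimension count confirms it. You assert the opposite $K$-class and a rank with the two $\sff h^2$-terms interchanged.

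The printed version cannot be right. In the reduced case, where $\sh^2$ vanishes near the loci, it would give $c_{\binom{\ell}{2}}$ of minus a bundle whose graded pieces are all $L_x(1)$. For $\ell=2$ that is $-c_1(L_x(1))$, contradicting the last line of the same statement. With the correct sign, your reduced-case argument gives $c_1(L_x(1))^{\binom{\ell}{2}}$, as claimed there.
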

  
\begin{proof} 
Consider the natural exact triangles
\begin{align*}& R\wh \pi_* \cc P_{f^* \b} \xr{s_{\ell  C}}  R\wh \pi_* \cc P_{\wh \b}\to  R\wh \pi_* (\cc P_{\wh \b}|_{\ell  C \times \Pic_{f^*\b}(\wh S)}) \qquad \ell \ge 0,\\
& R\wh \pi_* \cc P_{\wh \b} \xr{s^*_{-\ell  C}}  R\wh \pi_* \cc P_{f^* \b}\to  R\wh \pi_* (\cc P_{f^* \b}|_{-\ell  C \times \Pic_{f^*\b}(\wh S)}) \qquad \ell <0, \end{align*} over $\Pic_{f^*\b}(\wh S)$, where for $\ell=0$ the rightmost term is 0. By basechange and analyzing over closed points, it can be seen that $ R\wh \pi_* (\cc P_{\wh \b}|_{\ell  C \times \Pic_{f^*\b}(\wh S)})$ is a vector bundle in degree 1 for $\ell >0$, and $ R\wh \pi_* (\cc P_{f^* \b}|_{-\ell  C \times \Pic_{f^*\b}(\wh S)})$ is a vector bundle in degree 0 for $\ell <0$.
Using induction on $\ell$ and the identifications $\Pic_{f^*\b}(\wh S)\cong \Pic_{\b}(S)$ and $\wh S_{f^*\b}\cong S_\b$, shows that (see \cite[Section 3.3]{DKO} for a similar argument) 
\begin{align*}
&c\big( R^1\wh \pi_* (\cO(\cc D_{\wh \b})|_{\ell  C \times \wh S_{f^*\b}})  \big)=\left(1+c_1\big(\cO(\cc D_\b)|_{\{x\}\times S_\b}\big)\right)^{\ell(\ell-1)/2} \quad  \ell >0,\\
&c\big(R^1\pi_* (\cO(\cc D_{f^* \b})|_{-\ell  C \times \wh S_{f^*\b}})   \big)=\left(1+c_1(\cO(\cc D_\b)|_{\{x\}\times S_\b})\right)^{\ell(\ell+1)/2}  \qquad  \ell < 0.
 \end{align*} 
The formulas now follow  from Theorem \ref{thm:compdoub}. Part (1) of the theorem gives the formula for $\ell<0$ and part (2) for  $\ell>0$. 
\end{proof}

\section{Elliptic Surfaces} \label{sec:elliptic}
In this section, we let $f:S\to C$ be a relatively minimal elliptic surface over a nonsingular curve $C$ of genus $g:=g(C)$. Denote by $F_i$ the multiple fibers of $S$ and their multiplicities by $m_i$. Let $p_g=h^2(\cO_S)$ and $q:=q(S)=h^1(\cO_S)$. Such surfaces always have $\chi:=\chi(\cO_S)\geq0$. By the canonical bundle formula, 
\beq{eq:cann}\cO(K_S)\cong{f}^*(\cO_C(K_C)\otimes M^*)\otimes\cO_S\left(\sum (m_i-1)F_i\right),\eeq
where $M=R^1{f}_*\cO_S$ is a line bundle of degree $-\chi$ on $C$. As long as $M\neq\cO_C$, it is true that $q(S)=g(C)$, but when $M=\cO_C$, $q(S)=g(C)+1$. In this section, we only consider the elliptic surfaces of the first type. 

\begin{lemma}\label{lem:effectivedual}
    Any effective line bundle  $L\in \Pic(S)$ with $h^2(L)\neq0$ must be of the form
    $$L\cong f^*\cO_C(D)\otimes \cO_S(\sum  a_iF_i),$$
    where $D$ is an effective divisor on $C$ and $0\leq a_i<m_i$ for all $i$.
\end{lemma}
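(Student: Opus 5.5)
By Serre duality, $h^2(L)=h^0(K_S-L)$, so the condition $h^2(L)\neq 0$ means that $K_S-L$ is effective. Since $L$ is effective by hypothesis, $L$ is then an effective divisor $D_L$ with $D_L\le D'$ for some $D'\in|K_S|$. The plan is to show that every canonical divisor is vertical, i.e.\ a sum of fiber components, and then to describe which vertical effective divisors can arise.

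First, I will use the canonical bundle formula \eqref{eq:cann}. It gives $K_S\equiv f^*(K_C-M)+\sum(m_i-1)F_i$, so $K_S\cdot F=0$ for a fiber $F$. Hence any effective $D'\in|K_S|$ satisfies $D'\cdot F=0$. Since $F$ is nef and moves in a base point free pencil, every component of $D'$ is contained in a fiber. Every effective subdivisor $D_L$ of $D'$ is therefore vertical as well.

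Next, I will show that $|K_S|$ consists of pullbacks plus the fixed multiple-fiber part. By \eqref{eq:cann},
\[
H^0(K_S)\supseteq H^0\big(C,K_C\otimes M^*\big)\cdot s,
\]
where $s$ is the canonical section of $\cO_S(\sum(m_i-1)F_i)$. The standard fact $f_*\cO_S\big(\sum(m_i-1)F_i\big)=\cO_C$ gives equality. Thus every $D'\in|K_S|$ has the form $f^*D_0+\sum(m_i-1)F_i$ with $D_0$ effective on $C$. The reduced fibers $F_i$ appear with coefficient $m_i-1<m_i$, and all other fiber contributions are full fibers $f^*p$.

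The remaining step is to pass to subdivisors. A vertical effective $D_L\le D'$ is a sum of fiber components with bounded multiplicities. I must show that it is linearly equivalent to $f^*D+\sum a_iF_i$ with $D\ge0$ and $0\le a_i<m_i$. Within the multiple fiber $F_i$, the subdivisor is $a_iF_i$ with $a_i\le m_i-1$, which is already of the required form. The difficulty lies in the full fibers $f^*p$: a reducible singular fiber could contribute a proper partial subdivisor, such as a single component of an $I_n$ fiber.

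I expect this to be the main obstacle. The approach I would try first uses $K_S-L\ge0$ together with $L\ge0$. Both $L$ and $K_S-L$ are vertical and sum to $f^*D_0$ plus the multiple-fiber part. For a partial fiber $P$ (non-multiple), Zariski's lemma gives $P^2<0$. Writing $L=A+\text{(full fibers)}$ with $A$ partial on some fiber, and similarly $K_S-L=A'+\cdots$ on the same fiber with $A+A'$ a full fiber, I would compare $h^0$ and use rigidity. Concretely, I would argue that $h^0(L)$ and $h^0(K_S-L)$ are unchanged by removing the partial parts, since partial fiber components are fixed components. This would reduce $L$ to $f^*D+\sum a_iF_i+A$. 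I would then aim to show that such an $A$ forces $L$ not to be of the stated shape only up to linear equivalence. If a partial component cannot be absorbed, I would fall back on checking whether the lemma should be read as the statement for the moving part. This point needs care, and I would verify it through the structure of $\Pic$ restricted to fibers, using that $\cO_F(A)$ is nontrivial of degree $0$.
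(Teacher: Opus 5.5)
There is a genuine gap, exactly at the step you flagged. It cannot be closed, because the lemma as written is false once $f$ has reducible fibres.

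\textbf{The missing step cannot be supplied.} Your last step, passing from ``$L=\cO_S(E)$ with $E$ an effective subdivisor of some $f^*D_0+\sum(m_i-1)F_i$'' to the displayed form, is never carried out. The device you suggest cannot do it: discarding a partial-fibre part $A$ of $E$ because it is a fixed component changes $L$ itself. The lemma asserts an isomorphism of line bundles, not an equality of $h^0$'s.

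The partial-fibre case you worried about is in fact a counterexample. Suppose $f^*p$ is a reducible non-multiple fibre and $h^0(C,K_C\otimes M^*(-p))\neq0$. This holds automatically if $p_g\geq2$, since \eqref{eq:cann} and Lemma~\ref{lem:projection} give $h^0(C,K_C\otimes M^*)=p_g$. Let $E_1$ be a component of $f^*p$ and $E_2$ another component meeting it.
\begin{itemize}
\item $L:=\cO_S(E_1)$ is effective.
\item $K_S-E_1\sim(K_S-f^*p)+(f^*p-E_1)$ is effective, so $h^2(L)=h^0(K_S-E_1)\neq0$.
\item However, $\deg L|_{E_2}=E_1\cdot E_2>0$.
\item Every $f^*\cO_C(D)\otimes\cO_S(\sum a_iF_i)$ has degree $0$ on every curve contained in a fibre, because $m_iF_i=f^*p_i$ with $p_i:=f(F_i)$.
\end{itemize}
So any elliptic surface with $p_g\geq2$ and an $I_2$ fibre violates the lemma.

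\textbf{Comparison with the paper.} Up to verticality your route is valid and differs from the paper's. The paper gets $K_S\cdot E=0$ from nefness of $K_S$ and $K_S^2=0$, via $0\leq K_S\cdot(K_S-E)=-K_S\cdot E\leq0$. You instead read verticality off the explicit description $|K_S|=f^*|K_C\otimes M^*|+\sum(m_i-1)F_i$. The paper then asserts in one line that vertical $E$ gives the stated form with $\cO_C(D):=f_*L$. That is the same unjustified step: in the example $f_*L\cong\cO_C$, yet $L$ is not of the form $\cO_S(\sum a_iF_i)$.

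\textbf{What can be proved.} The lemma holds under an extra hypothesis such as \emph{all fibres of $f$ are irreducible}, which is true for a general elliptic surface. Then every vertical prime divisor is either a reduced non-multiple fibre $f^*p$ or some $F_i$, so $E=\sum_p n_pf^*p+\sum_ib_iF_i$ with $n_p,b_i\geq0$. Write $b_i=q_im_i+a_i$ with $0\leq a_i<m_i$ and use $m_iF_i=f^*p_i$; this gives the claim with $D=\sum_pn_pp+\sum_iq_ip_i\geq0$ and finishes your argument.

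Without such a hypothesis, the conclusion must allow an additional effective summand supported on components of reducible fibres. This also affects your remark that a subdivisor of $(m_i-1)F_i$ has the form $a_iF_i$, which fails for reducible multiple fibres of type ${}_mI_n$ with $n\geq2$.
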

\begin{proof}
    If such an $L$ exists then $\cO(K_S)\cong (\cO(K_S)\otimes L^*)\otimes L$ is effective, so  $p_g>0$. Formula \eqref{eq:cann} shows that for $F$ the class of a general fiber, $K_S$ is numerically equivalent to $rF$ with $r\in\Q$. Since $K_S$ is effective, $r\geq 0$, and hence $K_S$ is nef.

    If $S$ is an abelian or K3 surface, then $r=0$ and $K_S\cong\cO_S$. Thus, $L\cong\cO_S$, which satisfies our claim. Otherwise, $S$ must be properly elliptic, and $r>0$. Writing $L=\cO(E)$ for some effective divisor $E$ on $S$,  since $K_S$ is nef and $K_S^2=0$, we have
    $$0\leq K_S\cdot(K_S-E)=K_S^2-K_S\cdot E=-K_S\cdot E\leq 0.$$
    Therefore, $K_S\cdot E=rF\cdot E=0$ and so $E$ must be a vertical divisor. Thus, $L$ can be written as claimed  with $\cO_C(D):=f_*L$.
\end{proof}

\subsection{Split complexes revisited}\label{sec:61}
We study effective divisors of the form given in Lemma \ref{lem:effectivedual}. Let \beq{verticalbeta} \b:=c_1(L)\quad \text{where} \quad L:={f}^*\cO(D)\otimes\cO_S(\sum  a_iF_i)\eeq with $D$ an effective divisor on $C$ of degree $d$ and $0\leq a_i<m_i$. By the assumption $q(S)=g(C)$, the morphism $$\Pic_d(C)\to \Pic_\beta(S)\qquad L_0\mapsto {f}^*L_0\otimes\cO_S(\sum a_i F_i)$$  is an isomorphism. We also have the following property.

\begin{lemma}\label{lem:projection}
    For integers $0\leq a_i<m_i$, 
    $$f_*\cO_S(\sum a_i F_i)=\cO_C.$$
\end{lemma}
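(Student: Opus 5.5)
We need to show $f_*\cO_S(\sum a_iF_i)=\cO_C$ for $0\le a_i<m_i$. The question is local over $C$, so we may work one multiple fiber at a time: away from the points $p_i=f(F_i)$, the sheaf $\cO_S(\sum a_iF_i)$ is trivial and $f_*\cO_S=\cO_C$, since $f$ has connected fibers and $C$ is normal.

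First, set $\cc L:=f_*\cO_S(\sum a_iF_i)$. Since $\cO_S\subset\cO_S(\sum a_iF_i)$, we get an inclusion $\cO_C=f_*\cO_S\hookrightarrow\cc L$, and this is an isomorphism away from the points $p_i$. Second, $\cc L$ is torsion free, being the pushforward of a torsion-free sheaf on an integral surface, and it has rank one. So $\cc L$ is a line bundle containing $\cO_C$ with cokernel supported at the $p_i$. It follows that $\cc L=\cO_C(\sum b_ip_i)$ for some integers $b_i\ge0$, namely the largest $b_i$ with
$$f^*\cO_C(b_ip_i)=\cO_S(b_im_iF_i)\subset\cO_S(a_iF_i)$$
near $F_i$. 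Here we use $f^*p_i=m_iF_i$ as divisors.

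Third, we argue by divisors. A local section of $\cc L$ near $p_i$ is a rational function $\phi$ on $S$ with $\op{div}(\phi)+a_iF_i\ge0$. Since $\phi$ is regular off $F_i$ and $F_i$ is irreducible (as a reduced curve, or more carefully each component), $\phi$ is a rational function pulled back from $C$ locally. This holds because $f_*$ of the regular functions on the complement is computed fiberwise and has connected fibers, so we may write $\phi=f^*t^{-b}u$ with $t$ a uniformizer at $p_i$ and $u$ a unit. Then
$$\op{div}(\phi)=-bm_iF_i,$$
and the condition $-bm_i+a_i\ge0$ together with $a_i<m_i$ forces $b\le0$. Hence $b_i=0$ and $\cc L=\cO_C$.

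The main obstacle is making rigorous that such $\phi$ is pulled back from $C$. The cleanest way is to apply the inclusion $\cc L\supset\cO_C$ and compare $\cc L\otimes\cO_C(-p_i)$: if $b_i\ge1$, then $f^*\cO_C(p_i)=\cO_S(m_iF_i)$ would inject into $\cO_S(a_iF_i)$ near $F_i$. This would make $(a_i-m_i)F_i$ effective near $F_i$, which is a contradiction since $a_i-m_i<0$ and $F_i\neq0$. This avoids any description of $\phi$, and it uses only adjunction: a section of $\cO_C(p_i)\subset\cc L$ corresponds to $f^*\cO_C(p_i)\to\cO_S(a_iF_i)$.
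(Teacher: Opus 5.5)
Your argument is correct in substance but takes a different route from the paper. The paper inducts on $\sum a_i$ using
$$0\to\cO_S(D-F_j)\to\cO_S(D)\to\cO_{F_j}(a_jF_j)\to 0$$
together with the vanishing of $f_*\cO_{F_j}(a_jF_j)$ for $0<a_j<m_j$. That vanishing rests on two finer facts about the multiple fiber:
\begin{enumerate}
\item $\cO_{F_j}(F_j)$ has order exactly $m_j$ in $\Pic(F_j)$;
\item a nontrivial line bundle of degree zero on every component of a fiber has no nonzero sections.
\end{enumerate}
Your computation with rational functions needs only $f_*\cO_S=\cO_C$ and $f^*p_i=m_iF_i$ with $F_i$ a nonzero effective divisor. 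So it is more elementary and works for any fibration with connected fibers. It even gives $f_*\cO_S(\sum a_iF_i)=\cO_C(\sum\lfloor a_i/m_i\rfloor p_i)$ for all integers $a_i$.

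Two remarks on the write-up.

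\textbf{The step you call the main obstacle is immediate.} It does not need irreducibility of $F_i$. Take a small neighborhood $V$ of $p_i$ and a uniformizer $t$ at $p_i$. A section $\phi$ of $\cO_S(a_iF_i)$ over $f^{-1}(V)$ is regular on $f^{-1}(V\setminus\{p_i\})$. Applying $f_*\cO_S=\cO_C$ over $V\setminus\{p_i\}$ gives $\phi=f^*\psi$, with $\psi$ rational on $C$ and regular away from $p_i$. Equivalently, $f^*t\cdot\phi$ is regular on $f^{-1}(V)$ because $m_i>a_i$, hence lies in $f^*\cO_C$. Your valuation count then finishes the proof.

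\textbf{Your ``cleanest'' alternative is misstated.} A mere injection $\cO_S(m_iF_i)\hookrightarrow\cO_S(a_iF_i)$ near $F_i$ gives no contradiction. Near $F_i$ we have $m_iF_i=\op{div}(f^*t)$, so $\cO_S(m_iF_i)\cong\cO_S\subset\cO_S(a_iF_i)$, and such an injection always exists. The contradiction only appears if the adjoint map is the inclusion of subsheaves of the function field, i.e.\ it sends $t^{-1}$ to $f^*t^{-1}$. This does hold, because the adjoint restricts to the natural inclusion $\cO_S\subset\cO_S(a_iF_i)$. But that is exactly the direct computation again, so this variant does not actually avoid describing $\phi$.
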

\begin{proof}
    We use induction on $\sum a_i\geq0$. The claim is clear when all $a_i=0$. Suppose now that some $a_j>0$, we have an exact sequence
    $$0\To\cO_S(-F_j+\sum a_i F_i)\To \cO_S(\sum a_i F_i)\To \cO_{F_j}(\sum a_i F_i)\To0.$$
    Since $\cO_{F_j}(\sum a_i F_i)\cong \cO_{F_j}(a_j F_j)$ and $\cO_{F_j}(F_j)$ is a torsion sheaf of order $m_j$, then $f_*\cO_{F_j}(\sum a_iF_i)\cong0$. By the exact sequence and our inductive hypothesis   $$f_*\cO_S(\sum a_i F_i)\cong f_*\cO_S(-F_j+\sum a_i F_i)\cong\cO_C.$$
    \end{proof}

Denote by $\cP_d$ a Poincar\'e line bundle over $C\times \Pic_d(C)$, then $\cP_\beta:={f}^*\cP_d\otimes\op{pr}_1^*\cO(\sum a_iF_i)$ is a Poincar\'e line bundle over $S\times \Pic_d(C)$ with the property that ${f}_*\cP_\beta\cong\cP_d$ (cf. Lemma \ref{lem:projection}).  Here, we use the same symbol for $f$ and $f\times\op{id_{\Pic}}$. Denote by $\pi$ and $\psi$ the obvious projections commuting the diagram
\begin{equation} \label{pictriangle}
    \begin{tikzcd}
	{S\times \Pic_d(C)} && {C\times \Pic_d(C)} \\
	& {\Pic_d(C).}
	\arrow["{f}", from=1-1, to=1-3]
	\arrow["\pi"', from=1-1, to=2-2]
	\arrow["{\psi}", from=1-3, to=2-2]
\end{tikzcd}
\end{equation}
In this case, we also have the identification of the  Hilbert schemes $S_\beta\cong C_d$ via the pullback of the divisors. In particular $S_\b$ is a nonsingular variety of dimension $d$. Let  $\theta$ be the class of the theta divisor on $J(C)\cong \Pic_d(C)$. Denote by $\bar\theta:=\psi^*\theta$. Also, let $\cO_{C_d}(1)$ be the universal line bundle, and $x:=c_1(\cO_{C_d}(1))$. 

\begin{lemma} \label{lem:ellsplit} We have 
$$R\pi_* \cP_\beta\cong R\psi_*\cP_d\oplus R\psi_*\cP_d(M)[-1].$$
In particular,   $R^2\pi_{*}\cP_\beta\cong R^1\psi_{*}\cP_d(M)$ and $R^1\pi_{*}\cP_\beta\cong \psi_{*}\cP_d( M)\oplus  R^1\psi_{*}\cP_d$.
\end{lemma}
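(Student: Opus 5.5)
Since $R\pi_*=R\psi_*\circ Rf_*$, we first compute $Rf_*\cP_\beta$ on $C\times\Pic_d(C)$ and then push forward along $\psi$. By the projection formula,
$$Rf_*\cP_\beta\cong \cP_d\otimes Rf_*\cO_S\big(\textstyle\sum a_iF_i\big),$$
where $f$ also denotes $f\times\op{id}$, which is flat base change of $f$. So the key input is the structure of $Rf_*\cO_S(\sum a_iF_i)$ on $C$. Lemma \ref{lem:projection} gives $f_*\cO_S(\sum a_iF_i)=\cO_C$, and we claim $R^1f_*\cO_S(\sum a_iF_i)\cong M$. One route is relative duality: $R^1f_*\cO_S(\sum a_iF_i)$ is dual to $f_*(\omega_{S/C}(-\sum a_iF_i))$. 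By \eqref{eq:cann}, $\omega_{S/C}\cong f^*M^*\otimes\cO_S(\sum(m_i-1)F_i)$, so this dual sheaf is $M^*\otimes f_*\cO_S(\sum(m_i-1-a_i)F_i)=M^*$, again by Lemma \ref{lem:projection}, since $0\le m_i-1-a_i<m_i$. An alternative is induction on $\sum a_i$ using the exact sequences from the proof of Lemma \ref{lem:projection}: $R^1f_*$ of the torsion sheaf $\cO_{F_j}(a_jF_j)$ vanishes for $0<a_j<m_j$, and the surjection onto $R^1f_*$ of it is then checked to be an isomorphism. Duality is cleaner, and we need its torsion-freeness statement. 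Since $\cO_S(\sum a_iF_i)$ is flat over $C$ with fibers of $h^1=1$, $R^1f_*$ is a line bundle, and relative duality for the Gorenstein morphism $f$ applies.

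Next we show that $Rf_*\cO_S(\sum a_iF_i)\cong \cO_C\oplus M[-1]$ splits. The obstruction is a class in $\op{Ext}^2_C(M[-1],\cO_C)=\op{Ext}^1(M,\cO_C)$ — a priori nonzero. The relevant map, however, is the extension class of the triangle $\cO_C\to Rf_*\to M[-1]\to \cO_C[1]$, which lies in $\op{Hom}(M[-1],\cO_C[1])=\op{Ext}^2_C(M,\cO_C)=0$ because $C$ is a curve. So the splitting is automatic. After tensoring with $\cP_d$, which stays a direct sum on $C\times\Pic_d(C)$, we get
$$Rf_*\cP_\beta\cong\cP_d\oplus\cP_d(M)[-1],$$
where $M$ is pulled back from $C$. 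Here the splitting is already over $C$, so it pulls back without issue. Applying $R\psi_*$ then yields the displayed decomposition.

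The "in particular" statements follow by taking cohomology sheaves. Since $\psi$ has relative dimension $1$, $R\psi_*$ of a sheaf lives in degrees $[0,1]$. Hence the degree-2 cohomology is $R^1\psi_*\cP_d(M)$, and the degree-1 cohomology is $R^1\psi_*\cP_d\oplus\psi_*\cP_d(M)$. The main point requiring care is the identification $R^1f_*\cO_S(\sum a_iF_i)\cong M$ in the presence of multiple fibers. This is where relative duality and the canonical bundle formula \eqref{eq:cann} combined with Lemma \ref{lem:projection} do the work.
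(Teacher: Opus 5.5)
Your proof is correct, and it reaches the derived-category splitting by a different route from the paper's. The computation of $Rf_*$ is shared: $f_*\cO_S(\sum a_iF_i)=\cO_C$ by Lemma~\ref{lem:projection}, and $R^1f_*\cO_S(\sum a_iF_i)\cong M$ by relative duality and \eqref{eq:cann}.

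The paper then applies the Leray spectral sequence of $\pi=\psi\circ f$ over $\Pic_d(C)$. It degenerates at $E_2$ because $E_2^{i,j}$ is concentrated in $0\le i,j\le 1$. You instead split $Rf_*\cO_S(\sum a_iF_i)\cong\cO_C\oplus M[-1]$ already on the curve $C$, because the connecting map lies in $\op{Hom}(M[-1],\cO_C[1])=\op{Ext}^2_C(M,\cO_C)=0$. You then pull back, tensor with $\cP_d$ and apply $R\psi_*$.

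The spectral sequence identifies $R^2\pi_*\cP_\beta$ quickly. By itself, however, it only exhibits $R^1\pi_*\cP_\beta$ as an extension of $\psi_*\cP_d(M)$ by $R^1\psi_*\cP_d$, and it gives no isomorphism in $D^b(\Pic_d(C))$. Your argument produces the decomposition of $R\pi_*\cP_\beta$ itself, which is what is used later when $\sff E$ is treated as $\sff A\oplus\sff G[-1]$. It also gives the direct sum for $R^1\pi_*\cP_\beta$ at no extra cost.

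Two small repairs.

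First, delete the sentence placing the obstruction in ``$\op{Ext}^2_C(M[-1],\cO_C)=\op{Ext}^1(M,\cO_C)$''. The index shift is wrong (that group is $\op{Ext}^3_C(M,\cO_C)$), and it is not the relevant class anyway; the connecting map you give right after is the correct one.

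Second, the claim that every fiber of $\cO_S(\sum a_iF_i)$ has $h^1=1$ is exactly where the multiple fibers matter, and it needs a sentence. For instance, filter $\cO_{m_jF_j}(a_jF_j)$ by the line bundles $\cO_{F_j}((a_j-k)F_j)$, $0\le k<m_j$. Only one of these is trivial, so $h^0\le 1$, and semicontinuity together with $\chi=0$ finishes. Your inductive alternative sidesteps this entirely: it gives $R^1f_*\cO_S(\sum a_iF_i)\cong R^1f_*\cO_S=M$ directly, with $M$ already known to be a line bundle.
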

\begin{proof}
By \eqref{eq:cann}, the relative canonical bundle is $$\omega_{S/C}=f^*M^*\otimes \cO(\sum (m_i-1)F_i),$$ where as before, $M=R^1f_*\cO$. By the relative duality and Lemma \ref{lem:projection}, 
$$R^1{f}_*\cO(\sum a_iF_i)\cong(f_*\cO(\sum (m_i-1-a_i)F_i)\otimes M^*)^*\cong M.$$

We defined $\cP_\beta={f}^*\cP_d\otimes\op{pr}_1^*\cO(\sum a_i F_i)$, so by the projection formula 
$$R^j{f}_{*}\cP_\beta\cong \cP_d\otimes R^j{f}_*\cO_{S\times \Pic_d(C)}(\sum a_iF_i)\cong\begin{cases}
    \cP_d, &j=0,\\
    \cP_d(M),\;&j=1,\\
    0,&j\geq2.
\end{cases}$$
Since $R^{i\geq2}\psi_{*}(R^j{f}_{*}\cP_\beta)=0$ as $\psi$ has relative dimension $1$, the second page $E_2^{i,j}$ of the spectral sequence for $R\pi_{*}\cP_\beta=R(\psi\circ{f})_{*}\cP_\beta$ has only terms in degrees $(i,j)$ with $1\leq i, j\leq 2$, so it  converges and $E_2^{i,j}=E_\infty^{i,j}$. The claims of the lemma follow by writing down these terms. 
\end{proof}
Define the following complexes and fix locally free resolutions for them as $$\sff{E}:=R\pi_{*}\cP_\beta\simeq\{E_0\to E_1\to E_2\},$$  $$\sff{A}:=R\psi_{*}\cP_d\simeq\{A_0\to A_1\},\qquad  \sff{G}:=R\psi_{*}\cP_d(M)\simeq\{G_0\to G_1\}.$$ By Lemma \ref{lem:ellsplit}, we have
\begin{equation}\label{eq:splitell}
    \sff{E}\cong \sff{A}\oplus \sff{G}[-1].
\end{equation}

\begin{lemma}\label{lem:rankPoincare}
For any $d\in\Z$,
$$\rk(R^0\psi_*\cP_d)=\max\{0,d-g+1\},\quad \rk(R^1\psi_*\cP_d)=\max\{0,g-d-1\}.$$
\end{lemma}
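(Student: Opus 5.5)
The statement asks for the generic ranks of the two cohomology sheaves of $\sff A = R\psi_*\cP_d$ on $\Pic_d(C)$. Since $\Pic_d(C)$ is integral, the rank of a coherent sheaf equals its fiber dimension at a general closed point. So the plan is to compute $h^0(L_0)$ and $h^1(L_0)$ for a general $L_0\in\Pic_d(C)$, and then transfer this to the ranks of $R^0\psi_*\cP_d$ and $R^1\psi_*\cP_d$.

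\textbf{Transfer to fibers.} By cohomology and base change for the flat projective morphism $\psi$ of relative dimension one, $R^1\psi_*\cP_d$ commutes with base change. Hence its fiber at $L_0$ is $H^1(C,L_0)$, and its generic rank is $h^1(L_0)$ for general $L_0$. For $R^0\psi_*\cP_d$, on the dense open set where $h^1(L_0)$ attains its minimum, the function $h^0$ is constant by Riemann--Roch. By Grauert's theorem, $R^0\psi_*\cP_d$ is then locally free there with fibers $H^0(L_0)$. The ranks can therefore be read off from a general $L_0$. Equivalently, in the $K$-theory class $\sff A$ of rank $\chi=d-g+1$, exactly one of $\rk\sh^0(\sff A)$ and $\rk\sh^1(\sff A)$ is nonzero.

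\textbf{Computing general cohomology.} By Riemann--Roch, $h^0(L_0)-h^1(L_0)=d-g+1$, so it suffices to show that a general $L_0$ has $h^0(L_0)=0$ or $h^1(L_0)=0$.

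If $d\ge 2g-1$ (or, more generally, $d\ge g$ for general $L_0$), the Abel--Jacobi map $C_d\to\Pic_d(C)$ is surjective whenever $d\ge g$. Using Serre duality, $h^1(L_0)=h^0(K_C-L_0)$, and $K_C-L_0$ has degree $2g-2-d\le g-2$. The image of $C_{2g-2-d}$ in $\Pic_{2g-2-d}(C)$ has dimension at most $2g-2-d<g$, so a general $K_C-L_0$ is not effective and $h^1(L_0)=0$.

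If $d\le g-1$, the effective classes in $\Pic_d(C)$ form the image of $C_d$, of dimension at most $d<g$ (and they are empty when $d<0$). So a general $L_0$ has $h^0(L_0)=0$, and then $h^1(L_0)=g-1-d$.

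Combining these two cases gives $\max\{0,d-g+1\}$ and $\max\{0,g-d-1\}$, respectively.

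\textbf{Main obstacle.} The only subtle point is justifying that the generic rank of $R^0\psi_*\cP_d$ equals the general $h^0$, since $R^0\psi_*$ need not commute with base change. I would handle this with the semicontinuity and Grauert argument on the open locus described above, or alternatively via the two-term resolution $\{A_0\to A_1\}$: the kernel of this map has rank equal to the generic kernel dimension, which is the general $h^0$.
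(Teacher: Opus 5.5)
Your proof is correct, but it takes a different route from the paper's. The paper invokes \cite[Lemma IV.3.5]{arbarello1}: for $g-d+r\ge 0$, no component of $W^r_d=\op{D}_{r+1}(R\psi_*\cP_d)$ lies in $W^{r+1}_d$. It applies this at $r=d-g$, where $W^{d-g}_d=\Pic_d(C)$ by Riemann--Roch, to produce $L_0$ with $h^0(L_0)=d-g+1$. It then reads off the $R^1$ rank from $\rk(R\psi_*\cP_d)=d-g+1$.

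You instead use only the elementary bound $\dim\op{AJ}(C_e)\le e<g$ together with Serre duality. This shows that a general $L_0$ has $h^0(L_0)=0$ when $d\le g-1$ and $h^1(L_0)=0$ when $d\ge g$. You also justify the passage from fibrewise cohomology to generic ranks (top-degree base change for $R^1$, Grauert or additivity of generic rank for $R^0$), a step the paper leaves implicit.

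The cited Brill--Noether lemma is far stronger than what is needed. Your argument is self-contained and, in particular, treats $d\le g-1$ explicitly. There the paper's one-line argument does not literally apply: for $d<g-1$ both $W^{d-g}_d$ and $W^{d-g+1}_d$ equal $\Pic_d(C)$, and the argument would give $h^0=d-g+1<0$. What is really needed in that range is $W^0_d\neq\Pic_d(C)$, which is exactly your dimension count. This is also the range used later in Lemma~\ref{prop:ell1noblow}.

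Two cosmetic points. At $d=g-1$ both ranks vanish, so ``exactly one of $\rk\sh^0(\sff A)$, $\rk\sh^1(\sff A)$ is nonzero'' should read ``at most one''. The heading ``$d\ge 2g-1$'' and the remark on surjectivity of $C_d\to\Pic_d(C)$ are superfluous, since your Serre-duality argument works verbatim for every $d\ge g-1$.
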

\begin{proof}
    \cite[Lemma IV.3.5]{arbarello1} states that whenever $g-d+r\geq0$, no irreducible component of $W_d^r:=\op{D}_{r+1}(R\psi_*\cP_d)$ is contained in $W_d^{r+1}:=\op{D}_{r+2}(R\psi_*\cP_d)$. In particular, this shows that $W_d^r\setminus W_d^{r+1}\neq\emptyset$ when $W_d^r\neq \emptyset$.
    
    By Riemann-Roch, $W_d^{d-g}=\Pic_d(C)$.  
    Given $L_0\in W_d^{d-g}\setminus  W_d^{d-g+1}$, then $h^0(L)=d-g+1$, proving the first claim.
Since $\rk(R\psi_*\cP_d)=d-g+1$, the second claim follows.
\end{proof}

Our goal is to give an explicit evaluation of the classes $\llhb{S_\beta}_k=\llb{\sff E,1}_k$. We will distinguish three cases: \\ 1) $d\le p_g$,\\ 2) $p_g<d\le p_g+g-1$,\\ 3) $d>p_g+g-1$.  \\ Note that for $d\leq  p_g+g-1$ and $p_g>0$, the reduced cycle $[S_\beta]^{\op{red}}$ is not defined.
\subsubsection{Case $d\leq p_g$}

As a consequence of the following lemma, no blow up is needed in order to define the refined classes. 
\begin{lemma}\label{prop:ell1noblow}
    In this case, $\hd(R^2\pi_{*}\cP_\beta)\leq 1$ and $\rk(R^2\pi_{*}\cP_\beta)=p_g-d$. Moreover,
    $$\tau^{\leq1}(R\pi_{*}\cP_\beta)=R\psi_{*}\cP_d,$$
    so that $\llhb{S_\beta}=\llb{R\psi_*\cP_d,1}.$
\end{lemma}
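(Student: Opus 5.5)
By Lemma \ref{lem:ellsplit} and \eqref{eq:splitell}, $\sff E\cong \sff A\oplus\sff G[-1]$ with $\sff A=R\psi_*\cP_d$ and $\sff G=R\psi_*\cP_d(M)$. Hence $\sh^2(\sff E)\cong R^1\psi_*\cP_d(M)=\sh^1(\sff G)$. Since $\sff G$ is a 2-term complex $\{G_0\to G_1\}$ of vector bundles, $R^1\psi_*\cP_d(M)$ is the cokernel of a map of vector bundles. My plan is to show that this map is generically injective. Then its kernel $R^0\psi_*\cP_d(M)$ vanishes, $\sh^1(\sff G)$ has the locally free resolution $0\to G_0\to G_1\to \sh^1(\sff G)\to 0$, and so $\hd\le 1$.

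Step 1 is to compute the degree of $\cP_d(M)$ on each fiber. Here $M$ has degree $-\chi$, and $\chi=p_g-q+1=p_g-g+1$ because $q=g$ for surfaces of the first type. So $\cP_d(M)$ has fiber degree $d-\chi=d-p_g+g-1$, which is $\le g-1$ when $d\le p_g$. Lemma \ref{lem:rankPoincare} then gives generic rank $\rk R^0\psi_*\cP_d(M)=\max\{0,d-p_g\}=0$. Since $R^0\psi_*$ of a flat family of line bundles on curves over the integral base $\Pic_d(C)$ is torsion free, it is zero. Lemma \ref{lem:rankPoincare} also gives $\rk R^1\psi_*\cP_d(M)=g-1-(d-p_g+g-1)=p_g-d$, which is the claimed rank. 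The resolution above then shows $\hd(R^2\pi_*\cP_\beta)\le 1$. The vanishing $R^0\psi_*=0$ is formally just $\ker(G_0\to G_1)$ being a torsion-free sheaf of rank zero, so I do not expect trouble here.

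Step 2 is the truncation. Because $\sff G[-1]$ has cohomology only in degree 2, we get $\tau^{\le1}(\sff G[-1])=0$, and hence $\tau^{\le1}(\sff E)=\tau^{\le 1}(\sff A)=\sff A=R\psi_*\cP_d$, since $\sff A\in\Perf(X,0,1)$. Alternatively, one can follow Corollary \ref{cornu}: with the resolution $\{A_0\to A_1\oplus G_0\to G_1\}$, the kernel of $A_1\oplus G_0\to G_1$ is $A_1$, because $G_0\to G_1$ is injective. So the truncation is $\{A_0\to A_1\}$.

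Step 3 is the identification of the class. Since $\hd(\sh^2(\sff E))\le1$, the identity map is an $\sff E$-suitable blow up. Definition \ref{def:1st2nd} with $\nu=\id$ and $K=A_1$ then gives $\llhb{S_\beta}=\llb{\sff E,1}=0^!_{K_1}[\Gr(A_0^*,1)]$. This is by definition the class attached to the 2-term complex $\{A_0\to A_1\}$, which by Remark \ref{rem:hd=1,2}(i) equals $\llb{R\psi_*\cP_d,1}$. Here $X=\Pic_d(C)$ is nonsingular, so the class agrees with the perfect obstruction theory class, and this also follows by the quasi-isomorphism invariance in Lemma \ref{lem:indep}.

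The main obstacle is the degree and genus bookkeeping in Step 1. It needs $\chi=p_g-g+1$, which uses $q(S)=g(C)$, together with the torsion-freeness of $\psi_*$.
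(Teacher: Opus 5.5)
Your proposal is correct and follows the paper's argument essentially step for step. It uses the splitting $\sff E\cong\sff A\oplus\sff G[-1]$ and the degree count $d-\chi\le g-1$ fed into Lemma \ref{lem:rankPoincare}, which gives $R^0\psi_*\cP_d(M)=0$ and $\rk R^1\psi_*\cP_d(M)=p_g-d$. It then uses the resolution $0\to G_0\to G_1\to R^2\pi_*\cP_\beta\to 0$, which yields both $\hd\le 1$ and $\tau^{\le 1}(\sff E)=\sff A$. Your torsion-freeness remark, that $\ker(G_0\to G_1)$ has generic rank zero and therefore vanishes, spells out the step the paper passes over with ``so that''.
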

\begin{proof}
We have $R^2\pi_{*}\cP_\beta\cong R^1\psi_{*}\cP_d(M)$ and $\cP_d(M)$ is a Poincar\'e line bundle over $C\times \Pic_{d-\chi}(C)$. By Lemma \ref{lem:rankPoincare}, since $d-\chi\leq g-1$, $\rk(R^0\psi_{*}\cP_d(M))=0$, so that $R^0\psi_{*}\cP_d(M)=0$ and $\rk(R^1\psi_*\cP_d(M))=g-1-d+\chi=p_g-d$. By our choice of the locally free resolution above, we have an exact sequence $$0\to G_0\to G_1\to R^1\psi_{*}\cP_d(M)\to0.$$ This proves that $\hd(R^2\pi_*\cP_\beta)\leq1$ and $\tau^{\leq1}(\sff{E})=\sff{A}$. \end{proof}

\begin{lemma}  $\llhb{S_\beta}=[S_\beta]$. 
\end{lemma}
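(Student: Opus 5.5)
By Lemma \ref{prop:ell1noblow}, no blow up is needed and $\llhb{S_\beta}=\llb{R\psi_*\cP_d,1}$. Here $\sff A:=R\psi_*\cP_d\in\Perf(\Pic_d(C),0,1)$ is a 2-term complex and $\Pic_d(C)$ is nonsingular. So by Remark \ref{rem:hd=1,2}(i) this class is the virtual cycle $[\wt{\op D}_1(\sff A)]^{\vir}$ of Section \ref{virres}. That cycle comes from the perfect obstruction theory of $\wt{\op D}_1(\sff A)$, realized as the zero locus of a section of $Q_{\Gr}\otimes p^*A_1$ on $\prj(A_0^*)$ (cf. \eqref{eq:secforDr}). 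We have $\wt{\op D}_1(\sff A)=\prj(\sh^0(\sff A^\vee))\cong C_d\cong S_\beta$, which is the usual description of the symmetric product as the degeneracy locus of the Abel--Jacobi complex.

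The key point is a dimension count. The virtual dimension is
$$\dim \Pic_d(C)+\rank(\sff A)-1=g+(d-g+1)-1=d,$$
and $C_d$ is nonsingular of dimension $d$. A zero locus of a section of a vector bundle that is smooth of the expected dimension has virtual class equal to its fundamental class. So it suffices to show that the obstruction sheaf vanishes, or equivalently that the section is regular.

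To see this, I would identify the obstruction sheaf at a point $D\in C_d$ as $\sh^1$ of the restricted complex, namely $H^1(\cO_C(D))$, modulo the image of the derivative of the Abel--Jacobi map. Since $C_d$ is smooth of dimension $d$, the tangent space equals $H^0(\cO_D(D))$, which has dimension $d$. Comparing tangent and obstruction spaces through the exact sequence
$$0\to H^0(\cO_C)\to H^0(\cO_C(D))\to H^0(\cO_D(D))\to H^1(\cO_C)\to H^1(\cO_C(D))\to 0$$
then shows the obstruction space is zero. An alternative route avoids this computation: a nonsingular zero locus of the expected dimension $d$ is automatically a regular (lci) embedding with normal bundle equal to the restricted bundle, so $0^![\prj(A_0^*)]=[C_d]$ by \cite[Example 3.2.16 / Prop.~14.1]{fu}.

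The main obstacle is verifying that $\wt{\op D}_1(\sff A)$ is scheme-theoretically isomorphic to the smooth $C_d$, and not merely set-theoretically. I would take this from the identification $S_\beta\cong\wt{\op D}_1(\sff E)$ of \cite{G2} together with $S_\beta\cong C_d$. After that, the expected-dimension argument gives $\llhb{S_\beta}=[S_\beta]$.
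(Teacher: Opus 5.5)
Your proposal is correct and follows the paper's proof. Both arguments use Lemma \ref{prop:ell1noblow} to write $\llhb{S_\beta}=[\wt{\op D}_1(\sff A)]^{\vir}$, observe that its dimension is $d=\dim S_\beta$, and conclude because $S_\beta=\wt{\op D}_1(\sff A)\cong C_d$ is smooth of the expected dimension. Your explicit obstruction-space computation is correct but unnecessary: smoothness in the expected dimension already makes the section regular and forces the obstruction sheaf to vanish. (These two conditions are not equivalent in general, as your ``equivalently'' suggests, but both hold here.)
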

\begin{proof} By Lemma \ref{prop:ell1noblow}, $\llhb{S_\beta}=[\wt{\op{D}}_1(\sff A)]^{\vir}\in A_{p_g-\sff h^2_{\sff E}}(S_\b)$ and $$p_g-\sff h^2_{\sff E}=p_g-(p_g-d)=d=\dim S_\b.$$ Since $S_\beta=\wt {\op{D}}_1(\sff{A})$ is smooth of the expected dimension, the claim follows.

\end{proof}

The following theorem generalizes \cite[Proposition 4.8]{DKO}.

\begin{theorem}\label{totellcase1} For any $0\le k\le d$, $\llhb{S_\beta}_k$ is given by
$$\sum_{i=0}^{k}{d-p_g-i\choose k-i}\frac{(-1)^i\theta^ix^{k-i}}{i!}=(-1)^k\sum_{i=0}^{k}{p_g-d-1+k\choose k-i}\frac{\theta^ix^{k-i}}{i!}.$$
In particular, when $k=d$, since $\deg(\theta^i x^{d-i})=\frac{g!}{(g-i)!}$ we have that
$$\deg[S_\beta]^{\vir}=\deg\llhb{ S_\beta}_d=(-1)^{d}{2g-2+\chi(\cO_S)\choose d}.$$
\end{theorem}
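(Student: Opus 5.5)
Since $d\le p_g$, Lemma \ref{prop:ell1noblow} shows that no blow up is needed, so I will compute directly on $X=\Pic_d(C)$ with $\nu=\id$. By Definition \ref{def:refinedE},
$$\llhb{S_\beta}_k=s_k\big(\sh^2(\sff E)\otimes Q\big)\cap\llb{\sff E,1},$$
where $Q=\cO_{C_d}(1)$ is the universal quotient on $\wt{\op D}_1(\sff E)=S_\beta\cong C_d$. The previous lemma gives $\llb{\sff E,1}=[S_\beta]$. By Lemma \ref{prop:ell1noblow}, $\sh^2(\sff E)=R^1\psi_*\cP_d(M)$ has a two-term resolution $G_0\to G_1$. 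Hence in $K$-theory $\sh^2(\sff E)=-\sff G$, where $\sff G=R\psi_*\cP_d(M)$ has rank $d-\chi-g+1=d-p_g$. It follows that
$$\llhb{S_\beta}_k=s_k\big(-\sff G(1)\big)=c_k\big(\sff G(1)\big)$$
on $C_d$. This is consistent with the split formula of Theorem \ref{thm:refinedforsplit}.

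The main input is the Chern character of $\sff G$. I will use Grothendieck--Riemann--Roch, as in Arbarello--Cornalba--Griffiths--Harris. Twisting $\cP_d$ by $M$ changes only the normalization of the Poincaré bundle along $C$, and $\Pic_d\cong\Pic_{d-\chi}$. Mattuck's computation then gives
$$\ch(\sff G)=(d-p_g)-\theta,\qquad c(\sff G)=e^{-\theta},$$
with the standard normalization of $\cP_d$. I will note that any other choice of Poincaré bundle twists $\cP_d$ by a line bundle pulled back from $\Pic$. Such a twist rescales $\sff G$ and $Q$ inversely, so it does not affect $\sff G(1)$.

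The expansion is then formal:
$$c_k\big(\sff G\otimes\cO(1)\big)=\sum_{j}\binom{d-p_g-j}{k-j}c_j(\sff G)\,x^{k-j},\qquad c_j(\sff G)=\frac{(-1)^j\theta^j}{j!}.$$
This gives the first expression in the theorem. The second expression follows from $\binom{-n}{m}=(-1)^m\binom{n+m-1}{m}$.

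For the degree at $k=d$, I will use $\deg(\theta^ix^{d-i})=g!/(g-i)!$ on $C_d$ and sum
$$(-1)^d\sum_i\binom{p_g-1}{d-i}\binom{g}{i}=(-1)^d\binom{p_g-1+g}{d}.$$
By Vandermonde, and since $p_g-1+g=2g-2+\chi$, this is the stated value. The main obstacle is pinning down the sign and normalization in $\ch(\sff G)=(d-p_g)-\theta$, specifically confirming that the twist by $M$ does not introduce an extra $x$- or $\theta$-dependent term. To settle this, I would check that the twist by $M$ is a pullback from $C$, so it only shifts the degree.
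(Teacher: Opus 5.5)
Your proposal is correct and is essentially the paper's proof. With no blow up needed, you reduce $\llhb{S_\beta}_k$ to $s_k(-\sff G(1))=c_k\big(R\psi_*\cP_d(M)(1)\big)$ using $R^0\psi_*\cP_d(M)=0$, obtain $c\big(R\psi_*\cP_d(M)\big)=e^{-\theta}$ from Grothendieck--Riemann--Roch as in Lemma~\ref{chernpic}, and finish the degree computation with Vandermonde. The paper instead writes the twisted class in closed form as $(1+x)^{d-p_g}e^{-\theta/(1+x)}$ and expands it with Lemma~\ref{combchern}, while you expand $c_k(\sff G\otimes\cO(1))$ term by term with the twisting formula; these are the same computation. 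The identification $[S_\beta]^{\vir}=\llhb{S_\beta}_d$ is Theorem~\ref{redtovir} with $\rkh{\sff E}^2=p_g-d$.
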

\begin{proof}
    By definition, in this case 
    $$\llhb{ S_\beta}_k=[S_\beta]\cap s_k\big(R^1\psi_{*}\cP_d(M)(1)\big)$$
    Since $R^0\psi_{*}\cP_d(M)=0$,$R^1\psi_{*}\cP_d(M)=-R\psi_{*}\cP_d(M)$. By Lemma \ref{chernpic}, 
    $$\llhb{ S_\beta}_k=c_k\big(R\psi_{*}\cP_d(M)(1)\big)=\{(1+x)^{N}e^{\frac{-\theta}{1+x}}\}_k,$$
   where $N=d-g+1-\chi=d-p_g$ Using Lemma \ref{combchern}, we see that
    \begin{align*}
        c_k\big(R\psi_{*}\cP_d(M)(1)\big)
        &=(-1)^{k}\sum_{i=0}^k{k-(d-p_g)-1\choose k-i}\frac{\theta ^ix^ {k-i}}{i!}.\\
    \end{align*}
Finally, when $k=d$, $\deg\llhb{S_\beta}_d=$
    $$(-1)^d\sum_{i=0}^{d}{p_g-1\choose d-i}{g\choose i}=(-1)^{d}{p_g+g-1\choose d}=(-1)^{d}{\chi+2g-2\choose d}.$$
\end{proof}

\begin{corollary}\label{cor:pushell1}
    For the inclusion $\iota:S_\beta\hookrightarrow\prj(B)$ as  Section \ref{sec:embvia}, we have
    $$\iota_*\llhb{ S_\beta}_k=c_{b-1+g-d}\big(B(1)-R\psi_{*}\cP_d(1)\big)c_{k}\big(R\psi_{*}\cP_d(M)(1)\big).$$
    Pushing down along $q\colon \bb P(B)\to \Pic_\beta(S)$ we get:
    \begin{align*}    
        q_*\iota_*\llhb{S_\beta}_k=(-1)^{k}\frac{\theta^{g-d+k}}{g-d+k!}{g-1+p_g-2d+2k\choose k}
    \end{align*}
\end{corollary}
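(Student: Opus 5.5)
The first formula I would get directly from Theorem \ref{thm:refinedforsplit}, applied to the splitting \eqref{eq:splitell}, $\sff E\cong \sff A\oplus \sff G[-1]$, with $\sff A=R\psi_*\cP_d$ and $\sff G=R\psi_*\cP_d(M)$. I need to check the hypotheses of that subsection. First, $\sh^2(\sff E)\cong R^1\psi_*\cP_d(M)$ must be torsion; by Lemma \ref{prop:ell1noblow} it is not, but it has $\hd\le 1$. So no blow up is needed, and one can instead argue directly, as in the proof of Theorem \ref{totellcase1}. There, $\tau^{\le1}\sff E=\sff A$ and $\llhb{S_\beta}=[S_\beta]$ with $S_\beta\cong C_d$ smooth.

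The cleaner route is therefore to use \eqref{eq:modB}, which gives $\iota_*\llhb{S_\beta}=c_{b-1-\sff a}\big((B-\sff A)(1)\big)$, where $\sff a=\rk\sff A=d-g+1$ and $b-1-\sff a=b-1+g-d$. Since $\sh^2(\sff E)=-\sff G$ in $K$-theory (because $R^0\psi_*\cP_d(M)=0$), the refined class is $s_k(\sh^2(\sff E)(1))=c_k(\sff G(1))$. This class is pulled back from $\prj(B)$, so the projection formula gives the first displayed identity. One should note that $c_k(\sff G(1))$ is a genuine class on $\prj(B)$, so capping with $\iota_*[S_\beta]$ is legitimate.

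For the second formula I push forward along $q$. Write $h=c_1(\cO_{\prj(B)}(1))$, and put $y=\tfrac{h}{1}$ only formally. By Lemma \ref{chernpic} (the Chern character computation for $R\psi_*\cP_d$ used earlier), $c\big(R\psi_*\cP_d(1)\big)=(1+h)^{d-g+1}e^{-\theta/(1+h)}$ and $c\big(\sff G(1)\big)=(1+h)^{d-p_g}e^{-\theta/(1+h)}$. Also $c(B(1))=(1+h)^b$, because $B$ has trivial Chern classes after twisting appropriately; more precisely, I would use $q_*$ of $c_{b-1+j}(B(1)-V)=\{$Segre of $V\}$ via Lemma \ref{lem:pushpicn0}. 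This reduces $q_*\big(c_{b-1+g-d}((B-\sff A)(1))\,c_k(\sff G(1))\big)$ to the codimension-$(g-d+k)$ part of an expression of the form $\sum_j c_{\cdot}(-\sff A)\,c_{\cdot}(\sff G)$ on $\Pic_d(C)$. In that expression the $h$-dependence becomes binomial coefficients. Equivalently, I would combine everything into a single generating function: the product becomes $(1+h)^{2d-g+1-p_g}\ldots$, and the exponentials $e^{-\theta/(1+h)}e^{+\theta/(1+h)}$ from $-\sff A$ and $\sff G$ cancel in the $\theta$-linear part except through the $h$-shifts.

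The main obstacle is this combinatorial extraction: showing that only the pure power $\theta^{g-d+k}/(g-d+k)!$ survives, with coefficient $(-1)^k\binom{g-1+p_g-2d+2k}{k}$. I would first try the residue formulation. The pushforward $q_*$ of a polynomial in $h$ is the coefficient of $h^{b-1}$. Setting $t=1/(1+h)$, the $\theta$-exponentials combine to $e^{\theta(\ldots)}$ whose top power comes with a single binomial, and this can be evaluated by a Vandermonde-type identity analogous to Lemma \ref{lem:altbinom} and Lemma \ref{combchern}. As a sanity check, $k=d$ should recover Theorem \ref{totellcase1} after integrating against $\theta^{d}$-complementary degree.
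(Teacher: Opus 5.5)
Your handling of the first formula is the paper's argument. No blow up is needed, $\tau^{\le1}\sff E\simeq\sff A$, the two-term Thom--Porteous formula \eqref{eq:TP-Gr(B)} gives $\iota_*\llhb{S_\beta}$, and $s_k(\sh^2(\sff E)(1))=c_k(\sff G(1))$ together with the projection formula gives $\iota_*\llhb{S_\beta}_k$. One index needs fixing: the Chern class has degree $b-\sff a$, and it is $b-\sff a$ that equals $b-1+g-d$. As you wrote it, $b-1-\sff a=b-2+g-d$.

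For the second formula, your reduction via a pushforward lemma is the right one, but you stop before the actual computation, and the finishing route you propose rests on false premises.

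\begin{itemize}
\item The bundle $B=\pi_*\cP_\beta(A)$ does not have trivial Chern classes in general. The Grothendieck--Riemann--Roch computation of Lemma~\ref{chernpic} produces nonzero multiples of powers of $\theta$. Hence $q_*$ is not ``the coefficient of $h^{b-1}$''. The Chern classes of $B$ drop out only through the relation $h^b=-\sum_{i\ge1}c_i(B)h^{b-i}$, which is built into Lemmas~\ref{lem:pushpic} and~\ref{lem:pushpicn0}.
\item The integrand is a product of two fixed-degree Chern classes, not a graded piece of $c((B-\sff A)(1))\,c(\sff G(1))$. So there is no single generating function in which the exponentials cancel.
\end{itemize}

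The direct finish is short.

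\begin{enumerate}
\item Since $\rk\sff G=d-p_g$, expand $c_k(\sff G(1))=\sum_j\binom{d-p_g-j}{k-j}h^{k-j}c_j(\sff G)$.
\item The case $\sff F=0$ of Lemma~\ref{lem:pushpicn0} sends $h^{k-j}c_{b-1+g-d}((B-\sff A)(1))$ to $c_{g-d+k-j}(-\sff A)$.
\item Use $c(-\sff A)=e^{\theta}$ and $c(\sff G)=e^{-\theta}$ (Lemma~\ref{chernpic}), together with $\binom{d-p_g-j}{k-j}=(-1)^{k-j}\binom{p_g-d+k-1}{k-j}$. This gives
\[
q_*\iota_*\llhb{S_\beta}_k=(-1)^k\frac{\theta^{g-d+k}}{(g-d+k)!}\sum_j\binom{p_g-d+k-1}{k-j}\binom{g-d+k}{j}.
\]
\item Vandermonde's identity turns the sum into $\binom{g-1+p_g-2d+2k}{k}$.
\end{enumerate}

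That only the pure power of $\theta$ appears is automatic, since all the classes involved are polynomials in $\theta$.

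The paper reaches the same sum in one step, by applying Lemma~\ref{lem:pushpic} with $n=p_g-d+k$ and $m=0$. Your term-by-term version has one small advantage: it also covers $k=0$, $d=p_g$. There $n=0$, so Lemma~\ref{lem:pushpic} does not apply.
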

\begin{proof}
    Since $\tau^{\leq1}(\sff{E})\simeq \sff{A}$, the first claim follows from \eqref{eq:TP-Gr(B)}  . By Lemma \ref{lem:pushpic}, 
    \begin{align*}    
        q_*\iota_*\llhb{S_\beta}_k&=\sum_{j=0}^{p_g-d+k-1}(-1)^j{p_g-d+k-1\choose j}c_{g-d+j}\big(-R\psi_{*}\cP_d\big)c_{k-j}\big(R\psi_{*}\cP_d(\cM)\big)\\
        &=(-1)^{k}\frac{\theta^{g-d+k}}{(g-d+k)!}\sum_{j=0}^{p_g-d+k-1}{p_g-d+k-1\choose j}{g-d+k\choose k-j}\\
        &=(-1)^{k}\frac{\theta^{g-d+k}}{(g-d+k)!}{g-1+p_g-2d+2k\choose k}.
    \end{align*}
\end{proof}

\subsubsection{Case $p_g< d\le p_g+g-1$}
In this section, we work under the assumption that the curve $C$ is general.  Under this assumption, we will show that the results of Section \ref{sec:splitting} can be applied in this case. 
By Lemma \ref{lem:ellsplit} $R^2\pi_{*}\cP_\beta\cong R^1\psi_{*}\cP_d(M)$, which is  a torsion sheaf by the assumption on $d$. There is an exact sequence
\[\begin{tikzcd}
	0 & K & {G_0} & {G_1} & R^2\pi_{*}\cP_\beta & 0
	\arrow[from=1-1, to=1-2]
	\arrow[from=1-2, to=1-3]
	\arrow[from=1-3, to=1-4]
	\arrow[from=1-4, to=1-5]
	\arrow[from=1-5, to=1-6]
\end{tikzcd}\]
Letting $\cP_m=\cP_d^*(K_C-M)$ and dualizing, we get the short exact sequence
\[\begin{tikzcd}
	0 & {G_1^*} & {G_0^*} & R^1\psi_*\cP_m & 0
	\arrow[from=1-1, to=1-2]
	\arrow["f",from=1-2, to=1-3]
	\arrow[from=1-3, to=1-4]
	\arrow[from=1-4, to=1-5]
\end{tikzcd}\]
 $\cP_m$ is a Poincar\'e line bundle on $C\times\Pic_m(C)$ with $m=\chi(\cO_S)+2g-2-d$. Since $R\psi_*\cP_{d}(M)=\sff G$, and by Riemann-Roch $\rk(R^1\psi_*\cP_m)=d-p_g$. Serre duality identifies the fibers of $R^1\psi_*\cP_m$ with duals of the fibers of $R^0\psi_*\cP_{d}(M)$, so the space of the linear systems of degree $d-\chi(\cO_S)$ and dimension $d-p_g-1$ for $C$ can be expressed as follows
 \begin{align*}
     G^{d-p_g-1}_{d-\chi(\cO_S)}(C)&=\Gr(R^1\psi_*\cP_m,d-p_g)\\
     &=\{(L,V):L\in\Pic_{d-\chi(\cO_S)}(C),\, V\subset h^0(L),\,\dim(V)=d-p_g\}.
 \end{align*}
 For this rank and degree, the Brill-Noether number is 
 $$\rho=g-(d-p_g)(g-(d-\chi(\cO_S))+(d-p_g-1))=g-(d-p_g)(0)=g.$$
 By the result of Gieseker (cf. \cite[Chapter 5]{arbarello1}), since we work over a general curve $C$, $\Gr(R^1\psi_*\cP_m,d-p_g)$ is smooth of dimension $g$. As a result, all the required  conditions of Section \ref{sec:splitting} are satisfied, and an application of Theorem \ref{thm:refinedforsplit} gives the following proposition.

\begin{prop} \label{prop66}
    For $\iota:S_\beta\hookrightarrow\prj(B)$ as in Section \ref{sec:embvia} and any $0\leq k\le p_g$, we have
    $$\iota_*\llhb{S_\beta}_k=c_{b-1+g-d}\big(B(1)-R\psi_{*}\cP_d(1)\big)c_{d-p_g+k}\big(R\psi_{*}\cP_d(M)(1)\big).$$
\end{prop}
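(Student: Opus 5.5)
The plan is to apply Theorem \ref{thm:refinedforsplit} directly with $r=1$, $\sff E=\sff A\oplus\sff G[-1]$, where $\sff A=R\psi_*\cP_d$ and $\sff G=R\psi_*\cP_d(M)$ come from the splitting \eqref{eq:splitell}. After that, only the numerical invariants need to be identified. The work therefore consists of checking the hypotheses of Section \ref{sec:splitting} and then matching $c$, $\sff e$ and the indices.

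\textbf{Hypotheses.} First, $\sh^2(\sff E)\cong R^1\psi_*\cP_d(M)$ by Lemma \ref{lem:ellsplit}. Since $\cP_d(M)$ is a Poincar\'e bundle of degree $d-\chi\ge g$ on a general curve, Lemma \ref{lem:rankPoincare} shows that this sheaf is torsion. Second, dualizing the resolution of $\sh^2$ gives $\cC=\coker(\s_1^*)=\coker(G_1^*\to G_0^*)\cong R^1\psi_*\cP_m$. This sheaf has rank $c=d-p_g>0$, using Riemann--Roch together with the relation $\chi=p_g-g+1$, which holds because $q=g$. Third, we need $\Gr(\cC,c)$ to be irreducible. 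It is identified with $G^{d-p_g-1}_{d-\chi}(C)$, and by Gieseker's theorem for general $C$ this is smooth of dimension $g=\dim X$. I still have to argue that it is connected. One route is the Fulton--Lazarsfeld connectedness theorem, since the Brill--Noether number $\rho=g>0$. Another is that it is birational to $X$ through the open locus where $\cC$ is locally free. Either way, smooth and connected gives irreducible, and then Corollary \ref{cor:irred} supplies the suitable blow up.

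\textbf{Numerical identification.} Theorem \ref{thm:refinedforsplit} gives
$$\iota_*\llb{\sff E,1}_m=c_{b-\sff e-c}\big(B(1)-\sff A(1)\big)\,c_{c+m}\big(\sff G(1)\big).$$
Here $\sff e=\rk\sff E=\chi(L)$, and $\rk\sff A+\rk\sff G\cdot(-1)$ must be computed with the shift. We have $\rk\sff A=d-g+1$ and $\rk\sff G=d-\chi-g+1$, so $\sff e=(d-g+1)-(d-\chi-g+1)=\chi$. Then
$$b-\sff e-c=b-\chi-d+p_g=b-1+g-d,$$
using $p_g-\chi=g-1$. Also $c+m=d-p_g+k$. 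Finally, $\sh^2(\sff E)$ is torsion, so $\rkh{\sff E}^2=0$, and $\llhb{S_\beta}_k=\llb{\sff E,1}_k$ holds for $0\le k\le p_g$ by Definition \ref{def:refinedNested}. Substituting these values gives the stated formula.

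\textbf{Main obstacle.} The main obstacle is the irreducibility of $\Gr(\cC,c)$. The rest is bookkeeping, although the sign and shift conventions in $\sff e$ and in $\cC$ versus $\sh^2$ need care. If connectedness is not available directly, I would use the fact that $\Gr(\cC,c)$ has pure dimension $g$, being smooth of that dimension. The only component dominating $X$ is the closure of the section over the locally free locus, and since every component has dimension $g$, any other component would have to lie over a proper closed subset while still being $g$-dimensional. To exclude that, I would use a fiber-dimension count over the Brill--Noether strata $W^{r}$ for $r\ge d-p_g$; for general $C$ these strata have the expected dimension.
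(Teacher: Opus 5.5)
Your proposal is correct and is essentially the paper's argument. The paper likewise checks that $\sh^2(\sff E)\cong R^1\psi_*\cP_d(M)$ is torsion, identifies $\cC\cong R^1\psi_*\cP_m$ (of rank $c=d-p_g$) and $\Gr(\cC,c)$ with $G^{d-p_g-1}_{d-\chi}(C)$, which Gieseker's theorem makes smooth of dimension $\rho=g$. It then applies Theorem \ref{thm:refinedforsplit} with exactly your index bookkeeping: $\sff e=\chi$ and $b-\sff e-c=b-1+g-d$.

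Your extra care about irreducibility is reasonable, since the paper passes from smoothness directly to the hypotheses of Section \ref{sec:splitting}. However, the ``birational to $X$ via the locally free locus'' remark does not by itself give connectedness. Either of your other routes does close that step: Fulton--Lazarsfeld connectedness, which applies since $\rho=g\ge 2$, or your fiber-dimension count over the Brill--Noether strata, which is precisely the criterion of Proposition \ref{prop:irred} with $k=0$.
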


Pushing down along $q:\prj(B)\to\Pic_\beta(S)$, we get the following theorem.
\begin{theorem} \label{thm:67} We have
$q_*\iota_*\llhb{S_\beta}=0$, and 
    for $ 0< k\le p_g$,
 \begin{align*}
q_*\iota_*\llhb{S_\beta}_k=(-1)^{d-p_g+k}\frac{\theta^{g-p_g+k}}{(g-p_g+k)!}{g-1-p_g+2k\choose d-p_g+k}.
    \end{align*}

 
\end{theorem}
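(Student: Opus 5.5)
Starting from Proposition \ref{prop66}, we push forward along $q\colon\prj(B)\to\Pic_\b(S)\cong\Pic_d(C)$, exactly as in the proof of Corollary \ref{cor:pushell1}. The class $\iota_*\llhb{S_\beta}_k$ is a product of two Chern classes. The first is $c_{b-1+g-d}\big((B-\sff A)(1)\big)$, where $\sff A=R\psi_*\cP_d$ has rank $d-g+1$, so $B-\sff A$ has rank $b-1+g-d$ and this is a top Chern class. The second is $c_{d-p_g+k}\big(\sff G(1)\big)$, where $\sff G=R\psi_*\cP_d(M)$ has rank $d-\chi-g+1=d-p_g$. Writing $H=c_1(\cO(1))$, we will expand both factors in powers of $H$ and then apply Lemma \ref{lem:pushpic}, which pushes monomials $H^{b-1+j}$ forward to Segre classes of $B$. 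Those Segre classes cancel against the $B$-terms of the first factor. The result should be the analogue of the displayed sum in Corollary \ref{cor:pushell1}:
$$q_*\iota_*\llhb{S_\beta}_k=\sum_{j}(-1)^j{k-1\choose j}\,c_{g-d+j+(d-p_g)}\text{-type terms}\cdots,$$
concretely
$$\sum_{j=0}^{k-1}(-1)^j{k-1\choose j}\,c_{g-p_g+j}\big(-\sff A\big)\,c_{k-j}\big(\sff G\big)\quad\text{(up to the index bookkeeping of Lemma \ref{lem:pushpic}).}$$
The binomial coefficients ${k-1\choose j}$ come from the rank of $\sff G$ relative to the degree $d-p_g+k$. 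For $k=0$, the factor $c_{d-p_g}(\sff G(1))$ is the top Chern class of a rank-$(d-p_g)$ class, so the pushforward consists of terms of negative index and vanishes. This gives $q_*\iota_*\llhb{S_\beta}=0$.

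Next we evaluate the Chern classes on $\Pic_d(C)$ using Lemma \ref{chernpic}. The Poincaré formula gives $c(-R\psi_*\cP_d)=e^{\theta}$, so $c_i(-\sff A)=\theta^i/i!$. Since $M$ is a degree-$(-\chi)$ translate and translation does not change $\theta$, we get $c(\sff G)=e^{-\theta}$, so $c_i(\sff G)=(-1)^i\theta^i/i!$. Every term then becomes a multiple of $\theta^{g-p_g+k}$, with coefficient
$$\sum_j (-1)^{k}\,\frac{{k-1\choose j}}{(g-p_g+j)!\,(k-j)!}\,\text{(or the appropriately shifted binomial)}.$$
We rewrite this as $\frac{1}{(g-p_g+k)!}\sum_j{k-1+\ast\choose j}{g-p_g+k\choose k-j}$ and collapse it with Vandermonde's identity to $\frac{(-1)^{d-p_g+k}}{(g-p_g+k)!}{g-1-p_g+2k\choose d-p_g+k}$. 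This is the same step as in Corollary \ref{cor:pushell1}, with $d$ replaced by $p_g$ in the $\theta$-exponent and the lower index shifted by $d-p_g$.

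The main obstacle is the exact index bookkeeping in Lemma \ref{lem:pushpic}. The difficulty is that the second factor now has degree $d-p_g+k$, larger than its rank, so the effective "excess" parameter, which played the role of $p_g-d+k-1$ in the earlier case, is $k-1$. This parameter must be tracked so that the binomial summation, together with the sign $(-1)^{d-p_g}$ produced by the parity of the $H$-expansion, yields the stated lower index $d-p_g+k$. We would first check the formula directly in the cases $k=1$ and $g-p_g=0$, and only then run the general Vandermonde argument.
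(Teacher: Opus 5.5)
Your route is the paper's own (Proposition~\ref{prop66}, then Lemma~\ref{lem:pushpic}, Lemma~\ref{chernpic} and Vandermonde). However, the step you defer as ``index bookkeeping'' is the whole computation, and the sum you actually write down is wrong. With $\sff E=-\sff A$ of rank $\sff e=g-d-1$, $\sff F=\sff G$ of rank $\sff f=d-p_g$, $n=k$ and $m=0$, Lemma~\ref{lem:pushpic} gives
\[
q_*\iota_*\llhb{S_\beta}_k=\sum_{j=0}^{k-1}(-1)^j\binom{k-1}{j}\,c_{g-d+j}(-\sff A)\,c_{d-p_g+k-j}(\sff G),
\]
not $\sum_j(-1)^j\binom{k-1}{j}c_{g-p_g+j}(-\sff A)\,c_{k-j}(\sff G)$. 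Fed through Lemma~\ref{chernpic} and Vandermonde, your version yields $(-1)^k\frac{\theta^{g-p_g+k}}{(g-p_g+k)!}\binom{g-1-p_g+2k}{k}$. That is not the claimed formula: the lower index is $k$ instead of $d-p_g+k$, and the sign is $(-1)^k$. Your passage to the target via the placeholder $\binom{k-1+\ast}{j}$ is asserted, not derived. With the correct indices, the sign is $(-1)^j(-1)^{d-p_g+k-j}=(-1)^{d-p_g+k}$; it comes from $c(\sff G)=e^{-\theta}$, not from any parity of the $H$-expansion. Moreover $\frac{1}{(g-d+j)!\,(d-p_g+k-j)!}=\frac{1}{(g-p_g+k)!}\binom{g-p_g+k}{d-p_g+k-j}$, so Vandermonde gives $\sum_{j}\binom{k-1}{j}\binom{g-p_g+k}{d-p_g+k-j}=\binom{g-1-p_g+2k}{d-p_g+k}$. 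Terms with $g-d+j<0$ vanish on both sides.

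Your argument for $k=0$ also fails. Lemma~\ref{lem:pushpic} needs $n>0$; for $n=0$ you must use Lemma~\ref{lem:pushpicn0}. That gives $\sum_{j=0}^{d-p_g}c_{g-d+j}(-\sff A)\,c_{d-p_g-j}(\sff G)$, and being a top Chern class does not force negative indices. Since $d\ge g$, this sum is exactly the degree-$(g-p_g)$ part of $e^{\theta}e^{-\theta}=1$.
\begin{itemize}
\item When $g-p_g=1$ (i.e.\ $\chi(\cO_S)=0$), it vanishes by the cancellation $(1-1)^{g-p_g}=0$.
\item When $g<p_g$, it vanishes for degree reasons.
\item When $g=p_g$ (i.e.\ $\chi(\cO_S)=1$), the term $j=d-g$ is $c_0\,c_0=1$, so $q_*\iota_*\llhb{S_\beta}=[\Pic_d(C)]$.
\end{itemize}
For instance, for $g=p_g=2$ and $d=3$, Proposition~\ref{prop66} gives $\iota_*\llhb{S_\beta}=\iota_*\big((x-\theta)\cap[C_3]\big)$, and $C_3\to\Pic_3(C)$ is a $\prj^1$-bundle, so the pushforward is nonzero. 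The vanishing statement therefore genuinely requires $\chi(\cO_S)\neq 1$. The paper's last step $\sum_j(-1)^j\binom{g-p_g}{j}=0$ tacitly uses $g-p_g\ge 1$, and no negative-index argument can substitute for it.
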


\begin{proof} We apply $q_*$ to the identity in Proposition \ref{prop66}. For the first identity, apply Lemma \ref{lem:pushpicn0} and use \ref{chernpic} to get
\begin{align*} 
q_*\iota_*\llhb{S_\beta}&=\sum_{j=0}^{d-p_g}c_{g-d+j}(-\sff{A})c_{d-p_g-j}(\sff{G})\\
&=\sum_{j=0}^{d-p_g}\frac{\theta^{g-d+j}}{(g-d+j)!}\frac{(-\theta)^{d-p_g-j}}{(d-p_g-j)!}\\
        &=\frac{(-1)^{d-p_g}\theta^{g-p_g}}{(g-p_g)!}\sum_{j=0}^{d-p_g}(-1)^{j}{g-p_g\choose d-p_g-j}\\
        &=\frac{\theta^{g-p_g}}{(g-p_g)!}\sum_{j=0}^{g-p_g}(-1)^{j}{g-p_g\choose j}=0.
    \end{align*}
The last sum is taken to $g-p_g$ since $d>p_g$ implies $d\geq g$ since $\chi(\cO_S)\geq0$.

For the second identity, applying Lemma \ref{lem:pushpic} and \ref{chernpic}, we get
   \begin{align*} 
q_*\iota_*\llhb{S_\beta}_k&=\sum_{j=0}^{k-1}(-1)^j{k-1\choose j}c_{g-d+j}(-\sff{A})c_{d-p_g+k-j}(\sff{G})\\
        &=(-1)^{d-p_g+k}\frac{\theta^{g-p_g+k}}{(g-p_g+k)!}\sum_{j=0}^{k-1}{k-1\choose j}{g-p_g+k\choose d-p_g+k-j}.
    \end{align*}
    Now the claim follows from Vandermonde's identity.
\end{proof}

\begin{corollary}
    $\displaystyle \deg[ S_\beta]^{\vir}=(-1)^{d}{2g-2+\chi(\cO_S)\choose d}$.
\end{corollary}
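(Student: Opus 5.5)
The plan is to combine Theorem \ref{redtovir} with Theorem \ref{thm:67}. In the present case $p_g<d\le p_g+g-1$ we have $\rkh{\sff E}^2=\rk(R^1\psi_*\cP_d(M))=0$, because $R^2\pi_*\cP_\beta$ is torsion. Theorem \ref{redtovir} therefore gives $[S_\beta]^{\vir}=\llhb{S_\beta}_{p_g}$.

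Since $\deg$ is insensitive to pushforward, we can compute the degree after pushing down along $q\circ\iota$ to $\Pic_\beta(S)\cong\Pic_d(C)$, which is a $g$-dimensional abelian variety. Take $k=p_g$ in Theorem \ref{thm:67}; this needs $0<p_g$, which holds in this range because $d>p_g$ is otherwise vacuous when $p_g=0$, and the borderline subcase is handled below. It gives
$$q_*\iota_*\llhb{S_\beta}_{p_g}=(-1)^{d}\frac{\theta^{g}}{g!}{g-1+p_g\choose d}.$$
By Poincaré's formula $\deg\theta^g=g!$, so the degree is $(-1)^d{g-1+p_g\choose d}$.

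It remains to rewrite the binomial coefficient. Since $q(S)=g$, we have $\chi(\cO_S)=1-g+p_g$, hence $g-1+p_g=2g-2+\chi(\cO_S)$. This is exactly the claimed formula, and it matches the case $d\le p_g$ in Theorem \ref{totellcase1}.

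The only delicate point is the subcase $p_g=0$. There $\llhb{S_\beta}_{0}=\llhb{S_\beta}$, and Theorem \ref{thm:67} gives $q_*\iota_*\llhb{S_\beta}=0$. The formula then predicts $(-1)^d{g-1\choose d}$, which vanishes because $d\ge 1>g-1$ fails only if $d\le g-1$. In that subcase I would instead rerun the first computation in the proof of Theorem \ref{thm:67} with the exponent of $\theta$ made explicit, checking that it is $g-p_g=g$ and recovering ${g-1\choose d}$ up to sign from the alternating sum. I expect this consistency check of the $p_g=0$ edge case to be the main obstacle.
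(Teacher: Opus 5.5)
Your main line is exactly the paper's proof. Since $d>p_g$, the sheaf $R^2\pi_*\cP_\beta\cong R^1\psi_*\cP_d(M)$ is torsion, so $\rkh{\sff E}^2=0$ and Theorem \ref{redtovir} gives $[S_\beta]^{\vir}=\llhb{S_\beta}_{p_g}$. Setting $k=p_g$ in Theorem \ref{thm:67}, then using $\deg\theta^g=g!$ and $g-1+p_g=2g-2+\chi(\cO_S)$, finishes the argument.

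What needs fixing is your treatment of $p_g=0$. First, your stated reason for $p_g>0$ (``$d>p_g$ is otherwise vacuous'') is not an argument. Second, your fallback would fail. For $p_g=0$ the first computation in Theorem \ref{thm:67} gives $\frac{\theta^{g}}{g!}\sum_{j=0}^{g}(-1)^j\binom{g}{j}=0$, not $\pm\binom{g-1}{d}$. So if a case with $p_g=0$ and $1\le d\le g-1$ existed, Theorem \ref{thm:67} would \emph{contradict} the corollary there rather than confirm it.

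The correct resolution is that this case cannot occur. For these elliptic surfaces $\chi(\cO_S)=1-g+p_g\ge 0$, so $p_g\ge g-1$. The range $p_g<d\le p_g+g-1$ is nonempty only if $g\ge 2$, and then $p_g\ge 1$. Hence $k=p_g$ lies in the range $0<k\le p_g$ of the second formula of Theorem \ref{thm:67}. Replace your last paragraph with this observation and the proof is complete.
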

\begin{proof}
    In Theorem \ref{thm:67} set $k=p_g$ and use $\deg(\theta^g)=g!$.
\end{proof}

\subsubsection{Case $d>p_g+g-1$}
In this case, $$R^2\pi_{*}\cP_\beta\cong R^1\psi_{*}\cP_d(M)=0,$$ because $d-\chi>2g-2$. There is only one non-vanishing refined cycle, which coincides with the reduced cycle (cf. Proposition \ref{proph2l=0})
$$\llhb{ S_\beta}=[S_\beta]^{\op{red}}\in A_{p_g}(S_\beta).$$

\begin{theorem}
    The reduced class is given by $$[S_\beta]^{\op{red}}=c_{p_g-d}\big(T_{S_\beta}-R\pi_{*}\cP_\beta(1)\big)=\sum_{j=0}^{d-p_g}\frac{(-1)^j\theta^{j}x^{d-p_g-j}}{j!}.$$ 
\end{theorem}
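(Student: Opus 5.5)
In the case $d>p_g+g-1$ we have $R^2\pi_*\cP_\beta=0$, so $\sff E=R\pi_*\cP_\beta$ lies in $\Perf(X,0,1)$. By Remark \ref{rem:hd=1,2}, no blow up is needed and $\llhb{S_\beta}=\llb{\sff E,1}=[\wt{\op D}_1(\sff E)]^{\vir}$ is the virtual class of the 2-term construction of Section \ref{virres}. I plan to compute this class directly on $S_\beta$, using that $S_\beta\cong C_d$ is smooth of dimension $d$.

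\textbf{Step 1: the excess formula.} The class $[\wt{\op D}_1(\sff E)]^{\vir}$ comes from the section of $Q_{\Gr}\otimes E_1$ cutting out $S_\beta$ inside the smooth space $\prj(E_0^*)$. Since $S_\beta$ is smooth, the virtual class equals the top Chern class of the obstruction bundle:
$$[S_\beta]^{\vir}=c_{\rm top}(\mathrm{Ob})\cap[S_\beta],\qquad \mathrm{Ob}=T_{S_\beta}-T_{\prj(E_0^*)}|_{S_\beta}+E_1(1).$$
Using the Euler sequence, $T_{\prj(E_0^*)}=T_X+E_0(1)-\cO$. In $K$-theory this gives
$$\mathrm{Ob}=T_{S_\beta}-T_X+\cO-\sff E(1).$$
Here $T_X$ is trivial of rank $g$, since $X=\Pic_d(C)$ is an abelian variety. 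The trivial summands contribute nothing to the total Chern class, so the class is $c_{p_g}\big(T_{S_\beta}-R\pi_*\cP_\beta(1)\big)$. The rank is $d-(\sff e-1)-g+\ldots=p_g$; I will double check this against $\sff e=d-g+1+(g-1-d+\chi)\cdot(-1)$, using Lemma \ref{lem:ellsplit}. The index $p_g-d$ appearing in the statement I read as this codimension, up to the paper's convention.

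\textbf{Step 2: Chern classes on $C_d$.} By Lemma \ref{lem:ellsplit}, $R\pi_*\cP_\beta=R\psi_*\cP_d-R\psi_*\cP_d(M)$ in $K$-theory. On $C_d$, the tangent bundle is $T_{C_d}=R\psi_*\cP_d(1)-\cO$, via the standard identification of $T_{C_d}$ with $\psi_*(\cO_{\cD}(\cD))$. Substituting, the first term cancels and
$$T_{S_\beta}-\sff E(1)=R\psi_*\cP_d(M)(1)-\cO.$$
Hence $[S_\beta]^{\op{red}}=c_{d-p_g}\big(R\psi_*\cP_d(M)(1)\big)$, taken in dimension $p_g$, which matches codimension $d-p_g$.

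\textbf{Step 3: the explicit formula.} By Lemma \ref{chernpic}, exactly as in the proof of Theorem \ref{totellcase1}, we have $c\big(R\psi_*\cP_d(M)(1)\big)=(1+x)^{N}e^{-\theta/(1+x)}$ with $N=d-p_g$. The degree-$(d-p_g)$ part is $\sum_j\binom{N-j}{N-j}(-1)^j\theta^jx^{N-j}/j!$, once I check via Lemma \ref{combchern} that the binomial coefficients collapse to $1$ when $k=N$. This gives $\sum_{j=0}^{d-p_g}(-1)^j\theta^jx^{d-p_g-j}/j!$.

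\textbf{Main obstacle.} The hard step is Step 1: justifying the K-theoretic identification of the obstruction bundle with the correct signs and twists, and checking that $T_{C_d}$ matches $R\psi_*\cP_d(1)-\cO$ with the paper's conventions for $\cO(1)$.
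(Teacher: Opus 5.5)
Your proposal is correct and is essentially the paper's argument. You realize $[S_\beta]^{\op{red}}$ as a 2-term virtual class, use smoothness of $S_\beta\cong C_d$ to write it as a Chern class of the obstruction bundle $T_{S_\beta}-R\pi_*\cP_\beta(1)$ modulo trivial summands, and evaluate on $C_d$. The paper does the same, but uses the obstruction theory of $\op{D}_1(\sff H)\subset\prj(B)$ instead of the section of $E_1(1)$ on $\Gr(E_0^*,1)$, and it quotes $c(T_{C_d})=(1+x)^{d+1-g}e^{-\theta/(1+x)}$ and $c(R\pi_*\cP_\beta(1))=(1+x)^{\chi(\cO_S)}$ where you cancel $R\psi_*\cP_d(1)$ directly.

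The bookkeeping needs fixing:
\begin{itemize}
\item The obstruction bundle has rank $d-p_g$, not $p_g$, so the class is $c_{d-p_g}$ as in your Step~2. The statement's $p_g-d$ is likewise a typo for $d-p_g$, as the paper's own proof confirms.
\item $\sff e=\chi(\cO_S)$.
\item In $K$-theory, $T_{C_d}=R\psi_*\cP_d(1)-\cO+\cO^{\oplus g}$. With this, the obstruction class is exactly the rank-$(d-p_g)$ bundle $\psi_*\cP_d(M)(1)$.
\end{itemize}
These corrections only involve ranks and trivial summands, so your final formula stands.
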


\begin{proof}
As we have seen before (cf. Section \ref{sec:embvia}), $[S_\beta]^{\op{red}}=[{{\op{D}}}_1(\sff{H})]^{\vir}$ for $$\sff{H}:=\{E_0\to E_1\oplus B/\cO(-1)\to E_2\}$$ over $\prj(B)$. As shown in \cite{G1}, this has a perfect obstruction theory given by 
$$\big\{ \sh^0(\sff H)|_{S_\beta}\otimes \sh^1(\sff{H})^*|_{S_\beta}\to\Omega_{\prj(B)}|_{S_\beta}\big\}\to L_{S_\b}.$$
Since  $\sh^{-1}(R\psi_{*}\cP_d^\vee )=0,$ then $\sh^0(\sff E)$ is locally free with $S_\b=\prj(\sh^0(\sff E))$. Therefore $\sh^0(\sff{H})|_{S_\beta}=\cO_{S_\b}(-1)\cap q^*\sh^0(\sff E)=\cO_{S_\b}(-1)$, and 
\begin{align*}
    \sh^1(\sff{H})|_{S_\beta}&=-\sff{H}+\sh^0(\sff{H})|_{S_\beta}\\
    &=B/\cO_{S_\beta}(-1)-R\pi_{*}\cP_\beta+\cO_{S_\beta}(-1)=B-R\pi_{*}\cP_\beta
\end{align*} in $K$-theory.
Since $\prj(B)$ is a projective bundle over the abelian variety $\Pic_\beta(S)$, $T_{\prj(B)}=T_{\prj(B)/\Pic_\beta(S)}=B(1)$. Since $S_\beta\cong C_d$ is smooth of dimension $d$, the reduced class is given by the degree $d-p_g$ Chern class of
\begin{align*}
    \sh^1(\sff{H})\otimes \sh^0(\sff{H})^*-T_{\prj(B)}|_{S_\beta}+T_{S_\beta}&=B(1)-R\pi_{*}\cP_\beta(1)-B(1)+T_{S_\beta}\\
    &=-R\pi_{*}\cP_\beta(1)+T_{S_\beta}
\end{align*}
proving the first equality. By Lemma \ref{chernpic}, $$c\big(R\pi_{*}\cP_\beta(1)\big)=c\big(R\psi\cP_d(1)-R\psi\cP_{d-\chi}(1)\big)=(1+x)^{\chi},$$ and by  \cite[IV.(5.4)]{arbarello1} $c(T_{C_d})=(1+x)^{d+1-g}e^{\frac{-\theta}{1+x}}$. Using Lemma \ref{combchern},  we find
\begin{align*}
    [S_\beta]^{\op{red}}&=\left\{(1+x)^{d-p_g}e^{\frac{-\theta}{1+x}}\right\}_{d-p_g}\\
    &=\sum_{j=0}^{d-p_g}(-1)^j{d-p_g-j\choose d-p_g-j}\frac{\theta^j x^{d-p_g-j}}{j!}\\
    &=\sum_{j=0}^{d-p_g}(-1)^j\frac{\theta^j x^{{d-p_g}-j}}{j!}.
\end{align*}
\end{proof}


\begin{corollary}
    For any $i\geq0$, the pushforward of $x^i\cap[S_\beta]^{\op{red}}$ along $$p\colon S_\b  \to \Pic_\beta(S)$$ is equal to zero, unless $i=p_g-g$ in which case it is $[\Pic_\b(S)].$
\end{corollary}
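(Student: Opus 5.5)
We need to push forward $x^i\cap[S_\beta]^{\op{red}}$ along $p:S_\beta\cong C_d\to\Pic_d(C)\cong\Pic_\beta(S)$. The plan is to use the explicit formula just proven,
$$[S_\beta]^{\op{red}}=\sum_{j=0}^{d-p_g}(-1)^j\frac{\theta^jx^{d-p_g-j}}{j!},$$
where $\theta$ is pulled back from $\Pic_d(C)$. By the projection formula it then suffices to know $p_*(x^m)$ for $m\ge 0$. Since $d>p_g+g-1\ge 2g-2$ (as $\chi\ge0$ gives $p_g\ge g-1$), the Abel–Jacobi map $C_d\to\Pic_d(C)$ is the projective bundle $\prj(\psi_*\cP_d)$ of rank $N:=d-g+1$, with $x=c_1(\cO(1))$. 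The standard fact $p_*x^{m}=s_{m-N+1}(\psi_*\cP_d)$ (up to the sign convention for Segre classes), together with Lemma \ref{chernpic} giving $c(\psi_*\cP_d)=e^{-\theta}$, hence total Segre class $e^{\theta}$, yields
$$p_*x^{N-1+a}=\frac{\theta^a}{a!}.$$
(Equivalently, this is Poincaré's formula on $C_d$; cf. Lemma \ref{lem:pushpic}.)

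Next, substitute:
$$p_*\big(x^i[S_\beta]^{\op{red}}\big)=\sum_j(-1)^j\frac{\theta^j}{j!}\,p_*x^{d-p_g-j+i}.$$
The exponent equals $N-1+a$ with $a=i+g-p_g-j$, so the sum becomes
$$\frac{\theta^{i+g-p_g}}{(i+g-p_g)!}\sum_{j}(-1)^j\binom{i+g-p_g}{j},$$
summed over $0\le j\le\min(d-p_g,\,i+g-p_g)$. When $i+g-p_g<0$ every term vanishes. When $i+g-p_g>0$, we have $d-p_g\ge g> i+g-p_g$ only if $i<p_g$; in general one checks that the range contains all $j$ from $0$ to $i+g-p_g$, or else $\theta^{i+g-p_g}=0$ for dimension reasons when $i+g-p_g>g$. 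The alternating binomial sum then vanishes. When $i=p_g-g$, only $j=0$ survives and gives $[\Pic_\beta(S)]$.

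The main obstacle is bookkeeping: fixing the Segre/sign convention for $p_*x^m$ so that it matches the paper's normalization $c(R\psi_*\cP_d(1))$ from Lemma \ref{chernpic}, and verifying that the truncation $j\le d-p_g$ never cuts the binomial sum short when $\theta^{i+g-p_g}\ne0$. The latter holds because $d-p_g\ge g\ge i+g-p_g$ whenever $i\le p_g$, and $\theta^{>g}=0$ otherwise.
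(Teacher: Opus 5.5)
Your proposal is correct and follows essentially the paper's own argument. You combine the projection formula with the explicit formula for $[S_\beta]^{\op{red}}$, use $p_*x^{d-g+a}=s_a(\psi_*\cP_d)=\theta^a/a!$ (from $c(\psi_*\cP_d)=e^{-\theta}$ and the projective-bundle structure for $d\ge 2g-1$), and conclude from the vanishing of the alternating binomial sum. Your explicit check that the truncation $j\le d-p_g$ is harmless (because $d-p_g\ge g$ and $\theta^{g+1}=0$) fills in a step the paper passes over silently.
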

\begin{proof}
    Since $d>2g-2$,  $C_d\to \Pic_d(C)$ is the projective bundle $\prj(\psi_*\cP_d)$, where $\psi_*\cP_d$ is of rank $d-g+1$. Since $\theta$ is a class pulled back from $\Pic_d(C)$,  the pushforward of $x^{(d-p_g)+i-j}\theta^j$ is zero for $d-p_g+i-j<d-g$, or equivalently for $i-j<p_g-g$. For  $i-j\geq p_g-g$, by Lemma \ref{chernpic}

    \begin{align*}
        p_*\theta^j x^{{d-p_g}+i-j}&=\theta^js_{g-p_g+i-j}\big(\psi_{*}\cP_d\big)\\
        &=\theta^j\{e^{\theta}\}_{\theta^{g-p_g+i-j}}=\frac{\theta^{g-p_g+i}}{(g-p_g+i-j)!},
    \end{align*}
    so that
    \begin{align*}
        p_*\sum_{j=0}^{d-p_g}(-1)^j\frac{\theta^j x^{{d-p_g}+i-j}}{j!}=&\sum_{j=0}^{g-p_g+i}(-1)^j\frac{\theta^{{g-p_g}+i}}{j!(g-p_g+i-j)!}\\
        =&\frac{\theta^{g-p_g+i}}{(g-p_g+i)!}\sum_{j=0}^{g-p_g+i}(-1)^j{g-p_g+i\choose j}.
    \end{align*}
    If $g-p_g+i>0$, the last sum is the expansion of $(1-1)^{g-p_g+i}=0$, and when $g-p_g+i=0$ it is $1$.
\end{proof}

\subsection{Deformation invariance}
Using the results of the last subsection, we  formulate a stronger deformation invariance property for the refined cycles of the Hilbert schemes of divisors on elliptic fibrations (cf. Proposition \ref{prop:defInvmink}).

\begin{prop} \label{prop:defInvell}
Let $\cc S\xr{f} \cc C \to T$ be a smooth family of minimal elliptic surfaces over a nonsingular curve $T$ with $q(\cc S_t)=g(\cc C_t)$ for each $t\in T$. Let $\b$ be a vertical cohomology class of the form \eqref{verticalbeta} for each fiber $\cc S_t$ in which $d$ does not depend on $t$ and all $a_i=0$. In the case that \footnote{The Hodge numbers $q(\cc S_t)$ and $p_g(\cc S_t)$ remain unchanged for the family the surfaces.} $$p_g(\cc S_t) <d\leq p_g(\cc S_t)+q(\cc S_t)-1,$$ we assume each $\cc C_t$  is a general curve. Suppose that $$\min \{h^2(L):L\in\Pic_\beta(\cc S_t)\}$$ remains constant for the family of surfaces.
Then, the classes  $\llhb{\cc S_{t,\beta}}_k$ obey deformation invariance on $T$. 
\end{prop}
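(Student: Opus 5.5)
The plan is to reduce to the general deformation-invariance criterion, Theorem \ref{thm:defInv}, using Corollary \ref{cor:irredDefInv}: it suffices to produce an $\scr E_\bu$-suitable blow up $\cc Y\to\cc X$ of the relative Picard variety $\cc X:=\Pic_\beta(\cc S/T)$ whose central fiber $\cc Y_0$ equals the component $Y$ dominating $\cc X_0$. Here $\scr E_\bu:=R\pi_*\scr P_\beta$. The hypothesis that $\min h^2(L)$ is constant gives $\sff h^2_{\scr E_\bu}=\sff h^2_{\scr E_\bu|_t}$, which is the standing assumption of Section \ref{sec:defInvar}. Since all $a_i=0$, the relative version of Lemma \ref{lem:ellsplit} applies verbatim over $T$. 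Writing $\psi\colon\cc C\times_T\Pic_d(\cc C/T)\to\Pic_d(\cc C/T)$ and using the relative identification $\Pic_d(\cc C/T)\cong\cc X$, we get a splitting
$$\scr E_\bu\simeq R\psi_*\cP_d\oplus R\psi_*\cP_d(\cc M)[-1],$$
and hence $\sh^2(\scr E_\bu)\cong R^1\psi_*\cP_d(\cc M)$. I then treat the three ranges of $d$ from Section \ref{sec:61} separately.

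\emph{Case $d\le p_g$.} Here $R^0\psi_*\cP_d(\cc M)=0$ fiberwise, by Lemma \ref{lem:rankPoincare} applied to each $\cc C_t$ with degree $d-\chi\le g-1$. By cohomology and base change, $R^1\psi_*\cP_d(\cc M)$ is then a cokernel of an injection of bundles whose restriction to every fiber stays injective. So $\hd(\sh^2(\scr E_\bu))\le1$, and Proposition \ref{prop:hd<=1central}(1) applies directly.

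\emph{Case $d>p_g+g-1$.} Here $\sh^2(\scr E_\bu)=0$, so $\scr E_\bu$ is $1$-special and part (2) of the same proposition applies.

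\emph{Middle case $p_g<d\le p_g+g-1$.} This is the case where a genuine blow up is needed. As in the construction before Proposition \ref{prop66}, dualize the resolution $G_1^*\hookrightarrow G_0^*\twoheadrightarrow R^1\psi_*\cP_m$, which exists relatively over $T$ since $\sh^0$ of the dual vanishes fiberwise. The suitable blow up $\cc Y$ is then a component of the relative Grassmannian $\Gr(R^1\psi_*\cP_m,d-p_g)$, that is, of the relative space $G^{d-p_g-1}_{d-\chi}(\cc C/T)$ of linear series. By Gieseker's theorem on each general curve $\cc C_t$, every fiber $\Gr(R^1\psi_*\cP_m|_t,d-p_g)$ is smooth of dimension $g=\rho$. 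The key point is then that the total space is a family over $T$ all of whose fibers have the expected dimension $g$. Hence it is a local complete intersection, as the zero locus of the section \eqref{eq:sectionsigma}, of dimension $g+1$, flat over $T$ with smooth fibers, so it is smooth over $T$. I must also check the fibers are irreducible. For that I would use connectedness of $G^r_d$ on a general curve when $\rho>0$ (Fulton–Lazarsfeld), together with smoothness. This gives irreducibility in each fiber, so $\Gr(\cc C,c)|_{t}$ has a unique component of maximal dimension, and Corollary \ref{cor:irredDefInv} yields the claim.

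The main obstacle is this last step: verifying that the fiber of the relative blow up over $0$ is exactly the fiberwise Grassmannian and is irreducible, so that no extra components $W\in S$ appear. I would argue via the irreducibility of $G^{d-p_g-1}_{d-\chi}(\cc C_t)$ from Brill–Noether theory. If that fails, the fallback is to use the expected-dimension count to show that any extra component of $\cc Y_0$ would have dimension $\ge g$ inside a smooth $g$-dimensional fiber, which is impossible.
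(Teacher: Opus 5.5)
Your proposal is correct and takes essentially the same route as the paper. You use the relative splitting $\scr E_\bu\simeq R\psi_*\scr P_d\oplus R\psi_*\scr P_d(\cc M)[-1]$, then Proposition \ref{prop:hd<=1central} when $d\le p_g$. In the middle range you identify the relevant Grassmannian with $G^{d-p_g-1}_{d-\chi}$ of a general curve and apply Corollary \ref{cor:irredDefInv}.

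There are two small differences. First, you make explicit the Fulton--Lazarsfeld connectedness needed to upgrade Gieseker's smoothness to irreducibility, which the paper leaves implicit. Second, for $d>p_g+g-1$ you apply Proposition \ref{prop:hd<=1central}(2) directly, where the paper cites the known invariance of reduced cycles.

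Your smoothness-over-$T$ step is not needed. Your fallback would not work as stated: a dimension count alone cannot exclude extra $g$-dimensional connected components of a smooth but disconnected central fiber. To make it work you would need an extra argument, for example Zariski connectedness for the proper smooth family $\cc Y\to T$.
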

\begin{proof}
 Take $\cc X:=\Pic_\beta(\cc S/T)$ and let $\scr P_d$ be a Poincar\'e line bundle on $\cc C\times_T\Pic_d(\cC/T)$.  By the hypothesis that all $a_i=0$ at each fiber, the pullback of line bundles induces a morphism $$\Pic_d(\cc C/T)\xr{F}\Pic_\beta(\cc S/T).$$
 Both relative Picard schemes are smooth over $T$ of relative dimension $g$, and for any $t\in T$, $F_t:\Pic_d(\cc C_t)\to\Pic_\beta(\cc S_t)$ is an isomorphism (cf. Section \ref{sec:61}), so $F$ is an isomorphism. Also, $\scr P_\b:=(f^*\times F)\scr P_d$ is a Poincar\'e line bundle on $\cc S\times_T\Pic_{\b}(\cc S/T)$.  Define $\scr E_\bu:=R\pi_*\scr P_\b$ over $\cc X$ as in the proof of Proposition \ref{prop:defInvmink}.

 For any $p\in \cC$, let $t\in T$ be such that $p\in \cc C_t$, then  $$h^1((\cc S_t)_p,\cc O_{(\cc S_t)_p})=1,$$
since $\cc S_t\to \cc C_t$ is an elliptic fibration. By the Cohomology and Base Change theorem, it follows that $\cc M:=R^1f_*\cO_{\cc S}$ is a line bundle. Moreover, since $f:\cc S\to \cc C$ is a flat morphism between smooth schemes, by \cite[Lemma 0B91]{stacks-p} we know that the formation of $R^1f_*\cO_{\cc S}$ commutes with basechange. In particular, 
    $$(R^1f_*\cO_{\cc S})|_{\cc S_t}\cong R^1f_*\cO_{\cc S_t}.$$

For $\psi:\cc C\times_T\Pic_d(\cc C/T)\to  \Pic_d(\cc C/T)$ the projection, let $\scr A_\bu:=R\psi_*\scr P_d$ and $\scr G_\bu:=R\psi_*\scr P_d(\op{pr}_1^*\cc M)$. The argument in Lemma \ref{lem:ellsplit} generalizes to show
$$R\pi_*\scr P_\b\cong R\psi_*\scr P_d\oplus R\psi_*\scr P_d(\op{pr}_1^*\cc M)[-1].$$

When $d\leq p_g$, using the argument in Lemma \ref{prop:ell1noblow} at every fiber, we find that $\sh^0(R\psi_*\scr P_d(\op{pr}_1^*\cc M))=0$, giving a locally free presentation
$$0\To \scr G_0\To\scr G_1\To\sh^2(R\pi_*\scr P_\b)\To0.$$
Then, $\hd(\sh^2(\scr E_\bu))\leq1$ and we can apply Proposition \ref{prop:hd<=1central}. 

When $d>p_g+g-1$, the claim is about the deformation invariance property of the reduced cycle $[\cc S_{0,\b}]^{\op{red}}$ (all the other refined cycles vanish), which is known. 

When $p_g<d\leq p_g+g-1$, since 
$$\scr G_0\xr{\s} \scr G_1\to \sh^2(\scr E_\bu)\to 0$$
is a presentation, we can find an $\scr E_\bu$-suitable blowup $\nu:\cc Y\to \cc X$ with $\cc Y\subset\Gr(\cc B,b)$, for the sheaf $\cc B=\coker(\s^*)$ of rank $b$. Since $\cc C_0$ is a general curve, the discussion before Proposition \ref{prop66} shows that $\Gr(\cc B|_{\cc X_0},b)$ is irreducible, allowing us to use Corollary \ref{cor:irredDefInv} to finish the proof.
\end{proof}

\subsection{Duality} In this section, we prove a stronger version of duality that involves all the refined classes. 
We relate the cycles $\llhb{S_\beta}_k$ and $\llhb{S_{\hat{\beta}}}_{\hat k}$ for  $\hat{\beta}=K_S-\b$, for $\hat k$ chosen so that the two classes have the same dimension. Given $d=\deg(D)$, let $\hat d:=2g-2+\chi(\cO_S)-d$ be the degree of an effective divisor $\hat{D}$ on $C$ corresponding to $\hat{\beta}$. We assume that $0\leq d\leq p_g+g-1$ and $0\leq k\leq p_g-\rkh{\sff E}^2$. Then $0\leq \hat d\leq p_g+g-1$, and $0\leq \hat k\leq p_g-\rkh{\sff E}^0$.

When $g-1\leq d\leq p_g$, we allow $C$ to be any smooth curve. Otherwise, we work  under the assumption that the curve $C$ is general, since in this case either $d$ or $\hat d$ are in the range $(p_g,p_g+g-1]$.

\begin{theorem} \label{thm:dualityell}
    For $\b, \hat{\b}$ and $k, \hat k$ as above, 
    $$q_*\iota_*\llhb{ S_{\beta}}_{ k}=(-1)^{\chi(\cO_S)}q_*\iota_*\llb{ S_{\hat{\beta}}}_{\hat k}.$$
\end{theorem}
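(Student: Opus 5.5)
The plan is to reduce everything to the explicit pushforward formulas already computed in Section \ref{sec:61}, and then check the identity by direct comparison of binomial coefficients. First I will fix the relation between $k$ and $\hat k$. Since $\llhb{S_\b}_k$ has dimension $p_g-\rkh{\sff E}^2-k$ and $\llhb{S_{\hat\b}}_{\hat k}$ has dimension $p_g-\rkh{\hat{\sff E}}^2-\hat k$, I will compute these ranks using Lemma \ref{lem:ellsplit} and Lemma \ref{lem:rankPoincare}. For $\hat\b$, Serre duality gives $\rkh{\hat{\sff E}}^2=\rkh{\sff E}^0$. Writing both classes as multiples of a power of $\theta$ on $\Pic_d(C)\cong\Pic_{\hat d}(C)$ then forces $\hat k-k=\rkh{\sff E}^2-\rkh{\sff E}^0$, which equals $d-p_g$ when $d\le p_g$. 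I will also identify $\Pic_\b(S)\cong\Pic_{\hat\b}(S)$ via $L\mapsto K_S\otimes L^*$. Under this identification $\theta$ is preserved, because on $\Pic(C)$ the involution $L\mapsto K_C\otimes M^*\otimes L^*$ preserves the theta class.

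Next I split into cases according to the ranges of $d$ and $\hat d$. Since $d+\hat d=2(p_g+g-1)-(p_g-\chi+ \ldots)$, more precisely $d+\hat d=2g-2+\chi$, either both $d,\hat d\le p_g$ (the range $g-1\le d\le p_g$), or one of them lies in $(p_g,p_g+g-1]$ and the other is at most $p_g$. In the first case I use Corollary \ref{cor:pushell1} for both sides, which gives
\[
q_*\iota_*\llhb{S_\b}_k=(-1)^k\frac{\theta^{g-d+k}}{(g-d+k)!}\binom{g-1+p_g-2d+2k}{k},
\]
and the analogous expression with $\hat d,\hat k$. I then check that $g-d+k=g-\hat d+\hat k$ and that the binomial coefficients agree, using $\binom{n}{k}=\binom{n}{n-k}$ with $n=g-1+p_g-2d+2k$. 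The sign difference is $(-1)^{\hat k-k}=(-1)^{d-p_g}$, which I expect to combine with a reindexing sign to give $(-1)^{\chi}$. In the mixed case, say $p_g<d\le p_g+g-1$ so that $\hat d<g-1+\chi-p_g+\ldots\le p_g$, I use Theorem \ref{thm:67} for $\b$ and Corollary \ref{cor:pushell1} for $\hat\b$. I then again match the powers of $\theta$ and compare $\binom{g-1-p_g+2k}{d-p_g+k}$ with $\binom{g-1+p_g-2\hat d+2\hat k}{\hat k}$ via symmetry of binomials. The vanishing $q_*\iota_*\llhb{S_\b}=0$ from Theorem \ref{thm:67} must correspond to an index $\hat k$ that falls outside the allowed range, or to a binomial that vanishes on the dual side. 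The reverse mixed case follows by swapping the roles of $\b$ and $\hat\b$, since $\chi$ is symmetric.

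The main obstacle is the bookkeeping: getting the exact relation between $k$ and $\hat k$ right in each regime, and verifying that the signs $(-1)^{k}$, $(-1)^{d-p_g+k}$ and the binomial reindexing combine to exactly $(-1)^{\chi(\cO_S)}$. This is where I will need care with $p_g=g-1+\chi$ and $\hat d=2g-2+\chi-d$. I will first test the identity on the extreme case $k=p_g-\rkh{\sff E}^2$, where it should recover the known duality for virtual cycles via Theorem \ref{thm:wallhilb}, and use that case to pin down the sign.
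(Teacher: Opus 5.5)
Your route differs from the paper's and can be made to work. The paper does not use the closed forms. It keeps both pushforwards as sums $\sum_j(-1)^j\binom{p_g-d+k-1}{j}\,c_\bullet(\cdot)\,c_\bullet(\cdot)$ in the Chern classes of $\sff A=R\psi_*\cP_d$, $\sff G=R\psi_*\cP_d(M)$ and their duals. The $\hat\beta$-side complexes are identified with $\sff G^\vee[-1]$ and $\sff A^\vee[-1]$ by relative duality, and the two sums are matched via $j\mapsto(p_g-d+k-1)-j$ and $c_i(\sff V^\vee)=(-1)^ic_i(\sff V)$. That argument is structural and never compares theta classes. Yours substitutes the evaluated formulas of Corollary \ref{cor:pushell1} and Theorem \ref{thm:67}, so the identity reduces to $\binom{n}{k}=\binom{n}{n-k}$ plus a parity count. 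The cost is checking that $\Pic_d(C)\cong\Pic_{\hat d}(C)$ carries $\theta$ to $\theta$, which is cleanest to see from the same duality identifications.

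However, the bookkeeping you wrote down is wrong, and since the proof consists of nothing else, it has to be repaired. The only relation needed is $d+\hat d=2g-2+\chi=g-1+p_g$; normalize $d\le p_g$ as the paper does. Your formula $\hat k-k=\rkh{\sff E}^2-\rkh{\sff E}^0$ is right, but its value is not $d-p_g$. By Lemmas \ref{prop:ell1noblow} and \ref{lem:rankPoincare}, $\rkh{\sff E}^2=p_g-d$ and $\rkh{\sff E}^0=\max\{0,d-g+1\}$. Hence there are two cases:
\begin{itemize}
\item[(a)] If $d<g-1$, then $\hat k-k=p_g-d$. Here $\hat d>p_g$, Theorem \ref{thm:67} applies to $\hat\beta$, and $\hat k\ge 1$, so the vanishing at index $0$ never enters.
\item[(b)] If $g-1\le d\le p_g$, then $\hat k-k=\hat d-d=g-1+p_g-2d$, and Corollary \ref{cor:pushell1} applies to both sides.
\end{itemize}
With your value, the $\theta$-exponents $g-d+k$ and $g-\hat d+\hat k$ already disagree in general.

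With the correct values, set $n:=g-1+p_g-2d+2k$.
\begin{itemize}
\item In both cases the $\beta$-side is $(-1)^k\frac{\theta^{g-d+k}}{(g-d+k)!}\binom{n}{k}$.
\item The $\hat\beta$-side is $(-1)^{n-k}\frac{\theta^{g-d+k}}{(g-d+k)!}\binom{n}{n-k}$. The lower index $n-k$ equals $\hat d-p_g+\hat k$ in case (a) and $\hat k$ in case (b).
\item Symmetry of the binomial needs $n\ge 0$. In case (a) this is automatic, since $n=(g-1-d)+(p_g-d)+2k>0$. In case (b), $n=k+\hat k$, and this is exactly where the hypothesis $\hat k\ge 0$ is used.
\item The sign ratio is $(-1)^{n-2k}=(-1)^{2g-2+\chi}=(-1)^{\chi}$ directly. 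There is no extra reindexing sign, and your $(-1)^{d-p_g}$ is wrong in case (b).
\end{itemize}
Calibrating the sign on the virtual-cycle case via Theorem \ref{thm:wallhilb} is then only a consistency check, not part of the proof.
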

\begin{proof}
    We assume that $0\leq d\leq p_g$, since otherwise $0\leq \hat d\leq p_g$ and the argument is symmetric on exchanging $\beta\leftrightarrow\hat\beta$.
    
    If $d<g-1$ then $\hat d> p_g$. In this case, $\hat k= p_g-d+k$ and as in the proof of the second identity of Theorem \ref{thm:67}, we get 
    \begin{align*}
        q_*\iota_*\llhb{ S_{\hat{\beta}}}_{\hat k}&=\sum_{j=0}^{\hat k}(-1)^j{\hat k-1\choose j}c_{g-\bar d+j}(\sff{G}^\vee)c_{\hat d-p_g+\hat k-j}(-\sff{A}^\vee)\\
        &=\sum_{j=0}^{p_g- d+k-1}(-1)^j{p_g-d+k-1\choose j}c_{d-p_g+1+j}(\sff{G}^\vee)c_{\bar d-d+k-j}(-\sff{A}^\vee)\\
        &=\sum_{j=0}^{\hat d-d+k}(-1)^j{p_g-d+k-1\choose j}c_{d-p_g+1+j}(\sff{G}^\vee)c_{\bar d-d+k-j}(-\sff{A}^\vee).
        \end{align*}
    in the last equality we use that $\hat d-d+k=g+(p_g-d+k-1)-d\leq p_g-d+k-1$ to cut the sum at this term.

    If $g-1\leq d\leq p_g$ then $\hat d\leq p_g$. In this case,  $\hat k=\hat d-d+k$ and as in the proof of Corollary \ref{cor:pushell1} we have
    \begin{align*}
        q_*\iota_*\llhb{ S_{\hat{\beta}}}_{\hat k}&=\sum_{j=0}^{\hat k}(-1)^j{p_g-\hat d+\hat k-1\choose j}c_{g-\bar d+j}(\sff{G}^\vee) c_{\hat k-j}(-\sff{A}^\vee)\\
        &=\sum_{j=0}^{\hat d-d+ k}(-1)^j{p_g- d+ k-1\choose j}c_{d-p_g+1+j}(\sff{G}^\vee)c_{ \bar d-d+k-j}(-\sff{A}^\vee).
        \end{align*}
    Thus in either case, we get the same expression. Changing the variable $j\leftrightarrow (p_g-d+k-1)-j$ and using $\hat d-d+k=g+(p_g-d+k-1)-d$, we see that the last sum will now start from $\op{max}\{d-g,0\}$:
        \begin{align*}
        &=(-1)^{p_g-d+k-1}\sum_{j=\op{max}\{d-g,0\}}^{p_g- d+k-1}(-1)^j{p_g-d+k-1\choose j}c_{k-j}(\sff{G}^\vee)c_{g-d+j}(-\sff{A}^\vee)\\
        &=(-1)^{p_g-d+k-1}\sum_{j=0}^{p_g- d+k-1}(-1)^j{p_g-d+k-1\choose j}c_{k-j}(\sff{G}^\vee)c_{g-d+j}(-\sff{A}^\vee)\\
        &=(-1)^{p_g+g-1}\sum_{j=0}^{p_g- d+k-1}(-1)^j{p_g-d+k-1\choose j}c_{k-j}(\sff{G})c_{g-d+j}(-\sff{A})\\
        &=(-1)^{\chi(\cO(S))}q_*\iota_*\llb{ S_\beta}_{k}.
    \end{align*}
    For the first equality, when $\op{max}\{d-g,0\}=d-g$, we can start the sum at $0$ as  $c_{g-d+j}(\sff A^\vee)=0$ for $0\leq j< d-g$. This together with $d\leq p_g$ give the first equality. 
\end{proof}

\section{Stable pairs}
\label{sec:curvecounting}

In this section, we follow the work of Kool-Thomas in \cite{Kool_2014, Kool_2014_2}, and using our theory, we relax the condition in their work required for the existence of the reduced cycle on the moduli space of stable pairs. As before, let $S$ be a nonsingular complex projective surface. For simplicity we denote the nested Hilbert scheme $S^{[0,n]}_\b$ by $S^{[n]}_\b$.

\subsection{Setup}
 There is a natural isomorphism of schemes with (reduced) perfect obstruction theories (cf. \cite{gholampour2020nested, G1,G2})
 $$P_{1-h+n}(S,\beta) \cong S^{[n]}_\beta, $$ where the left hand side is the moduli space of stable pairs on $S$, and $h$ is the arithmetic genus of curves in class $\beta$, determined by adjunction $2h-2=\beta^2+\beta\cdot K_S.$
Let $\cc D_\b \subset S\times S^{[n]}_\b$ be the universal divisor, and $\cc D^{[n]}_\b$ be the associated rank $n$ tautological bundle over $S^{[n]}_\b$. Over $S\times S^{[n]}_\b$, we have $\cO(-\cD_\b)\subset \cI$. Then, for the projections $S \xleftarrow{p} S\times S_\beta^{[n]}\xr{\pi}  S_\beta^{[n]}$, \beq{def:sffT}\sff T:=R\sHom_\pi(\cI/\cO(-\cD_\b),\cI)[1]\in \Perf(S^{[n]}_\b, 0,1)\eeq is identified with the virtual tangent bundle of the moduli space of stable pairs.
 For any $\sigma_i\in H^*(S,\Z)$, let $\tau(\sigma_i):=\pi_*(c_1(\cD_\beta)\cap p^*(\sigma_i))$. 
 Define the intersection numbers
$$\llhb {S_\beta^{[n]}(\sigma_1,\dots,\sigma_\ell)}:=\int_{\llhb{S_\beta^{[n]}}}\prod^\ell_{i=1}\tau(\sigma_i)\quad\in \Z,$$
where it is defined to be zero if $\dim \prod^\ell_{i=1}\tau(\sigma_i)\neq 2(n+\vd_\b+p_g)$.

Given an integral basis of $\bar \ga=\{\gamma_1,\dots,\ga_{b_1(S)}\}$ of  $H_1(S,\Z)/$torsion, such that  $\int_{\Pic_\beta(S)}\tilde\gamma_1\wedge\dots\wedge\tilde\gamma_{b_1}=1$, we consider the intersection numbers of the form
$$\llhb{S_\beta^{[n]}(m,\bar\gamma;\bar\sigma)}:=\llhb{S_\beta^{[n]}([\pt]^{m}[\gamma_1]\cdots[\gamma_{b_1(S)}]\,\sigma_1\cdots\sigma_\ell)},$$
where  $\bar\sigma=(\sigma_1,\cdots,\sigma_\ell)$, $[\pt]$  is the class of a point on $S$ and $[\ga_i]$ is the Poincar\'e dual of $\ga_i$. As in \cite[Equation (57)]{Kool_2014}, the insertion of the basis $\bar \ga$ corresponds to the Gysin pullback $j^!$ for $j:\{L\}\hookrightarrow \Pic_\beta(S)$  the inclusion of a line bundle, and the term $\tau([\pt]^m)$ further cuts down by $H^m$ for $H$ the hyperplane section. Then
$$\llhb {S_\beta^{[n]}(m,\bar\gamma;\bar\sigma)}=\int_{S^{[n]}_{|L|}}j^!\llhb{S_\beta^{[n]}}\cap H^m \prod^\ell_{i=1}\tau(\sigma_i)$$
in which $S^{[n]}_{|L|}\subset S^{[n]}_\b$ is the locus where the divisorial part of the stable pair belongs to $|L|$ (cf. \eqref{equ:SnPa}).
 The following diagram relates the main spaces of interest in this section to each other. Here, $L$ is a fixed bundle in class $\beta$ and $\gamma=\beta+c_1(\cO(A))$ for a sufficiently positive divisor $A$, such that $h^{i\geq1}(M(A))=0$ for any $M\in\Pic_\beta(S)$. The Hilbert scheme $S_\gamma$ plays the role of $\prj(B)$ from Section \ref{sec:setupB}.
    \begin{equation}\label{fig:maindiagKT}
       \begin{tikzcd}[cramped]
	{S_{|L|}^{[n]}} & {S^{[n]}_\beta} & \\
	{S^{[n]}\times|L(A)|} & {S^{[n]}\times S_\gamma} & {S^{[n]}\times\prj(B)} \\
	{\{L(A)\}} & {\Pic_\gamma(S)} \\
	{\{L\}} & {\Pic_\beta(S)}
	\arrow[hook, from=1-1, to=1-2]
	\arrow["{{\iota_{L}}}"', hook, from=1-1, to=2-1]
	\arrow["\iota", hook, from=1-2, to=2-2]
	\arrow["\jmath", hook, from=2-1, to=2-2]
	\arrow[from=2-1, to=3-1]
	\arrow[equals, from=2-2, to=2-3]
	\arrow[from=2-2, to=3-2]
	\arrow[hook, from=3-1, to=3-2]
	\arrow[equals, from=3-1, to=4-1]
	\arrow[equals, from=3-2, to=4-2]
	\arrow["j", hook, from=4-1, to=4-2]
\end{tikzcd}
    \end{equation}
Defining the insertions $\tau(-)$ on $S^{[n]}\times S_\ga$ as 
$$\tau(\s_i):=\pi_*(c_1(\cD_\b)\cap (\id_{S^{[n]}},\iota)_*p^*(\s_i))\in A_*(S^{[n]}\times S_\ga),$$ we can push forward by the inclusion $\iota_{L}$ to get 
\beq{eq:insertions}\llhb{S_\beta^{[n]}(m,\bar\gamma,\bar\s)}=\int_{S^{[n]}\times|L(A)|}\iota{_L}_* j^!\llhb{S_\beta^{[n]}}\cap  H^m\prod_{i=1}^\ell\tau(\s_i).\eeq
Note that the class ${\iota_L}_*j^!\llhb{S_\beta^{[n]}}$ lives in ${A}_r(S^{[n]}\times|L(A)|)$ with
$$r=(p_g-h^2(\beta)+\vd_\beta+n)-g=\chi(L)-h^2(\b)-1+n,$$
where $h^2(\b):=\rk (R^2\pi_*\cP_\b)$, and as before, $p_g=h^{0,2}(S)$ and $g=h^{0,1}(S)$. The intersection number in \eqref{eq:insertions} vanishes if $\dim \prod^\ell_{i=1}\tau(\sigma_i)\neq 2(r-m)$.
\subsection{Evaluation} In this section, we study the intersection numbers of the form $\llhb{S_\beta^{[n]}(m,\bar\gamma,\bar\s)}$. There is a factorization (cf. \eqref{fig:contentionshilb}) of $\iota$  as $$\iota\colon S_\beta^{[n]}\overset{i}\hookrightarrow S^{[n]}\times S_\beta\overset{\varphi}\hookrightarrow S^{[n]}\times \prj(B).$$ 
For $\sff E:=R\pi_*\cI(\cP_\beta)$ and $q:\prj(B)\to\Pic_\beta(S)$ the projection, there is a map $\sff E\to B/\cO(-1)$ and we take $\sff H$ to be its cone. By \eqref{eq:hilbdeepest1},
$$\llhb{S_\beta^{[n]}}=\llb{\sff{H},1}.$$

In diagram \eqref{fig:maindiagKT} $j^!=\jmath^*$, and  from the functoriality of Gysin pullback    \begin{equation*}
    {\iota_L}_* j^!\llhb{S_\beta ^{[n]}}=j^!\iota_*\llhb{S_\beta ^{[n]}}=\jmath^*\varphi_*i_*\llhb{S_\beta^{[n]}}.\end{equation*}
    Using Theorem \ref{thm:compn1n2}, we see that
    \begin{align*}
i_*(\llhb{S_\beta^{[n]}})&=c_n\big({R}\pi_*\cO(\cD_\beta)-{R}\pi_*\cI(\cD_\beta)\big)\cap[S^{[n]}]\times\llhb{S_\beta}\\
&=c_n\big(\cD_\beta^{[n]}\big)\cap [S^{[n]}]\times\llhb{S_\beta}.
   \end{align*}
    For the inclusion $\bar\jmath:|L(A)|\hookrightarrow S_\gamma$, so that $\jmath=\op{id}\times\bar\jmath$, and for $\varphi=\op{id}\times\bar\varphi$, 
    \begin{equation}\label{eq:KTpushpull}
     {\iota_L}_* j^!\llhb{S_\beta ^{[n]}}=c_n\big(L^{[n]}(1)\big)\cap([S^{[n]}]\times\bar\jmath^*\bar\varphi_*\llhb{S_\beta}). 
    \end{equation}
    On restriction to the linear system $|L(A)|\subset S_\gamma$, $\cD_\beta\cong L\boxtimes\cO(1)$ and thus $\cD_\beta^{[n]}\cong L^{[n]}(1)$. Also, $\bar\jmath^*$ is the intersection product with $|L(A)|$ in $S_\gamma$, and the class $H$ on $S^{[n]}\times |L(A)|$ only acts on the second component. In the next theorem, we give an explicit evaluation of
 $\llhb{S_\beta^{[n]}(m,\bar\gamma,\bar\s)}$.


\begin{theorem}\label{thm:pointinsert}
    In the notation of Diagram \eqref{fig:maindiagKT},
$$\llhb{S_\beta^{[n]}(m,\bar\gamma,\bar\s)}=\int_{S^{[n]}\times|L(A)|}c_n\big(L^{[n]}(1)\big)H^{h^0(L(A))-\chi(L)+h^2(\beta)+m}\prod_{i=1}^\ell\tau(\s_i).$$
\end{theorem}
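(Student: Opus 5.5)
The plan is to start from \eqref{eq:insertions} and \eqref{eq:KTpushpull}. These already reduce the statement to the identity
$$\bar\jmath^*\bar\varphi_*\llhb{S_\beta}=H^{h^0(L(A))-\chi(L)+h^2(\beta)}\in A_*(|L(A)|).$$
Once this holds, we cap with $c_n(L^{[n]}(1))\cap[S^{[n]}]\times(-)$ and with $H^m\prod_i\tau(\sigma_i)$, and integrate, which gives the formula of Theorem \ref{thm:pointinsert}. Here $|L(A)|=q^{-1}(\{L\})=\prj(B|_{L})$ with $b=h^0(L(A))$, and $\bar\jmath^*=j^!$ is the Gysin pullback along the regular embedding of the point $\{L\}$ in the nonsingular $\Pic_\beta(S)$.

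For $\bar\varphi_*\llhb{S_\beta}$ I would use the Thom--Porteous formula \eqref{eq:modB} of Theorem \ref{thm:T-Pformulas} with $r=1$ and $\sff E=R\pi_*\cP_\beta$. Fix an $\sff E$-suitable blow up $\nu\colon Y\to\Pic_\beta(S)$, with induced $\nu\colon\prj(\nu^*B)\to\prj(B)$ and $\tilde q\colon\prj(\nu^*B)\to Y$. This gives
$$\bar\varphi_*\llhb{S_\beta}=\nu_*\Big(c_{N}\big(\tilde q^*V(1)\big)\cap[\prj(\nu^*B)]\Big),\qquad V:=\nu^*B-\tau^{\le1}(\nu^*\sff E),$$
with $N=b-\sff e+h^2(\beta)$. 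By Riemann--Roch $\sff e=\chi(L)$, so $N=h^0(L(A))-\chi(L)+h^2(\beta)$, which is the exponent we want. The class $V$ lies in $K^0(Y)$ and has rank $N$, since $\tau^{\le1}(\nu^*\sff E)$ is a 2-term complex of vector bundles.

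Next I would commute $j^!$ with $\nu_*$ using \cite[Theorem 6.2]{fu}, forming the fiber square over $\{L\}$. Then $j^![\prj(\nu^*B)]$ is the flat pullback along $\tilde q$ of $j^![Y]\in A_0(\nu^{-1}(L))$. Since $\dim Y=\dim\Pic_\beta(S)$, this $0$-cycle pushes forward to $[\{L\}]$ and so has degree $1$. By the projective bundle splitting,
$$c_N(\tilde q^*V(1))=\sum_i c_i(\tilde q^*V)H^{N-i}.$$
Every term with $i>0$ is killed after capping with the pullback of a $0$-cycle on $Y$, because $c_i(V)\cap(\text{$0$-cycle})=0$ for $i>0$. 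Only $H^N\cap\tilde q^*j^![Y]$ survives. Pushing this forward to the fiber $\prj(B|_L)$ gives $H^N$, as claimed.

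I expect the main obstacle to be the bookkeeping in this middle step. The blow up can have a positive-dimensional fiber $\nu^{-1}(L)$, and one has to check that the refined Gysin map produces a $0$-cycle of degree $1$ rather than some excess class. I would handle this by invoking compatibility of refined Gysin maps with proper pushforward and flat pullback, together with the dimension count above, and by noting that $\nu_*[Y]=[\Pic_\beta(S)]$ fixes the degree.
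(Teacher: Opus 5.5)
Your proposal is correct, and it reaches the key identity $\bar\jmath^*\bar\varphi_*\llhb{S_\beta}=H^{b-\chi(L)+h^2(\beta)}$ by a different route from the paper.

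\textbf{The paper's route.} It expands the Thom--Porteous class as
\[
\sum_i c_{\eta-i}\big(-\sff H(1)\big)\,\nu_*c_i\big(\sh^2(\nu^*\sff H(1))\big).
\]
It pulls the first factor out of $\nu_*$ and restricts it to the fiber, where it becomes $c_{\eta-i}\big(\cO(1)^{\oplus(b-\chi(L))}\big)$. It then treats each $\nu_*c_i\big(\sh^2(\nu^*\sff H(1))\big)$ downstairs on $\prj(B)$. A Leray--Hirsch decomposition $\sum_j H^jq^*(\alpha_{i,j})$ shows that intersecting with $q^*[\pt]$ only detects the fundamental-class coefficient $\lambda_i$ of $\alpha_{i,i}$. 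That coefficient is identified as $\binom{h^2(\beta)}{i}$ by restricting to the open set $U$, where $R^2\pi_*\cP_\beta$ is locally free and $\nu$ is an isomorphism. A Vandermonde summation finishes the computation.

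\textbf{Your route.} You commute $j^!$ past $\nu_*$ and work upstairs on $\prj(\nu^*B)$. There $j^![Y]$ is a $0$-cycle of degree one on $\nu^{-1}(L)$, so every $c_{i>0}$ of the whole $K$-class $V$ vanishes against it for dimension reasons.

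\textbf{Comparison.} Your argument is shorter and uses nothing about $\sh^2(\nu^*\sff E)$ beyond the rank of $V$. It avoids the Leray--Hirsch bookkeeping, the identification of the $\lambda_i$ via $U$, and the binomial sum. It also proves the general fact that
\[
j^!\nu_*\big(c_N(\tilde q^*V(1))\cap[\prj(\nu^*B)]\big)=H^N
\]
for every rank-$N$ class $V$ on any birational model $Y\to\Pic_\beta(S)$. (Theorem \ref{thm:T-Pformulas} is stated for $(\sff E,\sff E^\vee)$-suitable blow ups, but \eqref{eq:modB} only uses $\sff E$-suitability, so your choice is fine.) The paper's route instead stays on $\prj(B)$ and never has to consider the possibly positive-dimensional fiber $\nu^{-1}(L)$, which you handle cleanly through the refined Gysin formalism. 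In return it pays with extra bookkeeping.

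Both arguments rest on the same point: Gysin restriction to a point of $\Pic_\beta(S)$ sees only the fundamental-class component of classes coming from the base.
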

\begin{proof}
    The proof is mainly obtained by further unraveling the class $\iota_{L*}j^!\llhb{S_\beta^{[n]}}$ in the integrand of \eqref{eq:insertions} through Equation \eqref{eq:KTpushpull}. Let $U\subset \Pic_\beta(S)$ be the open set where $R^2\pi_*\cP_\beta$ is locally free. Over $U$, $\rk(R^2\pi_*\cP_\beta)=h^2(\beta)$. Denoting the restriction of $q$ by $q_U:\prj(B|_U)\to U$, we have that $\sh^2(\sff{H}(1))|_{\prj( B|_U)}=q_U^*(R^2\pi_*\cP_\beta)(1)$ is locally free. The $\sff H$-suitable blow up $\nu:Y\to \prj(B)$ of Lemma \ref{lem:blhd1}  is an isomorphism over $\prj(B|_U)$. 
    Let $Y_U:=\nu^{-1}(\prj(B|_U))$ and $\nu_{U}:=\nu|_{Y_U}$, and for the open subsets $\prj(B|_U)$ and $Y_U$, denote their open embeddings by $\psi_{\prj(B|_U)}$ and $\psi_{Y_U}$ respectively. 
    
    For $\eta:=b-\chi(L)+h^2(\beta)$. By the Thom-Porteous formula \eqref{eq:TP-Gr(B)}
    \begin{align*}
            \bar\varphi_*\llhb{S_\beta}& =\nu_* c_{\eta}\big(-\nu^*\sff{H}(1)+\sh^2(\nu^*\sff{H}(1))\big) \\
            &=\sum_{i=0}^{\eta}c_{\eta-i}\big(-\sff{H}(1)\big)\nu_* c_i\big(\sh^2(\nu^*\sff{H}(1))\big). 
    \end{align*}
    Recall that $\bar\jmath$ is the inclusion $\bar\jmath:|L(A)|\hookrightarrow \prj(B)$, so that in $K$-theory:
    \begin{align*}
        -\sff{H}(1)|_{|L(A)|}&=-{R}\pi_*\cP_\beta(1)|_{|L(A)|}+q^*B(1)|_{|L(A)|}\\
        &=-\cO(1)^{\oplus\chi(L)}+\cO(1)^{\oplus b},
    \end{align*}
    and $\bar\jmath^*$ is the intersection product by $[|L(A)|]=q^*[\pt]$ in $\prj(B)$. Therefore,
    \begin{align}\bar\jmath^*\bar\varphi_*\llhb{S_\beta}& =\sum_{i=0}^{\eta}\bar\jmath^*c_{\eta-i}\big(-\sff{H}(1)\big)\nu_*c_i\big(\sh^2(\nu^*\sff{H}(1))\big)\nonumber\\
            &=\sum_{i=0}^{\eta}c_{\eta-i}\big(\cO(1)^{b-\chi(L)}\big)\bar\jmath^*\nu_*c_i\big(\sh^2(\nu^*\sff{H}(1))\big)\nonumber\\
            &=\sum_{i=0}^{\eta}c_{\eta-i}\big(\cO(1)^{b-\chi(L)}\big)\nu_*c_i\big(\sh^2(\nu^*\sff{H}(1))\big) \cap q^*([\pt]).\label{eq:kt1}
    \end{align}
    By Leray-Hirsch theorem 
    $$A_{k}(\prj(B))\cong \bigoplus_{j=0}^{b-1} H^{j}A_{k+j-b+1}(\Pic_\beta(S)),$$
    so each term $\nu_*c_i\big(\sh^2(\nu^*\sff{H}(1))\big)\in A_{b-1+g-i}(\prj(B))$ can be written in the form
    \begin{equation}\label{eq:decomppic}
\nu_*c_i\big (\sh^2(\nu^*\sff{H}(1))\big)=\sum_{j=0}^{b-1}H^jq^*(\a_{i,j})\end{equation}
    for some $\a_{i,j}\in A_{g-i+j}(\Pic_\beta(S))$. Thus, for any $i$,
    \begin{equation*}
    \nu_*c_i\big(\sh^2(\nu^*\sff{H}(1))\big) \cap q^*([\pt]\big)= \sum_{j=0}^{b-1}H^jq^*(\a_{i,j}\cdot[\pt])=H^{i}q^*(\alpha_{i,i}\cdot[\pt]).
    \end{equation*}
    The last equality holds because $\alpha_{i,j}\cdot[\pt]=0$, unless $\alpha_{i,j}\in A_g(\Pic_\beta(S))$ meaning that $j=i$. Let $\lambda_i\in\Z_{\geq0}$, such that $\alpha_{i,i}=\lambda_i[\Pic_\beta(S)]$. Now, $\bar\jmath^*\nu_*c_i\big(\sh^2(\nu^*\sff{H}(1))\big)=\lambda_iH^iq^*([\pt])$. From \eqref{eq:kt1}, since $q^*[\op{pt}]=[|L(A)|]$,
    \begin{align*}       \bar\jmath^*\bar\varphi_*\llhb{S_\beta}
        &=\sum_{i=0}^{b-\chi(L)+h^2(\beta)}\lambda_i c_{b-\chi(L)+h^2(\beta)-i}\big(\cO(1)^{b-\chi(L)}\big)H^i\in A_*(|L(A)|).
    \end{align*} 
   Restricting to $\prj(B|_U)$, we see that \begin{align*}
       \psi^*_{\prj(B|_U)}\nu_*c_i\big(\sh^2(\nu^*\sff{H}(1))\big)&=\nu_{U *}\psi_{Y_U}^* c_i\big(\sh^2(\nu^*\sff{H}(1))\big)\\
      &=\nu_{U *} c_i\big(\sh^2(\nu^*\sff{H}(1))|_{Y_U}\big)=c_i\big(\sh^2(\sff{H}(1))|_{\prj(B|_U)}\big),
   \end{align*}
    where the last equality holds because $\nu_U$ is an isomorphism. $\sh^2(\sff{E})|_{U}$ is locally free, and $\sh^2(\sff{H})|_{\prj(B|_U)}=q_U^*(\sh^2(\sff E)|_U)$  so for $i\geq 0$,
    $$\nu_*c_i\big(\sh^2(\nu^*\sff{H}(1))\big)|_{\prj(B|_U)}=\sum_{j=0}^i{h^2(\beta)-j\choose i-j}H^{i-j}q^*_U(c_j\big(\sh^2(\sff E)|_U)).$$
    The coefficient of $H^i$ in this identity matches $\lambda_i$ from $\alpha_{i,i}=\lambda_i[\Pic_\beta(S)]$ in  \eqref{eq:decomppic} because the restriction to $\prj(B|_U)$ only cancels cycles supported on $\prj(B|_{Z})$, where $Z=\Pic_\beta(S)\setminus U$. As a result, $\lambda_i={h^2(\beta)\choose i}$ and
    \begin{align*}
        \bar\jmath^*\bar\varphi_*\llhb{S_\beta}     &=\sum_{i=0}^{b-\chi(L)+h^2(\beta)}\lambda_i c_{b-\chi(L)+h^2(\beta)-i}(\cO(1)^{b-\chi(L)})H^i\\
         &=H^{b-\chi(L)+h^2(\beta)}\sum_{i=0}^{b-\chi(L)+h^2(\beta)}{h^2(\beta)\choose i}{b-\chi(L)\choose i-h^2(\beta)} \\
        &=H^{b-\chi(L)+h^2(\beta)}.
    \end{align*} 
    Using Equation \eqref{eq:KTpushpull}, this proves that
    \begin{equation}\label{eq:KToldthm71}
        \iota_{L*}j^!\llhb{S_\beta^{[n]}}=c_n\big(L^{[n]}(1)\big)H^{\chi(L(A))-\chi(L)+h^2(\beta)}.
    \end{equation}
    Substituting into \eqref{eq:insertions} gives
    $$\llhb{S_\beta^{[n]}(m,\bar\gamma,\s)}=\int_{S^{[n]}\times|L(A)|}c_n\big(L^{[n]}(1)\big)H^{\chi(L(A))-\chi(L)+h^2(\beta)+m}\prod_{i=1}^\ell\tau(\s_i).$$
\end{proof}

\begin{corollary}\label{cor:KTinv1}
Let $\cc S\to C$ be a smooth family of projective surfaces over a nonsingular curve $C$, and suppose that the conditions (i), (ii) of Proposition \ref{prop:defInvmink} are satisfied for a cohomology class $\b$ and a nonnegative integer $l$.
Let $\s_1,\dots ,\s_\ell$ be some cohomology classes in the fibers of the family. 
Then,
    $\llhb{\cc S^{[n]}_{p,\b}(m,\bar\gamma,\bar\s)}$ is independent of $p\in C$.  
    \end{corollary}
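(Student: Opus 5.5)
The plan is to read the corollary off the explicit formula of Theorem \ref{thm:pointinsert}, which expresses the invariant entirely in terms of data that are locally constant in a smooth family, rather than proving deformation invariance of the cycles themselves. For each $p\in C$, choose $L_p\in\Pic_\b(\cc S_p)$ and a divisor $A_p$ varying in a relatively ample family $\cc A$ on $\cc S\to C$, positive enough that $h^{i\ge1}(M(A_p))=0$ for all $M\in\Pic_\b(\cc S_p)$. Such a choice is uniform after shrinking $C$ to a neighborhood of any given point; since $C$ is connected, it is enough to prove that the invariant is locally constant. Theorem \ref{thm:pointinsert} then gives
$$\llhb{\cc S^{[n]}_{p,\b}(m,\bar\gamma,\bar\s)}=\int_{\cc S_p^{[n]}\times|L_p(A_p)|}c_n\big(L_p^{[n]}(1)\big)H^{h^0(L_p(A_p))-\chi(L_p)+h^2(\b)+m}\prod_{i}\tau(\s_i).$$

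The integer exponent is constant in $p$. Indeed, $h^0(L_p(A_p))=\chi(L_p(A_p))$ and $\chi(L_p)$ depend only on the cohomology classes and the constant Hodge numbers. By condition (ii), $h^2(\b)=\rk(R^2\pi_*\cP_\b)$ on $\cc S_p$ equals the generic value $l=\min\{h^2(L)\}$. Condition (i) guarantees that $\Pic_\b(\cc S_p)$ is nonempty and that the relative Picard scheme is smooth over $C$, so choosing $L_p$ in a family is possible (after an \'etale base change, if needed, to get a section).

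Next, globalize the integrand. The spaces $\cc S_p^{[n]}\times|L_p(A_p)|$ fit together into $(\cc S/C)^{[n]}\times_C\bb P(\pi_*\scr L(\cc A))$. This is smooth and proper over $C$, because the relative Hilbert scheme of points of a smooth family of surfaces is smooth, and $\pi_*\scr L(\cc A)$ is locally free by the vanishing of the higher cohomology. The class $c_n(L^{[n]}(1))$ comes from the relative tautological bundle twisted by $\cO(1)$, and $H$ is the relative hyperplane class. The insertions $\tau(\s_i)$ come from the universal divisor together with the classes $\s_i$, which are flat sections of the local system of fiber cohomology. Hence the integrand is the restriction to each fiber of a global cohomology class on a smooth proper family. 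Its fiber integrals are therefore locally constant in $p$, by conservation of number, i.e. \cite[Prop.~10.2]{fu}.

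The main obstacle is the global choice of data: a family of line bundles $L_p$ and the associated linear systems, together with flat extensions of the classes $\s_i$, and in particular the basis $\bar\gamma$. Both the $\s_i$ and $\bar\gamma$ may have monodromy, so the statement is only meaningful locally, or after fixing flat transports. I would handle this by working on a small (analytic, or \'etale) neighborhood of each point of $C$, where the local systems are trivial and a section of $\Pic_\b(\cc S/C)$ exists. Independence of the invariant from the choice of $L_p$ and $A_p$ is then built into Theorem \ref{thm:pointinsert}, since its left-hand side does not depend on these choices. Gluing local constancy over the connected curve $C$ finishes the argument.
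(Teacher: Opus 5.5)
Your proposal is correct, and up to the last step it is the paper's argument. Both read the invariant off Theorem \ref{thm:pointinsert} and use condition (ii) to get $h^2(\beta)=l$ on every fiber, so the exponent of $H$ is constant.

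You differ in how you show that the remaining integral over $S^{[n]}\times|L(A)|$ does not vary.
\begin{itemize}
\item The paper simply appeals to the inductive argument of \cite{EGL}. This is the universality of tautological integrals over $S^{[n]}$: they are universal polynomials in $L^2$, $L\cdot K_S$, $K_S^2$, $c_2(S)$, and these numbers are constant in a smooth family.
\item You instead assemble the spaces into the smooth proper family $(\mathcal S/C)^{[n]}\times_C\mathbb P(\pi_*\mathscr L(\mathcal A))$ and apply conservation of number.
\end{itemize}

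Your route is more elementary and would work for any integrand that globalizes. It costs the bookkeeping you list: \'etale-local sections of $\Pic_\beta(\mathcal S/C)$ and Poincar\'e bundles, a uniform relatively ample $\mathcal A$, smoothness of the relative Hilbert scheme, and local trivializations of the local systems. It also gives only local constancy along $C$, though that is all the corollary asserts. The EGL route needs no family at all, and it shows the value depends only on numerical invariants of $(S,\beta)$ (including $h^2(\beta)$) and of the $\sigma_i$.

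One cosmetic point: the $\sigma_i$ are cohomology classes, so the conservation principle you want is the topological one, not the Chow-theoretic \cite[Prop.~10.2]{fu}. Over a small disc the family is a proper submersion, so by Ehresmann its fibers are homologous in the total space.
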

    \begin{proof}
        By our assumption $h^2(\beta)=l$ for every fiber $\cc S_p$. The power of $H$ in the expression obtained in Theorem \ref{thm:pointinsert} is therefore independent of $t$. Since the right hand side of the formula in that theorem is  deformation invariant (by the inductive argument in \cite{EGL}),  the claim is proven. 
    \end{proof}



\subsection{Curve counting} \label{sec:curvecounting}
Analogous  to definition $(45)$ in \cite{Kool_2014},
define the residue invariants\footnote{We call these intersection numbers the invariants of $S$ because of Corollary \ref{cor:KTinv2}. We call them the residue invariants, because in the setting of \cite{Kool_2014} they coincide with the reduced residue stable pair invariants of the total space of the canonical bundle of $S$ in which case $\sff T^\vee[-1]\otimes \fr t$ is identified with the virtual normal bundle of the fixed locus of the $\bb C^*$-action with the weight 1 representation $\fr t$ induced by the fiberwise action on the canonical bundle.} of $S$ by 
    $$\llhb{S^{[n]}_\b(m,\bar\gamma)}_{\op{res}}:=\int_{\llhb{S_\beta^{[n]}}}c(\sff T)\big(\prod^{b_1(S)}_{i=1}\tau(\gamma_i)\big)\tau([\pt])^{m},$$
  In $K$-theory, 
    $\sff T=T_{S^{[n]}}-\cO(\cc D_\b)^{[n]}+{R}\pi_*\cO(\cD_\beta)-R\pi_*\cO$ (cf. \eqref{def:sffT}).

We now prove a strengthening  of \cite[Theorem 1.1]{Kool_2014_2}. In particular, this theorem replaces their assumption that $h^2(L)=0$ for all $L\in \Pic_\b(S)$ by the assumption $h^2(L)=0$ for all \emph{effective}  $L\in \Pic_\b(S)$ (the weaker assumption is needed to ensure the reduced perfect obstruction theory exists.). 

\begin{theorem}\label{thm:KTpolynom}
    For any $n$, $m$ and $\b$. The residue invariant $\llhb{S^{[n]}_\b(m,\bar\gamma)}_{\op{res}}$ is a universal polynomial in the topological numbers $\beta^2,\,c_1(S)\cdot\beta ,\,c_1(S)^2,\,c_2(S)$.
\end{theorem}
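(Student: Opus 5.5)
The plan is to reduce the residue invariant to an integral over $S^{[n]}\times|L(A)|$ of tautological classes, using the machinery behind Theorem \ref{thm:pointinsert}, and then apply the Ellingsrud--Göttsche--Lehn universality argument of \cite{EGL}. First, expand $c(\sff T)$ in $K$-theory as $c(T_{S^{[n]}})\,c(\cO(\cD_\b)^{[n]})^{-1}\,c(R\pi_*\cO(\cD_\b)-R\pi_*\cO)$. All three factors are pulled back from $S^{[n]}\times S_\b$, hence from $S^{[n]}\times\prj(B)$ after restriction. On $S^{[n]}\times|L(A)|$ we have $\cD_\b\cong L\boxtimes\cO(1)$, so $\cO(\cD_\b)^{[n]}$ restricts to $L^{[n]}(1)$. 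Moreover $R\pi_*\cO(\cD_\b)-R\pi_*\cO$ restricts to $\cO(1)^{\oplus\chi(L)}-\cO^{\oplus\chi(\cO_S)}$ in $K$-theory.

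Second, we need the class $\iota_{L*}j^!\llhb{S^{[n]}_\b}=c_n(L^{[n]}(1))H^{\chi(L(A))-\chi(L)+h^2(\b)}$ from \eqref{eq:KToldthm71}. The insertions $\tau(\gamma_i)$ realize $j^!$, and $\tau([\pt])^m$ contributes $H^m$. The key point is that $c(\sff T)$ is a class on the ambient space, so the projection formula lets us absorb it into the integrand. This gives
$$\llhb{S^{[n]}_\b(m,\bar\gamma)}_{\op{res}}=\int_{S^{[n]}\times\prj^{N}}\frac{c(T_{S^{[n]}})\,c_n(L^{[n]}(1))\,(1+H)^{\chi(L)}}{c(L^{[n]}(1))}\,H^{N-\chi(L)+h^2(\b)+m},$$
where $N=\dim|L(A)|=\chi(L(A))-1$. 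Only the degree-matching part contributes.

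Third, integrate out the $\prj^N$ factor. Expanding in $H$ and using $\int_{\prj^N}H^N=1$ leaves a finite sum of integrals over $S^{[n]}$ of polynomials in the Chern classes of $T_{S^{[n]}}$ and $L^{[n]}$. The coefficients are integers depending only on $n$, $m$, $\chi(L)$ and the exponent $\chi(L)-h^2(\b)-m$, which equals the virtual-dimension count $\chi(L)-h^2(\b)-1+n$ shifted by $n-1$. By Riemann--Roch, $\chi(L)=\chi(\cO_S)+\tfrac12(\b^2+c_1(S)\cdot\b)$ and $\chi(\cO_S)=(c_1(S)^2+c_2(S))/12$. By \cite{EGL}, each integral $\int_{S^{[n]}}P(c(T_{S^{[n]}}),c(L^{[n]}))$ is a universal polynomial in $L^2$, $L\cdot K_S$, $K_S^2$ and $c_2(S)$. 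The dependence on $A$ drops out because $N$ cancels.

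The main obstacle is the term $h^2(\b)$ in the exponent of $H$, which is not a priori a function of the four topological numbers. I would handle it as follows. The total degree forces the integrand's $H$-power to equal $N$. Together with the dimension constraint ($\int$ vanishes unless degrees match), this shows the integrand is nonzero only when $h^2(\b)$ is determined by $n$, $m$ and $\chi(L)$. Alternatively, one can check that the class $\llhb{S^{[n]}_\b}$ sits in dimension $n+\vd_\b+p_g-h^2(\b)$, so the insertion $m$ must compensate. I would write the invariant as the coefficient extraction above, which depends on $h^2(\b)$ only through $m$ via the degree constraint. If that fails, I would follow \cite{Kool_2014_2} and treat the statement as universality for each fixed value of $m+h^2(\b)$.
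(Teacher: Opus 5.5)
Your reduction is the paper's own. You absorb $c(\sff T)$ into the integrand by the projection formula, using $\cO(\cD_\b)^{[n]}|_{S^{[n]}\times|L(A)|}\cong L^{[n]}(1)$ and $c(R\pi_*\cO(\cD_\b)-R\pi_*\cO)\mapsto(1+H)^{\chi(L)}$. You then substitute \eqref{eq:KToldthm71}, integrate out the linear system, and finish with the recursion of \cite{EGL}. Two points need correcting.

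\textbf{Off-by-one in the exponent.} With $N=\chi(L(A))-1$, \eqref{eq:KToldthm71} contributes $H^{N+1-\chi(L)+h^2(\b)}$, not $H^{N-\chi(L)+h^2(\b)}$. So after integrating over $\prj^N$ you must extract the coefficient of $H^{\chi(L)-1-h^2(\b)-m}$, which is the exponent $\ell-m$ of \eqref{eq:refineduniv}, not the coefficient of $H^{\chi(L)-h^2(\b)-m}$. This agrees with the fact that $\iota_{L*}j^!\llhb{S^{[n]}_\b}$ has dimension $n+\chi(L)-1-h^2(\b)$.

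\textbf{The degree-constraint argument for $h^2(\b)$ is wrong.} The integrand is built from total Chern classes: $c(T_{S^{[n]}})$, $(1+H)^{\chi(L)}$ and $c(L^{[n]}(1))^{-1}$. Hence $c(\sff T)$ supplies whatever degree is missing. Nothing ties $h^2(\b)$ to $n$, $m$ and $\chi(L)$, and $m$ is not forced to compensate the virtual dimension.

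Already for $n=0$, \eqref{eq:refineduniv} gives $\binom{\chi(L)}{\chi(L)-1-m-h^2(\b)}$. For $g=0$ you can check this directly via Theorem \ref{thm:g=0total}, since the invariant is $\int_{\prj^{h^0(L)-1}}H^{h^1(L)+m}(1+H)^{\chi(L)}$. This number is nonzero for a range of values of $h^2(\b)$ and changes with $h^2(\b)$ when the topological data are fixed. So $h^2(\b)$ is a genuine extra parameter.

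The correct content of the theorem is your fallback: for fixed $n$ and fixed $m+h^2(\b)$, the invariant is a universal polynomial in $\b^2$, $c_1(S)\cdot\b$, $c_1(S)^2$ and $c_2(S)$. This is exactly what \eqref{eq:refineduniv} yields, because it depends on $h^2(\b)$ only through $\chi(L)-1-(m+h^2(\b))$. It is also how the result is used in Corollary \ref{cor:KTinv2}, where $h^2(\b)=l$ is held fixed. Drop the degree-constraint argument and state this reading at the outset.
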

\begin{proof}

We refer to Diagram \eqref{fig:maindiagKT} again. Recall that $\iota$ factors as $S_\beta^{[n]}\overset{i}\hookrightarrow S^{[n]}\times S_\beta\overset{\varphi}\hookrightarrow S^{[n]}\times \prj(B)$. Similar simplifications as in \cite[Section 3]{Kool_2014_2} remain valid resulting in
    \begin{align*}
        \llhb{S^{[n]}_\b(m,\bar\gamma)}_{\op{res}}&= \int_{\llhb{S_\beta^{[n]}}}\frac{c(T_{S^{[n]}})\,c({R}\pi_*\cP_\beta(1))}{c(\cO(\cc D_\b)^{[n]})}\big(\prod^{b_1(S)}_{i=1}\tau(\gamma_i)\big)\,\tau([\pt])^{m}\\
&=\int_{S^{[n]}_{|L|}}\frac{c(T_{S^{[n]}})\,c(\cO(1)^{\oplus\chi(L)})}{c(L^{[n]}(1))}H^m j^!\llhb{S_\beta^{[n]}} \\
&=\int_{S^{[n]}\times|L(A)|} H^m \frac{c(T_{S^{[n]}})\,c(\cO(1)^{\oplus\chi(L)})}{c(L^{[n]}(1))} \iota_{L*}j^!\llhb{S_\beta^{[n]}}, 
    \end{align*} where we used $\cO(\cD_\beta)^{[n]}|_{S^{[n]}\times|L(A)|}\cong L^{[n]}(1)$.
Let $\ell:=\chi(L)-h^2(\beta)-1$ and $r:=\chi(L(A))-\chi(L)$. Equation \eqref{eq:KToldthm71} gives 
    \begin{align}
        \llhb{S^{[n]}_\b(m,\bar\gamma)}_{\op{res}}&=\int_{S^{[n]}\times|L(A)|} H^{m+r+h^2(\beta)} \frac{c(T_{S^{[n]}})\,c(\cO(1)^{\oplus\chi(L)})}{c(L^{[n]}(1))} c_n(L^{[n]}(1))\nonumber\\
        &=\int_{S^{[n]}\times\prj^{\ell-m}}\frac{c(T_{S^{[n]}})(1+H)^{\chi(L)}}{\sum _{i=0}^{n}(1+H)^ic_{n-i}(L^{[n]})} \sum_{i=0}^nH^ic_{n-i}(L^{[n]})\nonumber\\
        &=\int_{S^{[n]}} \left\{ \frac{c(T_{S^{[n]}})(1+H)^{\chi(L)}}{\sum _{i=0}^{n}(1+H)^ic_{n-i}(L^{[n]})} \sum_{i=0}^nH^ic_{n-i}(L^{[n]})\right\}_{H^{\ell-m}}.\label{eq:refineduniv}
    \end{align} 
    The claim is proven by running the recursion of \cite{EGL} $n$ times starting with the last formula. 
\end{proof}
\begin{corollary}\label{cor:KTinv2}
Let $\cc S\to C$ be a smooth family of projective surfaces over a nonsingular curve $C$, and suppose that the conditions (i) and (ii) of Proposition \ref{prop:defInvmink} are satisfied for a cohomology class $\b$ and a nonnegative integer $l$.
 Then,
    $\llhb{\cc S^{[n]}_{p,\b}(m,\bar\gamma)}_{\op{res}}$ is independent of $p\in C$.  
    \end{corollary}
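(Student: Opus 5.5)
The plan is to deduce the corollary from the explicit expression \eqref{eq:refineduniv} obtained in the proof of Theorem \ref{thm:KTpolynom}:
$$\llhb{\cc S^{[n]}_{p,\b}(m,\bar\gamma)}_{\op{res}}=\int_{\cc S_p^{[n]}} \left\{ \frac{c(T_{\cc S_p^{[n]}})(1+H)^{\chi(L_p)}}{\sum _{i=0}^{n}(1+H)^ic_{n-i}(L_p^{[n]})} \sum_{i=0}^nH^ic_{n-i}(L_p^{[n]})\right\}_{H^{\ell_p-m}},$$
where $L_p\in\Pic_\b(\cc S_p)$ and $\ell_p=\chi(L_p)-h^2(\b)-1$.

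First, I would check that the discrete data entering this formula are constant in $p$. The integer $\chi(L_p)$ is given by Riemann--Roch in terms of $\b^2$, $\b\cdot K_{\cc S_p}$ and $\chi(\cO_{\cc S_p})$. These are constant in a smooth family, since $\b$ is a fixed class in the cohomology of the fibers, and $K_{\cc S_p}$ and the Chern numbers are locally constant. By hypothesis (ii) of Proposition \ref{prop:defInvmink}, $h^2(\b)=l$ is the same for every fiber. Hypothesis (i) guarantees that $\Pic_\b(\cc S_p)\neq\emptyset$, so the construction and Theorem \ref{thm:pointinsert} apply on every fiber. Consequently $\ell_p$ and the exponent of $H$ being extracted are independent of $p$.

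Next, I would note that the remaining integrand is a tautological integral over $\cc S_p^{[n]}$. It involves the Chern classes of $T_{\cc S_p^{[n]}}$ and of the tautological bundle $L_p^{[n]}$, with $H$ a formal variable. By the universality result of \cite{EGL}, already invoked at the end of the proof of Theorem \ref{thm:KTpolynom}, such an integral is a universal polynomial in $L_p^2$, $L_p\cdot K_{\cc S_p}$, $K_{\cc S_p}^2$ and $c_2(\cc S_p)$. These numbers are constant in $p$.

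Combining the two steps gives the independence of $p$. An alternative route would be to argue exactly as in Corollary \ref{cor:KTinv1}, with the extra factor $c(\sff T)$ rewritten as in the proof of Theorem \ref{thm:KTpolynom}.

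The only delicate point is the first step: ensuring that the exponent $h^2(\b)$ produced in Theorem \ref{thm:pointinsert} is the minimal $h^2$ on each fiber and is therefore controlled by condition (ii). That identification holds because $h^2(\b)$ was defined as the generic rank of $R^2\pi_*\cP_\b$, which equals the minimum of $h^2(L)$ over $\Pic_\b(\cc S_p)$.
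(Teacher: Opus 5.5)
Your proposal is correct and follows the paper's argument: the paper uses hypothesis (ii) to get $h^2(\beta)=l$ on every fiber and then invokes Theorem \ref{thm:KTpolynom}, i.e.\ the EGL universality behind \eqref{eq:refineduniv}, together with the deformation invariance of $\beta^2$, $c_1\cdot\beta$, $c_1^2$, $c_2$. Your explicit check that $\chi(L_p)$ and the extracted exponent $\ell_p-m$ are constant simply makes precise the dependence on $h^2(\beta)$ that the statement of Theorem \ref{thm:KTpolynom} leaves implicit.
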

    \begin{proof}
        By our assumption $h^2(\beta)=l$ for every fiber $\cc S_p$.
        The topological numbers $\beta^2,\,c_1(S)\cdot\beta ,\,c_1(S)^2,\, c_2(S)$ are deformation invariant, so by Theorem \ref{thm:KTpolynom} the residue invariants  $\llhb{\cc S^{[n]}_{p,\b}(m,\bar\gamma)}_{\op{res}}$ are equal for all $p\in C$.
    \end{proof}

    
    In the case when $L$ is $\delta$-very ample for some nonnegative integer $\delta$, for any $0\le n\leq\delta$, we can link this residue invariant to the Euler characteristic of some subscheme of $S^{[n]}_{|L|}$. The scheme $S_{|L|}^{[n]}\subset S^{[n]}\times|L|$ is the zero locus of a section $s$ of $L^{[n]}(1)$. It is smooth  over $S^{[n]}$ with fiber over  $Z\in S^{[n]}$ given by $\prj(\text{ker}(h^0(L)\twoheadrightarrow h^0(L|_Z)),$ which is of codimension $n=\rk(L^{[n]})$ in $\{Z\}\times |L|$ (using that $L$ is $\de$-very ample). The section $s$ is therefore regular and 
    $$c_n(L^{[n]}(1))=[S_{|L|}^{[n]}]\in A_{n+h^0(L)}(S^{[n]}\times|L|).$$
    The following theorem has applications to the counting of $\de$-nodal curves in a general sublinear system $\prj^\de\subset |L|$ (cf. Section \ref{intro:curve}). 

    \begin{theorem}\label{thm:eulerref} Given a class $\beta$, such that $m:=\chi(L)-h^2(\beta)-1-\delta\geq0$, an integer $0\leq n\leq\delta$, a $\delta$-very ample $L\in \Pic_\beta(S)$  and a general chain of  sublinear systems $\bb P^0\subset \bb P^1\subset \cdots  \subset \prj^\delta\subset|L|$ of dimension $\delta$, we have 
$$\llhb{S^{[n]}_\b(m,\bar\gamma)}_{\op{res}}=e(S_{\prj^\delta}^{[n]})+\sum_{j=1}^{\de}a_j e(S_{\prj^{\delta-j}}^{[n]}),$$
where $\displaystyle a_j={\chi(L)-\de-2+j\choose j}$ for $1\le j \le \de$. In particular, they only depend on $\chi(L)$ and $\de$. 
    \end{theorem}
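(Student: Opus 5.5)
The plan is to start from the integral formula \eqref{eq:refineduniv} in the proof of Theorem \ref{thm:KTpolynom}. Before the extraction of the $H^{\ell-m}$ coefficient it reads
$$\llhb{S^{[n]}_\b(m,\bar\gamma)}_{\op{res}}=\int_{S^{[n]}\times\prj^{\ell-m}}\frac{c(T_{S^{[n]}})(1+H)^{\chi(L)}}{c(L^{[n]}(1))}\,c_n(L^{[n]}(1)),$$
where $\ell-m=\delta$ by the definition of $m$. Here $\prj^\delta$ is the general sublinear system in $|L|$ (one may replace $|L(A)|$ by $|L|$ after cutting down by $H^{m+r+h^2(\b)}$). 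I will then use the regularity of the section of $L^{[n]}(1)$ explained before the statement, restricted to $S^{[n]}\times\prj^{\delta}$. By $\delta$-very ampleness and generality, $S^{[n]}_{\prj^\delta}$ is the zero locus of a regular section, so $c_n(L^{[n]}(1))\cap[S^{[n]}\times \prj^\delta]=[S^{[n]}_{\prj^\delta}]$. Its normal bundle is $L^{[n]}(1)|_{S^{[n]}_{\prj^\delta}}$. Since $T_{\prj^\delta}$ has class $\cO(1)^{\delta+1}-\cO$, we get
$$c(T_{S^{[n]}_{\prj^\delta}})=\frac{c(T_{S^{[n]}})(1+H)^{\delta+1}}{c(L^{[n]}(1))}.$$
It then follows that
$$\llhb{S^{[n]}_\b(m,\bar\gamma)}_{\op{res}}=\int_{S^{[n]}_{\prj^\delta}}c(T_{S^{[n]}_{\prj^\delta}})\,(1+H)^{\chi(L)-\delta-1}.$$

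Next I expand $(1+H)^{\chi(L)-\delta-1}=\sum_j\binom{\chi(L)-\delta-1}{j}H^j$. That is the wrong shape, so I will instead proceed inductively. Let $e_j:=\int_{S^{[n]}_{\prj^{\delta-j}}}c(T)$, which equals $e(S^{[n]}_{\prj^{\delta-j}})$ by Gauss–Bonnet, since these are smooth for a general chain. The hyperplane $H$ cuts $S^{[n]}_{\prj^{\delta-j}}$ transversally in $S^{[n]}_{\prj^{\delta-j-1}}$, with normal bundle $\cO(1)$. This gives
$$H\cap c(T_{S^{[n]}_{\prj^{k}}})=\frac{H}{1+H}\,c(T_{S^{[n]}_{\prj^{k-1}}})\ \text{pushed forward}.$$
Iterating, I obtain $\int_{S^{[n]}_{\prj^\delta}} c(T)\,f(H)=\sum_j [\text{coefficient}]\,e_j$, where $f(H)=(1+H)^{N}$ with $N=\chi(L)-\delta-1$. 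The rule is to write $f(H)=\sum_j a_j\big(\tfrac{H}{1+H}\big)^j$ formally. Substituting $u=H/(1+H)$, so $1+H=(1-u)^{-1}$, gives $f=(1-u)^{-N}=\sum_j\binom{N-1+j}{j}u^j$. Hence $a_j=\binom{\chi(L)-\delta-2+j}{j}$ and $a_0=1$, as claimed. Terms with $j>\delta$ vanish because $\prj^{\delta-j}$ is then empty.

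The main obstacle I anticipate is justifying the genericity and transversality statements for $j\ge1$: that each $S^{[n]}_{\prj^{\delta-j}}$ is smooth of the expected dimension, and that it is cut transversally from $S^{[n]}_{\prj^{\delta-j+1}}$ by a hyperplane section representing $H$. For this I will use that $S^{[n]}_{|L|}\to|L|$ is a morphism from a smooth variety (a projective bundle over $S^{[n]}$, as noted before the statement). I will then apply Bertini/Kleiman generic transversality for general linear subspaces of the target $|L|$ chosen in a flag. A secondary point is checking that replacing $|L(A)|$ by $|L|$ in the first step is compatible with the powers of $H$. This should follow from the identity $H^{r+h^2(\b)+m}\cap[|L(A)|]=[\prj^{\ell-m}]$ as a general linear subspace, which is already implicit in \eqref{eq:refineduniv}.
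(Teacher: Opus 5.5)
Your proposal is correct and follows the paper's proof. Both arguments reduce \eqref{eq:refineduniv} to $\int_{S^{[n]}_{\prj^\delta}}(1+H)^{\chi(L)-\delta-1}c(T_{S^{[n]}_{\prj^\delta}})$ via the regular section of $L^{[n]}(1)$, and then peel off general hyperplane sections. The only difference is how $a_j$ is extracted: you read it off directly from $(1-u)^{-N}=\sum_j\binom{N-1+j}{j}u^j$ with $u=H/(1+H)$, whereas the paper expands in powers of $H$ and sums over chains $i_1\ge\cdots\ge i_k$ (a nested-subset count); your substitution is a cleaner route to the same coefficients.

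One slip: for the inclusion $i$, the displayed recursion should read $\frac{H}{1+H}\cap c(T_{S^{[n]}_{\prj^{k}}})=i_*c(T_{S^{[n]}_{\prj^{k-1}}})$. Equivalently, $H\cap c(T_{S^{[n]}_{\prj^{k}}})=i_*\big((1+H)\,c(T_{S^{[n]}_{\prj^{k-1}}})\big)$. This is exactly the relation your substitution rule uses.
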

    \begin{proof}
    Intersecting with general hyperplanes in $|L|$, by Bertini's theorem
    \begin{equation*}
    c_n(L^{[n]}(1))H^{h^0(L)-1-\delta}\cap [S^{[n]}\times|L|]=c_n(L^{[n]}(1))\cap [S^{[n]}\times\prj^\delta]=[S_{\prj^\delta}^{[n]}],\end{equation*}
    where $\prj^\delta$ is a general $\delta$-dimensional sublinear system of $|L|$ (cf. \eqref{equ:SnPa}). Therefore, the Euler characteristic of $S_{\prj^\delta}^{[n]}$ can be written as
    \begin{align}
        \int_{S_{\prj^\delta}^{[n]}} c(T_{S_{\prj^\delta}^{[n]}})&=\int_{S^{[n]}\times \prj^\delta}c_n(L^{[n]}(1))\, c(T_{S_{\prj^\delta}^{[n]}})\nonumber\\
        &=\int_{S^{[n]}\times \prj^\delta}c_n(L^{[n]}(1)) \frac{c(T_{S^{[n]}\times\prj^{\delta}})}{c(L^{[n]}(1))}\nonumber\\
        &=\int_{S^{[n]}} \frac{c(T_{S^{[n]}})(1+H)^{\delta+1}}{\sum _{i=0}^{n}(1+H)^ic_{n-i}(L^{[n]})} \sum_{i=0}^nH^ic_{n-i}(L^{[n]}).\label{eq:eulerintegral}
    \end{align}

        For $m$ as in the statement, we have $\chi(L)\geq\delta+1$, so \eqref{eq:refineduniv} becomes
        \begin{align*}
        &\llhb{S^{[n]}_\b(m,\bar\gamma)}_{\op{res}}=\int_{S^{[n]}\times \prj^\de}  \frac{c(T_{S^{[n]}})(1+H)^{\chi(L)}}{\sum _{i=0}^{n}(1+H)^ic_{n-i}(L^{[n]})} \sum_{i=0}^nH^ic_{n-i}(L^{[n]})\\
        &=\int_{S^{[n]}\times\prj^\de} (1+H)^{\chi(L)-\de-1} \frac{c(T_{S^{[n]}})(1+H)^{\de+1}}{\sum _{i=0}^{n}(1+H)^ic_{n-i}(L^{[n]})} \sum_{i=0}^nH^ic_{n-i}(L^{[n]})\\   &=\int_{S^{[n]}\times\prj^\de} (1+H)^{\chi(L)-\de-1}  \frac{c(T_{S^{[n]}\times\prj^{\delta}})}{c(L^{[n]}(1))}c_n(L^{[n]}(1))\\ &=\int_{S^{[n]}_{\prj^{\de}}} (1+H)^{\chi(L)-\de-1}c(T_{S_{\prj^{\delta}}^{[n]}})\\
        &=\sum_{j=0}^{\de}a_j e(S_{\prj^{\delta-j}}^{[n]})
    \end{align*}
     for some integers $a_j$ that can be inductively obtained as follows.  For any $0\le j\le\de$, $H\cdot[S^{[n]}_{\prj^{\delta-j}}]=[S^{[n]}_{\prj^{\delta-j-1}}]$ and we have a short exact sequence $$0\to T_{S^{[n]}_{\prj^{\delta-j-1}}}\to T_{S^{[n]}_{\prj^{\delta-j}}}\to \cO(H)\to 0.$$ 
     Letting $N:=\chi(L)-\de-1$, we find that 
    \begin{align}\notag
        a_j=\sum_{\substack{i_1\geq \cdots \geq i_{k}>0, \\i_1+\cdots+i_k=j}}{N\choose i_1}{i_1\choose i_2}\cdots {i_{k-1}\choose i_k}
        ={N-1+j\choose N-1}.
        \label{eq:recursion-a}
    \end{align}
    For the last simplification, note that the middle sum counts all arrangements of nested subsets of $\{1,\dots,N\}$ with cardinalities adding up to $j$. This is equivalent to choosing numbers $k_1,\dots, k_N\in\Z_{\geq0}$ adding up to $j$ with $k_s$ representing how many of the nested subsets contain the element $s$.   
\end{proof}


    \begin{remark}
         The condition $m\geq0$ in Theorem \ref{thm:eulerref} is equivalent to the condition $$\max\{h^0(M)-h^1(M) : M\in\Pic_\b(S)\}\geq\delta+1.$$ This is satisfied if and only if there exists $L_0\in\Pic_\b(S)$, such that $h^0(L_0)\ge h^1(L_0)+\delta+1$. In \cite{Kool_2014}, Theorem \ref{thm:eulerref} was proven under the stronger assumptions that $h^2(L)=0$ for every effective line bundle $L\in\Pic_\beta(S)$ required for the existence of the reduced class $[S^{[n]}_\beta]^{\op{red}}$, and that $h^1(L)=0$ for  some $\delta$-very ample line bundle $L$. Since for any $\delta$-very ample line bundle $L$ we clearly have $h^0(L)\geq\delta+1$, in the set up of \cite{Kool_2014} the condition $m\ge 0$ is automatically satisfied. 
    \end{remark}
\appendix 

\section{Invariance under quasi-isomorphism} \label{quasiinvariance}
In this appendix, we give another proof for the independence of the classes $\llb{\sff E,r}$ and $\llbb{\sff E,r}$ (cf. Definition \ref{def:1st2nd}) from the quasi-isomorphism type of  $\sff E\in \Perf(X,0,2)$ that does not use the resolution of singularities and perfect obstruction theory. We first prove the independence, when $\sff E$ is a 2-term complex of vector bundles. In the same fashion, we also give an alternative proof for the equality \eqref{eq:HandE} that was the core of the construction  of Section \ref{sec:deepest}.

As before,
let $X$ be a quasi-projective variety of dimension $d$. For a vector bundle $E$ over $X$ with a section $s$, let $\Z(s)$ be the localized top Chern class of $E$ over the zero scheme $Z(s)\subset X$ in the sense of \cite[Proposition 14.1]{fu}. 

\begin{lemma}\label{lem:invar3}
    If $s_1$ and $s_2$ are two sections of a vector bundle $E$ of rank $m$ with $Z(s_1)=Z(s_2)$ then $\bb Z(s_1)=\bb Z(s_2)$.
\end{lemma}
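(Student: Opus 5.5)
The plan is to show that $\bb Z(s)$ can be written purely in terms of the scheme $Z:=Z(s)$, the ambient $X$ and the bundle $E$, with no remaining dependence on $s$. Then the equality $\bb Z(s_1)=\bb Z(s_2)$ is immediate once $Z(s_1)=Z(s_2)$.

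First I will recall the construction of \cite[Section 14.1]{fu}. Let $\cI\subset\cO_X$ be the ideal sheaf of $Z$. The section $s$ gives a surjection $E^*\twoheadrightarrow\cI$. Restricting to $Z$ gives a surjection $E^*|_Z\twoheadrightarrow \cI/\cI^2$, and hence a surjection of graded algebras $\op{Sym}(E^*|_Z)\twoheadrightarrow\bigoplus_{n\ge0}\cI^n/\cI^{n+1}$. This realizes the normal cone $C:=C_{Z/X}$ as a closed subcone $C\subset E|_Z$. By definition,
$$\bb Z(s)=0_{E|_Z}^![C]\in A_{d-m}(Z).$$
The embedding of $C$ into $E|_Z$ depends on $s$, but the abstract cone $C$ does not: it depends only on $Z\subset X$. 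So the two sections give two possibly different closed embeddings $\iota_1,\iota_2\colon C\hookrightarrow E|_Z$ of the same cone.

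The key step is Fulton's formula for the Gysin pullback of a cone inside a vector bundle. For any closed subcone $C\subset V$ of a vector bundle $V$ of rank $m$ on $Z$, with $C$ of pure dimension $d$, one has
$$0_V^![C]=\big\{c(V)\cap s(C)\big\}_{d-m}$$
(cf. \cite[Example 4.1.8, Proposition 6.1(a)]{fu}). The Segre class $s(C)$ is defined intrinsically from $\bb P(C\oplus 1)$ and the tautological line bundle, so it does not depend on the embedding of $C$ in $V$. For $C=C_{Z/X}$ it equals the Segre class $s(Z,X)$. Applying this to both embeddings gives
$$\bb Z(s_i)=\big\{c(E|_Z)\cap s(Z,X)\big\}_{d-m},\qquad i=1,2,$$
which is exactly the statement \cite[Proposition 14.1(b)]{fu}. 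The right-hand side depends only on $Z$, $X$ and $E$, so the lemma follows.

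The main point to check is the embedding-independence of $0_V^![C]$. I would verify it directly: $0_V^![C]=0_V^!\iota_*[C]$, and the formula $0_V^!\alpha=\{c(V)\cap s(\alpha)\}$ for a cone class is proven in Fulton via the projective completion $\bb P(V\oplus 1)$. There $\iota$ induces a closed embedding $\bb P(C\oplus1)\hookrightarrow\bb P(V\oplus1)$ under which $\cO(1)$ restricts to $\cO(1)$. Hence the pushforward of the relevant powers of $c_1(\cO(1))$ is computed on $\bb P(C\oplus1)$ alone, independently of $\iota$. Since $X$ is only assumed to be a variety, I should also make sure that $C_{Z/X}$ has pure dimension $d$; this holds because $X$ is irreducible of dimension $d$.
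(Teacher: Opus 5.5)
Your argument is correct, but it follows a different route from the paper's. You write $\bb Z(s)=0^!_{E|_Z}[C_{Z/X}]$, with the embedding $C_{Z/X}\hookrightarrow E|_Z$ induced by $s$, and apply the cone formula to get $\bb Z(s)=\{c(E|_Z)\cap s(Z,X)\}_{d-m}$. In effect you invoke \cite[Proposition 14.1(b)]{fu}; the key point is that the Segre class of the cone does not see the $s$-dependent embedding.

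The paper instead blows up $X$ along $Z$. It uses $\pi_*[\op{Bl}_ZX]=[X]$ and the compatibility of refined Gysin maps with proper pushforward to move the computation to $\op{Bl}_ZX$, where the zero scheme becomes the Cartier divisor $D=\bb P(C_ZX)$. The excess intersection formula then gives $\bb Z(s)=\eta_*\big(c_{m-1}(p^*E/\cO_D(D))\cap[D]\big)$. Here the $s$-dependence lies only in the inclusion $\cO_D(D)\hookrightarrow p^*E$, which Chern classes do not see.

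The two intrinsic expressions agree via the blow-up formula $s(Z,X)=\sum_{k\ge1}(-1)^{k-1}\eta_*\big(c_1(\cO_D(D))^{k-1}\cap[D]\big)$ \cite[Section 4.2]{fu}. Your route is shorter and uniform: it does not rely on the blow-up being birational, so the degenerate case $Z=X$, where $\op{Bl}_ZX$ is empty, needs no separate comment. The paper's route uses only excess intersection along a divisor and avoids Segre classes of cones. Either formula shows that $\bb Z(s)$ depends only on $Z\subset X$ and the Chern classes of $E$, which is exactly what is used afterwards in Lemma \ref{lem:invar4}.
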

\begin{proof}
    Let $Z:=Z(s_1)=Z(s_2)$ and $Y:=\op{Bl}_{Z}X$. Consider the following commutative diagram with fiber squares:
    \[\begin{tikzcd}[cramped]
	{D:= \bb P(C_Z X)} & Y \\
	Z & {X} \\
	{X} & {E,}
	\arrow["\iota",from=1-1, to=1-2]
	\arrow["\eta", from=1-1, to=2-1]
	\arrow["p"', curve={height=10pt}, from=1-1, to=3-1]
	\arrow[from=1-2, to=2-2]
	\arrow[from=2-1, to=2-2]
	\arrow[from=2-1, to=3-1]
	\arrow["s_{1}",from=2-2, to=3-2]
	\arrow["0_{E}",from=3-1, to=3-2]
\end{tikzcd}\] where $\iota$ is the inclusion of the exceptional divisor of the blow up and $\eta$ is the natural projection. 
By the compatibility of Gysin pullback with pushforward and the excess intersection formula for $\iota$ and $0_{E}$ with excess bundle of rank $m-1$ given by $B_E:=p^*E/\cO_D(D)$, 
$$\Z(s_1)=0_{E}^![\Gr(E^*,r)]=\eta_*0_{E}^![Y]=\eta_*(c_{m-1}(B_E)\cap \iota^*[Y])=\eta_* c_{m-1}(B_E).$$
If $s_1$ is replaced by $s_2$ we arrive at exactly the same formula for $\bb Z(s_2)$, so the lemma is proven.
\end{proof}

\begin{lemma}\label{lem:invar1} Let $E_1$ and $E_2$ be two vector bundles over $X$ with the given sections $s_1$ and $s_2$, and let $s:=(s_1,s_2)$ be the induced section of the direct sum $E_1\oplus E_2$. For $Z_i:=Z(s_i)$, $i=1,2$ and $Z:=Z(s)$, consider the fiber diagram
\begin{equation}\label{diag:gys}
    \begin{tikzcd}[cramped]
	Z & {Z_1} & X \\
	{Z_2} & X & {E_1} \\
	X & {E_2.}
	\arrow[from=1-1, to=1-2]
	\arrow[from=1-1, to=2-1]
	\arrow[from=1-2, to=1-3]
	\arrow[from=1-2, to=2-2]
	\arrow["{0_{E_1}}", from=1-3, to=2-3]
	\arrow[from=2-1, to=2-2]
	\arrow[from=2-1, to=3-1]
	\arrow["{s_1}"', from=2-2, to=2-3]
	\arrow["{s_2}", from=2-2, to=3-2]
	\arrow["{0_{E_2}}"', from=3-1, to=3-2]
\end{tikzcd}
\end{equation}
We have $\Z(s)=0^!_{E_2}\Z(s_1)=0^!_{E_1}\Z(s_2)$.
\end{lemma}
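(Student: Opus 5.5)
The plan is to reduce everything to Fulton's construction of the localized top Chern class as a refined Gysin pullback, using functoriality and commutativity of refined Gysin maps (\cite[Theorem 6.2, Theorem 6.4]{fu}).

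First, by definition, $\Z(s)=0_{E_1\oplus E_2}^![X]$, where the refined Gysin map is taken along the fiber square of $s$ and the zero section of $E_1\oplus E_2$. The zero section $0_{E_1\oplus E_2}\colon X\to E_1\oplus E_2$ factors as
\[
X\xrightarrow{0_{E_1}} E_1\hookrightarrow E_1\oplus E_2 ,
\]
where the second map is the zero section of the pullback of $E_2$ to $E_1$. Both maps are regular embeddings, and the composite has normal bundle $E_1\oplus E_2$. Correspondingly, the section $s=(s_1,s_2)$ induces the fiber diagram \eqref{diag:gys}: the zero scheme $Z$ is the fiber product $Z_1\times_X Z_2$, and pulling back along $s$ decomposes into pulling back first the condition $s_2=0$ and then $s_1=0$, or in the other order.

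The key step is to identify the Gysin pullback for the composite with the composite of Gysin pullbacks. By functoriality of refined Gysin maps \cite[Theorem 6.5]{fu}, one gets
\[
0_{E_1\oplus E_2}^![X]=0_{E_2}^!\,0_{E_1}^![X],
\]
where $0_{E_1}^!$ is taken along $s_1$, producing $\Z(s_1)\in A_*(Z_1)$, and $0_{E_2}^!$ is then the refined pullback along the square with $s_2|_{Z_1}$, that is, the bottom-left part of \eqref{diag:gys} restricted over $Z_1$. This gives $\Z(s)=0^!_{E_2}\Z(s_1)$.

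The equality with $0^!_{E_1}\Z(s_2)$ then follows either by symmetry, exchanging the roles of $E_1$ and $E_2$, or directly from the commutativity of refined Gysin maps \cite[Theorem 6.4]{fu} applied to the two regular embeddings $0_{E_1}$ and $0_{E_2}$ in diagram \eqref{diag:gys}.

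The main obstacle is bookkeeping. I must check that the squares in \eqref{diag:gys} are genuinely cartesian, in particular that $Z=Z_1\times_X Z_2$ scheme-theoretically; this holds because the ideal of $Z(s)$ is the sum of the ideals of $Z(s_1)$ and $Z(s_2)$. I must also check that the refined map for the composite factorization of $0_{E_1\oplus E_2}$, via $E_1\to E_1\oplus E_2$, matches the iterated pullback along $s_1$ and then $s_2|_{Z_1}$. I would handle this by writing the intermediate cartesian square
\[
Z_1 \to X \xrightarrow{s} E_1\oplus E_2 ,
\]
over $E_1\to E_1\oplus E_2$, and noting that the excess normal bundles agree, so that no excess correction term appears.
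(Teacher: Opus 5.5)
Your proposal is correct and follows essentially the paper's argument: factor $0_{E_1\oplus E_2}$ through the zero section of one summand (the paper uses $X\xrightarrow{0_{E_2}}E_2\xrightarrow{\iota_2}E_1\oplus E_2$ and identifies $\iota_2^!$ with $0_{E_1}^!$), apply functoriality \cite[Theorem 6.5]{fu} to obtain one equality, and use commutativity \cite[Theorem 6.4]{fu} for the other. There is one harmless bookkeeping slip: in your factorization $X\to E_1\to E_1\oplus E_2$ the intermediate fiber is $s^{-1}(E_1\oplus 0)=Z_2$, not $Z_1$, and Theorem 6.5 applies the outer embedding first, so it yields $\Z(s)=0^!_{E_1}\Z(s_2)$ directly, while your commutativity or symmetry step supplies $0^!_{E_2}\Z(s_1)$.
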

\begin{proof}
    By commutativity of Gysin pullbacks we know that 
    $$0^!_{E_2}\Z(s_1)=0^!_{E_2}0^!_{E_1}[X]=0^!_{E_1}0^!_{E_2}[X]=0^!_{E_1}\Z(s_2).$$
    Next, consider the following fiber diagram
\[\begin{tikzcd}[cramped]
	Z & {Z_1} & X \\
	X & {E_2} & {E_1\oplus E_2} \\
	& X & {E_1,}
	\arrow[from=1-1, to=1-2]
	\arrow[from=1-1, to=2-1]
	\arrow[from=1-2, to=1-3]
	\arrow["{{s_2}_{|Z_1}}", from=1-2, to=2-2]
	\arrow["{(s_1,s_2)}", from=1-3, to=2-3]
	\arrow["{0_{E_2}}", from=2-1, to=2-2]
	\arrow["{\iota_2}", from=2-2, to=2-3]
	\arrow["{q}", from=2-2, to=3-2]
	\arrow["{p_1}", from=2-3, to=3-3]
	\arrow["{0_{E_1}}", from=3-2, to=3-3]
\end{tikzcd}\]
    where $q, p_1$ are the natural projections and $\iota_2=(0,\id)$. By definition, $E_1\oplus E_2= q^*E_1$ with the zero section $\iota_2=q^*0_{E_1}$, so $\iota_2^!=0_{E_1}^!$. Since $\iota_2\circ 0_{E_2}=0_{E_{1}\oplus E_2}$, by the functoriality of Gysin pullback,
    $$\Z(s)=0_{E_{1}\oplus E_2}^![X]=\iota_2^! 0_{E_2}^![X]=0^!_{E_1}0^!_{E_2}[X]=0^!_{E_1}\Z(s_2).$$
\end{proof}

Given a two term complex of vector bundles $E_0\xr{\sigma}E_1$, we denote by $s_\sigma$ the section of the vector bundle $Q_{\Gr}\otimes E_1$ over $q\colon \Gr(E_0^*,r)\to X$ induced by the composition
$$Q_{\Gr}^*\hookrightarrow q^*E_0\overset{\sigma}{\To}q^*E_1.$$
Then, $\wt{\op D}_r(E_\bu)=Z(s_\s)$ and $\llb{E_\bu,r}=\Z(s_\s)$  (cf. Definition \ref{def:1st2nd}).

\begin{lemma}\label{lem:invar2}
    Let $E_0\xr{\sigma} E_1$ and $E_0\oplus G \xr{\s_G} E_1\oplus G$ be two maps of vector bundles bundle over $X$, where $\s_G=(\s,\id_G)$. Denote by $Q$ and  $Q_G$ the universal quotient bundles of $\Gr(E_0^*,r)$ and $\Gr(E_0^*\oplus G^*,r)$. Let  $s_\s$ and $s_{\s_G}=(s_1,s_2)$ be the induced sections of the vector bundles $Q\otimes q^*E_1$ and $Q_G\otimes q^*(E_1\oplus G)$, respectively.  Then, under the identification  $$\Gr(E_0^*,r)\cong Z(s_2)\subset \Gr(E_0^*\oplus G^*,r),$$ we have  $Z(s_\sigma)=Z(s_{\sigma_G})$ and $\Z(s_\sigma)=\Z(s_{\sigma_G})$.
\end{lemma}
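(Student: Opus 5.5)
The plan is to split the section $s_{\sigma_G}$ into its two components and use Lemma \ref{lem:invar1}. Over $\Gr(E_0^*\oplus G^*,r)$, the universal subbundle $Q_G^*\hookrightarrow q^*E_0\oplus q^*G$ followed by $\sigma_G=(\sigma,\id_G)$ gives the two components. The first is $s_1\in H^0(Q_G\otimes q^*E_1)$, induced by $Q_G^*\to q^*E_0\xr{\sigma}q^*E_1$. The second is $s_2\in H^0(Q_G\otimes q^*G)$, induced by the projection $Q_G^*\to q^*G$.

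First I will show that $Z(s_2)$ is exactly the closed subscheme $\Gr(E_0^*,r)\subset\Gr(E_0^*\oplus G^*,r)$. The vanishing of $Q_G^*\to q^*G$ means the subbundle factors through $q^*E_0$. Dually, the quotient $E_0^*\oplus G^*\twoheadrightarrow Q_G$ kills $G^*$, i.e. it factors through $E_0^*$. This is the universal property of the closed immersion $\Gr(E_0^*,r)\hookrightarrow\Gr(E_0^*\oplus G^*,r)$ induced by the surjection $E_0^*\oplus G^*\twoheadrightarrow E_0^*$. Under this identification $Q_G$ restricts to $Q$, and $s_1|_{Z(s_2)}=s_\sigma$. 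Hence $Z(s_{\sigma_G})=Z(s_1)\cap Z(s_2)=Z(s_\sigma)$ as schemes.

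For the classes, Lemma \ref{lem:invar1} gives $\Z(s_{\sigma_G})=0^!_{Q_G\otimes E_1}\Z(s_2)$. Restricting $s_1$ along $Z(s_2)$, the refined Gysin map $0^!_{Q_G\otimes E_1}$ applied to a class on $Z(s_2)$ is the localized top Chern class of the restricted section $s_\sigma$ applied to that class. So it suffices to show $\Z(s_2)=[\Gr(E_0^*,r)]$.

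This last point is the main step. I will show $s_2$ is a regular section: its zero locus is smooth over $X$ of relative codimension $r\cdot\rk G=\rk(Q_G\otimes G)$. Locally on $X$, with $G$ trivial, $\Gr(E_0^*,r)$ is cut out in a standard affine chart of the bigger Grassmannian by the vanishing of the $G$-coordinates of the chart, which are exactly the entries of $s_2$. These form a regular sequence, since in the chart the ambient space is a product of $X$ with affine space. Because $X$ is only assumed reduced and irreducible, I will argue by flat pullback rather than via smoothness of $X$.

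By \cite[Example 14.1.1 / Proposition 14.1(b)]{fu}, the zero locus is a regular embedding of the right codimension. The normal bundle equals the restricted bundle, so there is no excess, and the localized top Chern class is the fundamental class: $\Z(s_2)=[Z(s_2)]$. Combining this with the previous paragraph gives $\Z(s_{\sigma_G})=\Z(s_\sigma)$.
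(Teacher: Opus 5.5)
Your proposal is correct and follows essentially the same route as the paper. You split $s_{\sigma_G}=(s_1,s_2)$, use Lemma~\ref{lem:invar1} to write $\Z(s_{\sigma_G})=0^!_{Q_G\otimes E_1}\Z(s_2)$, replace $\Z(s_2)$ by $[\Gr(E_0^*,r)]$ because $s_2$ is a regular section, and conclude via $s_1|_{Z(s_2)}=s_\sigma$. The only difference is that you spell out the local-chart proof that $s_2$ is regular and the scheme-theoretic identification $Z(s_2)\cong\Gr(E_0^*,r)$, both of which the paper simply asserts.
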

\begin{proof}
 Form the fiber diagram \eqref{diag:gys} for the situation at hand:
 \[\begin{tikzcd}[cramped]
	Z(s_{\sigma_G}) & {Z(s_1)} &\Gr(E_0^*\oplus G^*,r) \\
	{\Gr(E_0^*,r)} & \Gr(E_0^*\oplus G^*,r) & {Q_G\otimes q^*E_1} \\
	\Gr(E_0^*\oplus G^*,r) & {Q_G\otimes q^*(E_1\oplus G).}
	\arrow[from=1-1, to=1-2]
	\arrow[from=1-1, to=2-1]
	\arrow[from=1-2, to=1-3]
	\arrow[from=1-2, to=2-2]
	\arrow["{0_1}", from=1-3, to=2-3]
	\arrow[from=2-1, to=2-2]
	\arrow[from=2-1, to=3-1]
	\arrow["{s_1}"', from=2-2, to=2-3]
	\arrow["{s_2}", from=2-2, to=3-2]
	\arrow["{0_2}"', from=3-1, to=3-2]
\end{tikzcd}\]
  By Lemma \ref{lem:invar1}, 
$$\Z(s_{\sigma_G})=0_1^!0_2^![\Gr(E_0^*\oplus G^*,r)]=0_1^!\Z(s_2)=0_1^![\Gr(E_0^*,r)],$$
where the last equality is because $s_2$ is a regular section. Since the restriction of $s_1$ to $\Gr(E_0^*,r)$ is $s_\sigma$, we find that $Z(s_{\sigma_G})=Z(s_\sigma)$ and
$0_1^![\Gr(E_0^*,r)]=\Z(s_\sigma)$, as desired.
\end{proof}

\begin{lemma}\label{lem:invar4}
    Let $E\xr{\s} F$ and $E\xr{\tau}G$ be two quasi-isomorphic complexes of vector bundles then
$$Z(s_\s)=Z(s_\tau),\qquad \Z(s_\sigma)=\Z(s_\tau).$$
\end{lemma}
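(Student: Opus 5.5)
We will reduce to the case where one complex is obtained from the other by adding a trivial summand $G\xrightarrow{\id}G$, and then apply Lemma \ref{lem:invar2}.

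First, observe that the loci agree: $Z(s_\sigma)=\Gr(\coker(\sigma^*),r)$ and $Z(s_\tau)=\Gr(\coker(\tau^*),r)$. A quasi-isomorphism gives $\coker(\sigma^*)\cong\sh^0(E_\bu^\vee)\cong\coker(\tau^*)$, since both are the $\sh^0$ of the dual complexes. Hence both loci are the same closed subscheme of $\Gr(E^*,r)$, and it remains to compare the localized top Chern classes.

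Second, both complexes share the degree $0$ term $E$. We therefore use a standard trick: two quasi-isomorphic 2-term complexes of vector bundles become isomorphic after adding trivial complexes $G\xr{\id}G$ (a contractible summand), at least after passing to an affine bundle via the Jouanolou trick. Over an affine base, the quasi-isomorphism is represented by an honest chain map, and the mapping cylinder construction gives the stabilization. Concretely, we aim to realize both $E\xr{\sigma}F$ and $E\xr{\tau}G$ as stably isomorphic to a common complex. The fibre product $F\times_{\coker}G$ is not directly available, so instead we use the following. Let $W:=F\oplus G$ and consider $E\to W$ given by $(\sigma,\tau)$. The quasi-isomorphism identifies $\coker\sigma\cong\coker\tau$ compatibly, so the map $F\oplus G\to \coker$ given by $(a,-b)$ has kernel $W'$, and $W'$ is a vector bundle. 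Then $E\to W'$ is a complex which is isomorphic, after adding trivial summands, to each of $E\to F$ and $E\to G$. The isomorphism $W'\cong F\oplus G'$ requires splitting a surjection of bundles, which is possible after Jouanolou.

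Alternatively, and more cleanly, we can use a splitting of the mapping cone. The cone of $\sigma\to\tau$ is acyclic, so its terms split as trivial complexes on an affine base.

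Third, the intersection classes must be compatible with pulling back along the affine bundle $\pi\colon\wt X\to X$. Flat pullback commutes with the Gysin maps $0^!$, and $\pi^*$ is an isomorphism on Chow groups by the homotopy property, so it suffices to prove the equality on $\wt X$. There, both stabilizations are isomorphisms of the form $\sigma_G$ with $G$ trivial. Lemma \ref{lem:invar2} then gives $\Z(s_\sigma)=\Z(s_{\sigma\oplus\id_{G}})=\Z(s_{\tau\oplus\id_{G'}})=\Z(s_\tau)$. The middle equality holds because an isomorphism of complexes fixing $E$ (up to automorphism of the added summands) identifies the Grassmannians, the bundles, and the sections. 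Any residual discrepancy in the identification of sections with equal zero loci is absorbed by Lemma \ref{lem:invar3}.

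The main obstacle is the stabilization step. One must produce, without resolution of singularities, chain-level isomorphisms $\sigma\oplus\id_G\cong\tau\oplus\id_{G'}$ that keep the degree $0$ term compatible, so that the Grassmannian $\Gr(E^*\oplus G^*,r)$ and the embedding $\Gr(E^*,r)\cong Z(s_2)$ match. If the degree $0$ terms cannot be kept equal, we will instead use Lemma \ref{lem:invar3}. In that case we compare the two sections on a common Grassmannian after stabilizing both degrees, using that their zero schemes agree by the first step.
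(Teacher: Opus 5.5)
Your Step 1 is the same as the paper's first step, but Steps 2--3 do not close. Write $G_1,G_2$ for the contractible summands, since $G$ is already the target of $\tau$. The chain $\Z(s_\sigma)=\Z(s_{\sigma\oplus\id_{G_1}})=\Z(s_{\tau\oplus\id_{G_2}})=\Z(s_\tau)$ only makes sense if the isomorphism of complexes $\phi\colon\sigma\oplus\id_{G_1}\cong\tau\oplus\id_{G_2}$ has a degree-$0$ component $\phi^0\colon E\oplus G_1\to E\oplus G_2$ that carries the copy $\Gr(E^*,r)\cong Z(s_2)$ used in Lemma \ref{lem:invar2} for $\sigma$ onto the copy used for $\tau$. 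In effect, $\phi^0$ must restrict to the identity on $E$. The mapping-cone/cylinder splitting gives no such control, since $\phi^0$ generally mixes $E$ with the added summands. So all you obtain is that $\Z(s_\sigma)$ and $\Z(s_\tau)$ correspond under some isomorphism of zero loci induced by $\phi^0$, not equality on the common subscheme from Step 1. You flag this as the main obstacle, but the proposed fallback does not apply. Lemma \ref{lem:invar3} compares two sections of the \emph{same} bundle, whereas $s_\sigma$ and $s_\tau$ are sections of the different bundles $Q_{\Gr}\otimes q^*F$ and $Q_{\Gr}\otimes q^*G$. Your first construction also fails as stated: $F\times_{\coker\sigma}G$ need not be locally free. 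For $\sigma=\tau=(x,y)\colon\cO^2\to\cO$ on $\bb A^2$ it is $\cO\oplus\mathfrak m$.

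The missing idea, which is the paper's entire proof, comes from the \emph{proof} of Lemma \ref{lem:invar3} rather than its statement. Take a section $s$ of a rank $m$ bundle $V$ on $M$, let $D$ be the exceptional divisor of $\op{Bl}_{Z(s)}M$, and let $p\colon D\to M$ and $\eta\colon D\to Z(s)$ be the projections. The formula there, $\Z(s)=\eta_*\big(c_{m-1}(p^*V/\cO_D(D))\cap[D]\big)$, shows that $\Z(s)$ depends only on the subscheme $Z(s)\subset M$ and on the Chern classes of $V$. Quasi-isomorphic complexes have the same class in $K^0(X)$, so $[E]-[F]=[E]-[G]$. Hence $c(F)=c(G)$, and so $c(Q_{\Gr}\otimes q^*F)=c(Q_{\Gr}\otimes q^*G)$ on $\Gr(E^*,r)$. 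Together with your Step 1 this gives $\Z(s_\sigma)=\Z(s_\tau)$ at once, with no Jouanolou trick, no stabilization, and no use of Lemma \ref{lem:invar2}. One caveat applies to both your Step 1 and the paper's: each tacitly assumes the isomorphism $\coker\sigma^*\cong\coker\tau^*$ is compatible with the two quotient maps from $E^*$, for example that the quasi-isomorphism is the identity on $E$. An abstract isomorphism of cokernels does not by itself make the two subschemes of $\Gr(E^*,r)$ equal.
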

\begin{proof} $Z(s_\sigma)=\Gr(\coker(\sigma^*),r)=\op D_r(\s)$ and $Z(s_\tau)=\Gr(\coker(\tau^*),r)=\op D_r(\tau)$ (cf.\eqref{eq:secforDr}). Since the complexes are quasi-isomorphic, the two cokernels are isomorphic, and hence  $Z(s_\sigma)=Z(s_\tau)\subset \Gr(E^*,r)$. Moreover, it follows from the quasi-isomorphism that $c(F)=c(G)$, and hence the claim follows from the Chern class formula in the proof of Lemma \ref{lem:invar3}.
\end{proof}


\begin{prop}\label{prop:invariance}
    Let $E_0\xr{\s} E_1$ and $F_0\xr{\tau} F_1$ be two quasi-isomorphic complexes of vector bundles then 
$$Z(s_\s)=Z(s_\tau),\qquad \Z(s_\sigma)=\Z(s_\tau).$$
In particular, for $\sff E\in \Perf(X,0,1)$, the class $\llb{\sff E,r}$ only depends on the quasi-isomorphic type of $\sff E$.
\end{prop}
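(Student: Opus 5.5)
The plan is to reduce an arbitrary quasi-isomorphism between the 2-term complexes $E_\bu=\{E_0\xr{\s}E_1\}$ and $F_\bu=\{F_0\xr{\tau}F_1\}$ to the two elementary moves already handled. These are Lemma \ref{lem:invar2}, adding a trivial summand $G\xr{\id}G$, and Lemma \ref{lem:invar4}, changing the target while keeping the same source $E$. The first step is to build a common ``roof''. Since $X$ is quasi-projective, I can pass to an affine bundle via the Jouanolou trick, exactly as is done elsewhere in the paper. Pullback along the smooth affine bundle is injective on Chow groups and compatible with $Z(-)$ and $\Z(-)$, so this reduction is harmless. On the affine bundle, the quasi-isomorphism is realized by an honest chain map $f\colon E_\bu\to F_\bu$.

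Next I would replace $f$ by a degreewise surjective chain map. Put
$$E'_\bu:=\{E_0\oplus F_0\xr{\s\oplus\id}E_1\oplus F_0\},$$
which by Lemma \ref{lem:invar2} has the same $Z(s)$ and $\Z(s)$ as $E_\bu$. There is a chain map $E'_\bu\to F_\bu$ given by $(f_0,\id)$ in degree $0$ and $(f_1,\tau)$ in degree $1$, and this map is surjective in both degrees. Its kernel $K_\bu$ is a complex of vector bundles, and it is acyclic because $f$ is a quasi-isomorphism. An acyclic 2-term complex of vector bundles is an isomorphism $K_0\cong K_1$. On the affine variety, every short exact sequence of vector bundles splits. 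After choosing compatible splittings, I expect to get
$$E'_\bu\cong F_\bu\oplus\{K_0\xr{\cong}K_1\}$$
as complexes, up to a modification of the differential by an off-diagonal term. A trivial complex $\{K\xr{\id}K\}$ is then absorbed by Lemma \ref{lem:invar2} once more, which would give $Z(s_\s)=Z(s_\tau)$ and $\Z(s_\s)=\Z(s_\tau)$.

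The main obstacle is the splitting step. The off-diagonal component of the differential, a map $F_0\to K_1$ coming from the splitting, need not vanish, so $E'_\bu$ is a priori only isomorphic to a complex with the same source $F_0\oplus K_0$ as $F_\bu\oplus K_\bu$ but a different differential. My first attempt would be to remove this component by the automorphism of $F_0\oplus K_0$ that composes with the isomorphism $K_0\cong K_1$; this is a change of basis of the source. An isomorphism of the source induces an isomorphism of Grassmannians carrying the sections and universal bundles to each other. Any leftover discrepancy has the same source and quasi-isomorphic targets, so it is covered by Lemma \ref{lem:invar4}.

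The final claim, that $\llb{\sff E,r}$ depends only on the quasi-isomorphism type for $\sff E\in\Perf(X,0,1)$, then follows. By definition, with no blow up needed, $\llb{\sff E,r}=\Z(s_\s)$ for any resolution, and the supports $\wt{\op D}_r$ are canonically $\Gr(\sh^0(\sff E^\vee),r)$. Under this identification, the equality of $\Z$ classes in the common Chow group of $Z(s_\s)=Z(s_\tau)$ is exactly the required independence.
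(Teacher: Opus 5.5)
Your argument is correct, but it follows a different route from the paper. The paper never realizes the quasi-isomorphism by a chain map. It stabilizes \emph{both} complexes to the common source $E_0\oplus F_0$, namely $\{E_0\oplus F_0\xr{(\s,\id)}E_1\oplus F_0\}$ and $\{E_0\oplus F_0\xr{(\id,\tau)}E_0\oplus F_1\}$. It applies Lemma \ref{lem:invar2} to each and then compares the two with Lemma \ref{lem:invar4}. You instead stabilize only one side and use Jouanolou to get an honest chain map. You make it degreewise surjective; in degree $1$ this is exactly surjectivity of $\sh^1$ of the quasi-isomorphism, which you should state. You then split off the acyclic kernel. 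The unipotent automorphism $(k,x)\mapsto(k-d_K^{-1}h(x),x)$ of $K_0\oplus F_0$ kills the off-diagonal term $h\colon F_0\to K_1$ exactly. So you obtain an isomorphism of complexes $E'_\bu\cong F_\bu\oplus\{K_0\xr{\cong}K_1\}$, and two applications of Lemma \ref{lem:invar2} plus naturality under isomorphisms of complexes finish. Lemma \ref{lem:invar4} is never needed, so you can drop the hedge about a ``leftover discrepancy''.

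What each route buys: the paper's version is two lines and needs no descent. However, it leans on the proof of Lemma \ref{lem:invar4}, which treats $Z(s_\s)$ and $Z(s_\tau)$ as the same subscheme of $\Gr(E^*,r)$. That is only valid when $\im\s^*=\im\tau^*$ inside $E^*$. For the two stabilized complexes above, the zero loci are the $r$-planes inside $\ker\s\oplus 0$ and those inside $0\oplus\ker\tau$. These are disjoint subschemes of $\Gr((E_0\oplus F_0)^*,r)$. So an automorphism of the source carrying one locus and its section to the other is tacitly required, and your splitting construction supplies exactly that.

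The price of your route is the descent step, which you should justify in three parts:
\begin{enumerate}
\item Flat pullback along the Jouanolou torsor is injective on Chow groups, because it is a torsor under a vector bundle and hence the complement of a projective subbundle in a projective bundle.
\item $\Z(-)$ commutes with flat pullback \cite[Proposition 14.1]{fu}.
\item The identification $Z(s_\s)\cong Z(s_\tau)$ you produce on the affine bundle is the pullback of the canonical one through $\Gr(\sh^0(\sff E^\vee),r)$. This holds because your isomorphism $E'_\bu\cong F_\bu\oplus K_\bu$ is compatible with the projection $E'_\bu\to F_\bu$, so the equality of classes descends to $X$.
\end{enumerate}
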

\begin{proof}
    Form the complexes 
    $$E_0\oplus F_0\xr{(\sigma,\id)} E_1\oplus F_0,\qquad E_0\oplus F_0\xr{(\id,\ga)} E_0\oplus F_1,$$ which are quasi-isomorphic to the given ones. The claim now follows 
     from Lemmas \ref{lem:invar2} and \ref{lem:invar4}.
\end{proof}
We are now ready to prove the first main result of this appendix.
\begin{theorem} Let $\sff E\in\Perf(X,0,2)$, the classes $\llb{\sff E,r}$ and $\llbb{\sff E,r}$ only depend on the quasi-isomorphism type of $\sff E$.
\end{theorem}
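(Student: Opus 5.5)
The plan is to reduce everything to Proposition \ref{prop:invariance}, which handles 2-term complexes of vector bundles without resolution of singularities. We also use the first part of the proof of Lemma \ref{lem:indep}, which shows independence from the choice of an $\sff E$-suitable blow up for a fixed resolution and does not use resolution of singularities. We treat $\llb{\sff E,r}$; the argument for $\llbb{\sff E,r}$ is identical after replacing $\sff E$ by $\sff E^\vee[-2]$, or equivalently working with the 2-term complexes $\{\nu^*E_1/K\to \nu^*E_2\}$ and their duals.

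First, take two locally free resolutions $E_\bu=\{E_0\to E_1\to E_2\}$ and $E'_\bu=\{E'_0\to E'_1\to E'_2\}$ of $\sff E$. Any $\sff E$-suitable blow up $\nu\colon Y\to X$ is defined intrinsically by the condition $\hd(\sh^2(\nu^*\sff E))\le 1$, which depends only on the quasi-isomorphism class. Choose one such $\nu$; by Lemma \ref{lem:indep} (first part) we may use the same $Y$ for both resolutions. Set $K=\ker(\nu^*\sigma_1)$ and $K'=\ker(\nu^*\sigma'_1)$. Both are locally free, by the argument in Corollary \ref{cornu}. Then
$$\{\nu^*E_0\to K\}\simeq \tau^{\le1}(\nu^*\sff E)\simeq\{\nu^*E'_0\to K'\},$$
so these are two quasi-isomorphic 2-term complexes of vector bundles on $Y$. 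Here $\nu^*$ is the derived pullback, and truncation commutes with the quasi-isomorphism $\nu^*E_\bu\simeq\nu^*E'_\bu$.

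Second, by Definition \ref{def:1st2nd}, $0^!_{K_1}[\Gr(\nu^*E_0^*,r)]$ is exactly the localized top Chern class $\Z(s_{\nu^*\sigma_0})$ for the 2-term complex $\{\nu^*E_0\to K\}$, and likewise for the primed complex. Proposition \ref{prop:invariance} then gives $\Z(s_{\nu^*\sigma_0})=\Z(s_{\nu^*\sigma'_0})$ as classes on $Z(s_{\nu^*\sigma_0})=Z(s_{\nu^*\sigma'_0})=\wt{\op D}_r(\nu^*\sff E)$. Pushing forward by $\nu_*$ yields $\llb{E_\bu,r}=\llb{E'_\bu,r}$.

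The main obstacle is that Proposition \ref{prop:invariance} presupposes a genuine quasi-isomorphism of complexes, whereas two resolutions are a priori only isomorphic in $D^b(Y)$. These are roofs that need not be actual chain maps between the 2-term complexes, and the Grassmannians $\Gr(\nu^*E_0^*,r)$ and $\Gr(\nu^*E_0'^*,r)$ differ. I would address this as follows:
\begin{itemize}
\item On quasi-projective $Y$, any isomorphism in $D^b$ between bounded complexes of vector bundles can be realized by a zigzag of actual quasi-isomorphisms, after possibly passing to a third resolution that surjects onto both; if needed, apply the Jouanolou trick to reach an affine base, which does not affect Chow groups by flat pullback.
\item Apply Proposition \ref{prop:invariance} to each step of the zigzag.
\item Invariance under any chain quasi-isomorphism reduces, via the direct-sum construction in the proof of Proposition \ref{prop:invariance}, to Lemmas \ref{lem:invar2} and \ref{lem:invar4}. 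These identify the relevant zero loci canonically with $\Gr(\coker,r)$, which is intrinsic to $\sh^0$ of the dual, so the identifications are compatible.
\end{itemize}
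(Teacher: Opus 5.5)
Your proposal is correct and is essentially the paper's proof: on an $\sff E$-suitable blow up both classes are pushforwards of first classes of the intrinsic 2-term complexes $\tau^{\le1}(\nu^*\sff E)$ and $\sh^2(\nu^*\sff E)^\vee\simeq\{\nu^*E_2^*\to(\nu^*E_1/K)^*\}$, to which Proposition~\ref{prop:invariance} applies. Your roof-of-chain-quasi-isomorphisms remark just makes explicit a point the paper leaves implicit.

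One small fix: $\llbb{\sff E,r}$ is not the first class of $\sff E^\vee[-2]$. That class has dimension smaller by $r\rkh{\sff E}^1$ and is related to $\llbb{\nu^*\sff E,r}$ only through the $T$-correction of Proposition~\ref{prop:aboutT}. So drop the ``or equivalently'' and keep only your second description of the dual case.
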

\begin{proof}
     Let $\nu:Y\to X$ be an $\sff E$-suitable blow up. By definition
     $$
        \llb{\sff E}=\nu_*\llb{\tau^{\leq1}(\nu^*\sff E)},\qquad \llbb{\sff E}=\nu_*\llb{\sh^2(\nu^*\sff E)^\vee}.
    $$
    We know that (cf. Corollary \ref{cornu}) $\tau^{\leq1}(\nu^*\sff E)$, $\sh^2(\nu^*\sff E)^\vee \in \Perf(Y,0,1)$  are quasi-isomorphism invariants of $\sff E$, so the claim follows from Proposition \ref{prop:invariance}.
    \end{proof}

Our next goal is to give a more direct proof of the equality \ref{eq:HandE} needed for the construction of Section \ref{sec:deepest}. As before, we first prove the result for the 2-term complexes of vector bundles.

\begin{lemma}\label{lem:invar5}
    Given a map $E\xr{\s} F$ of vector bundles, define a 2-term complex of vector bundles over $q\colon \Gr(E^*,r)\to X $ 
    $$ \sff L:=\{q^*E\xr{\ga} q^*F\oplus q^*E/Q^*_\Gr\}$$
    where $Q_{\Gr}$ is the universal quotient bundle of $\Gr(E^*,r)$ and the map $\ga$ is the pair of $q^*\s$ and the canonical projection. Then,
     $$\llb{\sff L,r}=\Z(s_\sigma) .$$
\end{lemma}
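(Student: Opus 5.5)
The plan is to run the same argument as in Lemma \ref{lem:invar2}, now on the relative Grassmannian over $\Gr(E^*,r)$. Write $G:=\Gr(E^*,r)$ with universal quotient $Q$. Let $q'\colon G':=\Gr(q^*E^*,r)\to G$, with universal quotient $Q'$. By Definition \ref{def:1st2nd} and the discussion before Lemma \ref{lem:invar2}, applied to the 2-term complex $\sff L$ (no blow up is needed), we have
$$\wt{\op D}_r(\sff L)=Z(s_\ga),\qquad \llb{\sff L,r}=\Z(s_\ga).$$
Here $s_\ga$ is the section of $Q'\otimes q'^*(q^*F\oplus q^*E/Q^*)$ induced by $Q'^*\hookrightarrow q'^*q^*E\xr{\ga}q'^*(q^*F\oplus q^*E/Q^*)$. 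It splits as $s_\ga=(s_1,s_2)$, where $s_1$ is the section of $Q'\otimes q^*F$ induced by $\s$, and $s_2$ is the section of $Q'\otimes q^*E/Q^*$ given by the composition $Q'^*\hookrightarrow q^*E\twoheadrightarrow q^*E/Q^*$.

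First I would identify $Z(s_2)$. The section $s_2$ vanishes exactly where the rank $r$ subbundle $Q'^*$ lies inside the rank $r$ subbundle $Q^*$, that is, where $Q'^*=Q^*$. So $Z(s_2)$ is the image of the tautological section $\delta\colon G\to G'$ of $q'$, corresponding to the point $Q^*\subset q^*E$. I would check this scheme-theoretically, and also check that $s_2$ is a regular section, using the standard local charts of the relative Grassmannian. Over an open set where $q^*E=Q^*\oplus W$, the chart of $G'$ around $\delta(G)$ is $\op{Hom}(Q^*,W)$, and in this chart $s_2$ is identified with the tautological coordinate map $\op{Hom}(Q^*,W)\to \op{Hom}(Q^*,W)$. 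Its zero scheme is therefore the zero section, which is cut out regularly by $r(e-r)=\rk(Q'\otimes q^*E/Q^*)$ equations. Consequently $\Z(s_2)=\delta_*[G]$, since for a regular section the localized top Chern class is the fundamental class of the zero scheme. Moreover, $\delta^*Q'=Q$ and $\delta^*s_1=s_\s$.

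Next I would apply Lemma \ref{lem:invar1} to the fiber diagram \eqref{diag:gys} with $E_1=Q'\otimes q^*F$ and $E_2=Q'\otimes q^*E/Q^*$. This gives
$$\Z(s_\ga)=0^!_{E_1}\Z(s_2)=0^!_{E_1}[Z(s_2)]=0^!_{\delta^*E_1}[G]=\Z(\delta^*s_1)=\Z(s_\s).$$
At the same time it identifies $Z(s_\ga)=Z(s_1)\cap Z(s_2)$ with $Z(s_\s)\subset G$, so the two classes live on the same scheme, as required.

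The main obstacle is the local verification in the second paragraph: that $Z(s_2)$ is exactly the diagonal section scheme-theoretically, not merely set-theoretically, and that $s_2$ is regular. Everything else follows formally from Lemma \ref{lem:invar1} and the compatibility of refined Gysin maps with restriction. I would handle the local statement by trivializing $E$ on an affine cover and using the affine chart description of the Grassmannian. In that description $\op{Hom}(Q'^*,E/Q^*)$ restricted to the chart becomes the coordinate vector of the chart, so the zero locus is a reduced smooth section of the correct codimension.
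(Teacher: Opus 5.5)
Your proof is correct, but it does not follow the paper's proof of this lemma. It is the method the paper uses for Lemma \ref{lem:invar2}, run on the relative Grassmannian $\Gr(q^*E^*,r)\to\Gr(E^*,r)$.

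\textbf{The paper's route.} The paper never leaves $\Gr(E^*,r)$.
\begin{enumerate}
\item It observes that $\op D_r(\sff L)=Z(s_\sigma)$ is the deepest degeneracy locus of $\gamma$. Hence the kernel $K$ and cokernel $C$ of $\gamma|_{\op D_r}$ are vector bundles.
\item It applies the excess Porteous formula \cite[Example 14.4.7]{fu}. This writes $\llb{\sff L,r}$ as the graded piece of $c(K^*\otimes C)\cap s(\op D_r(\sff L),\Gr(E^*,r))$ in the relevant dimension.
\item It identifies $K$ with $Q^*_{\Gr}|_{\op D_r}$. Then $K^*\otimes C$ and $Q_{\Gr}\otimes q^*F$ agree in $K$-theory.
\item It compares this with the Segre-class expression for $\Z(s_\sigma)$ from \cite[Proposition 14.1]{fu}, which has the same Segre class.
\end{enumerate}

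\textbf{Your route.} You work directly with the definition $\llb{\sff L,r}=\Z(s_\gamma)$ on $\Gr(q^*E^*,r)$. You split off the component $s_2$ and reduce everything to two facts: $s_2$ is regular, and its zero scheme is the image of the tautological section $\delta$. Your chart argument for these is sound. Set-theoretically, the zero locus lies in the open set where $Q'^*\cap W=0$. On that open set, $s_2$ is the tautological coordinate section. It is cut out by a regular sequence of fibre coordinates, whatever the singularities of $X$.

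\textbf{Comparison.}
\begin{itemize}
\item Your version needs only Lemma \ref{lem:invar1} and a local computation.
\item It avoids Fulton's excess formula.
\item It does not need to know that the kernel and cokernel of $\gamma$ are locally free on $\op D_r(\sff L)$.
\item It yields the scheme-theoretic identification $\wt{\op D}_r(\sff L)\cong Z(s_\sigma)$ directly, via $\delta$. The paper instead gets it from the Fitting-ideal equality $\op D_r(\sff L)=Z(s_\sigma)$ together with the deepest-locus isomorphism $\wt{\op D}_r(\sff L)\to\op D_r(\sff L)$, which it only asserts "by construction".
\item The paper's version is shorter once Fulton's formula is granted. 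It also shows both classes as the same Segre-class expression on the common support.
\end{itemize}
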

\begin{proof}
By construction (cf. Section \ref{sec:setupB}), $\op D_r(\sff L)$ is the deepest degeneracy loci of $\sff L$, and $$\wt {\op D}_r(\s)=Z(s_\sigma)=\op D_r(\sf L)\cong \wt{\op D}_r(\sff L).$$
denote by $\op D_r$ this subscheme of $\Gr(E^*,r)$. Let $e,f$ be the ranks of $E$ and $F$ respectively. 
Letting $K:=\ker(\ga|_{\op D_r(\sff L)})$ and $C:=\coker(\ga|_{\op D_r(\sff L)})$, by \cite[Example 14.4.7]{fu}, we know that
$$\llb{\sff L}=c_{r(f-r)}(K^*\otimes C)\cap s\big(\op D_r(\sff L),\Gr(E^*,r)\big).$$
At a closed point $p:=(x,V)\in \Gr(E^*,r)$, the map $\ga|_p$ is given by 
$$E_x\xr{(\s|_x,\op{pr})}F_x\oplus E_x/V^*,$$
so $\ker(\ga|_p)=V^*\cap\ker(\s|_x)$. If $p\in\op D_r(\sff L)$, then $\ker(\ga|_p)$ is $r$ dimensional, so $V^*=\ker(\s|_x)$, and hence $K|_x=Q^*|_x$. From the definition of $\ga$ it is clear that $K\subset Q^*|_{\op D_r}$, so that $K\subset Q^*|_{\op D_r(\sff L)}$. Therefore,  
$$\llb{\sff L}=\left\{c\big(Q_\Gr\otimes (\sff L+Q^*_\Gr)\big)\cap s\big(\op D_r(\sff L),\Gr(E^*,r)\big)\right\}_{d-r(f-r)}.$$
On the other hand, in $K$-theory $Q_\Gr\otimes (\sff L+Q^*_\Gr)=Q_\Gr\otimes q^*F$, and $s_\sigma$ is a section of $Q_\Gr^*\otimes F$ with $Z(s_\s)=\op D_r(\sff L)$, therefore
$$\Z(s_\s)=\left\{c(Q^*\otimes F)\cap s\big(\op D_r(\sff L),\Gr(E^*,r)\big)\right\}_{d-r(f-r)}.$$ Comparing the last two equalities, we get the result. 
\end{proof}

\begin{prop}\label{prop:invar2}
    Let $\sff E:=\{E_0\xr{\s} E_1\}\in \Perf(X,0,1)$, and $B$ be a vector bundle, whose dual fits into the Diagram  \eqref{fig:defsurjB}. Let $q:\Gr(B^*,r)\to X$, and $\sff H:=\{q^*E_0\xr{\rho} q^*E_1\oplus q^*B/Q^*_{\Gr}\}$ with $\rho=(q^*\s,\psi^*)$. Then,
    $$\llb{\sff H,r}=\llb{\sff E, r}.$$
\end{prop}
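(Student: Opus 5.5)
The plan is to reduce Proposition \ref{prop:invar2} to Lemma \ref{lem:invar5} by adding a trivially acyclic piece to the complexes, using the invariance results Lemma \ref{lem:invar2} and Proposition \ref{prop:invariance}. As in Section \ref{sec:setupB}, we first pass to an affine bundle over $X$ so that the lift $\psi\colon B^*\to E_0^*$ exists. Pulling back along this flat map and pushing forward are compatible with the localized top Chern classes, so nothing is lost.

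\textbf{Step 1: identify the subschemes.} $\llb{\sff H,r}$ lives on $\wt{\op D}_r(\sff H)$, while $\llb{\sff E,r}=\Z(s_\s)$ lives on $Z(s_\s)=\wt{\op D}_r(\sff E)\subset\Gr(E_0^*,r)$. Both are $\Gr(\sh^0(\sff E^\vee),r)$. One is embedded in $\Gr(B^*,r)$ via the surjection $B^*\twoheadrightarrow\sh^0(\sff E^\vee)$, the other in $\Gr(E_0^*,r)$. I will compare both with the ambient $\Gr((E_0\oplus B)^*,r)$. Since $E_0^*\oplus B^*\to \sh^0(\sff E^\vee)$ restricts to surjections on each summand, both degeneracy loci sit inside it.

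\textbf{Step 2: an auxiliary complex.} Consider the 2-term complex
\[
\sff E' := \{E_0\oplus B \xr{} E_1\oplus B\},
\]
with matrix $\begin{pmatrix}\s & 0\\ \psi^* & \id\end{pmatrix}$ (or an elementary variant). It is quasi-isomorphic to $\sff E$. By Proposition \ref{prop:invariance}, together with Lemma \ref{lem:invar2} applied after a change of basis $E_0\oplus B\cong E_0\oplus B$ that makes the matrix block diagonal, we get $\llb{\sff E',r}=\llb{\sff E,r}$. Here $\llb{\sff E',r}$ is regarded on $\Gr((E_0\oplus B)^*,r)$.

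\textbf{Step 3: apply Lemma \ref{lem:invar5} to $\sff E'$ and compare.} Using instead the dual change of basis that kills $\psi$ on the $B^*$ side, the section $s_{\sff E'}$ on $\Gr((E_0\oplus B)^*,r)$ can be split as a pair $(s_1,s_2)$. The section $s_2$ is regular and cuts out $\Gr(B^*,r)$. On this zero locus $s_1$ restricts to a section whose zero scheme is $\op D_r(\sff H)$ and which agrees in $K$-theory with the bundle $Q_{\Gr}\otimes(q^*E_1\oplus q^*B/Q^*_{\Gr})$ modulo the trivial part. Lemma \ref{lem:invar1} then gives $\Z(s_{\sff E'})=0_1^![\Gr(B^*,r)]$. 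This Gysin class is exactly the class computed in Lemma \ref{lem:invar5} for $\sff H$, namely the Fulton excess formula $c_{\mathrm{top}}(K^*\otimes C)\cap s(\op D_r,\Gr(B^*,r))$. Equating this with $\llb{\sff H,r}$ uses Lemma \ref{lem:invar3}, since both sections have the same zero scheme and the same Chern classes.

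\textbf{Main obstacle.} Step 3 is the delicate part. The zero locus of $s_2$ must be exactly $\Gr(B^*,r)$ with regular section, and the restricted section $s_1$ must have zero scheme equal to $\op D_r(\sff H)$ scheme-theoretically; this is where the surjectivity $B^*\twoheadrightarrow\sh^0(\sff E^\vee)$ and the lift $\psi$ enter. If the direct splitting fails, I would fall back on the Segre-class description. Both classes equal
\[
\{c(Q\otimes(\cdot))\cap s(\op D_r,\text{ambient})\}_{\text{expected dim}},
\]
so it suffices to check that the normal cones of $\op D_r$ in $\Gr(E_0^*,r)$ and in $\Gr(B^*,r)$ give the same pushed-forward class after accounting for the smooth fibration discrepancy $\Hom(Q^*,E_0/Q^*)$ versus $\Hom(Q^*,B/Q^*)$. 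I would verify this through the common ambient $\Gr((E_0\oplus B)^*,r)$, where both appear as zero loci of regular sections.
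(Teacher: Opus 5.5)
Your architecture matches the paper's: you move to a Grassmannian of $B$ inside $\Gr((E_0\oplus B)^*,r)$, then use the deepest-locus Segre-class comparison behind Lemma \ref{lem:invar5}. However, Step 3, which carries all the content, fails as written in two places.

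\textbf{First gap: the splitting.} No change of basis of $\sff E'=\{E_0\oplus B\to E_1\oplus B\}$ produces a regular component $s_2$ with $Z(s_2)\cong\Gr(B^*,r)$. Such an $s_2$ requires a rank-$e_0$ quotient $W$ of the target such that $E_0\oplus B\to W$ is surjective with kernel mapping isomorphically onto $B$. That is, it requires a retraction of the bundle injection $(\s,\psi^*)\colon E_0\hookrightarrow E_1\oplus B$. The summands $E_1$ and $B$ of your target do not supply one. The column operation that removes $\psi^*$ just gives $\op{diag}(\s,\id_B)$, whose regular component cuts out $\Gr(E_0^*,r)$ again.

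What is missing is the complex $\sff F=\{B\xr{\kappa}(E_1\oplus B)/E_0\}$ with $\kappa(b)=[(0,b)]$. It is quasi-isomorphic to $\sff E$ and has $B$ in degree $0$. If you split $0\to E_0\to E_1\oplus B\to (E_1\oplus B)/E_0\to 0$ (Jouanolou trick), then $\sff E'$ becomes isomorphic to $\{E_0\oplus B\xr{\op{diag}(\id,\kappa)}E_0\oplus (E_1\oplus B)/E_0\}$. Your $s_2$ then exists, and Lemmas \ref{lem:invar1} and \ref{lem:invar2} give $\llb{\sff E,r}=\Z(s_\kappa)$ on $\Gr(B^*,r)$. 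The paper gets this equality directly from Proposition \ref{prop:invariance} applied to $\sff E\simeq\sff F$.

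\textbf{Second gap: passing to $\llb{\sff H,r}$.} The step from $\Z(s_\kappa)$ to $\llb{\sff H,r}$ is not justified. $\Z(s_\kappa)$ is a localized top Chern class on $\Gr(B^*,r)$. By contrast, $\llb{\sff H,r}$ is one on the Grassmannian bundle $\Gr(q^*E_0^*,r)\to\Gr(B^*,r)$, of a bundle of different rank in general. So Lemma \ref{lem:invar3}, which compares two sections of one bundle on one space, does not apply.

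Nor does Lemma \ref{lem:invar5} apply ``to $\sff H$''. Applied to $\kappa$, it gives $\Z(s_\kappa)=\llb{\sff L,r}$ for $\sff L=\{q^*B\to q^*\big((E_1\oplus B)/E_0\big)\oplus q^*B/Q^*_{\Gr}\}$, whose degree-$0$ term is $q^*B$, not $q^*E_0$. You still need the quasi-isomorphism $\sff L\simeq\sff H$ over $\Gr(B^*,r)$, given by $\psi^*$ in degree $0$, together with quasi-isomorphism invariance over that base. This is the paper's last step.

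Alternatively, apply \cite[Example 14.4.7]{fu} directly to $\sff H$. On $\op D_r(\sff H)$, the kernel of $\rho$ maps isomorphically onto $Q^*_{\Gr}$ under $\psi^*$, since $\psi^*$ is injective on $\ker\s$. Hence $K^*\otimes C=Q_{\Gr}\otimes q^*(E_1+B-E_0)$ in $K$-theory, which is the class of the bundle carrying $s_\kappa$, and the two Segre-class formulas agree.

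Your $K$-theory match with $Q_{\Gr}\otimes(q^*E_1\oplus q^*B/Q^*_{\Gr})$ ``modulo the trivial part'' is off by $Q_{\Gr}\otimes(q^*E_0-Q^*_{\Gr})$. That term is not trivial: it is exactly the base-versus-Grassmannian-bundle discrepancy that Example 14.4.7 absorbs. For the same reason, your fallback of comparing Segre classes of $\op D_r$ in $\Gr(E_0^*,r)$ and in $\Gr(B^*,r)$ is the nontrivial point itself, not a routine check.
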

\begin{proof}
    By the construction (cf. Section \ref{sec:setupB}), $\op{D}_r(\sff H)$ is a deepest degeneracy loci with $\op{D}_r(\sff H)\cong \wt{\op{D}}_r(\sff E)$.   Since $B^*$ fits into the Diagram \eqref{fig:defsurjB}, 
$$E_0\xr{(\s,\psi^*)} E_1\oplus B$$
is an injective map of bundles. The 2-term complex $$ \sff F:=\{B\xr{\op{pr}\circ (0,\id)} (E_1\oplus B)/E_0\}$$ is quasi-isomorphic to $E_0\xr{\s}E_1$ via the map of complexes
\[\begin{tikzcd}
	{E_0} & {E_1} \\
	B & {(E_1\oplus B)/E_0,}
	\arrow["\s", from=1-1, to=1-2]
	\arrow["{\psi^*}"', from=1-1, to=2-1]
	\arrow["{\op{pr}\circ (\id,0)}", from=1-2, to=2-2]
	\arrow["{\op{pr}\circ(0,\id)}", from=2-1, to=2-2]
\end{tikzcd}\]
 so by Proposition \ref{prop:invariance}, $$\wt{\op{D}}_r(\sff E)\cong  \wt{\op{D}}_r(\sff F),\qquad\llb{\sff E}=\llb{ \sff F}.$$
On the other hand, $\llb{\sff F}=\Z(s_1)$ for a section $s_1$ of $Q_\Gr^*\otimes q^*(E_1\oplus B)/E_0$. Letting $\sff L:=\{q^*B\xr{\ga} q^*(E_1\oplus B)/q^*E_0\oplus q^*B/Q^*_\Gr\}$, where $\ga=(\op{pr}\circ (0,\id),\op{pr})$ ,
by Lemma \ref{lem:invar5},  $$\op{D}_r(\sff L)\cong  \wt{\op{D}}_r(\sff F),\qquad \llb{\sff F}=\llb{\sff L}.$$
Finally, form the following two complexes 
$$q^*E_0\oplus q^*B\xr{\left(\begin{array}{cc}
     \id & 0 \\
     0& \ga  
\end{array}\right)}H_1\oplus q^*B,\qquad q^*E_0\oplus q^*B\xr{\left(\begin{array}{cc}
     \rho & 0 \\
     0& \id  
\end{array}\right)}q^*E_0\oplus L_1,$$ where $H_1$ and $L_1$ are the degree 1 terms of the complexes $\sff H$ and $\sff L$.
Since in $K$-theory
$$H_1\oplus q^*B=q^*B+q^*B+q^*E_1-Q^*_B=q^*E_0\oplus L_1,$$
by Lemma \ref{lem:invar4}$, {\op{D}}_r(\sff H)= {\op{D}}_r(\sff L)$ and 
$\llb{\sff L}=\llb{\sff H}$ completing the proof. 
\end{proof}

The second main result of this section is the following theorem.

\begin{theorem}\label{thm:A8}
        Let $\sff E:=\{E_0\to E_1\to E_2\}\in \Perf(X,0,2)$, $B$ be a vector bundle, whose dual fits into the Diagram\eqref{fig:defsurjB}, $q:\Gr(B^*,r)\to X$ be the natural projection and $u:q^*\sff E\to q^*B/Q^*_{\Gr}$ be the map obtained from \eqref{fig:defsurjB}. For $\sff H:=\Cone(u)$, we have
    $$\llb{\sff H,r}=\llb{\sff E, r}.$$
    \begin{proof}
        Let $\nu:Y\to X$ be an $\sff E$-suitable blow up, and make the fiber diagram
        \[\begin{tikzcd}[cramped]
	{\Gr(\nu^*B^*,r)} & Y \\
	{\Gr(B^*,r)} & X
	\arrow["{\nt q}"', from=1-1, to=1-2]
	\arrow["{\nu}", from=1-1, to=2-1]
	\arrow["\nu"', from=1-2, to=2-2]
	\arrow["q", from=2-1, to=2-2]
\end{tikzcd}\] The induced blow up in the left column is $\sff H$-suitable, and note that 
 $$\llb{\sff E}=\nu_*\llb{\tau^{\leq1}(\nu^*\sff E)} \;\text{ and }\;\llb{\sff H}=\nu_*\llb{\tau^{\leq1}(\nu^*\sff H)}.$$
After pulling back to $Y$, $\nu^*B$ fits in Diagram \eqref{fig:defsurjB} for $\tau^{\leq1}(\nu^*\sff E)$ resulting in the $2$-term complex of vector bundles $\tau^{\leq1}(\nu^*\sff H)$. The result now follows from Proposition \ref{prop:invar2}.
\end{proof}    
\end{theorem}


\section{Combinatorial identities}
\subsection{Pushforward identities}
 Given a projective bundle $q:\prj(B)\to X$ with $b=\op{rk}(B)$ and $X$ a quasi-projective variety, we study the pushforward of classes of the form
$$H^mc_{b+\sff e}\big(q^*B(1)+q^*\sff{E}(1)\big)\,c_{\sff f+n}\big(q^*\sff{F}(1)\big)$$ on $A(\bb P(B))$ 
into $A(X)$. Here, $\cO(1):=\cO_{\prj(B)}(1)$, $H:=c_1(\cO(1))$, $n,m\in\Z_{\geq0}$, and $\sff{E},\sff{F}$ are virtual bundles on  $X$ with $\rk(\sff E)=\sff e$ and $\rk(\sff F)=\sff f$. We omit the pullbacks $q^*$ in this subsection.

\begin{lemma}\label{lem:pushpic}
    For $n>0$,\;$m\geq0$,\; $b+\sff{e}\geq0$, $b>\sff f+n+m$\footnote{The last condition can be removed if $c_{i}(B+\sff E)=0$ for $i> b+\sff e$.}
    \begin{align*}&q_*\left(H^mc_{b+\sff{e}}\big(B(1)+\sff{E}(1)\big) \,c_{\sff{f}+n}\big(\sff{F}(1)\big)\right)=\sum_{j=0}^{n-1}(-1)^j{n-1\choose j}c_{\sff{e}+1+j+m}(\sff{E})\,c_{\sff{f}+n-j}(\sff F).\end{align*}
\end{lemma}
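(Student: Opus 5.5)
We reduce everything to the basic projective bundle identity: for $q\colon\prj(B)\to X$ with $b=\rk B$, and our convention that $\prj(B)$ parametrizes rank one quotients of $B^*$ (so that the tautological sub line bundle is $\cO(-1)\subset B$ and $H=c_1(\cO(1))$), we have $q_*H^{b-1+i}=s_i(B)$, the Segre class with $s(B)=c(B)^{-1}$ (signs fixed by the convention of \cite[Section 3.1]{fu} adapted to our $\cO(1)$). The plan is to expand both Chern classes in the integrand as polynomials in $H$ with coefficients pulled back from $X$, push forward term by term, and then recombine the resulting Segre classes with the Chern classes of $B$ appearing in the first factor so that $B$ disappears.

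Concretely, for a virtual bundle $\sff V$ of rank $v$ one has $c_k(\sff V(1))=\sum_{i}\binom{v-i}{k-i}c_i(\sff V)H^{k-i}$. Apply this to $B+\sff E$ (rank $b+\sff e$) and to $\sff F$ (rank $\sff f$). The first factor becomes $\sum_i c_i(B+\sff E)H^{b+\sff e-i}$, a clean expression since $\binom{b+\sff e-i}{b+\sff e-i}=1$. A better organization is to write $c_{b+\sff e}((B+\sff E)(1))$ as the degree $b+\sff e$ part of $c(B(1))c(\sff E(1))$ and use the relation $c_b(B(1))=\sum_i c_i(B)H^{b-i}=0$ in $A(\prj(B))$; equivalently, $q_*\big(H^N c(B(1))\cdot(\text{stuff})\big)$ can be evaluated via the generating function $q_*\big(\frac{1}{1-tH}\big)=t^{b-1}s_{1/t}(B)$-type identities, giving $q_*\big(H^{k}c_{b+\sff e}((B+\sff E)(1))\big)=c_{k+\sff e+1}(\sff E)$ for $k\ge 0$ (this is the $n=0$ type ``special case'' used in the proof of Theorem \ref{thm:gendual}; the condition $b>\sff f+n+m$ or the vanishing $c_{i>b+\sff e}(B+\sff E)=0$ ensures no spurious high-degree terms survive). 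I would prove this auxiliary identity first, by induction on $k$ or directly from $c(B)s(B)=1$.

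Then expand $c_{\sff f+n}(\sff F(1))=\sum_{j}\binom{\sff f-i}{\sff f+n-i}c_i(\sff F)H^{\sff f+n-i}$; writing $i=\sff f+n-j'$ the binomial becomes $\binom{j'-n}{j'}=(-1)^{j'}\binom{n-1}{j'}$ (upper negation), which vanishes for $j'\ge n$ since $n>0$. Combining with the auxiliary identity applied to $k=m+j'$ yields exactly $\sum_{j=0}^{n-1}(-1)^j\binom{n-1}{j}c_{\sff e+1+j+m}(\sff E)c_{\sff f+n-j}(\sff F)$, after relabeling $j'$ as $j$.

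The main obstacle is the auxiliary pushforward $q_*\big(H^kc_{b+\sff e}((B+\sff E)(1))\big)=c_{\sff e+1+k}(\sff E)$ with correct signs and index ranges: one must carefully handle that $\sff E$ is virtual (so $c_i(B+\sff E)$ need not vanish above $b+\sff e$, which is exactly where the hypothesis $b>\sff f+n+m$ enters to kill the extra terms by dimension/degree counting in $H$). I would first try the generating-function computation $\sum_k q_*(H^k c(\cdot))$ using $c(B)s(B)=1$, and check signs on the case $X=\pt$.
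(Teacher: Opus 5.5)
Your proposal is correct and follows essentially the same route as the paper: expand both factors in powers of $H$, push forward using $q_*H^{b-1+k}=s_k(B)$, recombine via $s(B)c(B+\sff E)=c(\sff E)$, and rewrite $\binom{j-n}{j}=(-1)^j\binom{n-1}{j}$. The paper does this in a single computation rather than first isolating your auxiliary identity (which is the $\sff F=0$ case of Lemma \ref{lem:pushpicn0}). One precision on the hypothesis: the expansion of $c_{b+\sff e}((B+\sff E)(1))$ only contains $c_i(B+\sff E)$ for $i\le b+\sff e$, so your auxiliary identity holds exactly when $k\le b-1$; the danger is missing terms, not spurious ones. With $k=m+j$ and $j\le\min(n-1,\sff f+n)$, this is precisely what $b>\sff f+n+m$ guarantees.
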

\begin{proof}
    We assume $\sff{f}+n\geq0$ and $b+\sff{e}+\sff{f}+n+m\geq b-1$ since otherwise, both sides are zero. Using the identities for the Chern classes of tensor products in \cite{manivel2010chernclassestensorproducts}, we expand \footnote{Here, we use the standard convention for the negative binomial coefficients, so that for $n\in\Z$, $(x+y)^n=\sum_{k=0}^\infty{n\choose k}x^ky^{n-k}.$} $c_{b+\sff{e}}\big(B(1)+\sff{E}(1)\big)\,c_{\sff{f}+n}\big(\sff{F}(1)\big)$ as
\begin{align*}
    &H^m\sum_{j=0}^{\sff{f}+n}\sum_{i=0}^{b+\sff{e}}H^ic_{b+\sff{e}-i}(B+\sff E){j-n\choose j} H^{j} c_{\sff{f}+n-j}(\sff F)\\
    &\qquad \qquad \qquad =\sum_{j=0}^{\sff{f}+n}{j-n\choose j} c_{\sff{f}+n-j}(\sff F)\sum_{i=0}^{b+\sff{e}}H^{i+j+m}c_{b+\sff{e}-i}(B+\sff E).
\end{align*}
    When pushing forward, we only care about the terms that have a power of $H$ with exponent at least $b-1$, so we can ignore the terms with smaller exponents. For $k\geq0$, using the identity $H^{b}=-\sum_{i\geq1} c_i(B)H^{b-i}$, we get
    $$H^{b+k}=s_k(B)H^{b-1}+\text{lower degree terms}.$$
    Thus, we replace the last sum above with
    \begin{align*}
    &\sum_{j=0}^{\sff{f}+n}{j-n\choose j} c_{\sff{f}+n-j}(\sff F)\sum_{i=0}^{b+\sff{e}}H^{b-1}s_{i+j+m-(b-1)}(B)\,c_{b+\sff{e}-i}(B+\sff E)\\
    &\qquad \qquad \qquad =H^{b-1}\sum_{j=0}^{\sff{f}+n}(-1)^j{n-1\choose j} c_{\sff{f}+n-j}(\sff F)\,c_{\sff{e}+1+j+m}(\sff E).
\end{align*}
We can replace $\sff f+n$ for $n-1$ on the upper limit of the sum because if $\sff f\geq 0$ then ${n-1\choose j}=0$ for $j\geq n$, and if $\sff f<0$ any $j\geq n$ has $c_{\sff{f}+n-j}(\sff F)=0$. 
\end{proof}

\begin{lemma}\label{lem:pushpicn0}
    For $ n\geq0, \; b+\sff{e}\geq0, \; m\geq0,\;b>\sff f+m$\footnote{The last condition can be removed if $c_{i}(B+\sff E)=0$ for $i> b+\sff e$.}, $$q_*\left(H^m c_{b+\sff{e}}\big(B(1)+\sff{E}(1)\big)\,c_{\sff f-n}\big(\sff{F}(1)\big)\right)=\sum_{j=0}^{\sff f-n}{n+j\choose j}c_{\sff{e}+1+j+m}(\sff E)\,c_{\sff{f}-n-j}(\sff F).$$
    In particular, when $\sff F=0$, this gives the identity
    $$q_*\big(H^m c_{b+\sff{e}}\big(B(1)+\sff{E}(1)\big)\big)=c_{\sff{e}+1+j+m}(\sff E).$$
\end{lemma}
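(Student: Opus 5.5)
Follow the proof of Lemma \ref{lem:pushpic} line by line; only the Chern class expansion of the $\sff F$-factor changes sign convention. First, by Manivel's formula for Chern classes of a tensor product of a virtual bundle of rank $\sff f$ with a line bundle, write
$$c_{\sff f-n}\big(\sff F(1)\big)=\sum_{j=0}^{\sff f-n}{\sff f-(\sff f-n-j)\choose j}H^j c_{\sff f-n-j}(\sff F)=\sum_{j=0}^{\sff f-n}{n+j\choose j}H^jc_{\sff f-n-j}(\sff F).$$
Here $n\ge 0$, so all binomial coefficients are the ordinary nonnegative ones and no truncation issue of the type encountered for $n>0$ arises. Similarly expand
$$c_{b+\sff e}\big(B(1)+\sff E(1)\big)=\sum_{i=0}^{b+\sff e}H^ic_{b+\sff e-i}(B+\sff E),$$
since $B+\sff E$ has rank $b+\sff e$.

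Next, multiply by $H^m$ and push forward along $q$. We use $q_*(H^{b-1+k})=s_k(B)$ (the Segre class, with the sign convention of the previous proof, so that $H^{b+k}\equiv s_k(B)H^{b-1}$ modulo lower powers). The hypothesis $b>\sff f+m$ guarantees that for each $j$ the relevant range of $i$ contains every index with $i+j+m\ge b-1$. Hence the inner sum over $i$ becomes a full convolution:
$$\sum_{i}s_{i+j+m-b+1}(B)\,c_{b+\sff e-i}(B+\sff E)=c_{\sff e+1+j+m}(\sff E),$$
by $s(B)c(B+\sff E)=c(\sff E)$. This convolution step is the one place needing care: we must check that the bounds of the $i$-sum do not cut off any terms. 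Either the rank bound on $B+\sff E$ or the footnoted vanishing $c_{i>b+\sff e}(B+\sff E)=0$ ensures this, since terms with $i>b+\sff e$ vanish anyway and terms with negative Segre index are zero.

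Combining the two expansions gives
$$\sum_{j=0}^{\sff f-n}{n+j\choose j}c_{\sff e+1+j+m}(\sff E)\,c_{\sff f-n-j}(\sff F),$$
as claimed. For the particular case $\sff F=0$ we have $\sff f=0$, so we take $n=0$; then only $j=0$ survives, yielding $c_{\sff e+1+m}(\sff E)$. The statement's index should be read with $j=0$.
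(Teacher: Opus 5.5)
Your proposal is correct and follows essentially the paper's proof: expand both factors with Manivel's tensor-product formula, replace $H^{b+k}$ by $s_k(B)H^{b-1}$ modulo lower powers of $H$, and collapse the inner $i$-sum using $s(B)c(B+\sff E)=c(\sff E)$, where $b>\sff f+m$ is exactly what ensures that the cutoff $i\ge 0$ loses no terms. Your reading of the stray $j$ in the special case as $j=0$ (giving $c_{\sff e+1+m}(\sff E)$) is right. Your write-up also avoids the index slips in the paper's version, which writes $c_{\sff f-j}(\sff F)$ for $c_{\sff f-n-j}(\sff F)$ and sums up to $\sff f$.
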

\begin{proof} If $b+\sff{e}+\sff{f}-n+m<b-1$, both sides of the claimed identity are zero.   Otherwise, we expand as in the previous lemma, and ignore terms with coefficients less than $b-1$ to write $H^mc_{b+\sff{e}}\big(B(1)+\sff{E}(1)\big)\,c_{\sff f-n}\big(\sff{F}(1)\big)$ as
     \begin{align*}
     &H^m\sum_{j=0}^{\sff f}\sum_{i=0}^{b+\sff{e}}H^ ic_{b+\sff{e}-i}(B+\sff E){n+j\choose j} H^j c_{\sff{f}-n-j}(\sff F)\\
    &\qquad \qquad \qquad=\sum_{j=0}^{\sff f}{n+j\choose j} c_{\sff{f}-j}(\sff F)\sum_{i=0}^{b+\sff{e}}H^{i+j+m}c_{b+\sff{e}-i}(B+\sff E)\\
     &\qquad \qquad \qquad=\sum_{j=0}^{\sff f} {n+j\choose j}c_{\sff{f}-j}(\sff F)\sum_{i=0}^{b+\sff{e}}H^{b-1}s_{i+j+m-b+1}(B)\,c_{b+\sff{e}-i}(B+\sff E)\\
     &\qquad \qquad \qquad =H^{b-1}\sum_{j=0}^{f} {n+j\choose j}c_{\sff{f}-j}(\sff F)\, c_{\sff{e}+m+1+j}(\sff E).
\end{align*}
\end{proof}

\subsection{Elliptic surfaces}
In this appendix, we evaluate the Chern classes of some virtual bundles over the Picard variety  needed for the calculations of Section \ref{sec:elliptic}. We use the set up and notation introduced in that section.
\begin{lemma} \label{chernpic}
    We have
    \begin{align*}&c\big(R\psi_{*}\cP_d(M)\big)=e^{-\theta}\in A^*(\Pic_d(C)),\\
    &c\big(R\psi_{*}\cP_d(M)(1)\big)=e^{\frac{-\theta}{1+x}}(1+x)^{d-g+1-k}\in A^*(C_d).\end{align*}
    
\end{lemma}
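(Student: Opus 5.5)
The plan is to compute both Chern classes with Grothendieck--Riemann--Roch for $\psi$, using the standard normalization of the Poincaré bundle on $C\times\Pic_d(C)$: take $\cP_d$ trivial along $\{p\}\times\Pic_d(C)$ for a fixed point $p\in C$. Then
$$c_1(\cP_d)=d\,[\pt\times\Pic]+\gamma,$$
where $\gamma$ is the mixed Künneth component with $\gamma^2=-2\,[\pt]\times\theta$ and $\gamma^3=0$. Twisting by $M$ only changes the degree to $d-\chi$. Since the second formula contains $1-k$, the symbol $k$ there must be the degree of $M$ with sign, $k=\chi$. Hence we may treat $\cP_d(M)$ as a Poincaré bundle of degree $d-\chi$; any pullback line bundle from the base changes only the second formula, through the identification of $\cO(1)$.

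First step. By GRR, as in \cite[VIII.2]{arbarello1},
$$\ch(R\psi_*\cP_{d'})=(d'-g+1)-\theta.$$
The Chern character $-\theta$, with $\theta^i$ of the appropriate weights, corresponds to total Chern class $e^{-\theta}$. This is the classical computation, since $\ch_i=-\theta$ only for $i=1$ and the higher $\ch$ vanish. Concretely, $\ch(R\psi_*\cP)=\psi_*(e^{c_1(\cP)}\,\mathrm{td}(C))$. The $\gamma^2/2$ term gives $-\theta$, and the degree-$1$ term gives $d'+1-g$. A rank $N$ virtual bundle with $\ch=N-\theta$ has $c=e^{-\theta}$, because $\ch_1=c_1=-\theta$ and Newton's identities force $c_j=(-\theta)^j/j!$. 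This proves the first identity.

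Second step. On $C_d$, the class $\cO(1)$ is the pullback to $C_d\times C$ restricted appropriately: the tautological line bundle with $x=c_1$. Tensoring by $\cO(1)$ pulled back from the base means that, for a virtual bundle $V$ of rank $N$ with $\ch(V)=N-\theta$,
$$\ch(V(1))=(N-\theta)e^{x}.$$
The splitting principle then gives
$$c(V(1))=\sum_i c_i(V)(1+x)^{N-i}=(1+x)^N\sum_i\frac{(-\theta)^i}{i!}(1+x)^{-i}=(1+x)^N e^{-\theta/(1+x)}.$$
With $N=d-g+1-\chi$ this is the stated formula.

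The main obstacle is bookkeeping. One must verify the Poincaré normalization is compatible with how $\cO(1)$ on $C_d$ was identified, namely $\cO(\cD)|_{C_d\times C}\cong\cP_d(1)$, so that no extra pullback line bundle shifts $x$. One must also confirm that the exponent $k$ equals $\chi$, via $\deg M=-\chi$.
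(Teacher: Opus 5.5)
Your proposal is correct and is essentially the paper's proof. Grothendieck--Riemann--Roch for $\psi$ with the normalized Poincaré bundle gives $\ch(R\psi_*\cP_d(M))=(d-g+1-\chi)-\theta$ (so $k=\chi$, as you say), hence $c=e^{-\theta}$, and twisting by $\cO(1)$ on $C_d$ gives the second formula. Your identity $c(V\otimes L)=\sum_i c_i(V)(1+x)^{N-i}$ simply makes explicit the paper's step of solving for the Chern classes from $(N-\theta)e^{x}$.

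One minor correction: the first formula also depends on the normalization of $\cP_d$, since twisting by a line bundle from $\Pic_d(C)$ with nonzero $c_1$ would change $e^{-\theta}$. So your remark that such twists affect only the second formula is inaccurate, though harmless, because you fix the normalization at the outset, as the paper implicitly does.
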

\begin{proof}
    By Grothendieck-Riemann-Roch applied to $\psi\colon C\times \Pic_d(C)\to \Pic_d(C)$,
    \begin{align*}
    \ch\big(R\psi_{*}\cP_d(M)\big)&=\psi_{*}\big(\ch(\cP_d)\ch(M)\op{td}(C)\big)\\
        &=\psi_{*}((1+d\eta +c^{1,1}-\eta\bar\theta)(1-k\eta)(1+(1-g)\eta),
    \end{align*}
    where is the $\eta$ is the pullback of the class of a point on $C$, and  $c^{1,1} \in H^1(C)\otimes H^1(\Pic_d(C))$ is some class with the property that $\psi_{*}(c^{1,1})=0$ and $c^{1,1}\cdot\eta=0$ (cf. \cite[Chapter 8.2]{arbarello1}).  Using $\eta^2=0$, $\psi_{*}(\eta)=[\Pic_d(C)]$ and $\psi_{*}(\eta\bar\theta)=\theta$, the expression above  simplifies to 
    $$\ch\big(R\psi_{*}\cP_d(\cM)\big)=d-g+1-k-\theta.$$
    Solving for the Chern classes, we find that
$$c\big(R\psi_{*}\cP_d(M)\big)=\sum_{i=0}^d\frac{(-1)^i\theta^i}{i!}=e^{-\theta},$$ as claimed.
    Pulling back to $C_d$ and twisting by $\cO_{C_d}(1)$, we get 
    \begin{align*}
        \ch\big(R\psi_{*}\cP_d(M)(1)\big)&=(d-g+1-k-\theta) e^x.
    \end{align*}
Again, solving for the Chern classes, we find that
        $$c\big(R\psi_{*}\cP_d(M)(1)\big)=(1+x)^{d-g+1-k}e^{\frac{-\theta}{1+x}},$$ as desired.
\end{proof}
\begin{lemma} \label{combchern} 
For any $N\in\Z$, the degree $n\geq0$ term of $(1+h)^Ne^{\frac{-t}{1+h}}\in \C\llhb{h,t}$ is given by 
$$\sum_{j=0}^{n}(-1)^j{N-j\choose n-j}\frac{t^jh^{n-j}}{j!}=(-1)^n\sum_{j=0}^{n}{n-N-1\choose n-j}\frac{t^jh^{n-j}}{j!}$$

\end{lemma}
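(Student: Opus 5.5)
We have to compute the coefficient of total degree $n$ in $(1+h)^N e^{-t/(1+h)}$, where $h$ and $t$ both carry degree one. The plan is to expand in powers of $t$ first. We write
$$e^{\frac{-t}{1+h}}=\sum_{j\ge 0}\frac{(-1)^j t^j}{j!}(1+h)^{-j},$$
so that
$$(1+h)^Ne^{\frac{-t}{1+h}}=\sum_{j\ge0}\frac{(-1)^jt^j}{j!}(1+h)^{N-j}.$$
Fix $j$ with $0\le j\le n$. The monomials of total degree $n$ in the $j$-th summand are exactly those of the form $t^jh^{n-j}$. Their coefficient is the coefficient of $h^{n-j}$ in $(1+h)^{N-j}$, which is ${N-j\choose n-j}$. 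Here we use the paper's convention for binomial coefficients with negative upper entry, since $N-j$ may be negative. Summing over $j$ gives the first expression
$$\sum_{j=0}^{n}(-1)^j{N-j\choose n-j}\frac{t^jh^{n-j}}{j!}.$$

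For the second expression we use the upper negation identity ${a\choose k}=(-1)^k{k-a-1\choose k}$, which holds for every integer $a$ and every $k\ge0$. Take $a=N-j$ and $k=n-j$. This gives
$${N-j\choose n-j}=(-1)^{n-j}{(n-j)-(N-j)-1\choose n-j}=(-1)^{n-j}{n-N-1\choose n-j}.$$
Multiplying by the sign $(-1)^j$ already present in the first sum produces the total sign $(-1)^n$, which gives the stated right-hand side.

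The only point that needs care is the validity of the negation identity for negative upper index. It follows directly from the falling-factorial definition ${a\choose k}=a(a-1)\cdots(a-k+1)/k!$: reversing the sign of each of the $k$ factors turns the product into $(k-a-1)(k-a-2)\cdots(-a)$, at the cost of a factor $(-1)^k$. This falling-factorial definition agrees with the convention $(x+y)^n=\sum_k{n\choose k}x^ky^{n-k}$ used earlier in the paper. Once this is settled, no real obstacle remains.
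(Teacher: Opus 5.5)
Your proof is correct and is essentially the paper's argument. The paper expands $(1+h)^N$ and $(1+h)^{-j}$ separately and then recombines them with the Vandermonde convolution $\sum_\ell \binom{N}{n-j-\ell}\binom{-j}{\ell}=\binom{N-j}{n-j}$; that is exactly your step of multiplying first to get $(1+h)^{N-j}$, and your upper-negation argument additionally supplies the second equality, which the paper states but does not explicitly verify.
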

\begin{proof}
    The degree $n$ term is
    \begin{align*}
        &\sum_{\substack{i+j+k=n\\i,j,k\geq0}}{N\choose i}h^i\frac{(-1)^jt^j}{j!}{-j\choose k}h^k\\
        =&\sum_{\substack{i+j+k=n\\i,j,k\geq0}}{N\choose i}{j+k-1\choose k}\frac{(-1)^{j+k}t^jh^{i+k}}{j!}\\
        =&\sum_{0\leq j+k\leq n}{N\choose n-j-k}{j+k-1\choose k}\frac{(-1)^{j+k}t^jh^{n-j}}{j!}\\
        =&\sum_{j=0}^n\frac{t^jh^{n-j}}{j!}\sum_{\ell=j}^{n-j}(-1)^{\ell}{N\choose n-\ell}{\ell-1\choose \ell-j}\\
        =&\sum_{j=0}^n\frac{t^jh^{n-j}}{j!}(-1)^j{N-j\choose n-j}.
    \end{align*}
     For the last equality, compare the coefficients of $x^{n-j}$ in  $(1+x)^{N}(1+x)^{-j}=(1+x)^{N-j}$ giving $$\sum_{\ell=j}^{n-j}{N\choose n-\ell}{-j\choose \ell-j}=\sum_{\ell=0}^{n-j}{N\choose n-j-\ell}{-j\choose \ell}={N-j\choose n-j}.$$
\end{proof}

\section{Irreducibility criterion for Grassmannians} \label{sec:irred} We adapt a part of the construction in the proof of  Lemma \ref{lem:blhd1} and Corollary \ref{cor:irred}. Given a presentation as in \eqref{eq:presentationC}, recall the description of the Grassmannian $\Gr(\cC,c)=Z(\s)$, where $\s$ is as in \eqref{eq:sectionsigma}.
In this appendix, we give a sufficient condition for the irreducibility of  $\Gr(\cC,c)$. 




For the presentation \eqref{eq:presentationC} let $K:=\ker(\varphi)$ and  $$Z_r:=\op{D}_{f-e+r}(\varphi)\subset X$$ be the degeneracy locus, where $\rank(\varphi |_x)\leq e-r$, or equivalently $\dim(\cC|_x)\geq r$. By the virtue of the filtration of the closed subschemes $X=Z_{c}\subset \cdots\subset Z_{e}$,   define the locally closed subschemes $U_r:=Z_r\setminus Z_{r+1}$. Each component of $Z_r$ has codimension at most equal to the expected codimension $r(f-e+r)$. Since $c=e-f+k$, this means that $\op{codim}(Z_r)\leq r(r-c+k)$.
\begin{prop}\label{prop:irred}
    If the presentation \eqref{eq:presentationC} satisfies \footnote{Here, we mean the codimension of each irreducible component of $Z_r$ satisfies the given inequality.} 
$$\op{codim}(Z_r)> c(r-c+k)
\qquad r=c+1,\dots,e,$$
then $\Gr(\cC,c)$ is irreducible of dimension $d$. In particular, $\Gr(\cC,c)$ is irreducible if all the $Z_r$'s are of the expected codimension.
\end{prop}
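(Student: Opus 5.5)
We will stratify $\Gr(\cC,c)$ according to the stratification $X=\bigsqcup_{r\ge c} U_r$ and compute the dimension of each piece. Over a closed point $x\in U_r$ the fiber of $p\colon \Gr(\cC,c)\to X$ is $\Gr(\cC|_x,c)$ with $\dim\cC|_x=r$, a Grassmannian of dimension $c(r-c)$. Hence $p^{-1}(U_r)$ has dimension at most $\dim Z_r + c(r-c)$. Over the open dense stratum $U_c$, where $\cC$ is locally free of rank $c$, $p$ is an isomorphism, so $p^{-1}(U_c)\cong U_c$ is irreducible of dimension $d$. Its closure is exactly the component $Y$ of Lemma \ref{lem:blhd1}.

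Next we bound the other strata from below in codimension. The hypothesis gives $\dim Z_r< d-c(r-c+k)$ for $r>c$, so
$$\dim p^{-1}(U_r)\le \dim Z_r+c(r-c)< d - ck\le d.$$
Thus every stratum over $Z_{c+1}$ has dimension strictly less than $d$. Here $k=\rk K$ is as in the statement, with $c=e-f+k$, and we use $k\ge 0$.

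We then need a lower bound on the dimension of every irreducible component of $\Gr(\cC,c)$, so that the small strata cannot form components. Since $\Gr(\cC,c)=Z(\s)$ is the zero scheme of a section of the rank $cf$ bundle $p^*F^*\otimes Q_{\Gr}$ on $\Gr(E,c)$, which is irreducible of dimension $d+c(e-c)$, each component of $Z(\s)$ has dimension at least
$$d+c(e-c)-cf=d+c(e-f-c)=d-ck.$$
This bound ties in exactly with the previous step. A component not meeting $p^{-1}(U_c)$ lies in $\bigcup_{r>c}p^{-1}(U_r)$, which has dimension $<d-ck$, and that contradicts the lower bound. So every component meets $p^{-1}(U_c)$, hence contains a nonempty open subset of it, and so equals $Y$. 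Therefore $\Gr(\cC,c)=Y$ set-theoretically, irreducible of dimension $d$.

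For the last sentence of the statement, we check that expected codimensions satisfy the hypothesis. When $\op{codim} Z_r=r(r-c+k)$ we have $r(r-c+k)>c(r-c+k)$ for $r>c$, since $r-c+k>0$.

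The main obstacle is keeping the index bookkeeping consistent. We need $Z_r=\op D_{f-e+r}(\varphi)$ to have expected codimension $r(f-e+r)=r(r-c+k)$, and the strict inequality must beat the excess $ck$ in the component lower bound. If $k$ were defined differently (for example as a generic rank), we would redo the lower bound as $d+c(e-c-f)$ and adjust the comparison accordingly. The key point is that the inequality in the hypothesis is precisely fiber dimension plus stratum dimension being less than the expected dimension of $Z(\s)$.
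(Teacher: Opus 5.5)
Your proof is correct and follows essentially the same route as the paper's. Both stratify $\Gr(\cC,c)$ over the strata $U_r$ and use the hypothesis together with the fiber dimension $c(r-c)$ to bound every stratum over $r>c$ strictly below $d-ck$. Both then compare this with the lower bound $d-ck$ on the components of the zero scheme $Z(\sigma)$, concluding that $\Gr(\cC,c)$ is the closure of $p^{-1}(U_c)\cong U_c$. You are slightly more careful in writing $\dim U_r\le \dim Z_r$ where the paper writes an equality.
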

\begin{proof}
    Let $j_r:U_r\hookrightarrow X$ be the inclusion.  Since by construction $j_r^*\cC$ has constant rank $r$, the basechange $\Gr(j_r^*\cC,c)\to U_r$ is the Grassmannian of the quotients of a locally free sheaf. In particular, for $r=c$, $$\Gr(j_c^* \cC, c)\cong U_c$$ is  an open subset of $X$ that is irreducible, and by our assumption for $r>c$,  
    \begin{align*}
    \dim\Gr(j_r^*\cC,c)&=\dim U_r+c(r-c)= d-\op{codim}(Z_r)+c(r-c)< d-ck.
    \end{align*}
    As we know, the expected dimension of $\Gr(\cC,c)=Z(\sigma)$ is $d-ck$, so in particular, each of its irreducible components must have dimension at least $d-ck$. Therefore, by the dimension count above, the stratification $$\Gr(\cC,c)=\bigsqcup_{c\le r\le f_0} \Gr(j^*_r\cC,c)$$ shows that  $ \Gr(\cC,c)$ is the closure of its open irreducible stratum $\Gr(j_c^* \cC, c)$, and hence it is irreducible of dimension $d$.
\end{proof}

\begin{corollary}
    \label{cor:1irred}
    If the presentation \eqref{eq:presentationC} satisfies 
$$\op{codim}(Z_r)> c(r-c)
\qquad r=c+1,\dots,e,$$
then $\Gr(\cC,c)$ has a single irreducible component of dimension $d$, and every other one of its irreducible components has dimension less than $d$. 
\end{corollary}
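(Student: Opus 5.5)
The plan is to reuse the stratification argument of Proposition \ref{prop:irred}, but to compare stratum dimensions with $d$ itself rather than with the expected dimension $d-ck$. Write $X=U_c\sqcup U_{c+1}\sqcup\cdots\sqcup U_e$ with $U_r=Z_r\setminus Z_{r+1}$, and let $j_r\colon U_r\hookrightarrow X$ be the inclusions. Over $U_r$ the sheaf $j_r^*\cC$ is locally free of rank $r$, so $\Gr(j_r^*\cC,c)\to U_r$ is a Grassmannian bundle with fibres $\Gr(r,c)$ of dimension $c(r-c)$. As sets, this gives
$$\Gr(\cC,c)=\bigsqcup_{r=c}^{e}\Gr(j_r^*\cC,c).$$

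First, the open stratum $r=c$ is isomorphic to $U_c$. Since $X$ is irreducible of dimension $d$, this stratum is irreducible of dimension $d$, and its closure is an irreducible component of $\Gr(\cC,c)$ of dimension $d$. This closure is the $Y$ of Lemma \ref{lem:blhd1}.

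Second, for $r>c$ we bound each stratum using the hypothesis. Applied to each irreducible component of $Z_r$, hence of $U_r$, it gives
$$\dim\Gr(j_r^*\cC,c)\le \dim U_r+c(r-c)\le d-\op{codim}(Z_r)+c(r-c)<d.$$
So every stratum other than the open one has dimension strictly less than $d$.

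Third, we conclude. Let $W$ be an irreducible component of $\Gr(\cC,c)$ different from $\overline{\Gr(j_c^*\cC,c)}$. Then $W$ is not contained in that closure. The generic point of $W$ lies in exactly one stratum; it cannot lie in the open stratum $r=c$, because then $W$ would meet that stratum and, being irreducible, would lie in its closure. So $W$ is contained in the closure of a stratum with $r>c$, and therefore $\dim W<d$. For the same reason no second component of dimension $d$ can exist. Hence $\Gr(\cC,c)$ has exactly one component of dimension $d$, and every other component has smaller dimension.

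The only point needing care is the generic-point argument: $\Gr(\cC,c)$ is a finite disjoint union of locally closed pieces, and each irreducible component is the closure of its intersection with some single piece. Beyond that the proof is just the dimension count above. We drop the lower bound $d-ck$ on component dimensions that Proposition \ref{prop:irred} used, which is why we get only uniqueness of the $d$-dimensional component rather than irreducibility.
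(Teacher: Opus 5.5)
Your proposal is correct and follows the paper's proof: the same stratification $\Gr(\cC,c)=\bigsqcup_r \Gr(j_r^*\cC,c)$ by the loci $U_r$, and the same fibre-dimension count giving $\dim\Gr(j_r^*\cC,c)\le \dim U_r + c(r-c) < d$ for $r>c$. The conclusion is likewise drawn from the closure of the open stratum $\Gr(j_c^*\cC,c)\cong U_c$. The one difference is that you spell out the generic-point argument, which the paper leaves implicit.
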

\begin{proof}
    By an analogous calculation, for $r>c$
    \begin{align*}
    \dim\Gr(j_r^*\cC,c)&< d.
    \end{align*}
    By the dimension count above, the stratification $\Gr(\cC,c)=\bigsqcup_{c\le r\le f_0} \Gr(j^*_r\cC,c)$ shows that  $ \Gr(\cC,c)$ has a single component of dimension $d$ given by the closure of $\Gr(j_c^* \cC, c)$, and all other components have dimension less than $d$.
\end{proof}


\section{Equivalence of two blow up constructions} \label{equivblow up}
In this appendix, we show that given a coherent sheaf $\mathcal{F}$ on a quasi-projective variety $X$, a blow up $\nu\colon Y\to X$ with the property $\hd(\nu^*\cF)\leq1$ is a \emph{sheaf desingularizing} blow up (cf.  \cite{rabano2024desingularizationssheaveshighergenus}) and vice versa.  We say a blow up $\nu:Y\to X$ desingularizes a coherent sheaf $\cG$ if the torsion free part of the pullback, $(\nu^*\cG)^{\op{tf}}$, is locally free.

    \begin{prop}\label{prop:bl1}
     Suppose $\cc F$ is coherent sheaf on $X$ and  $\nu:Y\to X$ is a blow up with $\hd(\nu^*\cF)\leq 1$. Then for any  presentation $$G\xr{\phi} H\to\cF\to0$$ with $G$, $H$ locally free of finite ranks, $\nu$ desingularizes the coherent sheaf  $\cC:=\coker (\phi^*)$.
    \end{prop}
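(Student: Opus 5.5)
Pull the presentation back along $\nu$, giving the exact sequence
$$\nu^*G\xr{\nu^*\phi}\nu^*H\to\nu^*\cF\to 0,$$
and show that the kernel $\cK':=\ker(\nu^*\phi)$ is a locally free sheaf whose dual is $(\nu^*\cC)^{\op{tf}}$. This is essentially the argument of Lemma \ref{lem:blhd1} run in reverse.

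\textbf{Step 1.} Since $\hd(\nu^*\cF)\le 1$ and $\nu^*H\to\nu^*\cF$ is a surjection from a locally free sheaf, the kernel $\cM:=\im(\nu^*\phi)$ is locally free. Then
$$0\to\cK'\to\nu^*G\to\cM\to 0$$
is an exact sequence whose last two terms are locally free. Hence it splits locally, $\cK'$ is locally free, and the dual sequence
$$0\to\cM^*\to\nu^*G^*\to\cK'^*\to 0$$
is exact with $\cK'^*$ locally free.

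\textbf{Step 2.} Pullback is right exact, so $\nu^*\cC=\coker(\nu^*\phi^*)$, where $\nu^*\phi^*\colon\nu^*H^*\to\nu^*G^*$ factors as $\nu^*H^*\to\cM^*\hookrightarrow\nu^*G^*$. This yields a surjection $\nu^*\cC\twoheadrightarrow \nu^*G^*/\cM^*=\cK'^*$ whose kernel is $\coker(\nu^*H^*\to\cM^*)$.

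\textbf{Step 3.} Identify that kernel as the torsion subsheaf of $\nu^*\cC$. One inclusion is immediate: $\cK'^*$ is locally free, hence torsion free, so the torsion of $\nu^*\cC$ lies in the kernel. For the converse, it suffices to show $\coker(\nu^*H^*\to\cM^*)$ is torsion, i.e.\ that it vanishes generically on $Y$. Work on the dense open set where $\nu$ is an isomorphism and where $\cF$ (equivalently $\nu^*\cF$) is locally free. There $\nu^*\cF$ is locally free, so $0\to\cM\to\nu^*H\to\nu^*\cF\to 0$ splits locally. Dualizing gives a surjection $\nu^*H^*\to\cM^*$, and the kernel vanishes there.

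Consequently $(\nu^*\cC)^{\op{tf}}\cong\cK'^*$, which is locally free, so $\nu$ desingularizes $\cC$.

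The main obstacle is Step 3, specifically the subtle point that surjectivity of $\nu^*H^*\to\cM^*$ holds only generically. This requires $Y$ to be integral, so that "generically zero" is the same as "torsion", and it requires that the locus where $\cF$ is locally free be dense; both hold because $X$ and $Y$ are varieties and $\nu$ is birational. The other ingredient to check carefully is that the torsion-free quotient is indeed the quotient by the full torsion subsheaf, which follows once the kernel is torsion and the target $\cK'^*$ is torsion free.
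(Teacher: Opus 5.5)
Your proof is correct and follows essentially the same route as the paper. The paper reduces to $\nu=\mathrm{id}$ and sets $\cC'=\coker(H^*\to (G/K)^*)$, which is your $\coker(\nu^*H^*\to\cM^*)$; it then uses the snake lemma to get $\cC/\cC'\cong K^*$ locally free and concludes $\cC'=\cC^{\op{tor}}$, the only difference being that it shows $\cC'$ is torsion by a rank count ($c'=0$) rather than by your generic local splitting.
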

    \begin{proof}
    Since the pullback is a right exact functor, and hence it preserves cokernel and presentation, it suffices to show the claim when $Y=X$. Thus, we reduce to  the case $\nu=\id$ and $\hd(\cF)\leq1$. Letting $K:=\ker(\phi)$, since $\hd(\cF)\leq1$, $K$ and $G/K$ are locally free. Let $\pi:G\to G/K$ be the projection, $\psi:G/K\to H$ be the induced map and $\cC'=\coker(\psi^*)$. We get a commutative diagram with exact rows:
\[\begin{tikzcd}
	0 & {\cF^*} & {H^*} & {(G/K)^*} & {\cC'} & 0 \\
	0 & {\cF^*} & {H^*} & {G^*} & \cC & 0
	\arrow[from=1-1, to=1-2]
	\arrow[from=1-2, to=1-3]
	\arrow[no head, from=1-2, to=2-2]
	\arrow[shift left, no head, from=1-2, to=2-2]
	\arrow["{\psi^*}", from=1-3, to=1-4]
	\arrow[no head, from=1-3, to=2-3]
	\arrow[shift right, no head, from=1-3, to=2-3]
	\arrow[from=1-4, to=1-5]
	\arrow["{\pi^*}"', from=1-4, to=2-4]
	\arrow[from=1-5, to=1-6]
	\arrow["{\iota}"',dashed, from=1-5, to=2-5]
	\arrow[from=2-1, to=2-2]
	\arrow[from=2-2, to=2-3]
	\arrow["{\phi^*}"', from=2-3, to=2-4]
	\arrow[from=2-4, to=2-5]
	\arrow[from=2-5, to=2-6]
\end{tikzcd}\]
    where $\iota$ is induced from the universal property of cokernel, as the composition $H^*\to (G/K)^*\to G^*\to \cC$ is zero. Truncating the diagram above to                           \[\begin{tikzcd}[cramped]
	0 & {H^*/\cF^*} & {(G/K)^*} & {\cC'} & 0 \\
	0 & {H^*/\cF^*} & {G^*} & \cC & 0
	\arrow[from=1-1, to=1-2]
	\arrow["{{\psi^*}}", from=1-2, to=1-3]
	\arrow[equals, from=1-2, to=2-2]
	\arrow[from=1-3, to=1-4]
	\arrow["{{\pi^*}}"', from=1-3, to=2-3]
	\arrow[from=1-4, to=1-5]
	\arrow["{{\iota}}"', from=1-4, to=2-4]
	\arrow[from=2-1, to=2-2]
	\arrow["{{\phi^*}}"', from=2-2, to=2-3]
	\arrow[from=2-3, to=2-4]
	\arrow[from=2-4, to=2-5]
\end{tikzcd}\]                              and using the snake lemma, we find that that $\iota$ is injective with  $\coker(\iota)=\coker(\pi^*)=K^*$. Also, $K=\cC^*$ and $\cC/\cC'=K^*$ is locally free. From the given presentation, $f=h-g+k$ and so by the diagram above $c'=g-k-h+f=0$. This shows that $\cC'\subset \cC^{\op{tor}}$. But $\cC/\cC'$ is locally free, so $\cC'=\cC^{\op{tor}}$ and  $\cC^{\op{tf}}$ is locally free, as desired.
    \end{proof}

    \begin{prop} \label{prop:bl2} Suppose that $\cc F$ is a coherent sheaf on $X$ with  a presentation $G\xr{\phi} H\to\cF\to0$ in which $G$ and $H$ are locally free of finite ranks. Then, for any blow up $\nu:Y\to X$ that desingularizes $\cC:=\coker(\phi^*)$, we have $\hd(\nu^*\cF)\leq 1$.
    \end{prop}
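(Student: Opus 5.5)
Since pullback is right exact, $\nu^*G\xr{\nu^*\phi}\nu^*H\to\nu^*\cF\to 0$ is again a locally free presentation, and $\coker(\nu^*\phi^*)=\nu^*\cC$. So I will replace $X$ by $Y$ and reduce to the following statement: if $\cC=\coker(\phi^*)$ has locally free torsion free part $\cC^{\op{tf}}$, then $\hd(\cF)\le 1$. For the reduction I am using that $Y$ is a variety, so torsion and ranks make sense, and that $\nu$ is birational.

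The plan is to reverse the argument of Proposition \ref{prop:bl1}, following the proof of Lemma \ref{lem:blhd1}. Let $c$ be the rank of $\cC^{\op{tf}}$ and put $S:=\ker(G^*\twoheadrightarrow \cC^{\op{tf}})$. Since $\cC^{\op{tf}}$ is locally free, $S$ is locally free and $0\to S\to G^*\to\cC^{\op{tf}}\to 0$ is locally split. Dualizing gives an exact sequence $0\to(\cC^{\op{tf}})^*\to G\to S^*\to 0$ with all terms locally free.

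Next I need the identification $(\cC^{\op{tf}})^*\cong\cC^*\cong K:=\ker(\phi)$. The first isomorphism holds because duals kill torsion. For the second, apply $\sHom(-,\cO)$ to $H^*\to G^*\to\cC\to 0$; left exactness gives $0\to\cC^*\to G\to H$, so $\cC^*=\ker\phi=K$. Combining, $K$ is locally free and $G/K\cong S^*$ is locally free.

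It remains to check that the map $G/K\to H$ is injective with cokernel $\cF$. The image of $\phi$ is $G/K$, so $0\to G/K\to H\to\cF\to 0$ is exact and is a length-one locally free resolution, giving $\hd(\cF)\le1$. The one point needing care is that the dualized inclusion $(\cC^{\op{tf}})^*\hookrightarrow G$ agrees with the inclusion $K=\cC^*\hookrightarrow G$ coming from $\phi$, so that the quotient $G/K$ really is $S^*$. I expect this compatibility to be the main place where care is required. Both inclusions are the dual of the surjection $G^*\to\cC\to\cC^{\op{tf}}$, since $(\cC^{\op{tf}})^*\to\cC^*$ is the identity map, and this should settle it.
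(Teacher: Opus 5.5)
Your proof is correct and follows essentially the same route as the paper: reduce to $\nu=\id$, dualize the locally split sequence $0\to\ker(G^*\to\cC^{\op{tf}})\to G^*\to\cC^{\op{tf}}\to 0$, and conclude that $G/\ker(\phi)$ is locally free. You also make explicit two points the paper uses tacitly: the identifications $(\cC^{\op{tf}})^*\cong\cC^*\cong\ker(\phi)$, and the fact that the two resulting inclusions into $G$ coincide.
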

    
    \begin{proof}
    As in the proof of Proposition \ref{prop:bl1}, we reduce to case that $\nu=\id$.  Let $\psi:G^*\to \cC$ and $\pi:\cC\to \cC^{\op{tf}}$ be the canonical surjections, $\theta:=\pi\circ\psi$  and $K:=\ker(\phi)$.
    Since $\cC^{\op{tf}}$ is locally free, $E:=\ker(\theta)$ is locally free. We get the short exact sequence
    \[\begin{tikzcd}
	0 & E & {G^*} & {\cC^{tf}} & 0,
	\arrow[from=1-1, to=1-2]
	\arrow[from=1-2, to=1-3]
	\arrow["\theta", from=1-3, to=1-4]
	\arrow[from=1-4, to=1-5]
\end{tikzcd}\]
    which after dualizing gives the short exact sequence 
\[\begin{tikzcd}
	0 & K & {G} & {E^*} & 0.
	\arrow[from=1-1, to=1-2]
	\arrow["{\psi^*}", from=1-2, to=1-3]
	\arrow[from=1-3, to=1-4]
	\arrow[from=1-4, to=1-5]
\end{tikzcd}\]
Thus, $G/K\cong E^*$ is locally free, and hence $\hd(\cF)\leq1$. \end{proof} 

To summarize, the proposed blow up $\nu:Y\to X$ in Lemma \ref{lem:blhd1} coincides with Rossi's blow up $\op{Bl}_\cC X$ in the sense of \cite{rabano2024desingularizationssheaveshighergenus}.  The universality of  Rossi's blow up shows that for any other blow up $\mu:Z\to X$ with  $\hd(\mu^*\cF)\le 1$ there is a unique morphism $f:Z\to Y$, such that $\mu=\nu\circ f$.


    Together with Proposition \ref{prop:bl1}, this shows that $\nu:Y\to X$ constructed in Lemma \ref{lem:blhd1} is independent of the choice of a presentation for $\cF$, up to a unique isomorphism. 

    \begin{remark}\label{rmk:D3}
        When $\hd(\cC)\leq1$,  the desingularizing blow up of $\cC$ is the blow up of the first non-zero fitting ideal of $\cc C$ \cite[Lemma 3.3.4]{rabano2024desingularizationssheaveshighergenus}. For a presentation $$G\xr{\phi} H\to\cF\to 0$$ and $\cC=\coker(\phi^*)$, the fitting ideals of $\cF$ and $\cC$ are the same. If $\cF$ is a torsion sheaf then we get the short exact sequence $$0\to H^*\xr{\phi^*} G^*\to \cC\to0$$
        showing that $\hd(\cC)\leq1$,  and hence by what we said above, the blow up in Lemma \ref{lem:blhd1} coincides with the blow up of the first nonzero fitting ideal of $\cF$.  In general, Rossi's blow up  $\op{Bl}_{\cC}(X)$ is not the blow up of a set of fitting ideals of $\cC$.
       
    \end{remark}

    \begin{remark} \label{rmk:LiHu}
       Following the discussion of \cite[Section 4]{rabano2024desingularizationssheaveshighergenus}, a coherent sheaf is called diagonal if all its fitting ideals are principal. For  a diagonal coherent sheaf $\cG$, $(\cG)^{\op{tf}}$ is locally free and $\hd(\cG)\leq1$. Given a coherent sheaf $\cF$ with the generic and maximal ranks $r$ and $r_2$, respectively, the blow up $$\nu:Y:=\op{Bl}_{F_r(\cF)\cdots F_{r_2-1}(\cF )}X\xr{} X$$ of all its nonzero fitting ideals makes $\nu^*\cF$ diagonal. In particular, we have $\hd(\nu^*\cF)\leq1$.
 \end{remark}

\printbibliography

\bigskip \noindent {\tt{jedguezr@umd.edu}} \qquad\quad\medskip \noindent {\tt{amingh@umd.edu}} \medskip

\noindent 4176 Campus Drive\\ William E Kirwan Hall \\ 
\noindent University of Maryland \\
College Park, MD 20742 \\
USA.
\end{document}